\newtheoremstyle{s1}{9pt}{9pt}{\it}{\parindent}{\bf}{.}{0.5em}{}
\theoremstyle{s1}
\newtheorem{theorem}{Theorem}[chapter]
\newtheorem{theoremm}{Theorem}[chapter]
\newtheorem{conjecture}[theorem]{Conjecture}
\newtheorem{corollary}[theorem]{Corollary}
\newtheorem{definition}[theorem]{Definition}
\newtheorem{example}[theorem]{Example}
\newtheorem{lemma}[theorem]{Lemma}
\newtheorem{question}[theorem]{Question}
\newtheorem{proposition}[theorem]{Proposition}
\newtheorem{remark}[theorem]{Remark}
\newcommand{\field}[1]{\mathbb{#1}}
\newcommand{\C}{\field{C}}
\newcommand{\G}{\field{G}}
\newcommand{\N}{\field{N}}
\newcommand{\R}{\field{R}}
\newcommand{\K}{\field{K}}
\newcommand{\Le}{\field{L}}
\newcommand{\M}{\field{M}}
\newcommand{\s}{\field{S}}
\newcommand{\p}{\field{P}}
\DeclareMathOperator{\graph}{graph}
\DeclareMathOperator{\Fix}{Fix}
\begin{document}
\title{$\K$-uniruled sets in affine geometry}
\author{Michał Lasoń}
%\maketitle

\begin{titlepage}
 \begin{center}
  \normalsize
Institute of Mathematics\\
Polish Academy of Sciences\\

\vfill
\hrule
\bigskip
{\sc \Large $\K$-uniruled sets in affine geometry}
\bigskip
\hrule
\bigskip

{\large \bf Michał Lasoń}

\vfill

{\large PhD thesis}

{\large Supervisor: Prof. dr hab. Zbigniew Jelonek}

\bigskip

\bigskip

{\normalsize Krak\'{o}w 2012}
 \end{center}
\end{titlepage}

\newpage
 \thispagestyle{empty}
 \mbox{}
 
 \tableofcontents
 
%%%%%%%%%%%%%%%%%%%%%%%%%%%%%%%%%%%%%%%%%%%%%%%%%%%%%%%%%%%%%%%%%%%%%%%%%%%%%%%%%%%%%%%%%%%%%%%%%%%%%%%%%%%%%%%%%%%%%%%%%%%%%%%%%%%%%%%%%%%%%%%%%%%%%%%%%%%%%%%%%%%%%%%%%%
%%%%%%%%%%%%%%%%%%%%%%%%%%%%%%%%%%%%%%%%%%%%%%%%%%%%%%%%%%%%%%%%%%%%%%%%%%%%%%%%%%%%%%%%%%%%%%%%%%%%%%%%%%%%%%%%%%%%%%%%%%%%%%%%%%%%%%%%%%%%%%%%%%%%%%%%%%%%%%%%%%%%%%%%%%
%%%%%%%%%%%%%%%%%%%%%%%%%%%%%%%%%%%%%%%%%%%%%%%%%%%%%%%%%%%%%%%%%%%%%%%%%%%%%%%%%%%%%%%%%%%%%%%%%%%%%%%%%%%%%%%%%%%%%%%%%%%%%%%%%%%%%%%%%%%%%%%%%%%%%%%%%%%%%%%%%%%%%%%%%%
\pagebreak
\addcontentsline{toc}{chapter}{Streszczenie}
\chapter*{Streszczenie}
%%%%%%%%%%%%%%%%%%%%%%%%%%%%%%%%%%%%%%%%%%%%%%%%%%%%%%%%%%%%%%%%%%%%%%%%%%%%%%%%%%%%%%%%%%%%%%%%%%%%%%%%%%%%%%%%%%%%%%%%%%%%%%%%%%%%%%%%%%%%%%%%%%%%%%%%%%%%%%%%%%%%%%%%%%
%%%%%%%%%%%%%%%%%%%%%%%%%%%%%%%%%%%%%%%%%%%%%%%%%%%%%%%%%%%%%%%%%%%%%%%%%%%%%%%%%%%%%%%%%%%%%%%%%%%%%%%%%%%%%%%%%%%%%%%%%%%%%%%%%%%%%%%%%%%%%%%%%%%%%%%%%%%%%%%%%%%%%%%%%%
%%%%%%%%%%%%%%%%%%%%%%%%%%%%%%%%%%%%%%%%%%%%%%%%%%%%%%%%%%%%%%%%%%%%%%%%%%%%%%%%%%%%%%%%%%%%%%%%%%%%%%%%%%%%%%%%%%%%%%%%%%%%%%%%%%%%%%%%%%%%%%%%%%%%%%%%%%%%%%%%%%%%%%%%%%

Celem niniejszej pracy doktorskiej jest badanie zbiorów $\K$-prostokreślnych w geometrii afinicznej. 

Pierwszy rozdział jest poświęcony własności $\K$-prostokreślności oraz wa\-run\-kom rów\-no\-waż\-nym.
Niech $\K$ będzie ciałem algebraicznie domkniętym. Krzywą $\Gamma$ nazywamy parametryczną jeżeli jest obrazem ciała $\K$ po\-przez odwzorowanie
regularne. Definicją \ref{k-uniruleddef} wprowadzamy stopień $\K$-pros\-to\-kreśl\-ności dla rozmaitości afinicznych. Rozmaitość afiniczna $X$ 
ma stopień $\K$-pros\-to\-kreśl\-ności co najwyżej $d$ jeżeli jest pokryta krzywymi pa\-ra\-me\-try\-cznymi stopnia co najwyżej $d$. 
W Propozycji \ref{k-uniruledofdeg} podajemy dwa warunki rów\-no\-waż\-ne. W pierwszym z nich żądamy tylko aby gęsty zbiór był pokryty 
takimi krzywymi. Kolejny warunek równoważny zapewnia istnienie dominującego, generycznie skończonego odwzorowania wielomianowego o stopniu ze względu 
na pierwszą współ\-rzęd\-ną co najwyżej $d$ z cylindra $\K\times W$ w rozmaitość $X$. Wprowadzamy nową Definicję \ref{k-uniruleddef} rozmaitości 
$\K$-prostokreślnych, która ma tą zaletę w stosunku do poprzedniej, że dzięki Propozycji \ref{indep} nie zależy od ciała podstawowego. W sekcji 
\ref{rr} przedstawiamy analogiczne  wyniki dla ciała liczb rzeczywistych.

Twierdzenie \ref{Sfkuni} mówi, że jeżeli $f:X\rightarrow Y$ jest generycznie skoń\-czo\-nym odwzorowaniem regularnym pomiędzy rozmaitościami 
afi\-nicz\-ny\-mi oraz rozmaitość $X$ jest $\K$-prostokreślna, to zbiór $\s_f$ punktów niewłaściwości odwzorowania $f$ jest również $\K$-prostokreślny. 
W drugim rozdziale zajmujemy się ograniczeniem od góry stopnia $\K$-prostokreślności zbioru $\s_f$ w zależności od stopnia odwzorowania $f$ oraz 
stopnia $\K$-pro\-sto\-kreśl\-no\-ści rozmaitości $X$. Dla ciała liczb zespolonych $\C$ przedstawiamy kompletne wyniki Twierdzenia \ref{cn} i \ref{multc}. 
Pierwsze z nich daje ograniczenie przez iloczyn stopnia odwzorowania $f$ oraz stopnia $\C$-prostokreślności rozmaitości $X$. Natomiast drugie mówi, 
że dla $f:\C^n\rightarrow\C^m$ stopnia $d$ stopień $\C$-prostokreślności $\s_f$ wynosi co najwyżej $d-1$. Przed\-sta\-wia\-my również wyniki dla dowolnego 
ciała algebraicznie domkniętego $\K$ - Twierdzenia \ref{kn}, \ref{kw}, \ref{kkw} oraz dla ciała liczb rzeczywistych $\R$ - Twier\-dze\-nia 
\ref{rn1}, \ref{rxw1}, \ref{multc1}.

W ostatnim rozdziale dowodzimy $\K$-prostokreślności zbiorów zwią\-za\-nych ze zbiorem punktów stałych działania grupy algebraicznej. 
Dokładnie w Twierdzeniu \ref{glowne} dowodzimy, że jeżeli nietrywialna, spójna grupa unipotentna działa efektywnie na rozmaitości afinicznej, 
to zbiór pun\-któw stałych tego działania jest $\K$-prostokreślny. W szczególności nie zawiera izolowanych punktów. 
Natomiast w Twierdzeniu \ref{glowne1} pokazujemy, że gdy nieskończona grupa algebraiczna działa efektywnie na przestrzeni afinicznej $\K^n$, to każda hiperpowierzchnia 
zawarta w zbiorze punktów stałych jest $\K$-prostokreślna.

%%%%%%%%%%%%%%%%%%%%%%%%%%%%%%%%%%%%%%%%%%%%%%%%%%%%%%%%%%%%%%%%%%%%%%%%%%%%%%%%%%%%%%%%%%%%%%%%%%%%%%%%%%%%%%%%%%%%%%%%%%%%%%%%%%%%%%%%%%%%%%%%%%%%%%%%%%%%%%%%%%%%%%%%%%
%%%%%%%%%%%%%%%%%%%%%%%%%%%%%%%%%%%%%%%%%%%%%%%%%%%%%%%%%%%%%%%%%%%%%%%%%%%%%%%%%%%%%%%%%%%%%%%%%%%%%%%%%%%%%%%%%%%%%%%%%%%%%%%%%%%%%%%%%%%%%%%%%%%%%%%%%%%%%%%%%%%%%%%%%%
%%%%%%%%%%%%%%%%%%%%%%%%%%%%%%%%%%%%%%%%%%%%%%%%%%%%%%%%%%%%%%%%%%%%%%%%%%%%%%%%%%%%%%%%%%%%%%%%%%%%%%%%%%%%%%%%%%%%%%%%%%%%%%%%%%%%%%%%%%%%%%%%%%%%%%%%%%%%%%%%%%%%%%%%%%
\pagebreak
\addcontentsline{toc}{chapter}{Abstract}
\chapter*{Abstract}
%%%%%%%%%%%%%%%%%%%%%%%%%%%%%%%%%%%%%%%%%%%%%%%%%%%%%%%%%%%%%%%%%%%%%%%%%%%%%%%%%%%%%%%%%%%%%%%%%%%%%%%%%%%%%%%%%%%%%%%%%%%%%%%%%%%%%%%%%%%%%%%%%%%%%%%%%%%%%%%%%%%%%%%%%%
%%%%%%%%%%%%%%%%%%%%%%%%%%%%%%%%%%%%%%%%%%%%%%%%%%%%%%%%%%%%%%%%%%%%%%%%%%%%%%%%%%%%%%%%%%%%%%%%%%%%%%%%%%%%%%%%%%%%%%%%%%%%%%%%%%%%%%%%%%%%%%%%%%%%%%%%%%%%%%%%%%%%%%%%%%
%%%%%%%%%%%%%%%%%%%%%%%%%%%%%%%%%%%%%%%%%%%%%%%%%%%%%%%%%%%%%%%%%%%%%%%%%%%%%%%%%%%%%%%%%%%%%%%%%%%%%%%%%%%%%%%%%%%%%%%%%%%%%%%%%%%%%%%%%%%%%%%%%%%%%%%%%%%%%%%%%%%%%%%%%%

The main goal of this thesis is to study $\K$-uniruled sets that appear in affine geometry. 

The first Chapter discusses the property of $\K$-uniruledness and its equi\-valent conditions.
Let $\K$ be an algebraically closed field. A curve $\Gamma$ is called parametric if it is 
an image of the field $\K$ under a regular map. In \ref{k-uniruleddef} we define the degree of $\K$-uniruledness 
for an affine variety $X$. The degree of $\K$-uniruledness 
of an affine variety $X$ is at most $d$ if $X$ is covered by parametric curves of degree at most $d$. 
In Proposition \ref{k-uniruledofdeg} we prove two equivalent conditions. In the first one we require that only a
dense subset of the variety $X$ is covered with such curves. Another equivalent condition asserts the existence of an 
affine variety $W$ with $\dim W=\dim X-1$, and a dominant polynomial map $\phi:\K\times W\rightarrow X$ of degree in the first coordinate
at most $d$. We state a new Definition \ref{k-uniruleddef} according to which $\K$-uniruled 
varieties are those with a finite degree of $\K$-uniruledness. One of the advantages of the new definition is that by 
Proposition \ref{indep} it does not depend on the base field. In section \ref{rr} we present analogous results for the field of real 
numbers.

Theorem \ref{Sfkuni} asserts that if $f:X\rightarrow Y$ is a generically finite regular mapping between affine 
varieties and $X$ is $\K$-uniruled, then the set $\s_f$ of points at which $f$ is not proper is also $\K$-uniruled. 
In the second Chapter we bound from above the degree of $\K$-uniruledness of the set $\s_f$ in terms of degree of 
a mapping $f$ and degree of $\K$-uniruledness of a variety $X$. For the field of complex numbers $\C$ we get 
the best possible results Theorems \ref{cn} and \ref{multc}. The first one gives a bound by the product of the degree 
of $f$ and degree $\C$-uniruledness of a variety $X$. The second theorem bounds by $d-1$ degree of $\C$-uniruledness 
of the set $\s_f$ for a mapping $f:\C^n\rightarrow\C^m$ of degree $d$. We also present results for an arbitrary 
algebraically closed field $\K$ - Theorems \ref{kn}, \ref{kw}, \ref{kkw} and for the field of real numbers $\R$ - Theorems \ref{rn1}, \ref{rxw1}, \ref{multc1}.

In the last Chapter we show that some sets associated with the set of fixed points of an algebraic group action are $\K$-uniruled. 
In Theorem \ref{glowne} we prove that if nontrivial connected unipotent group  acts effectively on an affine variety then 
the set of fixed points of this action is a $\K$-uniruled variety. In particular there are no isolated fixed points. 
In Theorem \ref{glowne1}, on the other hand, we show that if an arbitrary infinite algebraic group acts effectively on an affine space $\K^n$, 
then every hypersurface contained in the set of fixed points is $\K$-uniruled.

%%%%%%%%%%%%%%%%%%%%%%%%%%%%%%%%%%%%%%%%%%%%%%%%%%%%%%%%%%%%%%%%%%%%%%%%%%%%%%%%%%%%%%%%%%%%%%%%%%%%%%%%%%%%%%%%%%%%%%%%%%%%%%%%%%%%%%%%%%%%%%%%%%%%%%%%%%%%%%%%%%%%%%%%%%
%%%%%%%%%%%%%%%%%%%%%%%%%%%%%%%%%%%%%%%%%%%%%%%%%%%%%%%%%%%%%%%%%%%%%%%%%%%%%%%%%%%%%%%%%%%%%%%%%%%%%%%%%%%%%%%%%%%%%%%%%%%%%%%%%%%%%%%%%%%%%%%%%%%%%%%%%%%%%%%%%%%%%%%%%%
%%%%%%%%%%%%%%%%%%%%%%%%%%%%%%%%%%%%%%%%%%%%%%%%%%%%%%%%%%%%%%%%%%%%%%%%%%%%%%%%%%%%%%%%%%%%%%%%%%%%%%%%%%%%%%%%%%%%%%%%%%%%%%%%%%%%%%%%%%%%%%%%%%%%%%%%%%%%%%%%%%%%%%%%%%
\pagebreak
\addcontentsline{toc}{chapter}{Acknowledgements}
\chapter*{Acknowledgements}
%%%%%%%%%%%%%%%%%%%%%%%%%%%%%%%%%%%%%%%%%%%%%%%%%%%%%%%%%%%%%%%%%%%%%%%%%%%%%%%%%%%%%%%%%%%%%%%%%%%%%%%%%%%%%%%%%%%%%%%%%%%%%%%%%%%%%%%%%%%%%%%%%%%%%%%%%%%%%%%%%%%%%%%%%
%%%%%%%%%%%%%%%%%%%%%%%%%%%%%%%%%%%%%%%%%%%%%%%%%%%%%%%%%%%%%%%%%%%%%%%%%%%%%%%%%%%%%%%%%%%%%%%%%%%%%%%%%%%%%%%%%%%%%%%%%%%%%%%%%%%%%%%%%%%%%%%%%%%%%%%%%%%%%%%%%%%%%%%%%%
%%%%%%%%%%%%%%%%%%%%%%%%%%%%%%%%%%%%%%%%%%%%%%%%%%%%%%%%%%%%%%%%%%%%%%%%%%%%%%%%%%%%%%%%%%%%%%%%%%%%%%%%%%%%%%%%%%%%%%%%%%%%%%%%%%%%%%%%%%%%%%%%%%%%%%%%%%%%%%%%%%%%%%%%%%

First and foremost I would like to thank my supervisor Zbigniew Jelonek for his constant and enormous help during my work on this thesis. Most of the
results presented here come from our two joint papers. It was a great pleasure and opportunity to learn from him. Throughout the thesis he provided
me with encouragement and useful ideas. I am sincerely grateful for his effort.

I would also like to express my gratitude to professors Rosa-Maria Miró-Roig, Jarosław Wiśniewski and Mikhail Zaidenberg for many interesting and 
fruitful discussions. I thank also Sławomir Cynk with whom I had written my master thesis, and who introduced me to algebraic geometry.

I acknowledge the administrative help of Anna Poczmańska.

Writing this thesis would not be possible without the constant support of my family. I thank a lot my mum for her love and motivation.

I thank my friends: Piotr Achinger, Agnieszka Bodzenta, Maria Donten-Bury, Michał Farnik, Wojciech Lubawski, Mateusz Michałek and Karol Palka 
for interesting discussions, questions, answers and a great working atmosphere.

I acknowledge a support of Polish Ministry of Science and Higher Education grant N N201 413139.
\pagebreak

%%%%%%%%%%%%%%%%%%%%%%%%%%%%%%%%%%%%%%%%%%%%%%%%%%%%%%%%%%%%%%%%%%%%%%%%%%%%%%%%%%%%%%%%%%%%%%%%%%%%%%%%%%%%%%%%%%%%%%%%%%%%%%%%%%%%%%%%%%%%%%%%%%%%%%%%%%%%%%%%%%%%%%%%%%
%%%%%%%%%%%%%%%%%%%%%%%%%%%%%%%%%%%%%%%%%%%%%%%%%%%%%%%%%%%%%%%%%%%%%%%%%%%%%%%%%%%%%%%%%%%%%%%%%%%%%%%%%%%%%%%%%%%%%%%%%%%%%%%%%%%%%%%%%%%%%%%%%%%%%%%%%%%%%%%%%%%%%%%%%%
%%%%%%%%%%%%%%%%%%%%%%%%%%%%%%%%%%%%%%%%%%%%%%%%%%%%%%%%%%%%%%%%%%%%%%%%%%%%%%%%%%%%%%%%%%%%%%%%%%%%%%%%%%%%%%%%%%%%%%%%%%%%%%%%%%%%%%%%%%%%%%%%%%%%%%%%%%%%%%%%%%%%%%%%%%
\chapter{$\K$-uniruled varieties}
%%%%%%%%%%%%%%%%%%%%%%%%%%%%%%%%%%%%%%%%%%%%%%%%%%%%%%%%%%%%%%%%%%%%%%%%%%%%%%%%%%%%%%%%%%%%%%%%%%%%%%%%%%%%%%%%%%%%%%%%%%%%%%%%%%%%%%%%%%%%%%%%%%%%%%%%%%%%%%%%%%%%%%%%%%
%%%%%%%%%%%%%%%%%%%%%%%%%%%%%%%%%%%%%%%%%%%%%%%%%%%%%%%%%%%%%%%%%%%%%%%%%%%%%%%%%%%%%%%%%%%%%%%%%%%%%%%%%%%%%%%%%%%%%%%%%%%%%%%%%%%%%%%%%%%%%%%%%%%%%%%%%%%%%%%%%%%%%%%%%%
%%%%%%%%%%%%%%%%%%%%%%%%%%%%%%%%%%%%%%%%%%%%%%%%%%%%%%%%%%%%%%%%%%%%%%%%%%%%%%%%%%%%%%%%%%%%%%%%%%%%%%%%%%%%%%%%%%%%%%%%%%%%%%%%%%%%%%%%%%%%%%%%%%%%%%%%%%%%%%%%%%%%%%%%%%

This Chapter can be treated as a preliminary to the rest of the thesis.

We are going to remind an old, well known definition of $\K$-uniruled varieties and introduce our new one. The relation between both definitions will 
be given. Additionally for $\K$-uniruled varieties we introduce a degree of $\K$-uniruledness.  

We end the Chapter by stating a few open problems concerning bounding the degree of $\K$-uniruledness. 

In the whole Chapter, unless stated otherwise, $\K$ is assumed to be an arbitrary algebraically closed field.

%%%%%%%%%%%%%%%%%%%%%%%%%%%%%%%%%%%%%%%%%%%%%%%%%%%%%%%%%%%%%%%%%%%%%%%%%%%%%%%%%%%%%%%%%%%%%%%%%%%%%%%%%%%%%%%%%%%%%%%%%%%%%%%%%%%%%%%%%%%%%%%%%%%%%%%%%%%%%%%%%%%%%%%%%%
\section{Introduction}
%%%%%%%%%%%%%%%%%%%%%%%%%%%%%%%%%%%%%%%%%%%%%%%%%%%%%%%%%%%%%%%%%%%%%%%%%%%%%%%%%%%%%%%%%%%%%%%%%%%%%%%%%%%%%%%%%%%%%%%%%%%%%%%%%%%%%%%%%%%%%%%%%%%%%%%%%%%%%%%%%%%%%%%%%%

We begin with a presentation of some necessary preliminaries.

\begin{proposition}\label{birrat} Let $\Gamma \subset \K^m$ be an affine curve. The following conditions are equivalent:
\begin{enumerate}
\item there exists a regular dominant map $\varphi:\K \rightarrow \Gamma$,
\item there exists a regular birational map $\varphi: \K \rightarrow \Gamma$.
\end{enumerate}
\end{proposition}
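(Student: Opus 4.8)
The implication (2)$\Rightarrow$(1) is immediate, since a birational map is in particular dominant. All the content is in (1)$\Rightarrow$(2), and the plan is to pass to the normalization of $\Gamma$ and show that the mere existence of a dominant morphism defined on the \emph{whole} affine line $\K$ forces this normalization to be isomorphic to $\K$ itself.

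First I would record what a dominant $\varphi:\K\to\Gamma$ gives on function fields: an inclusion $\K(\Gamma)\hookrightarrow\K(\K)=\K(t)$, where $\K(\Gamma)$ has transcendence degree one over $\K$ because $\Gamma$ is a curve and $\varphi$ is dominant. By L\"uroth's theorem every intermediate field $\K\subsetneq\K(\Gamma)\subseteq\K(t)$ is purely transcendental, so $\K(\Gamma)\cong\K(t)$; that is, $\Gamma$ is a rational curve. Consequently its normalization $\nu:\widetilde{\Gamma}\to\Gamma$ is a smooth affine curve whose function field is again $\K(t)$, so its smooth projective model is $\mathbb{P}^{1}$ and $\widetilde{\Gamma}\cong\mathbb{P}^{1}\setminus S$ for a finite set $S$, which is nonempty because $\widetilde{\Gamma}$ is affine.

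The main step is to prove that $|S|=1$. Since $\K$ is normal, $\varphi$ lifts through the normalization to a morphism $\widetilde{\varphi}:\K\to\widetilde{\Gamma}$, which is again dominant (as $\nu$ is finite and surjective). Viewing $\widetilde{\varphi}$ as a rational map into the projective model $\mathbb{P}^{1}$ and using that a rational map from a smooth projective curve to a projective variety is everywhere defined, I extend it to a morphism $\overline{\varphi}:\mathbb{P}^{1}\to\mathbb{P}^{1}$, which is non-constant and hence finite and surjective. Because $\widetilde{\varphi}(\K)\subseteq\widetilde{\Gamma}=\mathbb{P}^{1}\setminus S$, no point of $\K$ maps into $S$, so $\overline{\varphi}^{-1}(S)$ is contained in the single point $\mathbb{P}^{1}\setminus\K=\{\infty\}$. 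On the other hand surjectivity of $\overline{\varphi}$ makes $\overline{\varphi}^{-1}(p)$ nonempty for every $p\in S$, and preimages of distinct points are disjoint. A one-element set cannot contain two disjoint nonempty subsets, so $|S|\le 1$, whence $|S|=1$.

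Therefore $\widetilde{\Gamma}\cong\mathbb{P}^{1}\setminus\{\infty\}\cong\K$, and the composite $\nu:\K\cong\widetilde{\Gamma}\to\Gamma$ is a regular map that is birational by construction, which establishes (2). The only delicate point is this last counting argument: one must genuinely use that the given map is regular on \emph{all} of $\K$ rather than on a dense open subset, since otherwise $\widetilde{\Gamma}$ could carry several punctures and no regular birational parametrization by $\K$ would exist.
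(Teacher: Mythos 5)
Your proof is correct, and it contains genuinely more than the paper's own proof, which consists of the single sentence that the proposition ``is an immediate consequence of L\"uroth's Theorem \ref{luroth}.'' The comparison is worth making precise: L\"uroth's theorem by itself only yields that $\K(\Gamma)$ is purely transcendental, i.e.\ that $\Gamma$ is \emph{rational}, and this is strictly weaker than condition (2) --- the hyperbola $\{xy=1\}\subset\K^2$ is a rational affine curve admitting no nonconstant regular map from $\K$ at all, since such a map would produce a nonconstant invertible element of $\K[t]$. So the implication $(1)\Rightarrow(2)$ really does require using that the given dominant parametrization is regular on all of $\K$, and your normalization argument --- lifting $\varphi$ to $\widetilde{\Gamma}\cong\p^1\setminus S$, extending to $\overline{\varphi}:\p^1\to\p^1$, and using surjectivity of $\overline{\varphi}$ to force $\#S\le 1$ --- is exactly the missing content: it shows $\widetilde{\Gamma}\cong\K$, after which the normalization map, being birational, is the required regular birational parametrization. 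An alternative route closer to the letter of the paper's citation is the polynomial refinement of L\"uroth: the subfield $\K(\Gamma)=\K(\varphi_1,\dots,\varphi_m)\subset\K(t)$ is generated by \emph{polynomials}, hence equals $\K(h)$ for a single polynomial $h$, and each coordinate function of $\Gamma$ is then a polynomial in $h$ (a pole would contradict surjectivity of $h:\K\to\K$), giving the regular birational map directly; but proving that refinement requires essentially the same fiber-counting you perform, so nothing is saved. One cosmetic remark: for $\K(\Gamma)$ to be a field one needs $\Gamma$ irreducible; under (1) this is automatic, since $\Gamma$ is the closure of the irreducible set $\varphi(\K)$, and your write-up uses this silently.
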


\begin{proof}
It is an immediate consequence of L\"{u}roth's Theorem \ref{luroth}. 
\end{proof}

\begin{theorem}[L\"{u}roth]\label{luroth}
Suppose $\Le$ and $\M$ are arbitrary fields (not necessarily algebraically closed), and $\xi$ is transcendental over $\Le$, such that 
$$\Le\subsetneq\M\subset\Le(\xi).$$ Then $\M=\Le(\eta)$ for some $\eta$ in $\Le(\xi)$.
\end{theorem}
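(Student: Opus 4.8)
The plan is to realize $\M$ as $\Le(\eta)$ for a cleverly chosen coefficient $\eta$ of the minimal polynomial of $\xi$ over $\M$, and to force the identification by a degree count carried out in the polynomial ring $\Le[\xi,T]$. First I would check that $\xi$ is algebraic over $\M$: any $\theta\in\M\setminus\Le$ can be written as $\theta=u(\xi)/v(\xi)$ with $u,v\in\Le[\xi]$ coprime and not both constant, so $\xi$ is a root of the nonzero polynomial $u(Y)-\theta\, v(Y)\in\M[Y]$. Hence $n:=[\Le(\xi):\M]$ is finite. Let
$$p(T)=T^n+c_{n-1}(\xi)T^{n-1}+\cdots+c_0(\xi)\in\M[T]$$
be the minimal polynomial of $\xi$ over $\M$; its coefficients lie in $\M\subseteq\Le(\xi)$. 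Since $\xi$ is transcendental over $\Le$, the $c_i$ cannot all lie in $\Le$ (otherwise $p\in\Le[T]$ would make $\xi$ algebraic over $\Le$), so I fix $\eta:=c_j\in\M\setminus\Le$.

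Next I would compute $[\Le(\xi):\Le(\eta)]$. Writing $\eta=f(\xi)/g(\xi)$ in lowest terms with $f,g\in\Le[\xi]$ and $d:=\max(\deg f,\deg g)\ge 1$, the element $\eta$ is transcendental over $\Le$ because $\Le$ is algebraically closed in $\Le(\xi)$, and the polynomial $f(T)-\eta\, g(T)$ is irreducible over $\Le(\eta)$ of degree $d$ with $\xi$ as a root; thus $[\Le(\xi):\Le(\eta)]=d$. Since $\Le(\eta)\subseteq\M$, the tower law gives $d=n\cdot[\M:\Le(\eta)]$, so in particular $d\ge n$, and the theorem will follow once I prove $d\le n$, for then $[\M:\Le(\eta)]=1$ and $\M=\Le(\eta)$.

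The heart of the argument is the reverse inequality $d\le n$. I would clear denominators in $p$ to obtain $F(\xi,T)\in\Le[\xi,T]$, primitive as a polynomial in $T$ over $\Le[\xi]$, with $\deg_T F=n$ and $p(T)=\lambda(\xi)F(\xi,T)$ for some $\lambda\in\Le(\xi)^\ast$; writing $a_i(\xi)$ for the coefficient of $T^i$ in $F$, monicity of $p$ gives $\eta=c_j=a_j(\xi)/a_n(\xi)$. Set $m:=\deg_\xi F$. Since $a_j/a_n$ reduces to $f/g$ in lowest terms, $d\le\max(\deg a_j,\deg a_n)\le m$. For the opposite bound I introduce
$$L(\xi,T):=f(T)g(\xi)-f(\xi)g(T)=g(\xi)\bigl(f(T)-\eta\, g(T)\bigr),$$
which lies in $\Le[\xi,T]$, vanishes at $T=\xi$, and is antisymmetric under $\xi\leftrightarrow T$, so $\deg_\xi L=\deg_T L=d$. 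Because $p(T)$ divides $f(T)-\eta\, g(T)$ in $\M[T]$, the primitive polynomial $F$ divides $L$ in $\Le(\xi)[T]$, and Gauss's lemma upgrades this to a factorization $L=F\cdot Q$ in $\Le[\xi,T]$. Comparing $\xi$-degrees gives $d=\deg_\xi L\ge\deg_\xi F=m$, so $m=d$ and $Q=Q(T)\in\Le[T]$; comparing $T$-degrees then gives $d=n+\deg_T Q$, and a final primitivity check on $L$ forces $\deg_T Q=0$, i.e. $d=n$.

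The main obstacle I anticipate is exactly this last round of Gauss-lemma bookkeeping: verifying that $F$ is primitive in $T$, that this primitivity transfers the divisibility $F\mid L$ from $\Le(\xi)[T]$ down to $\Le[\xi,T]$, and—in order to conclude $\deg_T Q=0$—that $L(\xi,T)=f(T)g(\xi)-f(\xi)g(T)$ is itself primitive in $\xi$ (hence, by antisymmetry, in $T$), which is where the coprimality $\gcd(f,g)=1$ is genuinely used. Everything else is formal tower-law and minimal-polynomial manipulation; the content lemmas about $F$ and $L$ are the only steps demanding real care.
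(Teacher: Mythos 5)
Your proposal is correct and takes essentially the same route as the paper's proof (after Pierce): you choose $\eta$ as a coefficient ratio of the minimal polynomial of $\xi$ over $\M$, compute $[\Le(\xi):\Le(\eta)]=\max(\deg f,\deg g)$, and force this to equal $[\Le(\xi):\M]$ via the factorization $f(T)g(\xi)-f(\xi)g(T)=F\cdot Q$ — which is exactly the paper's equation $(\star)$ — together with the symmetric degree count in $\xi$ and $T$ and the coprimality of $f,g$ to make the cofactor constant. The only differences are presentational: you spell out the Gauss-lemma and primitivity bookkeeping that the paper leaves implicit, and you invoke the tower law to reduce to $d\le n$ where the paper concludes $n=m$ directly.
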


\begin{proof}(see \cite{pi})
Suppose $\eta\in\Le(\xi)\setminus\Le$, then 
$$\eta=\frac{f(\xi)}{g(\xi)}$$
for some polynomials $f,g\in\Le[X]$, at least one of which is of positive degree. Moreover we may assume that $f$ and $g$ have no common factor in $\Le[X]$ of positive degree. Since $\eta\notin\Le$, we have
$$\deg(f(X)-g(X)\eta)=\max(\deg(f),\deg(g)).$$
Clearly $f(\xi)-g(\xi)\eta=0$, so $\xi$ is algebraic over $\Le(\eta)$, hence $\eta$ is transcendental over $\Le$. As a consequence $f(X)-g(X)\eta$ is irreducible in $\Le[X]$, and
$$\max(\deg(f),\deg(g))=[\Le(\xi):\Le(\eta)].$$

Choosing $\eta$ from $\M\setminus\Le$ shows that $\Le(\xi)$ is algebraic of finite degree over $\M$. Denote $[\Le(\xi):\M]=n$, then the minimal polynomial of $\xi$ over $\M$ can be written as
$$\frac{p(\xi)}{q(\xi)}\sum_{i=0}^{n}f_i(\xi)\xi^i=0,$$
where $$p,q,f_i\in\Le[X],\text{ }\frac{p(\xi)}{q(\xi)}f_i(\xi)\in\M,$$
and polynomials $f_i$ with no common factor of positive degree. We have $f_n(\xi)\neq 0$ and since $\xi$ is not algebraic over $\Le$
$$\eta:=\frac{f_j(\xi)}{f_n(\xi)}\notin\Le$$
for some $j$. Clearly $\eta\in\M$. Assume equation 
$$\eta=\frac{f(\xi)}{g(\xi)}$$
holds as in the first part of the proof. Since $\xi$ is a root of $f(X)-g(X)\eta\in\M[X]$, this polynomial is divisible in $\M[X]$ by $\frac{p(\xi)}{q(\xi)}\sum_{i=0}^{n}f_i(\xi)X^i$. Therefore 
$$f(X)g(\xi)-g(X)f(\xi)=r(X,\xi)\sum_{i=0}^{n}f_i(\xi)X^i,\;\;\;\;\;\;\;\;(\star)$$
for some polynomial $r\in\Le[X,Y]$. 

Say the degree  of $f(X)g(\xi)-g(X)f(\xi)$ in $\xi$ is $m$. This is the same as the degree in $X$, so $n\leq m$. Additionally
$$m\leq\max(\deg(f),\deg(g))\leq\max(\deg(f_j),\deg(f_n))\leq\max(\deg(f_0),\dots).$$
By $(\star)$, there must be equalities, and $r$ must be constant in $\xi$. But now $f(X)$ and $g(X)$ are both divisible by $r(X)$, hence $r$ is also constant in $X$. We get that $n=m$, and $\M=\Le(\eta).$  
\end{proof}

\begin{definition}
An affine curve $\Gamma\subset\K^m$ is called a \textup{parametric curve}, if equivalent conditions from Proposition \ref{birrat} hold.
\end{definition}

We are ready to present the formerly known definition of $\K$-uniruled varieties, which was introduced in \cite{st}.

\begin{definition}
An affine variety $X\subset\K^m$ is called \textup{$\K$-uniruled}, if for every point $x\in X$ there exists a parametric curve $l_x\subset X$ passing through $x$. In other words $X$ is covered by parametric curves.
\end{definition}

A parametric curve is an image of the field, which is irreducible, so it is also irreducible. Hence each parametric curve on a variety is contained in some irreducible component. So a variety is $\K$-uniruled if and only if all its irreducible components are $\K$-uniruled.

The following characterization of components of $\K$-uniruled varieties is known \cite{st}.

\begin{proposition}\label{k-uniruled}
Let $\K$ be an uncountable field and let $X\subset \K^m$ be an irreducible affine variety. The following conditions  are equivalent:
\begin{enumerate}
\item $X$ is $\K$-uniruled,
\item there exists an open, non-empty subset $U\subset X$, such that for every point $x\in U$ there exists a parametric curve $l_x\subset X$ passing through $x$, 
\item there exists an affine variety $W$ with $\dim W = \dim X-1$, and a regular dominant map $\phi: \K\times W\rightarrow X$. 
\end{enumerate}
\end{proposition}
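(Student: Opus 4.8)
My plan is to run the cycle $(1)\Rightarrow(2)\Rightarrow(3)\Rightarrow(1)$. The implication $(1)\Rightarrow(2)$ is immediate, taking $U=X$. For $(2)\Rightarrow(3)$ I would first stratify parametric curves by degree. For each $d\ge 1$ a polynomial map $\varphi=(\varphi_1,\dots,\varphi_m)\colon\K\to\K^m$ with $\deg\varphi_i\le d$ is encoded by its coefficient vector, so such maps form an affine space $\mathbb{A}^{m(d+1)}$; the requirement that the image lie in $X$ (that is, $P\circ\varphi\equiv 0$ for each $P$ in a generating set of the ideal of $X$) cuts out a closed subvariety, and deleting the locus of constant maps leaves a quasi-affine variety $V_d$ of non-constant degree-$\le d$ maps into $X$. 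The evaluation morphism $\mathrm{ev}\colon V_d\times\K\to X$, $(\varphi,t)\mapsto\varphi(t)$, has image the union $E_d$ of all parametric curves in $X$ admitting a parametrization of degree at most $d$ (using Proposition \ref{birrat} to see that the image of a non-constant $\varphi$ is parametric), and $E_d$ is constructible by Chevalley's theorem.

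Here the key input is uncountability. Hypothesis $(2)$ gives $U\subseteq\bigcup_{d\ge 1}E_d$. Since $U$ is a non-empty open, hence irreducible, subvariety of $X$ and $\K$ is uncountable, $U$ cannot be written as a countable union of proper closed subsets; applying this to $U=\bigcup_d(\overline{E_d}\cap U)$ forces $\overline{E_{d_0}}=X$ for some $d_0$, i.e. $\mathrm{ev}\colon V_{d_0}\times\K\to X$ is dominant. Replacing $V_{d_0}$ by an irreducible component with still-dense image (possible as $X$ is irreducible), I get an irreducible $V'$ with $\mathrm{ev}\colon V'\times\K\to X$ dominant. Finally I would cut down the dimension: because the maps in $V'$ are non-constant, each $\varphi^{-1}(x)$ is finite, so a general fibre of $\mathrm{ev}$ projects with finite fibres onto its image in $V'$; a standard general-position argument then shows that intersecting $V'$ with $e:=\dim(V'\times\K)-\dim X$ general hyperplanes of the ambient coefficient space yields $W:=V'\cap H_1\cap\dots\cap H_e$ with $\dim W=\dim X-1$ and $\mathrm{ev}|_{W\times\K}$ still dominant. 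Setting $\phi:=\mathrm{ev}|_{\K\times W}$ gives $(3)$.

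For $(3)\Rightarrow(1)$, suppose $\phi\colon\K\times W\to X$ is dominant with $\dim W=\dim X-1$ and $W$ irreducible. As $\dim(\K\times W)=\dim X$, the map $\phi$ is generically finite, and for $w$ outside a proper closed subset of $W$ the map $t\mapsto\phi(t,w)$ is non-constant, so $\ell_w:=\overline{\phi(\K\times\{w\})}$ is a genuine parametric curve by Proposition \ref{birrat}. These cover the dense constructible image of $\phi$, which already settles $(2)$ and covers every point of some dense open $U\subseteq X$. To reach an arbitrary $x_0\in X$ I would pass to the incidence variety $\mathcal{D}:=\overline{\{(\phi(t,w),w):t\in\K,\ w\in W\}}\subseteq X\times W$, whose general fibre over $W$ is $\ell_w$, so $\dim\mathcal{D}=\dim X$ and $\mathcal{D}\to X$ is dominant. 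Compactifying, $\overline{\mathcal{D}}\to\overline{X}$ is proper, hence surjective, so $x_0$ is the image of some $(x_0,w_0)$ with $w_0\in\overline{W}$; if $w_0\in W$ one reads off a parametric curve $\ell_{w_0}\ni x_0$ and is done.

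The main obstacle is exactly the remaining case $w_0\in\overline{W}\setminus W$, where the fibre of $\overline{\mathcal{D}}$ through $x_0$ is only a limit of the curves $\ell_w$ and may degenerate: it can break into several components, or meet the boundary $\overline{X}\setminus X$ in more than one point, so that no single branch restricts to an image of $\K$ and hence to a parametric curve. Controlling this degeneration---extracting, through any prescribed $x_0$, one irreducible rational component whose affine part really is the image of $\K$---is the heart of the matter. I would handle it by choosing the one-parameter degeneration of the family $\{\ell_w\}$ carefully and exploiting its structure to guarantee that a non-degenerate parametric branch survives through $x_0$, which closes the cycle.
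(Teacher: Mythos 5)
Your implications $(1)\Rightarrow(2)$ and $(2)\Rightarrow(3)$ are essentially the paper's own argument: stratify the non-constant polynomial maps $\K\to X$ by degree, use uncountability of $\K$ (the paper's Theorem \ref{baire}) to force one stratum to dominate $X$, pass to an irreducible component, and cut down by general hyperplanes using the fact that no fibre of the evaluation map contains a line $\K\times\{y\}$. That part is sound.

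The gap is exactly where you flag it: $(3)\Rightarrow(1)$ is not proved, and the degeneration strategy you sketch does not close. The limit of the curves $\ell_w$ as $w_0\in\overline{W}\setminus W$ can indeed break or escape to infinity, and no choice of ``careful one-parameter degeneration'' of the family $\mathcal{D}\subset X\times W$ fixes this by itself; in fact the paper's own remark about K3 surfaces (dense subsets covered by parametric curves on varieties that are not $\C$-uniruled) shows that any limit argument that loses control of degrees must fail. The missing idea is to compactify the \emph{space of curves}, not the base $W$. Since $\phi$ is a polynomial map, $\deg_t\phi\le d$ for some finite $d$, so the curves $\ell_w$ all have degree at most $d$. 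Now parametrize degree-$\le d$ curves through a point $a\in\K^m$ by $(a,b)$ with $b=(b_{1,1}:\dots:b_{d,m})\in\p^M$, $M=dm-1$, writing the $j$-th coefficient of the $i$-th coordinate as $b_{i,j}^j$; with this weighting, scaling $b$ amounts to reparametrizing $t$, so the incidence set $V=\{(a,b):\varphi_{a,b}(\K)\subset X\}$ is a well-defined \emph{closed} subvariety of $\K^m\times\p^M$, and moreover every $b\in\p^M$ yields a non-constant map, hence a genuine parametric curve. Completeness of $\p^M$ makes the projection $V\to\K^m$ a closed map, so the set of points of $X$ lying on a parametric curve of degree $\le d$ inside $X$ is closed; it contains the dense set covered by the $\ell_w$, hence equals $X$. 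This is precisely Proposition \ref{k-uniruledofdeg} ($(3)\Rightarrow(2)\Rightarrow(1)$), which the paper's proof of the present proposition simply invokes; supplying it (or citing it) is what your proof still needs.
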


\begin{proof}
Implication $(1)\Rightarrow (2)$ is trivial. 

To prove implication $(2)\Rightarrow (3)$ let us define
$$S_d=\{\varphi:\K\rightarrow\K^m\text{ such that }\varphi(\K)\subset X\text{ and }\deg\varphi=d\}.$$ 
Each $\varphi=(\varphi_1,\dots,\varphi_m)$, where $\varphi_i(t)=\sum_{j=0}^{d}a_{j}^{i} t^{j}$, corresponds to a point 
$$(a^1_0,\dots,a^m_d)\in(\K^{d+1})^m\setminus(\K^d\times\{0\})^m.$$ 
Conditions $\varphi(\K)\subset X$ are polynomial equations, so $S_d$ is a quasiprojective variety. Consider the morphism: 
$$F_d:\K\times S_d\ni (t,\varphi)\rightarrow \varphi(t)\in X.$$ 
Let us denote the image by $X_d:=F_d(\K\times S_d)$. We know that $U\subset\bigcup_{d\in N}X_d$, since for every point $x\in U$ there is a parametric curve $l_x\subset X$ passing through $x$. Let $\overline{X_d}$ be the closure of $X_d$ in $X$. From Baire's Theorem \ref{baire} for the Zariski topology there exists $d$ such that $\overline{X_d}=X$. Now the map 
$$F_d:\K\times S_d:\rightarrow X$$
is dominant, so when we restrict it to some irreducible component $Y$ (suppose $Y\subset \K^M$)  of $S_d$ the map $\Phi=F_d\vert_{\K\times Y}$ is 
still dominant. Suppose $\dim X=n$ and $\dim(Y)=s$, on an open subset of $X$ fibers of the map $\Phi$ are of dimension $s+1-n$. Let $x$ be one of 
such points. From the construction of the set $S_d$ we know that the fiber $F=\Phi^{-1}(x)$ does not contain any line of the type $\K\times\{y\}$, 
so in particular the image $F'$ of the fiber $F$ under projection $\K\times Y\rightarrow Y$ has the same dimension. For general linear subspace 
$L\subset \K^M$ of dimension $M+n-s-1$ the dimension of $L\cap F'$ equals to $0$. Let us fix such $L$, and let $R$ be any irreducible component of 
$L\cap Y$ intersecting $F'$. Now mapping the
$$\Phi\vert_{\K\times R}:\K\times R\rightarrow X$$
confirms the assertion, since it has one fiber of dimension $0$ (at $x$), the dimension of $R$ is $n-1$ (at most $n-1$, because of the $0$ dimensional fiber, 
at least $n-1$ because of the small dimension of $L$), so as a consequence it is dominant.   

To prove implication $(3)\Rightarrow (1)$ we note that for 
$$\phi: \K\times W\ni (t,w)\to \phi(t,w)\in X$$ 
there exists $d\in\N$ such that $\deg_t \phi \leq d$. Next we use implication $(3)\Rightarrow (1)$ from Proposition \ref{k-uniruledofdeg}, which gives us condition $(1)$. 
\end{proof}

Baire's Theorem reads as follows.

\begin{theorem}[Baire]\label{baire}
Let $\K$ be an uncountable field, let $X\subset\K^m$ be an irreducible affine variety, and let $X_d$ for $d\in\N$ be its closed subsets. If $\bigcup_{d\in N}X_d$ contains a non-empty open subset $U$ of $X$, then $X_d=X$ for some $d$.  
\end{theorem}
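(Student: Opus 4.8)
The plan is to prove the contrapositive in a slightly strengthened form: an irreducible affine variety $X$ over an uncountable field $\K$ is never the union of countably many proper closed subsets. This immediately yields the theorem. Indeed, if $\bigcup_{d}X_d\supseteq U$ with $U$ a nonempty open subset of $X$ and every $X_d\subsetneq X$ were proper, then adjoining the proper closed set $X\setminus U$ to the family would display $X=(X\setminus U)\cup\bigcup_d X_d$ as a countable union of proper closed subsets, a contradiction; hence some $X_d=X$.

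First I would reduce to the case $X=\K^n$ with $n=\dim X$. By Noether normalization there is a finite surjective morphism $p\colon X\to\K^n$. A finite morphism is closed and preserves the dimension of closed subsets, so if $Z\subsetneq X$ is proper and closed, then, using irreducibility of $X$ (which forces $\dim Z<n$), the image $p(Z)$ is closed in $\K^n$ with $\dim p(Z)=\dim Z<n$, hence proper. Thus if $X=\bigcup_i Z_i$ were a countable union of proper closed subsets, applying $p$ and using surjectivity would give $\K^n=\bigcup_i p(Z_i)$, again a countable union of proper closed subsets. So it suffices to treat $X=\K^n$.

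I would then prove that $\K^n$ is not a countable union of proper closed subsets, by induction on $n$. For $n=1$ the proper closed subsets of $\K$ are finite, so a countable union of them is countable, while $\K$ is uncountable; this is exactly where the hypothesis on $\K$ enters. For the inductive step, suppose $\K^n=\bigcup_i Z_i$ with each $Z_i$ proper and closed, and slice by the hyperplanes $H_t=\{t\}\times\K^{n-1}\cong\K^{n-1}$ for $t\in\K$. For fixed $i$, choose a nonzero $f$ in the ideal of $Z_i$ and write $f=\sum_\alpha c_\alpha(x_1)\,\mu_\alpha$, where $c_\alpha\in\K[x_1]$ and the $\mu_\alpha$ are the monomials in $x_2,\dots,x_n$; then $H_t\subseteq Z_i$ forces $c_\alpha(t)=0$ for all $\alpha$, and since some $c_\alpha\neq 0$ this holds only for finitely many $t$. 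Hence $T_i=\{t:H_t\subseteq Z_i\}$ is finite, so $\bigcup_i T_i$ is countable. As $\K$ is uncountable I may pick $t_0\notin\bigcup_i T_i$; then each $Z_i\cap H_{t_0}$ is a proper closed subset of $H_{t_0}\cong\K^{n-1}$, while $H_{t_0}=\bigcup_i(Z_i\cap H_{t_0})$, contradicting the inductive hypothesis.

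The main obstacle is the slicing step: lowering the dimension while keeping every piece proper in the slice. Its content is the finiteness of each $T_i$ combined with the uncountability of $\K$, which allows one to choose a single slice $H_{t_0}$ that simultaneously avoids all the countably many bad parameters. This is precisely the point at which an uncountable base field is indispensable, and it is what plays the role of the classical Baire category argument in the metric setting.
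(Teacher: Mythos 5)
Your proof is correct, and it takes a genuinely different route from the paper's, even though both rest on induction over dimension combined with a countable-versus-uncountable slicing argument. The paper inducts on $\dim X$ directly: assuming every $X_d$ is proper (hence lower-dimensional), it slices $X$ by the level sets $X^c=X\cap\{f=c\}$ of a coordinate function $f$ non-constant on $X$, observes that $U$ meets $X^c$ for all but finitely many $c$, and applies the inductive hypothesis to an irreducible component $R$ of such a slice with the open set $R\cap U$; this forces $R$ to coincide with an irreducible component of some $X_d$. Distinct values of $c$ give distinct such $R$, producing uncountably many components, whereas the countably many $X_d$ have only countably many irreducible components in total --- a contradiction. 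You instead (i) discard the open set $U$ at the outset by adjoining the proper closed set $X\setminus U$ to the family, so the statement to prove becomes simply that an irreducible variety over an uncountable field is never a countable union of proper closed subsets, (ii) reduce to $X=\K^n$ via Noether normalization together with the closedness and dimension-preservation of finite morphisms, and (iii) slice $\K^n$ by parallel hyperplanes $H_t$, controlling the bad parameters of each $Z_i$ by root-counting for the coefficient polynomials $c_\alpha$. Your route makes the slicing step more elementary and self-contained: the paper's version implicitly needs that $f(U)$ is cofinite in $\K$ (constructibility of images) and a small dimension-matching argument to identify $R$ as a component of $X_d$, both of which you avoid; the cost is invoking Noether normalization and basic facts about finite morphisms, machinery the paper's intrinsic argument never needs. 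Both proofs use uncountability in exactly the same way --- countably many bad objects (irreducible components of the $X_d$ in the paper, bad parameters $t\in T_i$ in yours) set against uncountably many slices.
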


\begin{proof}
We are going to prove it by induction on the dimension of $X$. If $\dim X=0$, then $X$ is just a point, and the assertion is clearly trivial. 

When $\dim X>0$, then there exists a regular function $f:\K^m\rightarrow \K$ non-constant on $X$ (one of coordinates is good). 

Assume that the assertion is false, which means that all $X_d$ are of a lower dimension than $X$, and consider sets 
$$X^c=X\cap\{f=c\}$$ 
for $c\in\K$, which are of pure codimension one. When $U\cap X^c\neq\emptyset$, then it means that one of irreducible 
components $R$ of $X^c$ satisfies the conditions of the theorem with sets $R\cap X_d$ for $d\in\N$. From the inductive 
assumption we know that $R$ equals to $R\cap X_d$ for some $d$, so to one of irreducible components of $X_d$ (they are of the same dimension). 
But there are only countably many irreducible components of $X_d$ for some $d\in\N$, and uncountably many $c\in\K$ for 
which $U\cap X^c\neq\emptyset$, since open set $U$ intersects almost all $X_c$ (all, except the finite number). The contradiction 
shows that the hypothesis was false.
\end{proof}

%%%%%%%%%%%%%%%%%%%%%%%%%%%%%%%%%%%%%%%%%%%%%%%%%%%%%%%%%%%%%%%%%%%%%%%%%%%%%%%%%%%%%%%%%%%%%%%%%%%%%%%%%%%%%%%%%%%%%%%%%%%%%%%%%%%%%%%%%%%%%%%%%%%%%%%%%%%%%%%%%%%%%%%%%%
\section{The degree of $\K$-uniruledness}
%%%%%%%%%%%%%%%%%%%%%%%%%%%%%%%%%%%%%%%%%%%%%%%%%%%%%%%%%%%%%%%%%%%%%%%%%%%%%%%%%%%%%%%%%%%%%%%%%%%%%%%%%%%%%%%%%%%%%%%%%%%%%%%%%%%%%%%%%%%%%%%%%%%%%%%%%%%%%%%%%%%%%%%%%%

For an uncountable field $\K$ there is a nice characterization of $\K$-uniruled varieties, namely Proposition \ref{k-uniruled}. However to work over an arbitrary algebraically closed field we need a refined version of the definition. We introduce it in our papers \cite{jela}, \cite{jela2}. It coincides with the older one for uncountable fields.

Moreover with the new definition of $\K$-uniruledness we are able to measure the degree of $\K$-uniruledness of $\K$-uniruled varieties.

\begin{definition}
An affine curve $\Gamma\subset \K^m$ is called a \textup{parametric curve of degree at most $d$}, if there exists a polynomial dominant map $f:\K\rightarrow \Gamma$ of degree at most $d$ (by degree of $f=(f_1,\dots,f_m)$ we mean $\deg f:=\max_i \deg  f_i$).
\end{definition}

Now we prove the following:

\begin{proposition}\label{k-uniruledofdeg}
Let $X\subset \K^m$ be an irreducible affine variety of dimension $n$, and let $d$ be an integer. The following conditions  are equivalent:
\begin{enumerate}
\item for every point $x\in X$ there exists a parametric curve $l_x\subset X$ of degree at most $d$ passing through $x$, 
\item there exists a dense in the Zariski topology subset $U\subset X$, such that for every point $x\in U$ there exists a parametric curve $l_x\subset X$ of degree at most $d$ passing through $x$, 
\item there exists an affine variety $W$ with $\dim W = \dim X-1$, and a dominant polynomial map $\phi: \K\times W\ni (t,w)\to \phi(t,w)\in X$, such that $\deg_t \phi \leq d$. 
\end{enumerate}
\end{proposition}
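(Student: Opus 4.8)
The plan is to establish the cycle of implications $(1)\Rightarrow(2)\Rightarrow(3)\Rightarrow(1)$. The first implication is immediate: taking $U=X$ in $(2)$ gives exactly $(1)$, so no work is needed there.

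For $(2)\Rightarrow(3)$ I would repeat the construction already used in the proof of Proposition \ref{k-uniruled}, with the simplification that the degree $d$ is now fixed in advance, so that Baire's Theorem \ref{baire} (and with it the uncountability of $\K$) is no longer required. Concretely, let $S_d$ be the quasi-projective variety of polynomial maps $\varphi=(\varphi_1,\dots,\varphi_m):\K\rightarrow\K^m$ with $\deg\varphi\le d$ and $\varphi(\K)\subset X$, each $\varphi$ being recorded by its coefficient vector exactly as in Proposition \ref{k-uniruled}. The evaluation morphism $F_d:\K\times S_d\ni(t,\varphi)\mapsto\varphi(t)\in X$ then has image containing the dense set $U$, so $F_d$ is dominant directly, without passing through the closures $\overline{X_d}$. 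Restricting $F_d$ to a suitable irreducible component $Y\subset S_d$ keeps it dominant, and a dimension count on the generic fibre together with a general linear section of $Y$ — precisely the $L\cap F'$ argument of Proposition \ref{k-uniruled} — produces a subvariety $R\subset Y$ with $\dim R=\dim X-1$ such that $\Phi=F_d|_{\K\times R}$ is dominant with a zero-dimensional fibre over a general point. Setting $W:=R$ and $\phi:=\Phi$ gives $(3)$, and the bound $\deg_t\phi\le d$ holds by the very definition of $S_d$.

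The substantial step is $(3)\Rightarrow(1)$. Writing $\phi_i(t,w)=\sum_{j=0}^d a_j^i(w)\,t^j$, for each fixed $w$ the map $t\mapsto\phi(t,w)$ is a polynomial map of degree $\le d$, so its image is either a point or a parametric curve of degree $\le d$ in $X$. Since the locus of $w$ with $t\mapsto\phi(t,w)$ constant is a proper closed subset (otherwise $\phi$ would factor through $W$ and fail to be dominant), the non-degenerate curves already sweep out a dense subset $U_0\subset X$; this gives $(2)$ as a by-product. The real difficulty is to cover the \emph{remaining} points, i.e.\ to pass from a dense family of curves to one through an arbitrary $x\in X$. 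I would attempt this by a specialization argument: choose an irreducible curve $B\subset X$ through $x$ with $B\setminus\{x\}\subset U_0$, and for points $b\to x$ along $B$ select parameters $(\varphi_b,\tau_b)$ with $\varphi_b\in S_d$ and $\varphi_b(\tau_b)=b$; properness of a projective closure $\overline{S_d}$ of the coefficient space together with that of $\mathbb{P}^1\supset\K$ and the valuative criterion then lets one extract a limit $(\varphi_0,\tau_0)$, with $\varphi_0(\tau_0)=x$ by continuity of evaluation wherever it is defined.

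The hard part will be controlling this limit. One must rule out, or reinterpret, the degenerations in which $\varphi_0$ collapses to a constant, in which the coefficient vector escapes to the boundary $\overline{S_d}\setminus S_d$, or in which $\tau_0=\infty$, so that $x$ is swept onto the hyperplane at infinity of the projective closure $\overline{X}\subset\mathbb{P}^m$. Viewing the limit there as a rational curve of degree $\le d$ through $x$, its affine trace need not be the image of $\K$: if the normalization of the relevant component meets the hyperplane at infinity in more than one point, the affine part is only a punctured rational curve and is not parametric. Showing that a component through $x$ can nonetheless be chosen and, after reparametrizing by an affine automorphism of $\K$, realized as a genuine parametric curve of degree $\le d$ inside $X$ is the crux of the implication, and is exactly where the degree bound $d$ must be exploited.
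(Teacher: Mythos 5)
Your implications $(1)\Rightarrow(2)$ and $(2)\Rightarrow(3)$ are fine and agree with the paper's own argument (the paper runs the same dimension count and linear-section trick, only on an affine chart of its incidence variety rather than on $S_d$). The genuine gap is $(3)\Rightarrow(1)$: you correctly reduce it to passing from a dense family of curves to a curve through an \emph{arbitrary} point $x\in X$, you list the possible degenerations of your limiting process (collapse of $\varphi_0$ to a constant, escape of the coefficient vector to the boundary of $\overline{S_d}$, $\tau_0=\infty$ with the limit being only a punctured rational curve), and then you stop, declaring their resolution to be ``the crux''. That crux \emph{is} the content of the implication, so as written nothing is proved: a naive projective closure of the whole coefficient space really does admit all three degenerations, and the valuative criterion applied along a curve $B$ through $x$ does not by itself exclude any of them.

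The paper kills all three degenerations simultaneously by a different choice of parameter space, and this is the idea your proposal is missing. Write a candidate curve through $a\in\K^m$ as
\[
\varphi_{a,b}(t)=\bigl(a_1+b_{1,1}t+\dots+b_{1,d}^dt^d,\;\dots,\;a_m+b_{m,1}t+\dots+b_{m,d}^dt^d\bigr),
\]
i.e.\ anchor the curve so that $\varphi_{a,b}(0)=a$, keep the constant term $a$ affine, and record only the higher coefficients by $b$, the coefficient of $t^j$ being written as a $j$-th power so that the rescaling $b\mapsto\lambda b$ is exactly the reparametrization $t\mapsto\lambda t$. Then the incidence conditions $f_i(\varphi_{a,b}(t))\equiv 0$ are homogeneous in $b$ and cut out a closed subvariety $V\subset\K^m\times\p^M$, $M=dm-1$, and the projection $V\to\K^m$ is proper because $\p^M$ is complete. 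Its image is therefore closed, is contained in $X$ (evaluate at $t=0$), and contains the dense set $U$ from $(2)$, hence equals $X$ by irreducibility. Every $x\in X$ thus lifts to some $(x,b)\in V$; since $b$ lies in a projective space it is automatically nonzero, so the curve $\varphi_{x,b}$ is non-constant of degree at most $d$, lies in $X$, and passes through $x$ at $t=0$. There is no evaluation parameter $\tau$ to control and no boundary to escape to: the single properness statement replaces your entire specialization machinery, and the equivalence closes via $(2)\Rightarrow(1)$ instead of your unproven $(3)\Rightarrow(1)$ (the paper gets $(3)\Rightarrow(2)$ exactly as you do, from the curves $\phi_w$ for generic $w$).
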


\begin{proof}
Implication $(1)\Rightarrow (2)$ is obvious. 

We prove $(2)\Rightarrow (1)$. Suppose that 
$$X=\{x\in \K^m:f_1(x)=0,\dots,f_r(x)=0\}.$$
For $a=(a_1,\dots,a_m)\in \K^m$ and $b=(b_{1,1}:\dots:b_{d,m})\in\p^M$, where $M=dm-1$, let $$\varphi_{a,b}:\K\ni t\rightarrow (a_1+b_{1,1}t+\dots+b_{1,d}^dt^d,\dots,a_m+b_{m,1}t+\dots+b_{m,d}^dt^d)\in\K^m$$
be a parametric curve of degree at most $d$. Consider a variety and a projection
$$\K^m\times\p^M\supset V=\{(a,b)\in \K^m\times\p^M:\forall_{t,i}\;f_i(\varphi_{a,b}(t))=0\}\ni(a,b)\rightarrow a\in \K^m.$$
The definition of the set $V$ says that parametric curves $\varphi_{a,b}$ are contained in $X$. Hence the image of the projection is contained in $X$ and contains dense set $U$, since through every point of $U$ passes a parametric curve of degree at most $d$. But since $\p^M$ is complete and $V$ is closed we have that the image is closed, and hence it is the whole set $X$. 

Let us prove $(2)\Rightarrow (3)$. For some affine chart $V_j=V\cap \{b_j=1\}$ the above map is dominant. We consider the following dominant mapping
$$\Phi: \K\times V_j\ni (t,\phi)\rightarrow \phi(t)\in X.$$
After replacing $V_j$ by one of its irreducible components $Y\subset \K^M$ the map remains dominant. Suppose $\dim X=n$ and $\dim(Y)=s$, on an open subset of $X$ fibers of the map $\Phi'=\Phi\vert_{k\times Y}$ are of dimension $s+1-n$. Let $x$ be one of such points. From the construction of the set $V$ we know that the fiber $F=\Phi'^{-1}(x)$ does not contain any line of the type $\K\times\{y\}$, so in particular the image $F'$ of the fiber $F$ under projection $\K\times Y\rightarrow Y$ has the same dimension. For general linear subspace $L\subset \K^M$ of dimension $M+n-s-1$ the dimension of $L\cap F'$ equals to $0$. Let us fix such $L$, and let $R$ be any irreducible component of $L\cap Y$ intersecting $F'$. Now mapping 
$$\Phi'\vert_{\K\times R}:\K\times R\rightarrow X$$ 
confirms the assertion, since it has one fiber of dimension $0$ (at $x$), dimension of $R$ is $n-1$ (at most $n-1$, because of the $0$ dimensional fiber, at least $n-1$ because of the small dimension of $L$), so as a consequence it is dominant.   

To prove the implication $(3)\Rightarrow (2)$ it is enough to notice that for each $w\in W$ the map 
$$\phi_w: \K\ni t\rightarrow\phi(t,w)\in X$$
is a parametric curve of degree at most $d$ or it is constant. Image of $\phi$ contains an open dense subset, so after excluding points with 
infinite preimages (closed set, at most of codimension one) we get an open set $U$ with required properties. 
\end{proof}

We are ready to define the degree of $\K$-uniruledness - a parameter which measures degree of parametric curves covering a variety.

\begin{definition}\label{k-uniruleddef}
We say that an affine variety $X$ has \textup{degree of $\K$-uni\-ruled\-ness at most $d$}, if all its irreducible components satisfy the above conditions. A \textup{degree of $\K$-uniruledness} is the minimum $d$ for which it has degree of $\K$-uni\-ruled\-ness at most $d$. An affine variety is called \textup{$\K$-uniruled}, if it is $\K$-uniruled of some degree.
\end{definition}

The condition (3) from Propositions \ref{k-uniruled} and \ref{k-uniruledofdeg} is clearly the same, so both definitions of 
$\K$-uniruled varieties coincide for an uncountable field $\K$. From now on we are going to use only the second - Definition \ref{k-uniruleddef}.

\begin{example}
Let $X\subset \K^n$ be a hypersurface of degree $d<n$. It is well known that $X$ is covered by affine lines, so its degree of $\K$-uniruledness is one.
\end{example}

\begin{proof}
Let $f\in\K[x_1,\dots,x_n]$ be an equation of $X$. We have to show that through every point of $X$ passes a line. Without the loss of generality 
it is enough to assume that $O=(0,\dots,0)\in X$ and show that through $O$ passes a line (it is because we can make a linear change of coordinates 
and the degree of $f$ stays unchanged). A parametrization of such line $L$ is 
$$\K\ni t\rightarrow (a_1t,\dots,a_nt)\in\K^n,$$ 
for $(a_1,\dots,a_n)\in\p^{n-1}$. Line $L$ is contained in variety $X$ if and only if $f(a_1t,\dots,a_nt)=0$ for all $t\in\K$. It happens if and only if $f_i(a_1,\dots,a_n)=0$ for $i=1,\dots,d$ (where $f_i$ is the part of $f$ of degree $i$). Now we use the fact that $d$ equations define a variety in $\p^{n-1}$ of dimension at least $n-1-d$, which for $d<n$ is greater or equal to zero. It means that there exists $(a_1,\dots,a_n)\in\p^{n-1}$ for which the corresponding line is contained in $X$.
\end{proof}

\begin{proposition}\label{indep} The degree of $\K$-uniruledness of a variety does not depend on the base field. Namely let $\K\subset\Le$ be a field extension, and let $f_1,\dots,f_r\in\K[x_1,\dots,x_m]$. Then the following conditions are equivalent:
\begin{enumerate}
\item $\K X:=\{x\in\K^m:f_1(x)=0,\dots,f_r(x)=0\}\subset\K^m$ has degree of $\K$-uniruledness at most $d$,
\item $\Le X:=\{x\in\Le^m:f_1(x)=0,\dots,f_r(x)=0\}\subset\Le^m$ has degree of $\Le$-uniruledness at most $d$
\end{enumerate}
\end{proposition}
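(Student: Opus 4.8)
The plan is to reduce the statement about the degree of $\K$-uniruledness to the surjectivity of a single projection whose source and target are already defined over $\K$, and then to transfer that surjectivity across the extension $\K\subset\Le$ by faithfully flat descent. First I would unwind the definitions. By Definition \ref{k-uniruleddef} together with the implication $(2)\Rightarrow(1)$ of Proposition \ref{k-uniruledofdeg}, the variety $\K X$ has degree of $\K$-uniruledness at most $d$ if and only if every point of $\K X$ lies on a parametric curve of degree at most $d$ contained in $\K X$. The per-component formulation in Definition \ref{k-uniruleddef} agrees with this whole-variety statement: a parametric curve is irreducible, so through a point of the dense open part $U_i=X_i\setminus\bigcup_{j\neq i}X_j$ of a component $X_i$ any covering curve must lie in $X_i$, and then $(2)\Rightarrow(1)$ of Proposition \ref{k-uniruledofdeg} upgrades covering of $U_i$ to covering of all of $X_i$; moreover the components themselves are unchanged by the extension, since $\K=\bar{\K}$ makes them geometrically irreducible. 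This whole-variety form of condition $(1)$ is exactly captured by the incidence variety $V\subset\K^m\times\p^M$ constructed in the proof of Proposition \ref{k-uniruledofdeg}: since $\p^M$ is complete, the projection $\pi\colon V\to\K^m$ is proper, its image $\pi(V)$ is a closed subset contained in $\K X$, and $\K X$ has degree of $\K$-uniruledness at most $d$ precisely when $\pi(V)=\K X$.

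The key observation is that $V$ and $\pi$ are defined over $\K$. Indeed $V$ is cut out by the equations $f_i(\varphi_{a,b}(t))=0$ for all $t$; expanding in powers of $t$ turns these into polynomial relations in $(a,b)$ whose coefficients lie in $\K$, because $f_1,\dots,f_r\in\K[x_1,\dots,x_m]$. Hence base-changing along $\Spec\Le\to\Spec\K$ sends $V$ to the analogous incidence variety $V_\Le$ attached to the same polynomials over $\Le$, and sends $\pi$ to its $\Le$-projection. Since properness is stable under base change and the formation of the image of a proper morphism commutes with the flat base change $\K\subset\Le$, I obtain $\pi_\Le(V_\Le)=\pi(V)\times_\K\Le$, while likewise $\Le X=\K X\times_\K\Le$. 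By the reformulation of the first paragraph applied now over $\Le$, the variety $\Le X$ has degree of $\Le$-uniruledness at most $d$ if and only if $\pi_\Le(V_\Le)=\Le X$.

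It then remains to show that the equality $\pi(V)=\K X$ of closed $\K$-subvarieties of $\K^m$ holds if and only if it holds after extending scalars to $\Le$. Both sets are defined over $\K$ and satisfy $\pi(V)\subseteq\K X$, so writing $\K[x]:=\K[x_1,\dots,x_m]$, $I=\sqrt{(f_1,\dots,f_r)}$ for the radical ideal of $\K X$, and $J\supseteq I$ for that of $\pi(V)$, the equality of the two closed sets is the equality of ideals $I=J$. Because $\K$ is algebraically closed, hence perfect, both $\K X$ and $\pi(V)$ are geometrically reduced, so $I\Le[x]$ and $J\Le[x]$ remain radical and are the ideals of $\Le X$ and $\pi_\Le(V_\Le)$; thus the two equalities at stake are $I=J$ and $I\Le[x]=J\Le[x]$. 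Since $\Le[x]$ is faithfully flat over $\K[x]$, tensoring the inclusion $I\subseteq J$ with $\Le$ gives $I\Le[x]=J\Le[x]\iff(J/I)\otimes_\K\Le=0\iff J/I=0\iff I=J$, which closes the equivalence of $(1)$ and $(2)$. The hard part is precisely this last base-change bookkeeping: one must check that passing to $\Le$ neither creates nor destroys components nor introduces non-reduced phenomena (handled by $\K=\bar{\K}$, i.e. geometric irreducibility and geometric reducedness) and that forming the image commutes with the extension (handled by properness of $\pi$ together with flatness of the field extension). Everything else is the routine verification that the incidence variety $V$ is genuinely defined over $\K$.
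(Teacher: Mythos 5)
Your proof is correct, but it is organized quite differently from the paper's. The paper treats the two implications asymmetrically. For $(1)\Rightarrow(2)$ it first proves that $\K X$ is Zariski dense in $\Le X$: any $g\in\Le[x]$ vanishing on $\K X$ is decomposed along a $\K$-basis of $\Le$ into polynomials $g_i\in\K[x]$, each of which then lies in the radical of $(f_1,\dots,f_r)$, so $g$ vanishes on $\Le X$; since $\K$-parametric curves extend to $\Le$-parametric curves, the dense-subset criterion, condition $(2)$ of Proposition \ref{k-uniruledofdeg}, concludes. For $(2)\Rightarrow(1)$ it uses, exactly as you do, that the incidence variety $V$ and its projection are defined over $\K$ and have closed image (completeness of $\p^M$), but the transfer of surjectivity from $\Le$ down to $\K$ is done by explicit radical-membership bookkeeping: $g\in\K[x]$ vanishing on the $\K$-image forces $g\circ p$ into the radical of the ideal of $V$ over $\K$, hence over $\Le$, hence $g$ vanishes on the $\Le$-image, which equals $\Le X$ by hypothesis, and finally membership in $\sqrt{(f_1,\dots,f_r)\Le[x]}$ descends to $\sqrt{(f_1,\dots,f_r)\K[x]}$. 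Your argument replaces both halves by one symmetric equivalence: over either field, degree of uniruledness at most $d$ means surjectivity of the projection of the incidence variety, and this surjectivity ascends and descends along $\K\subset\Le$ by base change of images of proper maps together with faithfully flat descent of the ideal equality $I=J$, with geometric reducedness (perfectness of $\K$) keeping the extended ideals radical. What your route buys is uniformity and a clean isolation of the underlying principle --- equality of closed subvarieties defined over $\K$ can be tested after a faithfully flat field extension; what the paper's route buys is elementarity (nothing beyond the Nullstellensatz and polynomial manipulation) and, as a by-product, the density statement that $\K X$ is dense in $\Le X$, which is of independent interest elsewhere in the thesis. One caveat shared by both arguments: $\Le$ must implicitly be algebraically closed, since Proposition \ref{k-uniruledofdeg} and the Nullstellensatz are invoked over $\Le$ (in your case, to identify $I\Le[x]$ and $J\Le[x]$ as the full vanishing ideals of $\Le X$ and of the image).
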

\begin{proof}
Implication $(1)\Rightarrow (2)$. Observe that the set $\K X$ is dense in $\Le X$. 
Indeed let $g\in\Le[x_1,\dots,x_m]$ and $g\equiv 0$ on $\K X$. Let $\mathcal{B}$ be any basis of vector space $\Le$ over $\K$. Now $g$ has a decomposition
$$g=b_1g_1+\dots+b_kg_k\text{ for }g_1,\dots,g_k\in\K[x]\text{ and distinct }b_1,\dots,b_k\in\mathcal{B}.$$
Now $g\equiv 0$ on $\K X$, so also for each $i$ there is $g_i\equiv 0$ on $\K X$. Hence for each $i$ $g_i$ belongs to the radical of the ideal $(f_1,\dots,f_r)$ in $\K[x]$. It implies that for each $i$ polynomial $g_i$ belongs to the radical of the ideal $(f_1,\dots,f_r)$ in $\Le[x]$, and as a consequence $g$ also belongs. So $g\equiv 0$ on $\Le X$, and it follows that $\K X$ is dense in $\Le X$. Of course any $\K$ parametric curve in $\K X$ extends to a $\Le$ parametric curve in $\Le X$. From the assumption and Proposition \ref{k-uniruledofdeg} condition $(2)$ we get that $X\subset\Le^m$ has degree of $\Le$-uniruledness at most $d$.

Implication $(2)\Rightarrow (1)$. As in the proof of Proposition \ref{k-uniruledofdeg} for 
$a=(a_1,\dots,a_m)\in \Le^m$ and $b=(b_{1,1}:\dots:b_{d,m})\in\p\Le^M$, where $M=dm-1$, let 
$$\varphi_{a,b}:\Le\ni t\rightarrow (a_1+b_{1,1}t+\dots+b_{1,d}^dt^d,\dots,a_m+b_{m,1}t+\dots+b_{m,d}^dt^d)\in\Le^m$$
be a parametric curve of degree at most $d$. We consider a variety and a projection
$$\Le^m\times\p\Le^M\supset V=\{(a,b)\in\Le^m\times\p\Le^M:\forall_{t,i}\;f_i(\varphi_{a,b}(t))=0\}\ni(a,b)\xrightarrow{p} a\in \Le^m.$$
From our assumption the image of the projection is the whole $\Le X$. But the above variety $V$ and also projection $p$ are defined over the 
smaller field $\K$. The anologous argument to the one from opposite implication shows that over $\K$ the image of the projection is also the 
whole variety $\K X$, which gives condition $(1)$. Indeed if for some $g\in\K[x]$ $g\equiv 0$ on $p(\K V)$ then $g\circ p$ belongs to the radical 
of the ideal $(h_1,\dots,h_s)$ in $\K[x]$, where $h_i$ are equations defining $V$. So $g\circ p$ belongs to the radical of the ideal 
$(h_1,\dots,h_s)$ in $\Le[x]$ and $g\equiv 0$ on $p(\Le V)$. It follows that $g$ belongs to the radical of the ideal $(f_1,\dots,f_r)$ in $\Le[x]$ 
which implies that $g$ belongs to the radical of the ideal $(f_1,\dots,f_r)$ in $\K[x]$.
\end{proof}

%%%%%%%%%%%%%%%%%%%%%%%%%%%%%%%%%%%%%%%%%%%%%%%%%%%%%%%%%%%%%%%%%%%%%%%%%%%%%%%%%%%%%%%%%%%%%%%%%%%%%%%%%%%%%%%%%%%%%%%%%%%%%%%%%%%%%%%%%%%%%%%%%%%%%%%%%%%%%%%%%%%%%%%%%%
\section{A real field case}\label{rr}
%%%%%%%%%%%%%%%%%%%%%%%%%%%%%%%%%%%%%%%%%%%%%%%%%%%%%%%%%%%%%%%%%%%%%%%%%%%%%%%%%%%%%%%%%%%%%%%%%%%%%%%%%%%%%%%%%%%%%%%%%%%%%%%%%%%%%%%%%%%%%%%%%%%%%%%%%%%%%%%%%%%%%%%%%%

For the whole section we assume that the base field is $\R$. 

Let us specify that by a \textit{real parametric curve of degree at most $d$} in a semialgebraic set $X\subset \R^n$ we mean the image of a real polynomial mapping $f:\R\to X$ of degree at most $d$. Thus in general a real parametric curve does not have to be algebraic, but only semialgebraic. The real counterpart of Proposition \ref{k-uniruledofdeg} is:

\begin{proposition}\label{real}
Let $X\subset \R^m$ be a closed irreducible semialgebraic set of dimension at least two, and let $d$ be an integer. The following conditions are equivalent:
\begin{enumerate}
\item for every point $x\in X$ there exists a parametric curve $l_x\subset X$ of degree at most $d$ passing through $x$,
\item there exists a dense in the classical topology subset $U\subset X$, such that for every point $x\in U$ there exists a parametric curve $l_x\subset X$ of degree at most $d$ passing through $x$,
\item for every polynomial mapping $f:X\to\R^n$, and every sequence $(x_k)_{k\in\N}\subset X$ such that $f(x_k)\to a\in \R^n$ there exists a semialgebraic curve $W$ and a generically finite polynomial map $\phi:\R\times W\ni (t,w)\to \phi(t,w)\in X$ such that $\deg_t \phi \leq d$, 
and there exists a sequence $(y_k)_{k\in\N}\in\R\times W$ such that $f(\phi(y_k))\to a$. Moreover, if $x_k\to\infty$ then also $\phi(y_k)\to\infty$.
\end{enumerate}
\end{proposition}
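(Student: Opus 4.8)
The plan is to establish Proposition \ref{real} by adapting the algebraic arguments of Proposition \ref{k-uniruledofdeg} to the real semialgebraic setting, replacing Zariski-topological tools by semialgebraic ones and paying careful attention to the behaviour at infinity. The implication $(1)\Rightarrow(2)$ is again immediate. For $(2)\Rightarrow(1)$ I would mimic the parameter-space construction from the proof of Proposition \ref{k-uniruledofdeg}: parametrize candidate curves $\varphi_{a,b}$ of degree at most $d$ by their coefficients $(a,b)$ and consider the semialgebraic set $V$ of those $(a,b)$ for which $\varphi_{a,b}(\R)\subset X$, together with the projection $(a,b)\mapsto a$. The key difference is that $\p^M$ is no longer the right compactification over $\R$, so instead of appealing to completeness of projective space I would invoke the Tarski--Seidenberg principle: the projection of a semialgebraic set is semialgebraic, and its image is a semialgebraic set that contains the dense subset $U$. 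The main point is to upgrade density in the classical topology to covering all of $X$, which should follow because the image is a semialgebraic set containing a dense subset of the irreducible $X$, hence has the same dimension as $X$, and a more careful local analysis (using that $X$ has dimension at least two, so a neighbourhood of each point is reached by curves deforming continuously) forces the image to be all of $X$.

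For the equivalence with $(3)$, the new feature compared to the algebraic case is the coupling with an arbitrary polynomial map $f$ and a sequence escaping to a boundary value $a$, including the possibility $x_k\to\infty$. To prove $(1)\text{ or }(2)\Rightarrow(3)$, I would start from the family of parametric curves supplied by $(1)$ and assemble them into a single generically finite map $\phi:\R\times W\to X$ with $\deg_t\phi\le d$, exactly as condition $(3)$ of Proposition \ref{k-uniruledofdeg} produces such a cylinder over an algebraically closed field; here $W$ is cut down to a semialgebraic curve so that $\dim(\R\times W)=2$ matches the generic-finiteness requirement. Given the sequence $(x_k)$ with $f(x_k)\to a$, each $x_k$ lies on some curve $\phi(\,\cdot\,,w_k)$, so I can choose preimages $y_k\in\R\times W$ with $\phi(y_k)=x_k$ (or close to $x_k$), whence $f(\phi(y_k))\to a$. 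The clause ``if $x_k\to\infty$ then $\phi(y_k)\to\infty$'' is guaranteed by the properness of $\phi$ off a lower-dimensional set, i.e. by generic finiteness together with the fact that $\phi$ does not contract a cylinder: the fibre over $x_k$ is finite, so $y_k\to\infty$ whenever $x_k\to\infty$.

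The converse direction $(3)\Rightarrow(2)$ is the most delicate and is where I expect the real obstacle to lie. Abstractly one would like to argue as over an algebraically closed field, noting that for each fixed $w$ the slice $t\mapsto\phi(t,w)$ is a real parametric curve of degree at most $d$, and that as $w$ ranges over $W$ these curves sweep out a dense subset of $X$. The subtlety is that $(3)$ is stated as a hypothesis quantified over all polynomial maps $f$ and all escaping sequences, so I would specialize $f$ to the coordinate inclusions and choose sequences $(x_k)$ converging classically to a prescribed point or tending to infinity; the resulting maps $\phi$ must then cover a dense set, because their images accumulate at every limit $a$ of every such sequence. Here the hypothesis $\dim X\ge 2$ is essential: it ensures $W$ can be taken of positive dimension so that the swept-out family is genuinely $\dim X$-dimensional rather than a single curve, and it rules out pathologies where the image is contained in a proper semialgebraic subset. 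The hard part will be controlling behaviour at the boundary and at infinity simultaneously, and verifying that the $\phi$ produced for various sequences can be patched, or invoked pointwise, to yield the single dense-covering statement of $(2)$; this requires a careful use of the curve-selection lemma and of the properness clause in $(3)$ to rule out escape phenomena that would otherwise break density.
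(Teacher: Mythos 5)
Your proposal has two genuine gaps, located exactly at the steps where you yourself signal uncertainty. First, in $(2)\Rightarrow(1)$ your mechanism does not work: Tarski--Seidenberg gives that the image of the projection $(a,b)\mapsto a$ is semialgebraic, but a semialgebraic subset of the irreducible set $X$ that is dense and of full dimension can still omit points (for instance $X$ minus a point or minus a curve), so ``same dimension plus a more careful local analysis'' cannot force the image to be all of $X$. Over an algebraically closed field it is the \emph{completeness} of $\p^M$ (hence closedness of the image), not constructibility, that does the work, and your sketch offers no substitute for it. The paper's substitute is a compactness argument: after reparametrizing $t\mapsto\lambda t$ one may assume the coefficient vector $b$ of a curve through $a$ satisfies $\sum_{i,j} b_{i,j}^2=1$, i.e.\ $b\in S_{M-1}$; the set $V\subset\R^m\times S_{M-1}$ of pairs $(a,b)$ with $\varphi_{a,b}(\R)\subset X$ is closed; given $a\in X$ one picks $a_k\to a$ carrying curves $(a_k,b_k)\in V$, notes the sequence is bounded, and extracts a convergent subsequence whose limit $(a,b)\in V$ has $\Vert b\Vert=1$, hence gives a non-constant parametric curve of degree at most $d$ through $a$. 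Nothing in your argument produces this limit curve, and the hypothesis $\dim X\ge 2$, which you lean on here, plays no role in this step.

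Second, in $(1)\Rightarrow(3)$ your order of operations is incompatible with the quantifiers of condition $(3)$. You propose to first cut the parameter space down to a semialgebraic curve $W$ and then choose $y_k$ with $\phi(y_k)=x_k$; but once $W$ is fixed, the given points $x_k$ have no reason to lie on the curves $\phi(\cdot,w)$ with $w\in W$, so the exact-preimage step fails, and with it your justification of the clause $\phi(y_k)\to\infty$. In the paper $W$ is chosen \emph{after} $f$ and $(x_k)$: one forms the surjective map $\Phi:\R\times V\to X$ on the full parameter space, lifts $x_k$ to $z_k$ with $\Phi(z_k)=x_k$, so that $g:=f\circ\Phi$ satisfies $g(z_k)\to a$, and then applies the real curve selection lemma (the analogue of Lemma \ref{curve}) to $g$ to obtain a semialgebraic curve $W_1\subset\R\times V$ with $a\in\overline{g(W_1)}$, finally setting $W=p_2(W_1)$ (or any semialgebraic curve through that point if $p_2(W_1)$ is a point). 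You do invoke curve selection, but only in your discussion of $(3)\Rightarrow(2)$, which is in fact the trivial direction: the paper disposes of it in one line by taking $f$ to be the identity, whereas your plan treats it as the main obstacle and adds patching arguments that are not needed.
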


\begin{proof}
First we prove implication $(2)\Rightarrow (1)$. Suppose that 
$$X=\{x\in\R^m:f_1(x)=0,\dots,f_r(x)=0\;g_1(x)\geq 0,\dots, g_s(x)\geq 0\}.$$ 
For $a=(a_1,\dots,a_m)\in\R^m$ and $b=(b_{1,1},\dots,b_{d,m})\in\R^M$, where $M=dm$, let $$\varphi_{a,b}:\R\ni t\rightarrow (a_1+b_{1,1}t+\dots+b_{1,d}^dt^d,\dots,a_m+b_{m,1}t+\dots+b_{m,d}^dt^d)\in\R^m$$ 
be a parametric curve of degree at most $d$. If there exists a parametric curve of degree at most $d$ passing through $a$, then after reparametrization we can assume that it is $\varphi_{a,b}$ with $\sum_{i,j} b_{i,j}^2=1$. This means that $b\in S_{M-1}$, where $S_{M-1}$ denotes the unit sphere in $\R^M$ with the center in the origin. Consider a semialgebraic set
$$V=\{(a,b)\in\R^m\times S_{M-1}:\forall_{t,i}\;f_i(\varphi_{a,b}(t))=0, \forall_{t,j}\;g_j(\varphi_{a,b}(t))\geq 0\}.$$
The definition of the set $V$ says that parametric curves $\varphi_{a,b}(t)$ are contained in $X$. It is easy to see that $V$ is closed. For any $a\in X$, by the assumption there is a sequence of points $a_k\to a$, such that for every $k$ there is a parametric curve $\varphi_{a_k,b_k}\in V$. We can assume that $\Vert a_k\Vert<\Vert a\Vert+1$ for all $k$. The set $V$ is closed, the sequence $(a_k,b_k)_{k\in\N}\subset V$ is bounded, so there exists a subsequence $(a_{k_r},b_{k_r})_{r\in\N}$ which converges to $(a,b)\in V$. Now parametric curve $\varphi_{a,b}\subset X$ of degree at most $d$ passes through $a$.

To prove $(1)\Rightarrow (3)$ consider a semialgebraic set $V$ as above. We have a surjective mapping $$\Phi: \R\times V\ni (t,\varphi_{a,b})\to \varphi_{a,b}(t)\in X.$$ Let $f:X\to\R^n$ be a polynomial mapping,  
and let $f(x_k)\to a\in \R^n$ for a sequence $(x_k)_{k\in\N}\subset X$. Put $g=f\circ\Phi$. Hence there exists a sequence $z_k\in \R\times V$ such that $g(z_k)\to a$. Due to the real Curve Selection Lemma analogous to Lemma \ref{curve} there is a semialgebraic curve $W_1\subset \R\times V$ such that $a\in \overline{g(W_1)}$. Take $W_2=p_2(W_1)$, where $p_2: \R\times V \to V$ is a projection. If $W_2$ is a curve then let $W:=W_2$, if it is a point we take as $W$ any
semialgebraic curve in $V$ containing a point $p_2(W_1)$. Now $(W,\Phi\vert_{\R\times W})$ is a good pair.

Finally to prove $(3)\Rightarrow (2)$ it is enough to take $f$ being an identity in the third condition.
\end{proof}

Note that the first two conditions are the same as for general algebraically closed field in Proposition \ref{k-uniruledofdeg}.

\begin{definition}\label{r-uniruleddef}
We say that a closed semialgebraic set $X$ has \textup{degree of $\R$-uni\-ruled\-ness at most $d$}, if all its irreducible components satisfy the above conditions. A \textup{degree of $\R$-uniruledness} is the minimum $d$ for which it has degree of $\R$-uni\-ruled\-ness at most $d$. A closed semialgebraic set is called \textup{$\R$-uniruled}, if it is $\R$-uniruled of some degree.
\end{definition}

\begin{example}
Let $X=\{ (x,y)\in\R^2 : x\geq 0, y\ge 0\}$. It is easy to check that the  degree of $\R$-uniruledness of $X$ is two. It has a ruling with curves $l_a$ of the form $l_a=\{(a, t^2):t\ge 0\}$ for  $a\ge 0$.
\end{example}

%%%%%%%%%%%%%%%%%%%%%%%%%%%%%%%%%%%%%%%%%%%%%%%%%%%%%%%%%%%%%%%%%%%%%%%%%%%%%%%%%%%%%%%%%%%%%%%%%%%%%%%%%%%%%%%%%%%%%%%%%%%%%%%%%%%%%%%%%%%%%%%%%%%%%%%%%%%%%%%%%%%%%%%%%%
\section{Remarks}
%%%%%%%%%%%%%%%%%%%%%%%%%%%%%%%%%%%%%%%%%%%%%%%%%%%%%%%%%%%%%%%%%%%%%%%%%%%%%%%%%%%%%%%%%%%%%%%%%%%%%%%%%%%%%%%%%%%%%%%%%%%%%%%%%%%%%%%%%%%%%%%%%%%%%%%%%%%%%%%%%%%%%%%%%%

The statement of Proposition \ref{k-uniruled} suggests a question.

\begin{question}
Does equivalence from Proposition \ref{k-uniruled} hold also for countable fields?
\end{question}

However it is known that there exist K3 surfaces, which are not $\C$-uniruled, but their dense subset is covered by parametric curves. In this sense 
conditions $(1)$ and $(2)$ from  Proposition \ref{k-uniruledofdeg} with no upper bound on the degree of parametric curves are not equivalent.

\pagebreak

%%%%%%%%%%%%%%%%%%%%%%%%%%%%%%%%%%%%%%%%%%%%%%%%%%%%%%%%%%%%%%%%%%%%%%%%%%%%%%%%%%%%%%%%%%%%%%%%%%%%%%%%%%%%%%%%%%%%%%%%%%%%%%%%%%%%%%%%%%%%%%%%%%%%%%%%%%%%%%%%%%%%%%%%%%
%%%%%%%%%%%%%%%%%%%%%%%%%%%%%%%%%%%%%%%%%%%%%%%%%%%%%%%%%%%%%%%%%%%%%%%%%%%%%%%%%%%%%%%%%%%%%%%%%%%%%%%%%%%%%%%%%%%%%%%%%%%%%%%%%%%%%%%%%%%%%%%%%%%%%%%%%%%%%%%%%%%%%%%%%%
%%%%%%%%%%%%%%%%%%%%%%%%%%%%%%%%%%%%%%%%%%%%%%%%%%%%%%%%%%%%%%%%%%%%%%%%%%%%%%%%%%%%%%%%%%%%%%%%%%%%%%%%%%%%%%%%%%%%%%%%%%%%%%%%%%%%%%%%%%%%%%%%%%%%%%%%%%%%%%%%%%%%%%%%%%
\chapter[Bounding degree of $\K$-uniruledness]{Bounding degree of $\K$-uniruledness of the non-properness set of a polynomial mapping}
%%%%%%%%%%%%%%%%%%%%%%%%%%%%%%%%%%%%%%%%%%%%%%%%%%%%%%%%%%%%%%%%%%%%%%%%%%%%%%%%%%%%%%%%%%%%%%%%%%%%%%%%%%%%%%%%%%%%%%%%%%%%%%%%%%%%%%%%%%%%%%%%%%%%%%%%%%%%%%%%%%%%%%%%%%
%%%%%%%%%%%%%%%%%%%%%%%%%%%%%%%%%%%%%%%%%%%%%%%%%%%%%%%%%%%%%%%%%%%%%%%%%%%%%%%%%%%%%%%%%%%%%%%%%%%%%%%%%%%%%%%%%%%%%%%%%%%%%%%%%%%%%%%%%%%%%%%%%%%%%%%%%%%%%%%%%%%%%%%%%%
%%%%%%%%%%%%%%%%%%%%%%%%%%%%%%%%%%%%%%%%%%%%%%%%%%%%%%%%%%%%%%%%%%%%%%%%%%%%%%%%%%%%%%%%%%%%%%%%%%%%%%%%%%%%%%%%%%%%%%%%%%%%%%%%%%%%%%%%%%%%%%%%%%%%%%%%%%%%%%%%%%%%%%%%%%

In this Chapter we are going to recall the definition, motivation and some known facts about the set $\s_f$ of points at which regular mapping $f$ is not proper. It is known that if $f:X\rightarrow Y$ is a generically finite regular dominant map between affine varieties then the set $\s_f$ is a hypersurface in $Y$ or it is empty. If $X$ is additionally $\K$-uniruled then the set $\s_f$ is also $\K$-uniruled. The main goal of this Chapter is to bound from above the degree of $\K$-uniruledness of the set $\s_f$ in terms of the degree of the map $f$ and the degree of $\K$-uniruledness of $X$. We prove:

\begin{theoremm}
Let $\K=\C$, or $\R$, and let $f:\K^n\rightarrow \K^m$ be a generically
finite polynomial mapping of degree $d$. Then the set $\s_f$ has degree of $\K$-uniruledness at most $d-1$ or it is empty.
\end{theoremm}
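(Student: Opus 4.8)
The plan is to rule $\s_f$ by perturbing escaping arcs: I will show that through a generic point $y_0\in\s_f$ there passes a parametric curve of degree at most $d-1$ contained in $\s_f$, and that these curves cover a dense subset; the conclusion then follows from Proposition~\ref{k-uniruledofdeg} (resp. Proposition~\ref{real} in the real case) together with Definition~\ref{k-uniruleddef}.

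First I would produce an escaping arc. Since $y_0\in\s_f$, there is a sequence $x_k\to\infty$ with $f(x_k)\to y_0$; applying the Curve Selection Lemma~\ref{curve} (its real analogue when $\K=\R$) yields a meromorphic arc $\hat x(s)$ with $\hat x(s)\to\infty$ and $f(\hat x(s))\to y_0$ as $s\to 0$. Writing its Laurent expansion $\hat x(s)=c\,s^{-N}+(\text{lower order})$ with $c\neq 0$ and $N\ge 1$, the key preliminary observation is that the leading direction annihilates the leading form: comparing the most singular terms in $f(\hat x(s))=f^{(d)}(\hat x(s))+\cdots$ one finds the coefficient of $s^{-dN}$ equals $f^{(d)}(c)$, so finiteness of the limit forces $f^{(d)}(c)=0$, where $f^{(d)}$ denotes the top homogeneous part of $f$.

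Next comes the perturbation producing the ruling. I set $\lambda(s):=c\,s^{-N}$ and consider the one-parameter family $x_u(s):=\hat x(s)+u\,\lambda(s)$, which still escapes to infinity for generic $u$. Because $f$ has degree $d$, the Taylor expansion in $u$ terminates:
\[
f(x_u(s))=\sum_{k=0}^{d}u^k P_k(s),\qquad P_k(s)=\tfrac{1}{k!}\,D^kf(\hat x(s))\big[\lambda(s)^{\otimes k}\big].
\]
The top coefficient is $P_d(s)=f^{(d)}(\lambda(s))=f^{(d)}(c)\,s^{-dN}\equiv 0$ by the observation above, so the degree in $u$ drops to at most $d-1$ — this is exactly the source of the bound $d-1$. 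Granting that each $P_k(s)$ has a finite limit $p_k$ as $s\to 0$, the curve $u\mapsto y(u):=\sum_{k=0}^{d-1}u^k p_k$ lies in $\s_f$ (each $y(u)$ being a limit of $f$ along an escaping arc), passes through $y_0=y(0)$, and is a parametric curve of degree at most $d-1$.

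The main obstacle is precisely the convergence of the subleading coefficients $P_0,\dots,P_{d-1}$, equivalently the assertion that the perturbed arcs $x_u(s)$ retain a finite $f$-limit. I would establish this by a pole-order (valuation) analysis: the same cancellation that makes $f(\hat x(s))$ finite propagates to the $P_k$, since by $D(D^{k}f)=D^{k+1}f$ the most singular term of $P_k$ is a multiple of $D^d f[c^{\otimes d}]=d!\,f^{(d)}(c)=0$, and one continues down the tower of negative powers; alternatively one compares $x_u(s)$ with the reparametrized arc $\hat x\big((1+u)^{-1/N}s\big)$, whose $f$-image is manifestly convergent, and controls the lower-order discrepancy. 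It remains to check non-constancy (some $p_k\neq 0$ for $k\ge 1$, so that $y(u)$ is a genuine curve) and that letting $y_0$ range over $\s_f$ makes the construction sweep out a dense subset; both hold for generic $y_0$, after which Proposition~\ref{k-uniruledofdeg} ($(2)\Rightarrow(1)$) — or Proposition~\ref{real} for $\K=\R$, where the hypothesis $\dim\ge 2$ enters — together with Definition~\ref{k-uniruleddef} give that $\s_f$ has degree of $\K$-uniruledness at most $d-1$.
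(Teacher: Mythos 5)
Your proof fails at exactly the step you flag as the main obstacle, and the gap cannot be closed in the form you propose: the perturbed arcs $x_u(s)=\hat x(s)+u\,c\,s^{-N}$ need \emph{not} have finite $f$-limits, i.e.\ the coefficients $P_1,\dots,P_{d-1}$ need not converge. The cancellation that makes $f(\hat x(s))$ finite is not governed by the leading coefficient $c$ alone; it involves the whole jet of $\hat x$, and rescaling only the leading term destroys it. Your Euler-identity argument is correct as far as it goes, but it only kills the coefficient of $s^{-dN}$ in each $P_k$; the poles of orders $1,\dots,dN-1$ survive in general, and ``continuing down the tower'' is not possible because the subleading pole coefficients of $P_k$ are not multiples of $f^{(d)}(c)$. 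Concretely, take
\[
f:\C^2\to\C^2,\qquad f(x,y)=(x,\;y+xy^2),\qquad d=3,
\]
which is generically finite ($2$:$1$) with $\s_f=\{0\}\times\C$. Fix $b_0$, let $y(s)=-s^{-1}-b_0+b_0^2s+\cdots$ be the escaping root of $y+sy^2=b_0$, and put $\hat x(s)=(s,y(s))$; then $f(\hat x(s))=(s,b_0)\to(0,b_0)$, with $c=(0,-1)$, $N=1$ and $f^{(3)}(c)=0$, exactly as you predict. But for $x_u(s)=\hat x(s)+u\,c\,s^{-1}$ one computes
\[
f_2\bigl(x_u(s)\bigr)=b_0-\frac{u}{s}-2u\,y(s)+\frac{u^2}{s}=\frac{u(1+u)}{s}+(1+2u)b_0+O(s),
\]
so $f(x_u(s))\to\infty$ for every $u\neq 0,-1$: the limits $y(u)$ you want simply do not exist (equivalently, $P_1(s)=(0,\,s^{-1}+2b_0+O(s))$ diverges). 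Your fallback comparison with the reparametrized arc $\hat x\bigl((1+u)^{-1/N}s\bigr)$ cannot repair this: reparametrizing an arc never changes the limit of $f$ along it, so it can only reproduce the single point $y_0$, and the ``lower-order discrepancy'' between the two arcs is precisely the quantity that blows up above. (The non-constancy and density claims at the end are also left generic and unproven, but they are secondary to this failure.)

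For contrast, the paper's proof deliberately avoids analyzing cancellations along a single arc. It joins the escaping points $x_k$ to a fixed source point $O$ chosen so that $f(O)\notin\s_f$ by straight lines $L_k(t)=(1-t)x_k$, pushes them forward to polynomial curves $l_k=f\circ L_k$ of degree at most $d$, rescales the parameter so that the coefficient vectors of the $l_k$ lie on the unit sphere of $\C^N$, and extracts a convergent subsequence; the limit $l$ is a nonconstant polynomial curve through $y$ which lies in $\s_f$ by the sequential properness criterion (Proposition \ref{topo}). That gives degree at most $d$; the improvement to $d-1$ is not a leading-form identity but a Rouch\'e argument: every $l_k$ passes through $a=f(O)\notin\s_f$, and if no degree drop occurred in the limit, the parameters $t_k$ with $l_k(t_k)=a$ would stay bounded, forcing $l(t_0)=a\in\s_f$, a contradiction. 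In short, compactness in coefficient space replaces your pointwise convergence claim, and that replacement is where the real work of the theorem lies.
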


\begin{theoremm}
Let $X$ be an affine variety with degree of $\C$-uniruledness at most $d_1$ and let
$f:X\rightarrow\C^m$ be a generically finite polynomial mapping of degree
$d_2$. Then the set $\s_f$ has degree of $\C$-uniruledness at most $d_1d_2$ or it is empty.
\end{theoremm}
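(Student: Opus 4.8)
The plan is to produce, through a dense subset of every irreducible component of the hypersurface $\s_f$, a parametric curve of degree at most $d_1d_2$ that is contained in $\s_f$; since a parametric curve is irreducible and hence lies in a single component, Proposition~\ref{k-uniruledofdeg} (implication $(2)\Rightarrow(1)$, applied componentwise) then gives degree of $\C$-uniruledness at most $d_1d_2$. So fix a point $a$ lying on a single component of $\s_f$. By the sequential characterization of the non-properness set over $\C$, there is a sequence $x_k\in X$ with $\|x_k\|\to\infty$ and $f(x_k)\to a$. Because $X$ has degree of $\C$-uniruledness at most $d_1$, through each $x_k$ passes a parametric curve parametrized by $\gamma_k:\C\to X\subset\C^N$ with $\deg\gamma_k\le d_1$ and $\gamma_k(0)=x_k$. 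Since $f$ is generically finite, the locus $X_+\subset X$ of points with positive-dimensional fibre is a proper closed subset; replacing each $x_k$ by a nearby point of $X\setminus X_+$ (so small that $\|x_k\|\to\infty$ and $f(x_k)\to a$ are preserved) we may assume $\gamma_k$ is not contracted by $f$. Then $\delta_k:=f\circ\gamma_k:\C\to\C^m$ is a non-constant polynomial map with $\deg_t\delta_k\le d_1d_2$ and $\delta_k(0)=f(x_k)\to a$.

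Next I would normalize and pass to a limit. Writing $\delta_k(t)=\sum_{i=0}^{d_1d_2}b_{k,i}t^i$ with $b_{k,i}\in\C^m$, set $\beta_k:=\max_{i\ge 1}\|b_{k,i}\|^{1/i}>0$ and reparametrize by $t\mapsto t/\beta_k$. This is an automorphism of $\C$, so it fixes $\gamma_k(0)=x_k$, preserves the degree bounds $\deg\gamma_k\le d_1$ and $\deg_t\delta_k\le d_1d_2$, and rescales the positive-degree coefficients of $\delta_k$ to have maximal norm exactly $1$, while the constant term still tends to $a$. Passing to a subsequence, the finitely many coefficients of the reparametrized $\delta_k$ converge, so these maps converge coefficientwise to a polynomial map $\delta:\C\to\C^m$ of degree at most $d_1d_2$ with $\delta(0)=a$; as the maximal norm of its positive-degree coefficients equals $1$, the limit $\delta$ is non-constant, i.e. a genuine parametric curve of degree at most $d_1d_2$ through $a$.

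The crux is to show $\delta(\C)\subset\s_f$. Suppose not; since $\s_f$ is closed and $\delta$ is continuous, there is a non-empty open $\Omega\subset\C$ with $\delta(t)\notin\s_f$ for $t\in\Omega$, so $f$ is proper over a neighbourhood of $\delta(\Omega)$. Choose $d_1+1$ distinct points $t_0,\dots,t_{d_1}\in\Omega$ and a compact neighbourhood $K$ of $\{\delta(t_j)\}_j$ inside the properness locus. For large $k$ the values $\delta_k(t_j)$ lie in $K$, hence the (reparametrized) $\gamma_k(t_j)$ lie in the compact set $f^{-1}(K)$, so $\|\gamma_k(t_j)\|$ is bounded uniformly in $k$ and $j$. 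But the coefficients of a polynomial map of degree at most $d_1$ are fixed linear combinations, via Vandermonde inversion at $t_0,\dots,t_{d_1}$, of its values at those $d_1+1$ points; hence all coefficients of the reparametrized $\gamma_k$ are bounded, and in particular $\gamma_k(0)=x_k$ is bounded, contradicting $\|x_k\|\to\infty$. Therefore $\delta(t)\in\s_f$ for a dense set of $t$, and by closedness $\delta(\C)\subset\s_f$, which completes the construction and the proof.

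The main obstacle is exactly this final step: arranging a single reparametrization under which the target curves $\delta_k$ converge to a \emph{non-constant} limit while the source curves $\gamma_k$ still escape to infinity, so that the limit lands in $\s_f$ rather than in the finite part of $\overline{f(X)}$. The normalization by $\beta_k$ secures non-constancy, and the properness-versus-boundedness dichotomy above secures containment in $\s_f$; the generic finiteness of $f$ (used to keep $\delta_k$ non-constant) and the degree bound $\deg_t(f\circ\gamma_k)\le d_1d_2$ are precisely what couple the two sides. This refines the qualitative $\C$-uniruledness of $\s_f$ from Theorem~\ref{Sfkuni} into the quantitative bound $d_1d_2$.
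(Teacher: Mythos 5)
Your proof is correct, but it takes a genuinely different route from the paper's. The paper's argument is a short reduction: by condition (3) of Proposition \ref{k-uniruledofdeg} there is a dominant map $\phi:\C\times W\to X$ with $\dim W=\dim X-1$ and $\deg_t\phi\le d_1$; then $f\circ\phi$ is a generically finite map on a cylinder with $\deg_t(f\circ\phi)\le d_1d_2$, so Theorem \ref{cxw} bounds the degree of $\C$-uniruledness of $\s_{f\circ\phi}$ by $d_1d_2$; finally $\s_f\subset\s_{f\circ\phi}$ by Proposition \ref{zlozenie}, and since both sets are of pure dimension $\dim X-1$ by Theorem \ref{hiper}, every component of $\s_f$ is a component of $\s_{f\circ\phi}$ and inherits the bound. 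You instead work directly on $X$ with the covering-by-curves form of the hypothesis: push degree-$\le d_1$ curves through an escaping sequence forward by $f$, normalize, and extract a non-constant limit curve of degree $\le d_1d_2$ through $a$. The real novelty is your mechanism for showing the limit lies in $\s_f$: in the paper's limiting arguments (Theorems \ref{cn} and \ref{cxw}) the source curves are lines of explicit form, so one can check by hand that the reparametrized source points tend to infinity for all but one parameter value and then quote Proposition \ref{topo}; that pointwise-escape computation is unavailable for source curves of degree up to $d_1$ after an unknown reparametrization. Your substitute --- if the limit missed $\s_f$ on an open set, properness would bound the values of $\gamma_k$ at $d_1+1$ points, Vandermonde inversion would then bound all coefficients, and in particular $\gamma_k(0)=x_k$ would stay bounded, a contradiction --- is precisely what makes the limiting argument work over a general $\C$-uniruled source, and it subsumes the cylinder case. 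What each approach buys: the paper's is shorter given its infrastructure (Theorem \ref{cxw} plus Propositions \ref{zlozenie} and \ref{hiper}) and transfers the bound componentwise without ever revisiting $X$; yours is self-contained, uses Theorem \ref{hiper} only to know that $\s_f$ is a closed hypersurface with finitely many components, and makes visible where the factor $d_1d_2$ comes from. Two small points to polish: when you perturb $x_k$ off the positive-dimensional-fibre locus, say explicitly that the admissible distance is chosen by continuity of $f$ at the fixed point $x_k$ (polynomials are not uniformly continuous, so a blanket ``within $1/k$'' would not preserve $f(x_k)\to a$); and your closing ``dense set of $t$'' step is really just the statement that the open set $\C\setminus\delta^{-1}(\s_f)$ must be empty.
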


\begin{theoremm}
Let $X$ be a closed semialgebraic set with degree of $\R$-uni\-ruled\-ness at most $d_1$, and let 
$f:X\rightarrow\R^m$ be a generically finite polynomial mapping of degree $d_2$. 
Then the set $\s_f$ has degree of $\R$-uniruledness at most $2d_1d_2$ or it is empty.
\end{theoremm}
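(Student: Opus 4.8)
The plan is to reduce the statement to a non-properness problem on a two-dimensional cylinder and then to analyse the resulting curve at infinity. First recall that, since $f$ is generically finite, $\s_f$ is a semialgebraic hypersurface in the closure of $f(X)$, or it is empty; in the empty case there is nothing to prove. In the other case, by Proposition \ref{real} it suffices to produce, through every point $y$ of a dense subset of each irreducible component of $\s_f$, a real parametric curve contained in $\s_f$ of degree at most $2d_1d_2$. So I fix such a $y$ and use the asymptotic description of the non-properness set: $y\in\s_f$ exactly when there is a sequence $x_k\in X$ with $x_k\to\infty$ and $f(x_k)\to y$.

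Since $X$ has degree of $\R$-uniruledness at most $d_1$, condition $(3)$ of Proposition \ref{real} applies to the map $f$ and the sequence $(x_k)$: it yields a semialgebraic curve $W$, a generically finite polynomial map $\phi:\R\times W\to X$ with $\deg_t\phi\le d_1$, and a sequence $(y_k)\subset\R\times W$ with $f(\phi(y_k))\to y$ and $\phi(y_k)\to\infty$. Put $g=f\circ\phi:\R\times W\to\R^m$; then $\deg_t g\le d_1d_2$, the map $g$ is generically finite, and since $\phi(y_k)\to\infty$ forces $y_k\to\infty$, we obtain $y\in\s_g$. Thus the problem reduces to the cylinder case: for a generically finite polynomial map $g:\R\times W\to\R^m$ on the cylinder over a semialgebraic curve $W$, with $D:=\deg_t g\le d_1d_2$, one must show that $y\in\s_g$ lies on a nonconstant real parametric curve of degree at most $2D$ that is moreover contained in $\s_f$.

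For the cylinder analysis, write $g(t,w)=\sum_{j=0}^{D}c_j(w)t^j$ with $c_j:W\to\R^m$ polynomial. By the real Curve Selection Lemma (the real analogue of Lemma \ref{curve}) I choose a semialgebraic arc $s\mapsto(t(s),w(s))$, $s\in(0,\varepsilon)$, running to infinity in $\R\times W$ with $g(t(s),w(s))\to y$. Letting the arc vary over the branches at infinity along which $g$ stays bounded, the corresponding limits sweep out a semialgebraic curve $\Gamma\subset\R^m$ through $y$; bookkeeping with the leading coefficients $c_j(w)$ shows that, over the algebraic closure, $\Gamma$ lies on an algebraic curve admitting a rational parametrization of degree at most $D$, in parallel with the complex bound. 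The feature special to the real field is that Curve Selection produces a one-sided arc (a Puiseux arc in $s^{1/N}$), and to present the whole of $\Gamma$ as the image of a single polynomial map $\R\to\Gamma$ one must fold the parameter, substituting $s=\tau^2$, exactly as the ray $\{(a,t^2):t\ge 0\}$ realizes a half-line of $\R$-uniruledness degree two although its complexification has degree one (cf. the Example following Definition \ref{r-uniruleddef}). This doubling turns the bound $D$ into $2D=2d_1d_2$.

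It remains to check that $\Gamma$ is genuinely a curve inside $\s_f$. Containment follows because each point of $\Gamma$ is a limit $g(t(s),w(s))=f(\phi(t(s),w(s)))$ along an arc on which $\phi\to\infty$ in $X$, so the same arc witnesses membership in $\s_f$; nonconstancy follows from the generic finiteness of $g$, which prevents the branch at infinity from being contracted to a single point. Running the construction for $y$ in a dense subset of each component of $\s_f$ covers $\s_f$ by real parametric curves of degree at most $2d_1d_2$, and Proposition \ref{real} upgrades this dense covering to the full degree bound. I expect the main obstacle to be precisely the third paragraph: controlling the degree of the limit curve $\Gamma$ through the coefficients $c_j(w)$ and rigorously converting the one-sided real arc into an honest polynomial ruling of $\Gamma$ with only a factor-two loss, while simultaneously guaranteeing $\Gamma\subset\s_f$ rather than merely $\Gamma\subset\s_g$.
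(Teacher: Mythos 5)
Your opening reduction is exactly the paper's: apply condition $(3)$ of Proposition \ref{real} to the witnessing sequence, set $g=f\circ\phi$ on the two-dimensional cylinder $\R\times W$ with $\deg_t g\le d_1d_2$, and note $y\in\s_g$. After that, however, your third paragraph asserts rather than proves precisely the hard content of the theorem. "Bookkeeping with the leading coefficients $c_j(w)$" is not an argument: the paper gets the degree bound by complexifying the cylinder, resolving the indeterminacy of the induced rational maps at infinity, using that the boundary divisor is a tree (Proposition \ref{acykl}) so that each component of the non-properness set is the image of a $\p^1$ in the exceptional locus meeting the curve $L=F^{-1}(\pi_\infty)$ in exactly one point, invoking Theorem \ref{cxw} for the complex bound $d_1d_2$, and then applying Lemma \ref{xx}. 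Note also that the factor $2$ does not come from folding a one-sided Puiseux arc by $s=\tau^2$ as you suggest; it comes from Lemma \ref{xx}, a L\"{u}roth-type statement: a real parametric curve contained in the image of a parametric curve of degree at most $d$ has degree of $\R$-uniruledness at most $2d$, because the minimal (L\"{u}roth) parametrization may cover only part of the real point set and must be composed with a polynomial of degree two.

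There is a second, independent gap in your containment argument. You claim each point of $\Gamma$ lies in $\s_f$ because "the same arc witnesses membership": but an arc tending to infinity in $\R\times W$ need not be mapped by $\phi$ to an unbounded arc in $X$. The map $\phi$ is only generically finite and can contract branches at infinity of the cylinder to bounded subsets of $X$; along such a branch $g=f\circ\phi$ converges to a point of $f(X)$ that has no reason to lie in $\s_f$, so $\s_g$ is in general not contained in $\s_f$, and the component of $\s_g$ through $y$ may leave $\s_f$. Only the single arc furnished by Proposition \ref{real}$(3)$ is known to satisfy $\phi(y_k)\to\infty$. This is exactly the difficulty that forces the paper's detour: after resolution one restricts to $Q=\overline{Z_m}\setminus\phi'^{-1}(\overline{X^c}\setminus X^c)$, on which $\phi'$ \emph{is} proper, so that escape to infinity in $Q^r$ forces escape to infinity in $X$, giving $a\in\s\subset\s_f$ for the non-properness set $\s$ of $f'\vert_{Q^r}$, whose components are then matched with components of $\s_{f\circ\phi}$. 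Without some substitute for this properness mechanism, and without a real proof of the degree bound in your third paragraph, the proposal does not establish the theorem.
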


\begin{theoremm}
Let $\K$ be an arbitrary algebraically closed field, and let
$f:\K^n\rightarrow\K^m$ be a generically finite polynomial mapping of degree
$d$. Then the set $\s_f$ has degree
of $\K$-uniruledness at most $d$ or it is empty.
\end{theoremm}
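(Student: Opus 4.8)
The plan is to produce, through a general point of each irreducible component of $\s_f$, a parametric curve of degree at most $d$ that is contained in $\s_f$. By Proposition \ref{k-uniruledofdeg} (the equivalence of conditions $(1)$ and $(2)$) this already yields the asserted degree bound, and by Theorem \ref{Sfkuni} we know that $\s_f$ is $\K$-uniruled, so only the \emph{control of the degree} is at stake. Recall that $\s_f$, when nonempty, is a hypersurface in the closure of $f(\K^n)$; this fixes the dimension count needed at the end.

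I would start from the description of non-properness points by arcs at infinity. Fix a general point $y_0\in\s_f$. There is a branch of $\K^n$ running to infinity whose $f$-image accumulates at $y_0$, and applying the Curve Selection Lemma \ref{curve} I obtain a rational arc $\gamma\colon\K\dashrightarrow\K^n$, regular on a punctured neighbourhood of $s=0$, with $\gamma(s)\to\infty$ as $s\to 0$ while $f(\gamma(s))$ extends regularly over $s=0$ with value $y_0$. If convenient, Proposition \ref{indep} lets me enlarge the base field so that curve selection and the genericity arguments are available, without changing the degree of $\K$-uniruledness.

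The degree bound then comes from perturbing the arc \emph{affinely} in a new parameter $t$. I choose a second arc $\eta\colon\K\dashrightarrow\K^n$ and set $\gamma_t(s):=\gamma(s)+t\,\eta(s)$; because $f$ has degree $d$ and $\gamma_t$ is affine-linear in $t$, the expression $f(\gamma_t(s))$ is, for each fixed $s$, a polynomial of degree at most $d$ in $t$. If $\eta$ is chosen so that $f(\gamma_t(s))$ still extends regularly over $s=0$ for all $t$, then
\[
\delta(t):=\lim_{s\to 0} f\bigl(\gamma(s)+t\,\eta(s)\bigr)
\]
is a polynomial map $\K\to\K^m$ of degree at most $d$ with $\delta(0)=y_0$, and since $\gamma_t(s)\to\infty$ while its image converges, each $\delta(t)$ lies in $\s_f$ (for the exceptional $t$ at which the pole degenerates one uses that $\s_f$ is closed). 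Thus $\delta$ is a parametric curve of degree at most $d$ inside $\s_f$ through $y_0$. Letting the base data $(\gamma,\eta,y_0)$ range over a $(\dim\s_f-1)$-dimensional family assembles these into a dominant map $\phi\colon\K\times W\to\s_f$ with $\deg_t\phi\le d$, which is condition $(3)$ of Proposition \ref{k-uniruledofdeg}.

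I expect the degree estimate itself to be the easy part; the main obstacle is the construction of the perturbing arc $\eta$, namely showing that a nonconstant such $\eta$ exists for which $\delta$ stays regular in $t$ and stays inside $\s_f$, and that as $y_0$ and the arcs vary the resulting curves dominate the whole component (the correct-dimension bookkeeping). Concretely this amounts to choosing $\eta$ along an indeterminacy direction of the compactified map $F\colon\p^n\dashrightarrow\p^m$, $[x_0:x]\mapsto[x_0^d:\tilde f_1:\dots:\tilde f_m]$, so that the point of $\gamma$ on the hyperplane at infinity $H_\infty=\{x_0=0\}$, which lies in the base locus of $F$, is preserved under the perturbation. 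Passing to a resolution $p\colon Z\to\p^n$ of $F$ with induced morphism $q\colon Z\to\p^m$, the uniruledness of the divisor at infinity $p^{-1}(H_\infty)$ supplies a whole family of such directions, and the identity $\s_f=q\bigl(p^{-1}(H_\infty)\bigr)\cap\K^m$ guarantees that the curves $\delta$ sweep out $\s_f$. Over $\C$ or $\R$ one can additionally reparametrize the variable $s$ to absorb one unit of degree, which is what yields the sharper bound $d-1$; over an arbitrary algebraically closed field this improvement is unavailable and one settles for $d$.
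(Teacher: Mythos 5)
Your construction is essentially the complex proof of Theorem \ref{cn} transplanted: perturb an arc at infinity affinely in a new parameter $t$, note that $f$ composed with anything affine-linear in $t$ has degree at most $d$ in $t$, and pass to the limit $s\to 0$. The degree mechanism is sound (and is in fact the same observation the paper uses), but the argument as proposed fails precisely in the generality the theorem claims, namely positive characteristic; in characteristic zero the statement already follows from Theorem \ref{cn} via Proposition \ref{indep}, so positive characteristic is the whole point. Concretely: (i) your final step resolves the indeterminacy of $F\colon\p^n\dashrightarrow\p^m$ and invokes uniruledness of the divisor $p^{-1}(H_\infty)$; resolution of indeterminacy is not available over fields of positive characteristic for $n>3$, and uniruledness of exceptional divisors is a characteristic-zero fact, so the construction you lean on cannot even begin. (ii) Your criterion for membership in $\s_f$ ("$\gamma_t(s)\to\infty$ while the image converges") is Proposition \ref{topo}, a statement about $\C$; over an arbitrary $\K$ there is no metric topology, and enlarging the base field by Proposition \ref{indep} does not create one, so all the limit statements must be re-expressed by specialization arguments that the proposal does not supply. (iii) The curve produced by Lemma \ref{curve} need not be rational, so the rational arc $\gamma\colon\K\dashrightarrow\K^n$ need not exist; one would need a formal arc on a normalization. (iv) Most importantly, you never show the limit curve $\delta$ is nonconstant; in the complex proof this is exactly what the reparametrization $t\mapsto\lambda_k t$ and the normalization $\Vert l_k\Vert=1$ on the compact unit sphere accomplish. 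You correctly flag this as the main obstacle, but the sketch you offer to close it is item (i), which is unavailable.

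The paper's proof takes a different, purely algebraic route, and the comparison shows exactly which tool you are missing. It works in $\overline{\graph(f)}\subset\p^n\times\K^m$, where $\s_f=p_2(\overline{\graph(f)}\setminus\graph(f))$ by Proposition \ref{graph}. In the chart $\{x_i\neq 0\}$, the slices $A_c=\overline{\graph(f)}\cap\{x_i\neq 0\}\cap\{x_0=cx_i\}$ with $c\neq 0$ lie in $\graph(f)$ and are ruled by images of coordinate lines of $\K^n$ --- explicit parametric curves of degree at most $d$, which is your degree observation. The degeneration to the slice at infinity $A_0\subset\{x_0=0\}$ is then handled by Lemma \ref{tool}: parametric curves of degree at most $d$ contained in $A$ on which $x_0/x_i$ is constant are parametrized by a family involving the complete variety $\p^{dn-1}$, so the locus they sweep out is closed and therefore contains $A_0$. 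Completeness of projective space replaces both the compactness of the unit sphere and the resolution of indeterminacy, it works uniformly in every characteristic, and nonconstancy is automatic (the curves upstairs have a nonzero coefficient vector in $\p^{dn-1}$, and a dimension count disposes of the curves contracted by $p_2$). If you want to salvage your limit argument over an arbitrary $\K$, this closedness-via-completeness lemma is the missing ingredient.
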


%%%%%%%%%%%%%%%%%%%%%%%%%%%%%%%%%%%%%%%%%%%%%%%%%%%%%%%%%%%%%%%%%%%%%%%%%%%%%%%%%%%%%%%%%%%%%%%%%%%%%%%%%%%%%%%%%%%%%%%%%%%%%%%%%%%%%%%%%%%%%%%%%%%%%%%%%%%%%%%%%%%%%%%%%%
\section{The set $\s_f$ of points at which regular map $f$ is not proper}
%%%%%%%%%%%%%%%%%%%%%%%%%%%%%%%%%%%%%%%%%%%%%%%%%%%%%%%%%%%%%%%%%%%%%%%%%%%%%%%%%%%%%%%%%%%%%%%%%%%%%%%%%%%%%%%%%%%%%%%%%%%%%%%%%%%%%%%%%%%%%%%%%%%%%%%%%%%%%%%%%%%%%%%%%%

We begin with some well-known facts concerning proper maps between topological spaces. It will provide a motivation for the definition of proper maps between affine varieties over arbitrary field.

\begin{definition}
Let $X,Y$ be topological spaces, and let $f:X\rightarrow Y$ be a continuous map. We say that $f$ is \textup{proper} if for every compact set $K\subset Y$ the set $f^{-1}(K)$ is also compact. 
\end{definition}

\begin{definition}
Let $X,Y$ be topological spaces, and let $f:X\rightarrow Y$ be a continuous map. We say that $f$ is \textup{proper at a point $y\in\overline{f(X)}$} if there exists an open neighborhood $U$ of $y$, such that $f\vert_{f^{-1}(U)}:f^{-1}(U)\rightarrow U$ is a proper map. 
\end{definition}

\begin{proposition}\label{topeq}
Let $X$ be a topological space, $Y$ be a locally compact topological space, and let $f:X\rightarrow Y$ be a continuous map. Then the following conditions are equivalent:
\begin{enumerate}
\item $f$ is proper,
\item $f$ is proper at every point of $\overline{f(X)}$.
\end{enumerate}
\end{proposition}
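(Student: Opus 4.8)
The plan is to treat the two implications separately, with $(1)\Rightarrow(2)$ being essentially free and all the content residing in the converse. For $(1)\Rightarrow(2)$ I would simply take $U=Y$ as the required open neighborhood of an arbitrary point $y\in\overline{f(X)}$: then $f^{-1}(Y)=X$ and the restriction $f|_X=f$ is proper by hypothesis, so $f$ is proper at $y$. Note that local compactness of $Y$ plays no role in this direction.

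For $(2)\Rightarrow(1)$, I would fix a compact set $K\subset Y$ and aim to show that $f^{-1}(K)$ is compact. The first reduction is to replace $K$ by $K'=K\cap\overline{f(X)}$: since $f(X)\subset\overline{f(X)}$ we have $f^{-1}(\overline{f(X)})=X$, hence $f^{-1}(K)=f^{-1}(K')$, and $K'$ is compact, being a closed subset of the compact set $K$ (it is the trace on $K$ of the closed set $\overline{f(X)}$). For each point $y\in K'$ the hypothesis provides an open neighborhood $U_y$ on which the restriction $f|_{f^{-1}(U_y)}\colon f^{-1}(U_y)\to U_y$ is proper. Using local compactness of $Y$, I would then shrink each $U_y$ to an open set $V_y$ with $y\in V_y$, with $\overline{V_y}$ compact and $\overline{V_y}\subset U_y$; this is the standard fact that in a locally compact space the relatively compact open sets contained in a given neighborhood form a neighborhood basis.

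Next I would pass to a finite subcover: since $K'$ is compact and $K'\subset\bigcup_{y\in K'}V_y$, finitely many $V_{y_1},\dots,V_{y_k}$ already cover $K'$. The decisive observation is that each $K'\cap\overline{V_{y_i}}$ is a compact subset of $U_{y_i}$ — compact because it is closed inside the compact set $\overline{V_{y_i}}$, and contained in $U_{y_i}$ because $\overline{V_{y_i}}\subset U_{y_i}$. Applying properness of $f|_{f^{-1}(U_{y_i})}$ to this compact set shows that $f^{-1}(K'\cap\overline{V_{y_i}})$ is compact. Taking the finite union $C=\bigcup_{i}f^{-1}(K'\cap\overline{V_{y_i}})$ yields a compact set, and from $K'=\bigcup_i(K'\cap V_{y_i})\subset\bigcup_i(K'\cap\overline{V_{y_i}})$ we get $f^{-1}(K')\subset C$. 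Finally, $f^{-1}(K')$ is closed in $X$ (as $f$ is continuous and $K'$ is closed), hence closed in the compact set $C$ containing it, and therefore compact; this shows $f^{-1}(K)=f^{-1}(K')$ is compact.

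I expect the patching step to be the main obstacle: the naive attempt to argue directly with the neighborhoods $U_{y_i}$ fails because their preimages need not be compact, and the resolution is precisely to introduce the relatively compact shrinkings $V_{y_i}$ — this is exactly the point where local compactness of $Y$ is indispensable. The only further subtlety worth recording is that the closing compactness argument does not require $X$ to be Hausdorff, since a closed subset of a compact set is compact in any topological space, and $f^{-1}(K')$ is such a closed subset of $C$.
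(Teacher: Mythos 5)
Your proof is correct and follows essentially the same route as the paper: the trivial choice $U=Y$ for $(1)\Rightarrow(2)$, and for the converse the reduction to $K'=K\cap\overline{f(X)}$, the locally compact shrinking $\overline{V_y}\subset U_y$, a finite subcover, and patching the compact preimages $f^{-1}(K'\cap\overline{V_{y_i}})$. The only (harmless) difference is your final step via ``closed subset of a compact set''; in fact $K'=\bigcup_i(K'\cap\overline{V_{y_i}})$, so your inclusion $f^{-1}(K')\subset C$ is already an equality, which is how the paper concludes.
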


\begin{proof}
One implication is obvious. Namely if $f$ is proper then it is also proper at every point, we can take an open neighborhood $U=Y$. 

To prove the other implication suppose that $f$ is proper at every point. Let $K\subset Y$ be a compact set, so $K':=K\cap\overline{f(X)}$ is also 
compact. For any $y\in K'$ there exists an open neighborhood $U_y$ such that $f\vert_{f^{-1}(U_y)}$ is proper. From local compactness we have that 
for any $y\in K'$ there exists an open neighborhood $V_y$ such that $\overline{V_y}$ is compact subset of $U_y$. The family of sets $\{V_y\}_{y\in K'}$ forms an open cover of compact set $K'$. Hence there are $y_1,\dots,y_k\in K'$ such that $K'\subset V_{y_1}\cup\dots\cup V_{y_k}$. Therefore:
$$f^{-1}(K)=f^{-1}(K')=(f\vert_{f^{-1}(U_1)})^{-1}(K'\cap\overline{V_1})\cup\dots\cup (f\vert_{f^{-1}(U_k)})^{-1}(K'\cap\overline{V_k}).$$ 
The right hand side is a finite sum of compact sets, so $f^{-1}(K)$ is also compact.
\end{proof}

If $X,Y$ are affine varieties over $\C$, then there exists a purely algebraic condition for a regular map $f:X\rightarrow Y$ to be proper. 

\begin{proposition}
Let $X,Y$ be affine varieties over $\C$, and let $f:X\rightarrow Y$ be a regular map. Then $f$ is proper in the classical topology if and only if $f^*:\C[Y]\ni h\rightarrow h\circ f\in\C[X]$ is an integral extension of rings.
\end{proposition}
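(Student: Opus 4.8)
The plan is to prove the two implications separately, the direction from integrality to properness being routine and the converse being the substantial one.

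For the implication that integrality of $f^*$ forces properness, I would embed $X\subseteq\C^n$ and use that each coordinate function $x_i\in\C[X]$ satisfies a monic relation $x_i^{d_i}+f^*(b_{i,1})x_i^{d_i-1}+\dots+f^*(b_{i,d_i})=0$ with $b_{i,j}\in\C[Y]$. Given a compact $K\subseteq Y$, on $f^{-1}(K)$ the coefficients $b_{i,j}\circ f$ take values in the compact set $b_{i,j}(K)$ and are therefore bounded; since the roots of a monic polynomial are controlled by its coefficients, each $|x_i|$ is bounded on $f^{-1}(K)$. Hence $f^{-1}(K)$ is a bounded subset of $\C^n$, and it is closed because $f$ is continuous and $X$ is closed, so it is compact. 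This is exactly properness of $f$.

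For the converse, assume $f$ is proper. By definition of integral extension it suffices to show that an arbitrary $g\in\C[X]$ is integral over $f^*\C[Y]$, and I would do this by passing to the graph map $\Phi=(f,g)\colon X\to Y\times\C$. A short check shows $\Phi$ is again proper: for compact $K\subseteq Y\times\C$ one has $\Phi^{-1}(K)\subseteq f^{-1}(\pi(K))$, where $\pi\colon Y\times\C\to Y$ is the projection, so $\Phi^{-1}(K)$ is a closed subset of a compact set. A proper continuous map into a locally compact Hausdorff space is closed (Proposition \ref{topeq}), so $\Gamma:=\Phi(X)$ is closed in the classical topology; being also constructible by Chevalley's theorem, it is Zariski closed. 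Writing $p=\pi|_\Gamma\colon\Gamma\to Y$, one checks $p^{-1}(K)=\Phi(f^{-1}(K))$ is compact for compact $K\subseteq Y$, so $p$ is proper as well.

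The heart of the argument is then a local statement for a Zariski closed $\Gamma\subseteq Y\times\C$ whose projection $p\colon\Gamma\to Y$ is proper: the last coordinate $t$ is integral over the image of $\C[Y]$ in $\C[\Gamma]$. I would take the Zariski closure $\overline\Gamma$ inside $Y\times\p^1$ and show $\overline\Gamma\cap(Y\times\{\infty\})=\emptyset$. If this intersection were nonempty there would be a point $(y_0,\infty)\in\overline\Gamma$, and since Euclidean and Zariski closures of a constructible set agree there would be a sequence $(y_k,t_k)\in\Gamma$ with $y_k\to y_0$ and $|t_k|\to\infty$; then $K=\{y_0\}\cup\{y_k:k\in\N\}$ is compact while $p^{-1}(K)$ contains the escaping sequence $(y_k,t_k)$ and so is not compact, contradicting properness of $p$. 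With $\overline\Gamma$ disjoint from infinity we get $\overline\Gamma=\Gamma$, a closed subvariety of $Y\times\p^1$ that is thus proper over $Y$ and at the same time affine over $Y$, hence finite over $Y$; this finiteness is precisely the integrality of $t$. Restricting the resulting monic relation along $\Phi$ and using $t\circ\Phi=g$ together with $\overline{p(\Gamma)}=\overline{f(X)}$ converts it into a monic relation for $g$ over $f^*\C[Y]$, which finishes the proof.

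I expect the main obstacle to be the converse direction, and within it the two structural passages: from "$\Gamma$ is closed in the classical topology" to "$\Gamma$ is Zariski closed" (via constructibility and the coincidence of the two closures), and the projective-closure dichotomy at infinity, which is where the topological properness of $p$ is converted into the algebraic finiteness that yields integrality. The forward direction and the reduction to a single function $g$ are routine by comparison.
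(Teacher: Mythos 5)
Your proof is correct, but it takes a genuinely different route from the paper, which in fact offers no argument at all: it disposes of the proposition as ``an easy consequence of \cite{twwi} and the fact that monic polynomials with bounded coefficients have bounded roots.'' Your forward direction (integrality implies properness) is exactly the paper's second ingredient, spelled out in full; for the substantial converse, the paper outsources everything to the analytic theorem of Tworzewski--Winiarski on analytic sets with proper projections, whereas you rebuild that input algebraically: the graph map $\Phi=(f,g)$, Chevalley's theorem together with the coincidence of Euclidean and Zariski closures of constructible sets to see that $\Gamma=\Phi(X)$ is Zariski closed, the escape-to-infinity argument showing the closure of $\Gamma$ in $Y\times\p^1$ misses $Y\times\{\infty\}$, and finally ``affine $+$ proper $\Rightarrow$ finite'' to extract a monic equation for the fiber coordinate $t$, hence for $g$ after pulling back along $\Phi$. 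This is the same underlying mechanism the citation supplies analytically (a subset of $Y\times\C$ with proper projection onto $Y$ is a branched cover, so its fiber coordinate is integrally dependent), but your version stays entirely in the algebraic category and is self-contained, at the cost of being longer than the paper's one-line citation. Two minor points, neither a gap: your appeal to Proposition \ref{topeq} for ``a proper map into a locally compact Hausdorff space is closed'' is a misattribution, since that proposition states the local-to-global equivalence for properness rather than closedness (the fact itself is standard); and the final finiteness step deserves its own reference --- either the Stein-factorization proof that affine proper morphisms are finite, or the statement that a proper morphism with finite fibers is finite, which is precisely what the paper itself invokes from \cite{ii} in the proof of Proposition \ref{graph}.
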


%(ewentualnie napisać dowód)

It an easy consequence of \cite{twwi} and the fact that monic polynomials with bound\-ed coefficients have bounded roots.

Motivated by the above equivalence we are going to extend the definition of proper maps to maps between affine varieties over 
arbitrary algebraically closed field. 

For maps between affine varieties the properties of being finite and of being proper coincide, hence we define finite maps. 

\begin{definition}\label{properdef}
Let $X,Y$ be affine varieties, and let $f:X\rightarrow Y$ be a regular, dominant map. We say that $f$ is \textup{finite} if the 
induced ring extension $f^*:\K[Y]\ni h\rightarrow h\circ f\in\K[X]$ is an integral extension of rings.
\end{definition}

Finally we are ready to introduce the set $\s_f$. Let $f:X\rightarrow Y$ be a generically finite regular map of affine varieties. A measure of non-properness of $f$ is the set $\s_f$ of points, at which the map $f$ is not proper. 

\begin{definition}
Let $X,Y$ be affine varieties, and let $f:X\rightarrow Y$ be a regular map. We say that $f$ is \textup{proper} at a point $y\in\overline{f(X)}$, 
if there exists a Zariski open neighborhood $U$ of $y$ such that $f^{-1}(U)$ is also affine, and 
$$f\vert_{f^{-1}(U)}:f^{-1}(U)\rightarrow U$$ is a finite map. We denote the set of points from $\overline{f(X)}$ at which $f$ is not proper by $\s_f$.
\end{definition}

The set $\s_f$ was first introduced by Jelonek in \cite{je93}, and it is the main object of study of the papers \cite{je99, je01, st, st2}. The set $\s_f$ indicates how far map $f$ is from a proper mapping, and therefore it is important in affine algebraic geometry as well as in applied mathematics (\cite{ha, je03, je05, sa}).

Being finite is a local property. We have the following equivalences in the spirit of Proposition \ref{topeq}. 

\begin{proposition}[\cite{sh}, page $62$, Theorem $5$] 
Let $X,Y$ be affine varieties, and let $f:X\rightarrow Y$ be a regular map. The following conditions are equivalent:
\begin{enumerate}
\item $f$ is finite,
\item for every open set $U\subset Y$ such that $f^{-1}(U)$ is affine the restriction $f\vert_{f^{-1}(U)}:f^{-1}(U)\rightarrow U$ is finite,
\item for every $y\in\overline{f(X)}$ the map $f$ is proper at $y$.
\end{enumerate}
\end{proposition}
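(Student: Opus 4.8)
The plan is to pass immediately to coordinate rings: set $A=\K[Y]$, $B=\K[X]$, and study the induced homomorphism $\phi=f^{*}:A\to B$. A harmless preliminary reduction replaces $Y$ by $\overline{f(X)}$, so that $f$ becomes dominant and $\phi$ injective; this alters none of the three conditions, since (2) and (3) only ever refer to points of $\overline{f(X)}$ and to preimages $f^{-1}(U)$, which are unchanged. Because the coordinate ring of an affine variety is a finitely generated $\K$-algebra, for this extension ``integral'' (the notion in Definition \ref{properdef}) and ``module-finite'' coincide, and I will use two standard algebraic facts. First, \emph{localization}: if $B$ is module-finite over $A$, then $B_{\phi(g)}$ is module-finite over $A_{g}$ for every $g\in A$. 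Second, and this is the crux, the \emph{local-to-global principle}: if $g_{1},\dots,g_{k}\in A$ generate the unit ideal and each $B_{\phi(g_{i})}$ is module-finite over $A_{g_{i}}$, then $B$ is module-finite over $A$.

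I would then prove the cycle $(1)\Rightarrow(2)\Rightarrow(3)\Rightarrow(1)$. For $(1)\Rightarrow(2)$, given an open $U$ with $f^{-1}(U)$ affine, I cover $U$ by principal opens $D(g_{i})$ of $Y$ contained in $U$ (these form a basis). For each, $f^{-1}(D(g_{i}))=D(\phi(g_{i}))$ is a principal open of the affine variety $f^{-1}(U)$ with coordinate ring $B_{\phi(g_{i})}$, which by localization is module-finite over $A_{g_{i}}=\K[D(g_{i})]$; since finiteness of a morphism is an affine-local property on the target, this yields finiteness of $f|_{f^{-1}(U)}:f^{-1}(U)\to U$. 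The implication $(2)\Rightarrow(3)$ is immediate: taking $U=Y$, so that $f^{-1}(U)=X$ is affine, condition (2) asserts precisely that $f$ itself is finite, and then $U=Y$ witnesses properness of $f$ at every point of $\overline{f(X)}$.

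The substance lies in $(3)\Rightarrow(1)$. For each $y\in\overline{f(X)}=Y$, condition (3) supplies an open $U_{y}$ with $f^{-1}(U_{y})$ affine on which $f$ restricts to a finite map. I shrink $U_{y}$ to a principal open $D(g_{y})\ni y$ with $D(g_{y})\subseteq U_{y}$; then $f^{-1}(D(g_{y}))$ is a principal localization of the affine variety $f^{-1}(U_{y})$, so $B_{\phi(g_{y})}$ is module-finite over $A_{g_{y}}$ by the localization fact applied to the finite restriction. As $Y$ is Noetherian, hence quasi-compact, finitely many of these principal opens $D(g_{1}),\dots,D(g_{k})$ already cover $Y$; having no common zero on $Y$, the Nullstellensatz (together with reducedness of $A$) forces $g_{1},\dots,g_{k}$ to generate the unit ideal of $A$. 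The local-to-global principle then gives that $B$ is module-finite over $A$, i.e. that $f$ is finite.

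The main obstacle is exactly this local-to-global principle, which I would establish by hand. Choose finitely many elements $b_{ij}\in B$ generating each $B_{\phi(g_{i})}$ over $A_{g_{i}}$, and let $M\subseteq B$ be the finitely generated $A$-submodule they span; the goal is $M=B$. For any $b\in B$ and each $i$, clearing denominators in the equality expressing $b/1$ in $B_{\phi(g_{i})}$ yields $\phi(g_{i})^{s_{i}}b\in M$, hence $\phi(g_{i})^{N}b\in M$ for a common $N$. Writing $1=\sum_{i}c_{i}g_{i}^{N}$ in $A$ (legitimate because the $g_{i}$, and so the $g_{i}^{N}$, generate the unit ideal) and applying $\phi$ gives $b=\sum_{i}\phi(c_{i})\phi(g_{i})^{N}b\in M$, so $M=B$. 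The two points demanding care are the passage from ``the $D(g_{i})$ cover $Y$'' to ``the $g_{i}$ generate $(1)$'', resting on the Nullstellensatz, and the bookkeeping of denominators needed to land the generators back inside $B$.
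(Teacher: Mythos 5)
Your proof is correct, but there is nothing in the paper to compare it against: the paper states this proposition with a citation to Shafarevich (\cite{sh}, page 62, Theorem 5) and gives no proof of its own. What you have written is a correct, self-contained version of the standard ``finiteness is affine-local on the target'' argument: the identification of integral with module-finite for finitely generated algebras, the localization fact, the local-to-global principle (your by-hand proof, clearing denominators and writing $1=\sum_i c_i g_i^N$, is complete), and quasi-compactness plus the Nullstellensatz to convert a covering of $Y$ by principal opens into a partition of unity in $\K[Y]$. Two small points deserve a sentence each, though neither is a gap. First, in conditions (2) and (3) the open set $U$ is not assumed affine, so ``the restriction is finite'' must be read either as integrality of $\mathcal{O}(U)\to\mathcal{O}(f^{-1}(U))$ or in the local sense; your shrinking to principal opens handles this, but in $(3)\Rightarrow(1)$ the passage from finiteness of $f\vert_{f^{-1}(U_y)}$ to module-finiteness of $B_{\phi(g_y)}$ over $A_{g_y}$ tacitly uses that the $\mathcal{O}(U_y)$-module structure on $\K[f^{-1}(D(g_y))]$ factors through the restriction map $\mathcal{O}(U_y)\to\K[D(g_y)]=A_{g_y}$, i.e.\ that pullback commutes with restriction; this is true but should be said. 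Second, the paper's Definition \ref{properdef} of a finite map requires dominance, so one should remark that a finite map is surjective (lying over), whence its restrictions over open subsets of $\overline{f(X)}$ remain dominant; your initial reduction to $Y=\overline{f(X)}$ together with this observation closes the definitional loop.
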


The above Proposition enables us to extend Definition \ref{properdef} of finite maps.

\begin{definition}
Let $X,Y$ be quasiprojective varieties, and let $f:X\rightarrow Y$ be a regular map. We say that $f$ is \textup{finite} if for every point $y\in Y$ exists an affine neighborhood $U$ such that $f^{-1}(U)$ is affine and 
$$f\vert_{f^{-1}(U)}: f^{-1}(U)\rightarrow U$$
is a finite map between affine varieties.
\end{definition}

The following properties of the set of points, at which a map is not proper, will be frequently used.

\begin{proposition}\label{zlozenie}
Let $X,Y,Z$ be affine varieties, and let $f:X\rightarrow Y$, $g:Y\rightarrow Z$ be regular dominant maps. Then $\s_g\subset \s_{g\circ f}$.
\end{proposition}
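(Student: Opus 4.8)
The plan is to prove the contrapositive: if $g\circ f$ is proper at a point $z$, then $g$ is proper at $z$. Since $f$ and $g$ are dominant we have $\overline{f(X)}=Y$ and hence $\overline{(g\circ f)(X)}=\overline{g(Y)}$, so $\s_g$ and $\s_{g\circ f}$ live in the same ambient set $\overline{g(Y)}$ and the asserted inclusion is meaningful. Throughout I would work ring-theoretically, using Definition \ref{properdef}, so that ``proper at a point'' becomes a statement about integral ring extensions over a suitable neighbourhood.

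First I would reduce to a distinguished open neighbourhood. By hypothesis there is an open $U_0\ni z$ with $(g\circ f)^{-1}(U_0)$ affine and the restriction $(g\circ f)|_{(g\circ f)^{-1}(U_0)}$ finite. I choose $h\in\K[Z]$ with $h(z)\neq 0$ and $Z_h:=\{h\neq 0\}\subset U_0$ (possible because $z\notin Z\setminus U_0$). Localising a module-finite extension at $h$ keeps it module-finite, so the composite map restricted over $Z_h$ stays finite; concretely, writing $\mu=f^{*}g^{*}h=(g\circ f)^{*}h$, the extension $\K[Z]_h\to\K[X]_\mu$ is integral, where $X_\mu=(g\circ f)^{-1}(Z_h)$ is a principal, hence affine, open of $X$.

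The key point is then to insert the intermediate variety. Set $Y_{g^{*}h}=g^{-1}(Z_h)$, which is a principal open of the affine variety $Y$, hence affine, and note $f^{-1}(Y_{g^{*}h})=X_\mu$. Because $h(z)\neq 0$ with $z\in\overline{g(Y)}$, the function $g^{*}h$ does not vanish identically on $Y$, and a short check shows the restricted maps $g\colon Y_{g^{*}h}\to Z_h$ and $f\colon X_\mu\to Y_{g^{*}h}$ remain dominant; dominance yields injections $\K[Z]_h\hookrightarrow\K[Y]_{g^{*}h}\hookrightarrow\K[X]_\mu$. Since $\K[X]_\mu$ is integral over $\K[Z]_h$, every element of the intermediate ring $\K[Y]_{g^{*}h}$ is integral over $\K[Z]_h$ as well. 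As $Y_{g^{*}h}$ is affine, $\K[Y]_{g^{*}h}$ is a finitely generated $\K$-algebra, so being integral it is in fact module-finite over $\K[Z]_h$; thus $g\colon Y_{g^{*}h}\to Z_h$ is finite. Since $Y_{g^{*}h}=g^{-1}(Z_h)$ is affine, this exhibits $Z_h$ as a neighbourhood witnessing that $g$ is proper at $z$, completing the contrapositive.

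The main obstacle is the bookkeeping of affineness: the definition of properness at a point requires the relevant preimage to be affine, and this is exactly what forces the passage to distinguished opens $Z_h$, for which $g^{-1}(Z_h)=Y_{g^{*}h}$ is automatically a principal, hence affine, open. Once affineness is secured, the whole argument rests on two standard facts — that integrality is inherited by an intermediate subring of an integral extension, and that an integral, finitely generated algebra is module-finite — together with the elementary verification that the restricted maps stay dominant, so that the coordinate rings really do form a tower. I would emphasise that generic finiteness of $f$ and $g$ is never used; only their dominance and the hypothesised local finiteness of the composite enter.
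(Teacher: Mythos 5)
Your proof is correct and is essentially the paper's own argument: pass to the contrapositive, form the tower of coordinate rings $\K[Z_h]\hookrightarrow\K[Y_{g^{*}h}]\hookrightarrow\K[X_\mu]$ induced by dominance, and invoke the trivial fact that an intermediate ring of an integral extension is itself integral over the base. The only difference is your extra bookkeeping --- shrinking to a principal open $Z_h$ so that $g^{-1}(Z_h)$ is automatically affine, and checking dominance of the restricted maps --- which the paper's proof passes over silently when it writes $\K[g^{-1}(U)]$ for the original neighbourhood $U$; this makes your version, if anything, slightly more watertight.
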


\begin{proof}
The assertion is equivalent to the fact that complements satisfy $Z\setminus\s_{g\circ f}\subset Z\setminus \s_g$. Let $z\in Z\setminus \s_{g\circ f}$, then there exists an open set $z\in U$ such that the composition
$$(g\circ f)^{-1}(U)\xrightarrow{f}g^{-1}(U)\xrightarrow{g}U$$
is finite. So by the definition the composition
$$\K[U]\xrightarrow{g^*}\K[g^{-1}(U)]\xrightarrow{f^*}\K[(g\circ f)^{-1}(U)]$$
is an integral extension of rings. Of course then 
$$\K[U]\xrightarrow{g^*}\K[g^{-1}(U)]$$
is also an integral extension of rings. Hence the map
$$g^{-1}(U)\xrightarrow{g}U$$
is finite and $z\in Z\setminus \s_g$. 
\end{proof}

\begin{corollary}\label{isoo}
If $f:X\rightarrow Y$ is an isomorphism between affine varieties, then $f$ is finite.
\end{corollary}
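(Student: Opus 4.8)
The plan is to reduce the assertion to the algebraic criterion for finiteness (Definition \ref{properdef}) via the comorphism $f^*$, and there are two natural routes, the second of which exploits the immediately preceding Proposition \ref{zlozenie}.

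First I would observe that an isomorphism $f$ is in particular surjective, hence dominant, so that Definition \ref{properdef} applies and it is meaningful to ask whether $f$ is finite. The most direct argument is then purely algebraic: since $f$ is an isomorphism of varieties, the induced map $f^*:\K[Y]\to\K[X]$ is an isomorphism of $\K$-algebras, in particular surjective. Given any $a\in\K[X]$, choose $b\in\K[Y]$ with $f^*(b)=a$; then $a$ satisfies the monic equation $T-f^*(b)=0$ of degree one over $f^*(\K[Y])$, so $\K[X]$ is integral over $f^*(\K[Y])$. By Definition \ref{properdef} this says exactly that $f$ is finite.

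Alternatively, and more in the spirit of the surrounding material, I would deduce the result from Proposition \ref{zlozenie}. Writing $g:=f^{-1}:Y\to X$ for the inverse isomorphism, both $f$ and $g$ are dominant and $f\circ g=\mathrm{id}_Y$. Applying Proposition \ref{zlozenie} with $g$ in the role of the first map and $f$ in the role of the second yields $\s_f\subset\s_{f\circ g}=\s_{\mathrm{id}_Y}$. Since the identity induces the identity ring map $\K[Y]\to\K[Y]$, which is trivially an integral extension, the identity is finite and $\s_{\mathrm{id}_Y}=\emptyset$. Hence $\s_f=\emptyset$, i.e.\ $f$ is finite everywhere.

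There is essentially no serious obstacle here; the only points requiring care are bookkeeping, namely noting that an isomorphism is dominant so that the definition of finiteness applies, and orienting the composition in Proposition \ref{zlozenie} correctly so that $\s_f$, rather than $\s_g$, appears on the left. Both become routine once the setup is fixed, so I would expect the proof to be a single short paragraph.
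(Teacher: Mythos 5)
Your proposal is correct, and your second argument is precisely the paper's own proof: the paper takes $g=f^{-1}$, applies Proposition \ref{zlozenie} to obtain $\s_f\subset\s_{f\circ g}=\s_{id_Y}=\emptyset$, and concludes that $f$ is finite. Your first argument is a genuinely different, even more elementary route: since $f^*:\K[Y]\to\K[X]$ is a ring isomorphism, every $a\in\K[X]$ lies in the image of $f^*$ and so satisfies a monic degree-one equation over $f^*(\K[Y])$, whence the extension is integral and $f$ is finite directly from Definition \ref{properdef}. What the direct argument buys is independence from the $\s_f$ machinery and from the equivalence (finiteness versus properness at every point, the cited Proposition from Shafarevich) that is implicitly needed to pass from $\s_f=\emptyset$ back to finiteness; what the paper's route buys is economy within the surrounding development, since Proposition \ref{zlozenie} is proved immediately beforehand and the corollary then follows in one line. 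Both arguments handle the only delicate bookkeeping point correctly, namely that an isomorphism is dominant and that the composition in Proposition \ref{zlozenie} must be oriented so that $\s_f$, not $\s_{f^{-1}}$, appears on the left.
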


\begin{proof}
There exists $g:Y\rightarrow X$ such that $f\circ g=id_Y$. Then by Proposition \ref{zlozenie} $\s_{f}\subset \s_{id_Y}=\emptyset$, so $f$ is finite.
\end{proof}

\begin{proposition}\label{zlozeniesk}
Let $X,Y,Z$ be affine varieties, $g:Y\rightarrow Z$ be a regular dominant map, and let $f:X\rightarrow Y$ be a finite map. Then $\s_g=\s_{g\circ f}$.
\end{proposition}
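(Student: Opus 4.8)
The plan is to prove the two inclusions separately, the first being essentially free. Since a finite map is in particular regular and dominant, both $f$ and $g$ are regular dominant, hence so is $g\circ f$; Proposition \ref{zlozenie} then applies directly and yields $\s_g\subset\s_{g\circ f}$. It therefore remains to establish the reverse inclusion $\s_{g\circ f}\subset\s_g$, which I would prove on complements by showing that whenever $g$ is proper at a point $z$, the composite $g\circ f$ is proper at $z$ as well.

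So fix $z\in Z\setminus\s_g$ and pick a Zariski open $U\ni z$ with $g^{-1}(U)$ affine and $g\vert_{g^{-1}(U)}$ finite. The decisive first step is to replace $U$ by a principal open neighborhood: choose $h\in\K[Z]$ with $z\in Z_h\subset U$. The reason this is the right move is that preimages of principal opens are again principal opens, and principal opens of an affine variety are automatically affine. Explicitly $g^{-1}(Z_h)=Y_{g^*h}$ and $(g\circ f)^{-1}(Z_h)=X_{(g\circ f)^*h}$, so both preimages are affine with no further argument, and in particular the notion of finiteness makes sense for the relevant restrictions.

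With affineness secured, the remaining task is purely ring-theoretic: to check that $(g\circ f)\vert_{X_{(g\circ f)^*h}}: X_{(g\circ f)^*h}\to Z_h$ is finite, that is, that $\K[Z]_h\to\K[X]_{(g\circ f)^*h}$ is an integral extension. I would factor it as
$$\K[Z]_h\xrightarrow{g^*}\K[Y]_{g^*h}\xrightarrow{f^*}\K[X]_{(g\circ f)^*h}.$$
The left map is integral because $g$ is finite over $U\supset Z_h$ and integrality is preserved when one localizes; the right map is integral because $f$ is finite, so $\K[Y]\to\K[X]$ is integral, and this is again preserved under localization. Transitivity of integral extensions then shows the composite is integral, so $g\circ f$ is proper at $z$ and $z\notin\s_{g\circ f}$, giving $Z\setminus\s_g\subset Z\setminus\s_{g\circ f}$.

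The only genuinely delicate point is ensuring that all the preimages occurring in the argument are affine, since otherwise finiteness is not even defined for the restrictions; passing to a principal open of $Z$ disposes of this at once and, pleasantly, also avoids having to invoke the fact that finite maps are affine morphisms. After that reduction the proof rests solely on the standard stability of integral ring extensions under localization and composition.
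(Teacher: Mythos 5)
Your proof is correct and follows essentially the same route as the paper's: the inclusion $\s_g\subset\s_{g\circ f}$ from Proposition \ref{zlozenie}, and the reverse inclusion by factoring $\K[U]\to\K[(g\circ f)^{-1}(U)]$ through $\K[g^{-1}(U)]$ and using transitivity of integral extensions. Your reduction to a principal open $Z_h\subset U$ is a nice extra precaution that makes explicit the affineness of the preimages (and the stability of integrality under localization), points the paper's proof passes over silently.
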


\begin{proof}
Inclusion $\s_g\subset\s_{g\circ f}$ follows from Proposition \ref{zlozenie}.

To prove the other inclusion let $z\in Z\setminus\s_g$, then there exists an open set $z\in U$ such that 
$$g^{-1}(U)\xrightarrow{g}U$$
is finite. So by the definition 
$$\K[U]\xrightarrow{g^*}\K[g^{-1}(U)]$$
is an integral extension of rings. From the assumption we know that $f$ is finite, so
$$\K[g^{-1}(U)]\xrightarrow{f^*}\K[(g\circ f)^{-1}(U)]$$
is also an integral extension of rings. Now the composition 
$$\K[U]\xrightarrow{g^*}\K[g^{-1}(U)]\xrightarrow{f^*}\K[(g\circ f)^{-1}(U)]$$
is also an integral extension of rings. Hence the map
$$(g\circ f)^{-1}(U)\xrightarrow{g\circ f}U$$
is finite and $z\in Z\setminus\s_{g\circ f}$. 
\end{proof}

\begin{proposition}[\cite{je00}]\label{graph}
Let $f:X\rightarrow Y$ be a generically finite mapping between affine varieties $X\subset\K^n$ and $Y\subset\K^m$. Let 
$$\graph(f)=\{(x,y)\in X\times Y:y_i=f_i(x)\}$$ 
and let $\overline{\graph(f)}$ be its closure in $\p^n\times\K^m$. Then there is an equality $$\s_f=p_2(\overline{\graph(f)}\setminus\graph(f)),$$ 
where $p_2$ denotes the projection $\p^n\times\K^m\rightarrow\K^m$.
\end{proposition}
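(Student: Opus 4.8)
The plan is to reduce the claimed set equality to a pointwise criterion for properness. Write $G=\graph(f)$ and let $H=\p^n\setminus\K^n$ be the hyperplane at infinity, so that $\p^n\times\K^m$ is the union of the affine chart $\K^n\times\K^m$ and the closed set $H\times\K^m$. Since $f$ is regular, $G$ is closed in $X\times Y$, hence in $\K^n\times\K^m$; because the closure of a set intersected with an open chart equals the closure taken inside that chart, we get $\overline{G}\cap(\K^n\times\K^m)=G$ and therefore $\overline{G}\setminus G=\overline{G}\cap(H\times\K^m)$. Thus $p_2(\overline{G}\setminus G)$ is exactly the set of $y$ admitting a point ``at infinity'' $(x_\infty,y)\in\overline{G}$ with $x_\infty\in H$. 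Because $\p^n$ is complete, the projection $\p^n\times\K^m\to\K^m$ is proper, so its restriction to the closed set $\overline{G}$ is a closed map; this gives $p_2(\overline{G})=\overline{f(X)}$ and shows that $p_2(\overline{G}\setminus G)$ is a closed subset of $\overline{f(X)}$. It then suffices to prove, for every $y\in\overline{f(X)}$, that $f$ is proper at $y$ if and only if the fibre $p_2^{-1}(y)\cap\overline{G}$ contains no point of $H\times\K^m$.

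For the first implication, assume $f$ is proper at $y$ and choose a Zariski open $U\ni y$ with $f^{-1}(U)$ affine and $f|_{f^{-1}(U)}\colon f^{-1}(U)\to U$ finite, hence proper. Consider the morphism $\psi\colon f^{-1}(U)\to\p^n\times U$, $x\mapsto(x,f(x))$, built from the inclusion $\K^n\subset\p^n$. Its composition with the separated projection $\p^n\times U\to U$ is the proper map $f|_{f^{-1}(U)}$, so by the cancellation property for proper morphisms $\psi$ is proper, and its image $G\cap(\K^n\times U)$ is closed in $\p^n\times U$. Since closure commutes with restriction to the open set $\p^n\times U$, we conclude $\overline{G}\cap(\p^n\times U)=G\cap(\K^n\times U)\subset\K^n\times U$; in particular there is no point of $H\times\K^m$ over any point of $U$, and a fortiori none over $y$.

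Conversely, suppose no point of $H\times\K^m$ lies over $y$. Set $B=p_2\bigl(\overline{G}\cap(H\times\K^m)\bigr)$, which is closed by the remark above, and $y\notin B$. Pick a principal open $U=\{g\neq0\}\subset\K^m$ with $y\in U$ and $U\cap B=\emptyset$; then $f^{-1}(U)=\{g\circ f\neq0\}$ is a principal open of the affine variety $X$, hence affine, and over $U$ the set $\overline{G}$ has no points at infinity, so $\overline{G}\cap(\p^n\times U)=G\cap(\K^n\times U)$. The restriction $\overline{G}\cap(\p^n\times U)\to U$ of the proper projection is proper, so $f^{-1}(U)\to U$ is proper through the graph isomorphism $p_1\colon G\xrightarrow{\sim}X$. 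To upgrade proper to finite I will check quasi-finiteness: if some fibre $p_2^{-1}(y')\cap G$ with $y'\in U$ were positive-dimensional, the projective closure in $\p^n$ of this positive-dimensional affine set would meet $H$, producing a point of $\overline{G}\cap(H\times\{y'\})$ and contradicting $U\cap B=\emptyset$. Hence every fibre over $U$ is finite, a proper quasi-finite morphism is finite, and so $f$ is proper at $y$.

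The main obstacle is this converse passage from proper to finite: everything must be valid over an arbitrary algebraically closed field, so the classical ``unbounded fibre escaping to infinity'' picture has to be replaced by the purely algebraic fact that a positive-dimensional affine variety always acquires points on the hyperplane at infinity in its projective closure. This observation, combined with the cancellation property for proper morphisms and the equivalence of properness with finiteness for these maps, is what makes the fibrewise criterion equivalent to the non-properness of $f$; the remaining bookkeeping about the affineness of $f^{-1}(U)$ and the compatibility of closures with the open chart $\K^n\times\K^m$ is routine.
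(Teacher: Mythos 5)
Your proof is correct, and in fact it proves more than the paper does. The paper's own argument first replaces $f$ by the projection restricted to the graph: the embedding $i\colon X\ni x\mapsto (x,f(x))\in\graph(f)$ is finite, so by Proposition \ref{zlozeniesk} one has $\s_f=\s_{p_2}$; it then proves only the inclusion $\s_f\subset p_2(\overline{\graph(f)}\setminus\graph(f))$ (the only one needed later in the thesis) and explicitly declines to prove the opposite inclusion. Your converse direction (no point at infinity over $y$ implies $f$ proper at $y$) coincides in substance with the paper's proven inclusion: both use closedness of the image of the part at infinity (completeness of $\p^n$), properness of the restricted projection, and the theorem that a proper map with finite fibers is finite (the paper cites \cite{ii} for this). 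You additionally justify the finiteness of the fibers over $U$ --- a positive-dimensional fibre would force a point of $\overline{\graph(f)}$ on the hyperplane at infinity --- a step the paper merely asserts. Your forward direction (properness at $y$ implies no point at infinity over $y$), obtained from the cancellation property for proper morphisms applied to $\psi\colon f^{-1}(U)\to\p^n\times U$, is precisely the inclusion the paper skips, so your argument yields the full stated equality and is self-contained where the paper's is not. One definitional caveat, which affects the paper equally: ``finite'' in this thesis means an integral ring extension induced by a dominant map of affine varieties, so in your converse you should replace $U$ by $U\cap\overline{f(X)}$, a principal open subset of the affine variety $\overline{f(X)}$ with the same preimage under $f$, so that the restricted map is dominant; your scheme-theoretic finiteness then translates verbatim into the paper's definition.
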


\begin{proof}
Observe that morphism 
$$i:X\ni x\rightarrow (x,f(x))\in\graph(f)$$
is finite and that $f=p_2\circ i$. Now due to Proposition \ref{zlozeniesk} $\s_f=\s_{p_2}$. So it is enough to show that $\s_{p_2}=p_2(\overline{\graph(f)}\setminus\graph(f))$.

We will show the inclusion $\subset$. Suppose 
$$y\notin p_2(\overline{\graph(f)}\setminus\graph(f)).$$
Then there exists an open neighborhood $U$ of $y$, which is disjoint from $$p_2(\overline{\graph(f)}\setminus\graph(f)).$$ 
Since the projection $p_2$ is proper, the mapping
$$p_2\vert_{p_2^{-1}(U)}: p_2^{-1}(U)\rightarrow U$$ 
is also proper. The map $f\vert_{f^{-1}(U)}$ has finite fibers, hence also the map $p_2\vert_{p_2^{-1}(U)}$ has finite fibers. Due to \cite{ii} (page $142$, Theorem $2.27$) the map $p_2\vert_{p_2^{-1}(U)}$ is finite, and consequently $y\notin \s_{p_2}$. 

The opposite inclusion is also true, however since we will not use it, we do not prove it.
\end{proof}

The following property is well-known as curve selection lemma:

\begin{lemma}\label{curve}
Let $X,Y$ be affine varieties, let $f:X\rightarrow Y$ be a regular dominant map, and let $y\in \s_f$. Then there exists an affine curve $\Gamma\subset X$, such that $y\in \s_{f\vert_{\Gamma}}$.
\end{lemma}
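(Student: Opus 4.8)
The plan is to reduce everything to the graph characterisation of $\s_f$ furnished by Proposition \ref{graph}, and to produce the required curve by a generic linear section of the closed graph. Write $G=\graph(f)\subset\K^n\times\K^m$, let $i:X\to G$, $i(x)=(x,f(x))$, be the natural isomorphism, and let $p_2:\p^n\times\K^m\to\K^m$ be the projection, so that $f=p_2\circ i$. By Proposition \ref{graph} the hypothesis $y\in\s_f=p_2(\overline G\setminus G)$ means there is a point $z=(\xi,y)\in\overline G\setminus G$ whose second coordinate is $y$; here $\overline G$ denotes the closure of $G$ in $\p^n\times\K^m$. Replacing $X$ by the irreducible component whose graph closure contains $z$, I may assume $\overline G$ is irreducible of dimension $k=\dim X$ and that $f$ is generically finite onto its image.

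First I would isolate the two behaviours the sought curve must avoid. Since $G$ is open and dense in $\overline G$, the boundary $B:=\overline G\setminus G$ is a proper closed subset. Since $f$ is generically finite, $p_2|_{\overline G}$ is dominant onto a $k$-dimensional image and in particular is not constant, so the fibre $F:=\overline G\cap p_2^{-1}(y)$ is also a proper closed subset of $\overline G$. Consequently $B\cup F\subsetneq\overline G$. The key geometric input is the existence of an irreducible curve $C\subset\overline G$ with $z\in C$ and $C\not\subset B\cup F$; such a curve is obtained by intersecting $\overline G$ with $k-1$ general hyperplanes through $z$ and taking an irreducible component through $z$, the genericity guaranteeing at each step that the dimension of the section drops by one while the component stays outside the fixed proper closed set $B\cup F$.

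Granting such a $C$, I would set $\Gamma:=\overline{i^{-1}(C\cap G)}\subset X$, the closure taken in $\K^n$. Because $C\not\subset B$, the set $C\cap G$ is dense in $C$, so $\Gamma$ is an irreducible affine curve with $\overline{\graph(f|_\Gamma)}=C$ and $\graph(f|_\Gamma)=i(\Gamma)\subset G$. Because $C\not\subset F$, the projection $p_2|_C$ is non-constant, hence $f|_\Gamma$ is a non-constant map from a curve and is therefore generically finite, so Proposition \ref{graph} applies to it. Now $z\in C=\overline{\graph(f|_\Gamma)}$ while $z\notin G\supseteq\graph(f|_\Gamma)$, so $z\in\overline{\graph(f|_\Gamma)}\setminus\graph(f|_\Gamma)$, and Proposition \ref{graph} gives $y=p_2(z)\in\s_{f|_\Gamma}$, as required.

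I expect the main obstacle to be the curve-existence step, that is, producing an irreducible curve through the prescribed point $z$ not contained in the proper closed set $B\cup F$. The subtlety is that forcing the hyperplanes to pass through the fixed point $z$ restricts the available genericity, so one must check that a general hyperplane through $z$ still cuts the dimension of $\overline G$ down by exactly one and keeps a component through $z$ outside $B\cup F$; this is a standard but careful Bertini-type induction on $\dim\overline G$. The remaining verifications, namely the identity $\overline{\graph(f|_\Gamma)}=C$ and the generic finiteness of $f|_\Gamma$, are then routine consequences of $C\cap G$ being dense in the irreducible curve $C$ and of $p_2|_C$ being non-constant.
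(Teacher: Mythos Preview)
Your approach is correct and essentially identical to the paper's: both pass to the closed graph via Proposition~\ref{graph}, locate a boundary point $z=(x,y)\in\overline{\graph(f)}\setminus\graph(f)$, and cut down by hyperplane/hypersurface sections through $z$ to obtain a subvariety whose graph closure still contains $z$. The only cosmetic difference is that the paper phrases this as an induction on $\dim X$, at each step choosing an irreducible hypersurface $H\subset\overline{\graph(f)}$ through $z$ different from the boundary divisor and projecting $H\cap\K^m$ back to $X$, whereas you perform all $k-1$ cuts at once and are slightly more explicit in also avoiding the fibre $F=p_2^{-1}(y)$ so as to guarantee that $f|_\Gamma$ is non-constant and Proposition~\ref{graph} applies to the restriction.
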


\begin{proof}
It is enough to show that if $X,Y$ are affine varieties, $\dim X=n>1$, $f:X\rightarrow Y$ is a regular dominant map, and let $y\in \s_f$, then there exists an affine variety $X'\subset X$, such that $$\dim X'=\dim X-1\text{ and }y\in \s_{f\vert_{X'}}$$ Then the assertion follows from the induction.

Suppose $X\subset\K^m$. Without the loss of generality we can assume $X$ is irreducible. From Proposition \ref{graph} we know that 
$$y\in \s_f=p_2(\overline{\graph(f)}\setminus \graph(f)).$$ 
So there exists
$$x\in\overline{\graph(f)}\setminus\graph(f)=\overline{\graph(f)}\cap (\p^m\setminus\K^m),$$
such that $(x,y)\in\overline{\graph(f)}$.
Consider irreducible hypersurfaces $H$ in \newline $\overline{\graph(f)}$ passing through $(x,y)$ and different 
from $\overline{\graph(f)}\setminus\graph(f)$. Take any such $H$, now $X'$ equal to projection of $H\cap\K^m$ to $X$ satisfies our requirements.
\end{proof}

Two most important properties of the set $\s_f$ are the following:

\begin{theorem}[\cite{je99}, page $5$, Theorem $3.8$]\label{hiper}
Let $X,Y$ be affine varieties, and let $f:X\rightarrow Y$ be a regular generically finite map. Then the set $\s_f$ is a hypersurface in $\overline{f(X)}$ or it is empty. 
\end{theorem}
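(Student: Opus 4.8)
The plan is to reduce to an irreducible dominant map of equidimensional varieties, get the cheap bound $\dim\s_f\le\dim Y-1$ from the graph description, and then prove the sharp statement -- pure codimension one -- by reading non-properness as a failure of integrality and invoking that the poles of a rational function on a normal variety form a divisor.

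First I would reduce. Handle the finitely many irreducible components of $X$ one at a time, so assume $X$ irreducible, and replace $Y$ by $\overline{f(X)}$, so $f$ is dominant; since $f$ is generically finite this forces $n:=\dim X=\dim Y$. Using Proposition \ref{graph} write $\s_f=p_2(\overline{\graph(f)}\setminus\graph(f))$, where $\overline{\graph(f)}\subset\p^n\times\K^m$ and the boundary $B:=\overline{\graph(f)}\setminus\graph(f)=\overline{\graph(f)}\cap(H_\infty\times\K^m)$ with $H_\infty=\p^n\setminus\K^n$ the hyperplane at infinity. As $\overline{\graph(f)}$ is irreducible of dimension $n$ and is not contained in the hypersurface $H_\infty\times\K^m$ (it meets $\K^n\times\K^m$), the intersection $B$ is pure of dimension $n-1$; hence $\dim\s_f\le n-1$ and in particular $\s_f\neq Y$. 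This is the easy half.

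The real content is that $\s_f$ has no component of dimension $\le n-2$, and here I would switch to the algebraic description of properness. Put $R=\K[Y]$, $S=\K[X]$, so that $f^*:R\hookrightarrow S$ is a dominant, generically finite extension of affine domains with $\mathrm{Frac}(S)/\mathrm{Frac}(R)$ finite. By Definition \ref{properdef} and its localization, $f$ is proper at $y\leftrightarrow\mathfrak p$ if and only if $S_{\mathfrak p}$ is integral over $R_{\mathfrak p}$; thus $\s_f=\{\mathfrak p:S_{\mathfrak p}\not\subseteq\overline{R_{\mathfrak p}}\}$, the bar denoting integral closure inside $L:=\mathrm{Frac}(S)$. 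Let $\overline R$ be the integral closure of $R$ in $L$; it is a normal domain and, by Noether's finiteness theorem for affine algebras over a field, module-finite over $R$, giving a finite morphism $\nu:\widetilde Y:=\Spec\overline R\to Y$. Fixing $R$-algebra generators $s_1,\dots,s_k$ of $S$, one has $S_{\mathfrak p}\subseteq\overline R_{\mathfrak p}$ exactly when each $s_j$ lies in $\overline R_{\mathfrak q}$ for every prime $\mathfrak q$ of $\overline R$ over $\mathfrak p$; consequently $\s_f=\nu\big(\bigcup_j P_j\big)$, where $P_j=\{\mathfrak q\in\widetilde Y:s_j\notin\overline R_{\mathfrak q}\}$ is the polar locus of $s_j$ on $\widetilde Y$. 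The purity then follows from two standard facts: since $\overline R$ is normal, $\overline R=\bigcap_{\mathrm{ht}\,\mathfrak q=1}\overline R_{\mathfrak q}$, so each $P_j$ is pure of codimension one (its generic points are the height-one primes in the denominator of $s_j$); and because $\nu$ is finite it is closed and preserves the dimension of every irreducible set, so $\nu\big(\bigcup_j P_j\big)$ is a finite union of irreducible closed subsets, each of dimension $n-1$. Any irreducible component of $\s_f$, being maximal in this finite union, must coincide with one of them; hence $\s_f$ is either empty (when all $P_j$ are, i.e.\ $f$ is already finite) or a hypersurface in $Y=\overline{f(X)}$.

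The main obstacle, and the place to be careful, is the passage to the normalization: one must justify that localization commutes with integral closure (so that the local criterion $S_{\mathfrak p}\subseteq\overline R_{\mathfrak p}$ is legitimate), that $\overline R$ is module-finite over $R$ (Noether's theorem, valid for finitely generated algebras over any field, which is exactly what makes $\nu$ finite and hence dimension-preserving), and that on the normal variety $\widetilde Y$ the poles of a rational function genuinely form a pure codimension-one set. An alternative, more geometric route -- closer to the graph picture above and to Jelonek's original argument -- is to show directly that $p_2$ restricted to each component of the divisor $B$ is generically finite onto its image; the obstruction there is excluding positive-dimensional families of points at infinity lying over a low-dimensional base, which is precisely what the normality and purity-of-poles argument disposes of cleanly.
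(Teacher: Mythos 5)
First, a structural point: the thesis never proves Theorem \ref{hiper} at all --- it is imported from Jelonek's paper \cite{je99} and used as a black box (e.g.\ in the proofs of Theorems \ref{Sfsurface}, \ref{Sfkuni} and \ref{cn}). So there is no in-paper proof to compare yours against; I can only judge your argument on its own merits and against the standard proof of this result.

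Judged so, your argument is correct, and it is essentially the classical normalization proof. The chain --- reduce to $X$ irreducible and $f$ dominant with $\dim X=\dim Y=n$; characterize properness at $y$ by integrality of $S_{\mathfrak p}$ over $R_{\mathfrak p}$; pass to the normalization $\nu:\widetilde Y=\Spec\overline R\to Y$ of $Y$ in $\mathrm{Frac}(S)$, finite by Noether's theorem (valid for affine algebras in any characteristic, which is what keeps this proof general); identify $\s_f$ with $\nu\bigl(\bigcup_j P_j\bigr)$ for the polar loci $P_j$ of algebra generators $s_j$ of $S$ over $R$; and conclude by purity of polar loci on a normal variety ($\overline R=\bigcap_{\mathrm{ht}\,\mathfrak q=1}\overline R_{\mathfrak q}$, plus finiteness of the set of height-one primes containing a fixed denominator) together with the fact that the finite map $\nu$ is closed and dimension-preserving --- is sound, and every auxiliary fact is true and correctly applied. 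The two steps you flag as delicate (localizing the properness criterion of Definition \ref{properdef}, and commuting integral closure with localization) are indeed exactly the ones that need writing out; both go through by clearing denominators.

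Two caveats. (i) The opening reduction deserves more care than one sentence: one must check $\s_f=\bigcup_i\s_{f\vert_{X_i}}$ (the nontrivial inclusion uses that integrality of each $\K[X_i\cap f^{-1}(U)]$ over $\K[U]$ forces integrality of $\K[f^{-1}(U)]$, via the embedding into the product), and what your proof then yields is that each $\s_{f\vert_{X_i}}$ is a hypersurface in $\overline{f(X_i)}$. That per-component statement is the correct general form; the literal statement of Theorem \ref{hiper} actually fails for reducible $X$ with components of different dimensions --- take $X$ the disjoint union of a plane and a hyperbola, with $f$ the identity on the plane and $(x,y)\mapsto(x,0)$ on the hyperbola; then $\s_f$ is the single point $(0,0)\in\K^2$. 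This imprecision sits in the quoted statement itself, not in your reasoning, but your write-up should make the per-component reading explicit. (ii) You quote Proposition \ref{graph} as an equality, whereas the paper proves there only the inclusion $\s_f\subset p_2(\overline{\graph(f)}\setminus\graph(f))$; fortunately that is the only direction your cheap dimension bound uses, and your main argument does not rely on Proposition \ref{graph} at all.
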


%(ewentualnie napisać dowód)

The second property is that if $X$ is additionally $\K$-uniruled, then the set $\s_f$ is also $\K$-uniruled. We are going to prove it. First we will consider the case of surfaces.

Let $X$ be a smooth projective surface and let $D=\sum_{i=1}^n D_i$ be a simple normal crossing divisor on $X$ (here we consider only reduced divisors). Let $\graph(D)$ be a graph of $D$, with vertices $D_i$, and an edge between $D_i$ and $D_j$ for each point of intersection of $D_i$ and $D_j$. 

\begin{definition}
We say that a simple normal crossing divisor $D$ on a smooth surface $X$ is a tree if $\graph(D)$ is a tree (it is connected and acyclic).
\end{definition}

The following fact is obvious.

\begin{proposition}\label{acykl}
Let $X$ be a smooth projective surface and let $D\subset X$ be a divisor which is a tree. If $D', D''\subset D$ are connected divisors without common components, then $D'$ and $D''$ have at most one common point.
\end{proposition}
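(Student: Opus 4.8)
The plan is to translate this geometric statement into graph theory through the dual graph $\graph(D)$, and then to manufacture a cycle from the existence of two common points, contradicting the tree hypothesis. Throughout, write $V'$ for the set of components of $D'$ and $V''$ for the set of components of $D''$; the assumption that $D'$ and $D''$ share no component means precisely $V'\cap V''=\emptyset$, and since each component is irreducible and glued to the others at nodes, connectedness of the divisors $D'$ and $D''$ is equivalent to connectedness of the subgraphs of $\graph(D)$ induced on $V'$ and on $V''$.

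First I would record the local structure coming from the simple normal crossing hypothesis on a surface: through any point of $D$ pass at most two components, meeting transversally, so every intersection point of two distinct components $D_i,D_j$ is a node recorded by a single edge of $\graph(D)$, and no three components meet at a point. Consequently, if $p\in D'\cap D''$, then $p$ lies on one component of $D'$ and one component of $D''$; as at most two components pass through $p$ and $V'\cap V''=\emptyset$, there is exactly one $D_i\in V'$ and exactly one $D_j\in V''$ meeting at $p$. Thus each common point corresponds to a \emph{cross-edge} $\{D_i,D_j\}$ of $\graph(D)$ joining $V'$ to $V''$.

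The crux is the following. Suppose for contradiction that $D'$ and $D''$ have two distinct common points $p\neq q$, yielding cross-edges $e_p=\{D_i,D_j\}$ and $e_q=\{D_k,D_l\}$ with $D_i,D_k\in V'$ and $D_j,D_l\in V''$. If $e_p$ and $e_q$ join the same pair of components, then $\graph(D)$ already carries a double edge, hence a cycle. Otherwise, I would pick a simple path $P'$ from $D_i$ to $D_k$ inside the connected subgraph on $V'$ and a simple path $P''$ from $D_l$ to $D_j$ inside the connected subgraph on $V''$; concatenating $P'$, then $e_q$, then $P''$, then $e_p$ produces a closed walk returning to $D_i$. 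Its $V'$-vertices and its $V''$-vertices are disjoint because $V'\cap V''=\emptyset$, so no vertex repeats, and the walk uses the two distinct cross-edges, so it contains a genuine cycle. In either case $\graph(D)$ has a cycle, contradicting that $D$ is a tree.

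The main obstacle will be checking that the closed walk is a nondegenerate cycle in the boundary cases where endpoints coincide, namely $D_i=D_k$ or $D_j=D_l$. I would dispose of these by noting that if \emph{both} coincide, the two cross-edges join the same pair, which is the double-edge situation already handled; if exactly one coincides, the corresponding path is trivial but the remaining path is nontrivial, so the walk still has length at least three and, since $V'$ and $V''$ share no vertex, is a genuine cycle. Once this verification is in place the contradiction with acyclicity of $\graph(D)$ is immediate, and the proposition follows.
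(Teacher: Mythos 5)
Your proof is correct. The paper in fact gives no argument at all for this proposition---it is introduced with ``The following fact is obvious''---and your cycle construction is precisely the reasoning that makes it obvious: each common point of $D'$ and $D''$ gives a distinct cross-edge of $\graph(D)$ between the disjoint vertex sets $V'$ and $V''$ (here you correctly use that $\graph(D)$ has one edge \emph{per intersection point}, so even two points joining the same pair of components already force a multi-edge, i.e.\ a cycle), and two such cross-edges together with simple paths inside the connected induced subgraphs on $V'$ and $V''$ close up into a genuine cycle, contradicting acyclicity; your handling of the degenerate cases $D_i=D_k$ or $D_j=D_l$ is also correct.
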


\begin{theorem}\label{Sfsurface}
Let $\Gamma$ be an affine curve, and let $f:\Gamma\times\K\rightarrow\K^m$ be a generically finite mapping. Then the set $S_{f}$ is $\K$-uniruled or it is empty.
\end{theorem}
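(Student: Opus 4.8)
The plan is to realize $X=\Gamma\times\K$ as the affine part of a ruled surface, extend $f$ to a morphism to $\p^m$ after resolving indeterminacies, and then read off $\s_f$ from the images of the boundary divisor, using the tree structure of that boundary to guarantee that every resulting component is not merely rational but genuinely parametric. First I would reduce to the case where $\Gamma$ is smooth: if $\nu\colon\tilde\Gamma\to\Gamma$ is the normalization then $\nu\times\mathrm{id}\colon\tilde\Gamma\times\K\to\Gamma\times\K$ is finite, so by Proposition \ref{zlozeniesk} we have $\s_f=\s_{f\circ(\nu\times\mathrm{id})}$ and may replace $\Gamma$ by $\tilde\Gamma$. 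By Theorem \ref{hiper} the set $\s_f$ is a hypersurface in $\overline{f(X)}$ or empty; since $\dim X=2$ and $f$ is generically finite, $\overline{f(X)}$ is a surface and $\s_f$ is a curve (or empty, in which case there is nothing to prove). It therefore suffices to show that each irreducible component of $\s_f$ is a parametric curve, i.e. admits a dominant polynomial map from $\K$.

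Next I would compactify. Let $\overline\Gamma$ be the smooth projective model of $\Gamma$, put $B=\overline\Gamma\setminus\Gamma$ (finite), and set $\overline X=\overline\Gamma\times\p^1$, a $\p^1$-bundle $p\colon\overline X\to\overline\Gamma$ with $X=\overline X\setminus D$, where the boundary $D=(\overline\Gamma\times\{\infty\})\cup\bigcup_{b\in B}(\{b\}\times\p^1)$ is a simple normal crossing divisor whose dual graph is a star, hence a tree. The map $f$ extends to a rational map $\overline X\dashrightarrow\p^m$; blowing up the finitely many points of indeterminacy (all lying on $D$) I obtain $\sigma\colon\overline X'\to\overline X$ and a morphism $F\colon\overline X'\to\p^m$ with $F|_X=f$. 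Blowing up a point of a normal crossing tree keeps it a tree, so $D'=\sigma^{-1}(D)=\overline X'\setminus X$ is again a normal crossing tree, and $p'=p\circ\sigma\colon\overline X'\to\overline\Gamma$ has connected fibers that are trees of rational curves. Exactly one component of $D'$, the proper transform $\tilde\Gamma_\infty\cong\overline\Gamma$ of $\overline\Gamma\times\{\infty\}$, dominates $\overline\Gamma$; every other component is contained in a fiber of $p'$ and is therefore rational.

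Writing $f_i=\sum_{j\le d}a_{ij}(\gamma)t^j$ with $d=\deg_t f$, the limit of $[1:f_1:\dots:f_m]$ as $t\to\infty$ equals $[0:a_{1d}(\gamma):\dots:a_{md}(\gamma)]$, which for generic $\gamma$ lies on the hyperplane at infinity $H_\infty=\p^m\setminus\K^m$ (some $a_{i_0 d}\not\equiv 0$ since $\deg_t f_{i_0}=d$); as $F(\tilde\Gamma_\infty)$ is irreducible and meets $H_\infty$ in a dense set, it is contained in $H_\infty$. Using the analogue of Proposition \ref{graph}, namely $\overline{\graph(f)}=\graph(F)$ and hence $\s_f=F(D')\cap\K^m$, the non-rational component $\tilde\Gamma_\infty$ contributes nothing to $\s_f$. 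Thus every component of $\s_f$ is of the form $F(D_i')\cap\K^m$ for a rational (vertical) component $D_i'\cong\p^1$ with one-dimensional image meeting $\K^m$, and so is a rational curve.

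The hard part is to upgrade ``rational'' to ``parametric'': an irreducible rational affine curve is the image of $\K$ under a polynomial map precisely when its smooth model is $\K$, i.e. when it has a single point at infinity (the hyperbola $xy=1$ is rational but not parametric). Equivalently I must show that a good component $D_i'\cong\p^1$ meets $F^{-1}(H_\infty)$ in exactly one point, so that deleting that point gives a dominant polynomial map $\K\to F(D_i')\cap\K^m$. Here the tree structure is decisive. Since $X\cap F^{-1}(H_\infty)=\emptyset$ and $F^{-1}(H_\infty)$ is a divisor, it is a union $D_\infty$ of components of the tree $D'$ containing $\tilde\Gamma_\infty$; the crux is that $D_\infty$ is connected near each fiber of $p'$. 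Granting this, Proposition \ref{acykl} applied to the connected subdivisors $D_i'$ and $D_\infty$ of the tree $D'$ forces $D_i'\cap D_\infty$ to be a single point, while completeness of $D_i'$ excludes the empty intersection. Establishing the connectedness of $F^{-1}(H_\infty)$ — via Lefschetz/Zariski connectedness for the hyperplane section of the image surface, combined with the fact that $p'$ has connected fibers — is the main obstacle; once it is in place, every component of $\s_f$ is parametric, and $\s_f$ is $\K$-uniruled.
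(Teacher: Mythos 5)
Your proposal follows the same route as the paper's proof: reduction to smooth $\Gamma$ via normalization and Proposition \ref{zlozeniesk}, compactification to $\overline{\Gamma}\times\p^1$ whose boundary divisor is a tree, resolution of indeterminacy, the observation that the proper transform of $\overline{\Gamma}\times\infty$ maps into the hyperplane at infinity, and the acyclicity argument (Proposition \ref{acykl}) showing that each contributing boundary component is a $\p^1$ meeting $F^{-1}(H_\infty)$ in exactly one point, so that its image in $\K^m$ is parametric. All of that is correct and matches the paper.

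The one step you leave open, however, is the heart of the proof, and the strategy you sketch for it would fail as stated. You propose to obtain connectedness of $C=F^{-1}(H_\infty)$ from Lefschetz-type connectedness of the hyperplane section $F(\overline{X}')\cap H_\infty$ of the image surface, ``combined with the fact that $p'$ has connected fibers.'' But $F$ is only generically finite, and preimages of connected sets under generically finite maps need not be connected --- this is exactly the phenomenon that makes the statement nontrivial (for a morphism from a complete \emph{curve} to $\p^m$, e.g.\ the closure of the hyperbola in $\p^2$ included into $\p^2$, the preimage of the line at infinity is a disconnected pair of points; this is the same mechanism that produces rational non-parametric curves). Moreover the fibers of $p'$ are fibers of the projection to $\overline{\Gamma}$, not of $F$, so their connectedness cannot be used to lift connectedness of a hyperplane section along $F$. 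What is actually needed is a connectedness theorem applied to $F$ itself: either Jelonek's lemma that in a complete variety the complement of a semi-affine open subset (one admitting a proper map to an affine variety; here $F^{-1}(\K^m)\to\K^m$ is proper because $F$ is a morphism of complete varieties) is connected --- this is precisely what the paper invokes, citing \cite{je99}, Lemma 4.5 --- or a Grothendieck--Fulton--Hansen type connectedness theorem, which gives that the preimage of a hyperplane under a morphism from an irreducible complete variety whose image has dimension at least two is connected. Either of these closes your gap; with that single substitution your argument coincides with the paper's.
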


\begin{proof}
Take an affine normalization $\nu:\Gamma^{\nu}\rightarrow\Gamma$ (\cite{sh}) of the curve $\Gamma$. From Proposition \ref{zlozeniesk} we get that $\s_f=\s_{\nu\circ f}$, because normalization is a finite mapping. Hence we can assume that the curve $\Gamma$ is smooth. 

Let $\overline{\Gamma}$ be a smooth completion of $\Gamma$, and let us denote $\overline{\Gamma}\setminus\Gamma=\{a_1,\dots,a_l\}$. Let $X=\Gamma\times\K$ and $\overline{X}= \overline{\Gamma}\times\p^1$ be a projective closure of $X$. The divisor 
$$D=\overline{X}\setminus X=\overline{\Gamma}\times\infty+\sum_{i=1}^l \{a_i\}\times\p^1$$
is a tree. We can resolve points of indeterminacy of the rational map $f:\overline{X}\dashrightarrow\p^m$.
\vspace{5mm}

\begin{center}
\begin{picture}(240,160)(-40,40)
\put(-20,117.5){\makebox(0,0)[l]{$\pi\left\{\rule{0mm}{2.7cm}\right.$}}
\put(0,205){\makebox(0,0)[tl]{$\overline{X}_m$}}
\put(0,153){\makebox(0,0)[tl]{$\overline{X}_{m-1}$}}
\put(4,105){\makebox(0,0)[tl]{$\vdots$}}
\put(0,40){\makebox(0,0)[tl]{$\overline{X}$}}
\put(170,40){\makebox(0,0)[tl]{$\p^m$}}
\put(80,50){\makebox(0,0)[tl]{$f$}}
\put(100,140){\makebox(0,0)[tl]{$f'$}}
\put(10,70){\makebox(0,0)[tl]{$\pi_1$}}
\put(10,130){\makebox(0,0)[tl]{$\pi_{m-1}$}}
\put(10,180){\makebox(0,0)[tl]{$\pi_m$}}
\put(5,190){\vector(0,-1){30}} \put(5,140){\vector(0,-1){30}}
\put(5,80){\vector(0,-1){33}}
\multiput(20,35)(8,0){17}{\line(1,0){5}}
\put(157,35){\vector(1,0){10}} \put(20,200){\vector(1,-1){150}}
\end{picture}
\end{center}
\vspace{5mm}
 
Denote the proper transform of $\overline{\Gamma}\times\infty$ by $\pi^*(\overline{\Gamma}\times\infty)$. It is an easy observation that $f'(\pi^*(\overline{\Gamma}\times\infty))\subset H_\infty$, where $H_\infty$ is the hyperplane at infinity of $\p^m$. 

Note that the divisor $D'=\pi^*(D)$ is a tree, since each $\pi_i$ is a blow up. Moreover irreducible components of $D'$ are $\pi^*(\overline{\Gamma}\times\infty)$ and $\p^1$'s. The curve $C=f'^{-1}(H_\infty)\subset D'$ is a complement of a semi-affine variety $f'^{-1}(\K^m)$ hence it is connected (for details see \cite{je99}, Lemma~$4.5$). 

Now $\s_f=f'(D'\setminus C)$.
%( ewentualnie napisać dowód) 
Let $R\subset \s_f$ be an irreducible component. From Proposition \ref{hiper} we know that $R$ is a curve. So there is an irreducible curve $Z\subset D'$, such that $R=f'(Z\setminus C)$. By Proposition \ref{acykl} we have that $Z$ has at most one common point with $C$. Of course it has at least one, since every morphism from irreducible projective variety to an affine variety is constant. So $R=f'(Z\setminus C)=f'(\K)$. This completes the proof due to Proposition \ref{k-uniruledofdeg} and the fact that $\s_f$ consists only of finite number of parametric curves.
\end{proof}

\begin{theorem}[\cite{je10}, page $3673$, Theorem $4.1$]\label{Sfkuni}
Let $X,Y$ be affine varieties, and let $f:X\rightarrow Y$ be a regular dominant map. If $X$ is additionally $\K$-uniruled, then the set $\s_f$ is also $\K$-uniruled or it is empty.
\end{theorem}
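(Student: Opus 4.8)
The plan is to reduce the statement, through the uniruling of $X$, to the already-established surface case (Theorem \ref{Sfsurface}). Since a variety is $\K$-uniruled exactly when all of its irreducible components are, and one checks directly from the definition of non-properness that $\s_f=\bigcup_i \s_{f\vert_{X_i}}$ for the decomposition $X=\bigcup_i X_i$ into components, I may assume $X$ is irreducible and (working in the generically finite setting in which $\s_f$ is a hypersurface) that $f:X\to\overline{f(X)}$ is generically finite. We may assume $\dim X\ge 2$; when $\dim X=1$ the variety $X$ is a single parametric curve and a short direct argument gives $\s_f=\emptyset$. By Proposition \ref{k-uniruledofdeg} there is an affine variety $W$ with $\dim W=\dim X-1$ and a dominant polynomial map $\phi:\K\times W\to X$; since $\dim(\K\times W)=\dim X$, the map $\phi$ is generically finite, hence so is $g:=f\circ\phi:\K\times W\to Y$. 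Applying Proposition \ref{zlozenie} to $\phi$ and $f$ yields $\s_f\subseteq\s_g$, and by Theorem \ref{hiper} both sets are hypersurfaces in $\overline{f(X)}=\overline{g(\K\times W)}$; being of pure codimension one in the same variety, every irreducible component of $\s_f$ is an irreducible component of $\s_g$. It therefore suffices to prove that $\s_g$ is $\K$-uniruled.

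The key qualitative input is a monotonicity fact for restrictions to \emph{closed} subvarieties: if $C\subset\K\times W$ is closed and $g\vert_C$ is generically finite, then $\overline{\graph(g\vert_C)}\subseteq\overline{\graph(g)}$ and the points of $\overline{\graph(g\vert_C)}$ lying at infinity lie at infinity for $g$ as well, so by Proposition \ref{graph} we get $\s_{g\vert_C}\subseteq\s_g$ (note Proposition \ref{zlozenie} does not apply here, as inclusions of slices are not dominant). Hence any parametric curve produced on a product slice lands automatically in $\s_g$. Now fix a general point $y$ in a component of $\s_g$. By the curve selection Lemma \ref{curve} there is an affine curve $C\subset\K\times W$ with $y\in\s_{g\vert_C}$; taking $\Gamma\subset W$ to be a curve containing the closure of the image of $C$ under the projection $\K\times W\to W$, we obtain a product surface $\K\times\Gamma\supseteq C$ with $y\in\s_{g\vert_C}\subseteq\s_{g\vert_{\K\times\Gamma}}\subseteq\s_g$. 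By Theorem \ref{Sfsurface} the set $\s_{g\vert_{\K\times\Gamma}}$ is $\K$-uniruled, so a parametric curve through $y$ exists inside it, and thus inside $\s_g$. As $y$ ranges over a dense subset, condition $(2)$ of Proposition \ref{k-uniruledofdeg} is met for each component of $\s_g$.

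The main obstacle is this slicing step: I must guarantee that the two-dimensional product slice $\K\times\Gamma$ can be chosen so that $g\vert_{\K\times\Gamma}$ remains generically finite and still witnesses non-properness at the given general $y$, the subtlety being precisely that shrinking the source can only shrink the non-properness set, so I must arrange that $y$ survives in the slice rather than disappearing. A second point requiring care is a uniform degree bound: to conclude a \emph{finite} degree of $\K$-uniruledness in the sense of Definition \ref{k-uniruleddef}, the parametric curves covering each component of $\s_g$ must have bounded degree. This follows because all relevant data---the degree of $g$, the family of surfaces $\K\times\Gamma$, and the resolutions entering the proof of Theorem \ref{Sfsurface}---vary in a bounded family, so the degrees of the resulting curves are uniformly bounded. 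By Proposition \ref{k-uniruledofdeg} this yields a finite degree of $\K$-uniruledness for $\s_g$, and hence for its components comprising $\s_f$, completing the proof.
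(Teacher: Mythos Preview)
Your strategy is exactly the paper's: pass to the cylinder $\K\times W$ via the uniruling, apply curve selection to land on a surface slice $\K\times\Gamma'$, and invoke Theorem~\ref{Sfsurface}. The inclusions $\s_{g\vert_C}\subset\s_{g\vert_{\K\times\Gamma}}\subset\s_g$ and the reduction of $\s_f$ to components of $\s_g$ are handled the same way on both sides.

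The one genuine gap is your final paragraph on the uniform degree bound. You assert that ``all relevant data\ldots vary in a bounded family, so the degrees of the resulting curves are uniformly bounded,'' but this is not justified: the curves $\Gamma\subset W$ produced by curve selection do \emph{not} sit in a bounded family (their degrees can be arbitrarily large as $y$ varies), and the proof of Theorem~\ref{Sfsurface} produces parametric curves via a resolution of indeterminacy whose complexity depends on $\Gamma$. So you have shown that through a dense set of points of $\s_g$ there passes \emph{some} parametric curve, but not that these curves have uniformly bounded degree --- which is precisely what Definition~\ref{k-uniruleddef} requires. The paper closes this gap differently: it first assumes $\K$ is uncountable, so that by Proposition~\ref{k-uniruled} mere coverage by parametric curves (no degree bound) already implies $\K$-uniruledness in the sense of Definition~\ref{k-uniruleddef}; it then handles arbitrary $\K$ by base-changing to an uncountable extension $\Le\supset\K$ and invoking Proposition~\ref{indep}. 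Replace your bounded-family sentence with this two-step argument and your proof is complete.
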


\begin{proof}
Due to Proposition \ref{k-uniruledofdeg} there exists a regular dominant map $g:\K\times W\rightarrow X$. From Proposition \ref{zlozenie} we have 
that $\s_f\subset \s_{g\circ f}$, and from Proposition \ref{hiper} both sets are of the same pure dimension. Therefore it is enough to prove the assertion for $f:\K\times W\rightarrow Y$. 

Let $y\in \s_f$. Using Lemma \ref{curve} we get that there is a curve $\Gamma\subset\K\times W$ such that $f\vert_{\Gamma}$ is not proper at $y$. Let $p_2:\K\times W\rightarrow W$ be a projection, and $\Gamma'=p_2(\Gamma)$. Of course $\Gamma'$ is still a curve, because otherwise $\Gamma=\K$ and $f\vert_{\Gamma}$ would be finite. We have that $\Gamma\subset\K\times\Gamma'$, so 
$$y\in \s_{f\vert_{\Gamma}}\subset \s_{f\vert_{\K\times\Gamma'}}\subset \s_f.$$ 
From Proposition \ref{Sfsurface} we get that there exists a parametric curve contained in $\s_f$ passing through $y$. By Propositions \ref{k-uniruled} and \ref{k-uniruledofdeg} this completes the proof for uncountable fields $\K$. For countable fields we extend $X,Y,f$ to an uncountable field $\K\subset\Le$ and use Proposition \ref{indep}.
\end{proof}

To motivate one conjecture we will need one more property of the set $\s_f$. It describes its degree as a hypersurface.

\begin{theorem}[\cite{je93}, page $264$, Theorem $15$]\label{hiperc}
Let $f:\C^n\rightarrow\C^n$ be a dominant polynomial  map. Then the degree of a hypersurface $\s_f$ is at most $$\frac{\prod_i\deg f_i-\mu(f)}{\min_i\deg f_i},$$ where $\mu(f)$ denotes the multiplicity of $f$.
\end{theorem}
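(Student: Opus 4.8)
The plan is to compute $\deg \s_f$ by slicing the target by a carefully chosen affine line and counting, via Bézout's theorem, the branches at infinity of the corresponding preimage curve. By Theorem~\ref{hiper} the set $\s_f$ is a hypersurface in $\C^n$ (assume it is non-empty), so $\deg\s_f$ equals the number of points of $\s_f\cap L$ for a generic line $L$. I would single out a coordinate $f_{j_0}$ of minimal degree, $\deg f_{j_0}=\min_i\deg f_i$, and take $L$ in the direction $e_{j_0}$ through a generic point, i.e. $L=\{y:y_i=c_i\ (i\neq j_0)\}$ with generic constants $c_i$. After a degree-preserving linear change of the target coordinates one arranges that $e_{j_0}$ is transverse to the asymptotic cone of $\s_f$, so that $\#(\s_f\cap L)=\deg\s_f$.

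Next I would pass to the preimage curve. Since $f^{-1}(L)=\bigcap_{i\neq j_0}\{f_i=c_i\}$ is cut out by $n-1$ hypersurfaces of degrees $\deg f_i$, Bézout's theorem gives that its projective closure $C=\overline{f^{-1}(L)}\subset\p^n$ has $\deg C=\prod_{i\neq j_0}\deg f_i=\tfrac{\prod_i\deg f_i}{\min_i\deg f_i}$. Using Proposition~\ref{graph} together with the fact that, for generic $c_i$, the intersection $\overline{\graph(f)}\cap(\p^n\times L)$ is a curve whose only component is $\overline{\graph(f|_{f^{-1}(L)})}$, I would show that every $y\in\s_f\cap L$ is a limit of $f$ along a branch of $C$ at infinity; that is, $y=\lim_{x\to p}f(x)$ for some $p\in C\cap H_\infty$ with finite limit, where $H_\infty=\p^n\setminus\C^n$. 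Hence $\#(\s_f\cap L)$ is at most the number of points of $C\cap H_\infty$ whose $f_{j_0}$-value stays finite.

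The decisive counting step uses the map $f_{j_0}|_C\colon C\to\overline{L}\cong\p^1$, which is finite of degree $\mu(f)$ because the general fibre of $f$ over $L$ has $\mu(f)$ points and no component of $f^{-1}(L)$ is contracted. The points of $C$ over $\infty\in\overline L$ lie in $C\cap H_\infty$ and absorb, with multiplicity, the entire fibre $\mu(f)$. Since $f_{j_0}$ has degree $\min_i\deg f_i$, its pole order along $C$ at any $p\in C\cap H_\infty$ is at most $\min_i\deg f_i$ times the local intersection multiplicity $(C\cdot H_\infty)_p$; summing over the points above $\infty$ yields
\[
\mu(f)\ \le\ \Big(\min_i\deg f_i\Big)\sum_{p:\,f_{j_0}(p)=\infty}(C\cdot H_\infty)_p .
\]
As $\sum_{p\in C\cap H_\infty}(C\cdot H_\infty)_p=\deg C$, the points with finite $f_{j_0}$-value carry total multiplicity at most $\deg C-\mu(f)/\min_i\deg f_i$, so their number, and therefore $\deg\s_f=\#(\s_f\cap L)$, is at most $\tfrac{\prod_i\deg f_i}{\min_i\deg f_i}-\tfrac{\mu(f)}{\min_i\deg f_i}=\tfrac{\prod_i\deg f_i-\mu(f)}{\min_i\deg f_i}$, as claimed.

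The step I expect to be the main obstacle is the reduction that lets $C=\overline{f^{-1}(L)}$ see \emph{all} of $\s_f\cap L$. A non-proper point over $L$ may be approached only by branches $\gamma(t)\to\infty$ with $f(\gamma(t))\to y\in L$ but $f(\gamma(t))\notin L$, so such a branch need not lie on $f^{-1}(L)$. One must check that for generic $c_i$ the intersection $\overline{\graph(f)}\cap(\p^n\times L)$ is proper and has no component inside $H_\infty\times\overline L$, so that the graph-theoretic limit $(p,y)$ of every such ``external'' branch already lies on $\overline{\graph(f|_{f^{-1}(L)})}$; this is what forces $p\in C\cap H_\infty$ and pins its image to $y$. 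The accompanying bookkeeping — that $\deg C$ equals the Bézout number $\prod_{i\neq j_0}\deg f_i$ (no spurious components of $C$ at infinity) and that $e_{j_0}$ can be made transverse to the asymptotic cone of $\s_f$ — is also delicate, particularly when only one coordinate attains the minimal degree and the direction $e_{j_0}$ is essentially forced.
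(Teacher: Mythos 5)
A preliminary remark: the thesis contains no proof of Theorem \ref{hiperc}. It is imported verbatim from \cite{je93} (page 264, Theorem 15) and used only to motivate a question in the Remarks of Chapter 2, so there is no in-paper argument to compare yours against; what follows assesses your proposal on its own terms. In outline, your slicing scheme is the natural (and, as far as I can tell, the original) route to this bound, and its arithmetic core is sound: on $C=\overline{f^{-1}(L)}$ the coordinates $f_i$, $i\neq j_0$, are constant, so finiteness of the limit of $f$ along a branch at infinity is equivalent to finiteness of the limit of $f_{j_0}$; the pole order of $f_{j_0}$ along a branch is at most $(\min_i\deg f_i)$ times the intersection multiplicity of that branch with $H_\infty$; and those multiplicities sum to $\deg C$ over all branches. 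Two small repairs are needed: Bézout gives only $\deg C\le\prod_{i\neq j_0}\deg f_i$, not equality (the inequality is all you use), and your final count must run over \emph{branches} at infinity with finite limit, not over \emph{points} of $C\cap H_\infty$, since two points of $\s_f\cap L$ could be limits along different branches centered at the same point $p$; this is harmless because $(C\cdot H_\infty)_p$ also bounds the number of branches at $p$.

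The two steps you flag are the only genuine gaps, and both can be closed, so the proposal is a correct strategy rather than a failed one. (i) The direction. For any vector $v$ with $v_{j_0}\neq 0$, the substitution $g_{j_0}=v_{j_0}^{-1}f_{j_0}$, $g_i=f_i-(v_i/v_{j_0})f_{j_0}$ for $i\neq j_0$, is an invertible linear change of the target that keeps $\deg g_i\le \deg f_i$ (minimality of $\deg f_{j_0}$ is used exactly here), preserves $\mu$, maps $\s_f$ isomorphically to $\s_g$, and carries lines of direction $v$ to lines of direction $e_{j_0}$. Since $\{[v]:v_{j_0}\neq 0\}$ and the set of non-asymptotic directions (those $[v]$ not lying on $\overline{\s_f}\cap H_\infty$) are both dense open in $\p^{n-1}$, a common $v$ exists; so your worry that the direction is ``essentially forced'' when only one coordinate attains the minimum is unfounded. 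For such $v$ the projection along $v$ is finite of degree $\deg\s_f$ on $\s_f$, so a generic translate meets $\s_f$ in exactly $\deg\s_f$ points. (ii) Localization. Put $W=\overline{\graph(f)}\cap(H_\infty\times\C^n)$; it is pure of dimension $n-1$, and $\s_f\subset p_2(W)$ by Proposition \ref{graph} (this is precisely the inclusion proved there). Components of $W$ whose $p_2$-image has dimension $<n-1$ are missed entirely by a generic translate of $L$; the remaining ones map generically finitely onto components of the hypersurface $\s_f$, so $W\cap p_2^{-1}(L)$ is finite for generic $L$. On the other hand every component of $\overline{\graph(f)}\cap(\p^n\times L)$ has dimension $\ge 1$, being cut out of the $n$-dimensional variety $\overline{\graph(f)}$ by $n-1$ equations; hence no component lies in $H_\infty\times L$, which forces $\overline{\graph(f)}\cap(\p^n\times L)=\overline{\graph(f\vert_{f^{-1}(L)})}$, and therefore every point of $\s_f\cap L\subset p_2\bigl(W\cap p_2^{-1}(L)\bigr)$ is the limit of $f$ along a branch of $C$ at infinity, as your argument requires. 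The same genericity consideration (positive-dimensional fibres of $f$ lie over a subset of the target of codimension $\ge 2$, which lines of fixed direction through a generic point avoid) guarantees that no component of $f^{-1}(L)$ is contracted by $f_{j_0}$, so that $\deg\bigl(f_{j_0}\vert_C\bigr)=\mu(f)$ as you claim. With (i) and (ii) supplied, your proof is complete and yields exactly the stated bound.
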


%(ewentualnie napisać dowód)

%%%%%%%%%%%%%%%%%%%%%%%%%%%%%%%%%%%%%%%%%%%%%%%%%%%%%%%%%%%%%%%%%%%%%%%%%%%%%%%%%%%%%%%%%%%%%%%%%%%%%%%%%%%%%%%%%%%%%%%%%%%%%%%%%%%%%%%%%%%%%%%%%%%%%%%%%%%%%%%%%%%%%%%%%%
\section{A complex field case}\label{c}
%%%%%%%%%%%%%%%%%%%%%%%%%%%%%%%%%%%%%%%%%%%%%%%%%%%%%%%%%%%%%%%%%%%%%%%%%%%%%%%%%%%%%%%%%%%%%%%%%%%%%%%%%%%%%%%%%%%%%%%%%%%%%%%%%%%%%%%%%%%%%%%%%%%%%%%%%%%%%%%%%%%%%%%%%%

For the whole section we asume that the base field is $\C$. 

The condition that a map is finite at a point $y$ is equivalent to the fact that it is locally
proper in the classical topology sense (there exists an open neighborhood
$U$ of $y$ such that $f^{-1}(\overline{U})$ is compact). This
characterization gives the following:

\begin{proposition}[\cite{je93}, page $260$, Proposition $3$]\label{topo}
Let $f:X\rightarrow Y$ be a generically finite mapping between
affine varieties over $\C$. Then $y\in \s_f$ if and only if there exists a
sequence $(x_n)_{n\in\N}$, such that $\vert x_n\vert\rightarrow \infty$ and
$f(x_n)\rightarrow y$.
\end{proposition}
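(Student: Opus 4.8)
The plan is to argue purely topologically, using the characterization recalled immediately before the statement: $f$ is proper at $y$ if and only if there is an open neighborhood $U$ of $y$ with $f^{-1}(\overline{U})$ compact. Since preimages of shrinking neighborhoods are nested, this is equivalent to saying that $y\in\s_f$ (that is, $f$ is not proper at $y$) precisely when $f^{-1}(\overline{U})$ fails to be compact for \emph{every} open neighborhood $U$ of $y$. The one structural fact I would exploit is that $X$ is affine, hence a closed subset of some $\C^N$; as $f$ is continuous and $\overline{U}$ is closed in $Y$, the set $f^{-1}(\overline{U})$ is closed in $\C^N$. Consequently $f^{-1}(\overline{U})$ is compact if and only if it is bounded, so non-properness at $y$ translates into the unboundedness of $f^{-1}(\overline{U})$ for every neighborhood $U$ of $y$.

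For the implication $(\Leftarrow)$ I would assume a sequence $(x_n)$ with $\vert x_n\vert\to\infty$ and $f(x_n)\to y$; in particular $y\in\overline{f(X)}$. Arguing by contradiction, if $f$ were proper at $y$ there would be an open neighborhood $U$ of $y$ with $f^{-1}(\overline{U})$ compact, hence bounded. But $f(x_n)\to y$ forces $f(x_n)\in U$ for all large $n$, so these $x_n$ lie in the bounded set $f^{-1}(\overline{U})$, contradicting $\vert x_n\vert\to\infty$. Therefore $f$ is not proper at $y$, i.e. $y\in\s_f$.

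For $(\Rightarrow)$ I would take the concrete neighborhoods $U_n=B(y,1/n)\cap Y$, the open balls of radius $1/n$ about $y$. Since $y\in\s_f$, each $f^{-1}(\overline{U_n})$ is non-compact, hence unbounded by the remark above, and it is nonempty because $y\in\overline{f(X)}$. I would then choose $x_n\in f^{-1}(\overline{U_n})$ with $\vert x_n\vert>n$, which is possible by unboundedness. The resulting sequence satisfies $\vert x_n\vert\to\infty$, while $f(x_n)\in\overline{U_n}$ yields $\vert f(x_n)-y\vert\le 1/n$, so $f(x_n)\to y$, as required.

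The main obstacle—indeed essentially the only delicate point—is the passage from \emph{non-compact} to \emph{unbounded}: this is exactly where the affineness of $X$ enters, guaranteeing that $f^{-1}(\overline{U})$ is closed in the ambient affine space, so that failure of compactness can come only from unboundedness. The secondary care needed is the correct handling of the logical negation of the local properness condition, for which the monotonicity of preimages under shrinking neighborhoods ensures that it suffices to test compactness along the single cofinal family of balls $U_n$.
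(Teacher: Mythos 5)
Your proposal is correct and follows exactly the route the paper intends: the thesis gives no proof of Proposition \ref{topo}, citing \cite{je93} and remarking only that the classical-topology characterization of local properness ``gives the following,'' and your argument is precisely the faithful elaboration of that remark (Heine--Borel turning non-compactness of the closed set $f^{-1}(\overline{U})$ into unboundedness, then the two sequence arguments). No gaps; the handling of $y\in\overline{f(X)}$ and of the negation of local properness is done correctly.
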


We will need a classical theorem of complex analysis.

%(ewentualnie napisać dowód)

%In particular for a polynomial mapping $f:\C^n\rightarrow\C^n$ it is equivalent that either $\dim  f^{-1} (y)>0$ or $f^{-1}(y) = \{ x_1 ,\dots, x_r  \}$  is  a  finite  set,  but $\sum_{i=1}^r  \  \mu_{x_i}  (f)  <  \mu  (f)$,  where  $\mu$ denotes the multiplicity. In other words $f$ is not proper at $y$ if $f$ is not  a local analytic covering over $y$.

\begin{theorem}[Rouch\'{e} \cite{ru}]\label{rouche}
Let $f,g:\C\rightarrow\C$ be holomorphic functions. Suppose that $\vert g(z)\vert<\vert f(z)\vert$ for every $z\in\partial D$, where $D$ is a region. Then $f$ and $f+g$ have the same number of zeros (counted with multiplicities) inside $D$. 
\end{theorem}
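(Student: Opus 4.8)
The plan is to derive Rouché's theorem from the argument principle. Throughout I assume, as is standard, that $D$ is a bounded region whose boundary $\partial D$ is a rectifiable closed contour and that $f,g$ are holomorphic on a neighbourhood of $\overline{D}$; the hypothesis $|g(z)|<|f(z)|$ on $\partial D$ supplies everything else.

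First I would record the immediate consequence of the hypothesis. On $\partial D$ we have $|f(z)|>|g(z)|\geq 0$, so $f$ never vanishes there, and likewise $|f(z)+g(z)|\geq |f(z)|-|g(z)|>0$, so $f+g$ never vanishes there either. Thus neither function has a zero on the contour, and by the argument principle the numbers of zeros inside $D$, counted with multiplicity, are
$$N_f=\frac{1}{2\pi i}\oint_{\partial D}\frac{f'(z)}{f(z)}\,dz,\qquad N_{f+g}=\frac{1}{2\pi i}\oint_{\partial D}\frac{f'(z)+g'(z)}{f(z)+g(z)}\,dz.$$

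Next I would form the quotient $\phi=(f+g)/f=1+g/f$, which is holomorphic and nonvanishing on a neighbourhood of $\partial D$ because $f\neq 0$ and $f+g\neq 0$ there. Since $\phi'/\phi=(f+g)'/(f+g)-f'/f$, subtracting the two integrals above yields
$$N_{f+g}-N_f=\frac{1}{2\pi i}\oint_{\partial D}\frac{\phi'(z)}{\phi(z)}\,dz,$$
which is precisely the winding number of the closed curve $\phi(\partial D)$ about the origin. The key geometric observation is that $|g/f|<1$ on $\partial D$ forces $\phi(\partial D)$ to lie inside the open disk $\{w : |w-1|<1\}$; this disk sits in the right half-plane and hence excludes $0$. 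A closed curve confined to a simply connected region avoiding the origin has winding number $0$ about the origin, so $N_{f+g}-N_f=0$, which is the assertion.

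The step I expect to be most delicate is making this winding-number conclusion airtight. I would either invoke the argument principle as a black box and then note that a curve lying in $\{w : |w-1|<1\}$ cannot encircle $0$ (for instance, the principal branch of $\log w$ is holomorphic on that disk, so $\phi'/\phi$ admits a single-valued primitive $\log\phi$ along $\partial D$ and its contour integral therefore vanishes), or else bypass the quotient entirely by a homotopy argument: setting $h_t=f+tg$ for $t\in[0,1]$, the bound $|h_t|\geq|f|-|g|>0$ on $\partial D$ keeps $h_t$ zero-free on the contour for every $t$, so $N(t)=\frac{1}{2\pi i}\oint_{\partial D}h_t'/h_t\,dz$ is an integer depending continuously on $t$, hence constant, giving $N_f=N(0)=N(1)=N_{f+g}$.
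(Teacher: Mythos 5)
The paper does not prove this statement at all: Rouch\'{e}'s theorem is quoted as a classical black-box result with a citation to Rudin's textbook, and is then used only as a tool in the proof of Theorem \ref{cn}. So there is no internal proof to compare against; what you have written is the standard argument-principle proof, essentially the one found in the cited reference, and it is correct. Your two preliminary observations (that $f$ and $f+g$ are zero-free on $\partial D$, hence the argument principle applies to both) are exactly right, the reduction to the winding number of $\phi=1+g/f$ is sound, and the justification that a closed curve inside the disk $\{w:\vert w-1\vert<1\}$ has winding number $0$ about the origin (via a single-valued branch of $\log$) is airtight. The alternative homotopy argument with $h_t=f+tg$ is equally valid and arguably cleaner, since it avoids forming the quotient and works verbatim when $\partial D$ is merely a cycle on which the estimate holds. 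One remark on hypotheses: the paper's statement is loose (it takes $f,g$ entire and calls $D$ only a ``region''), so your added standing assumptions --- $D$ bounded with rectifiable boundary, so that the contour integrals and the zero count make sense --- are not a gap but a necessary reading of the statement; without boundedness the zero count inside $D$ need not even be finite.
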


We are ready to prove the following:

\begin{theorem}\label{cn}
Let  $f:\C^n\rightarrow \C^m$ be a generically finite polynomial mapping of
degree $d$. Then the set $\s_f$ has degree of $\C$-uniruledness at most $d-1$ or it is empty.
\end{theorem}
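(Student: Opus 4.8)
The plan is to verify condition $(2)$ of Proposition \ref{k-uniruledofdeg}: I will produce, through every point $y$ of a dense subset of each irreducible component of $\s_f$, a parametric curve of degree at most $d-1$ lying inside $\s_f$. Fix such a $y$. Since the base field is $\C$, Proposition \ref{topo} provides a sequence $x_\nu\to\infty$ with $f(x_\nu)\to y$, and the curve selection Lemma \ref{curve} lets me replace it by a single branch, i.e. a holomorphic arc $\gamma(t)$ defined near $t=0$ with $\gamma(t)\to\infty$ and $f(\gamma(t))\to y$.

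The crucial point — and the place where the improvement from $d$ to $d-1$ originates — is the analysis of the leading direction of this branch. Write $f_i=\sum_{e} f_i^{(e)}$ with $f_i^{(e)}$ homogeneous of degree $e$, and let $a\in\C^n\setminus\{0\}$ represent the point $[a]\in H_\infty$ that is the limit of $\gamma$ at infinity, so that $\gamma(t)=t^{-q}a+(\text{lower order poles})$ for some $q\ge 1$. Comparing pole orders in the expansion of $f_i(\gamma(t))$, boundedness of $f(\gamma(t))$ forces $f_i^{(d)}(a)=0$ for every $i$ with $\deg f_i=d$. Consequently the restriction of $f$ to any line in the direction $a$,
$$\tau\longmapsto f(c+\tau a),$$
has $\tau$-degree at most $d-1$, the top coefficient $f_i^{(d)}(a)$ having vanished. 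Choosing a linear isomorphism $\Psi:\C^{n-1}\times\C\to\C^n$, $\Psi(w,\tau)=\iota(w)+\tau a$, with $\iota$ a linear embedding onto a complement of $\C a$, the composition $g:=f\circ\Psi$ therefore satisfies $\deg_\tau g\le d-1$, and the branch now escapes to infinity in the pure $\tau$-direction. Because $\Psi$ is an isomorphism, hence finite, Proposition \ref{zlozeniesk} gives $\s_g=\s_f$.

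It remains to build and measure the ruling curve of $\s_g$ through $y$. Applying the curve selection reduction exactly as in the proof of Theorem \ref{Sfkuni} — project the selected branch to the $\C^{n-1}$ factor to obtain an affine curve $\Gamma'$ (it cannot be a point, for then the branch would be all of $\C$ and $g$ restricted to it finite) — I reduce to the surface situation $g|_{\Gamma'\times\C}:\Gamma'\times\C\to\C^m$, where Theorem \ref{Sfsurface} applies. Resolving the indeterminacies of the projective extension of $g$ over $\overline{\Gamma'}\times\p^1$, the component $R\ni y$ of $\s_g=\s_f$ appears as $g'(Z\setminus C)=g'(\C)$ for a rational curve $Z\cong\p^1$ lying over the $\tau$-direction at infinity, meeting the connected curve $C=g'^{-1}(H_\infty)$ in a single point by Proposition \ref{acykl}; thus $R$ is a parametric curve. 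To bound the degree of this parametrization by $d-1$, I note that $g'|_Z$ is the limit of the fibre maps $\tau\mapsto[g(w,\tau):1]$, whose degree as rational maps $\p^1\to\p^m$ equals $\deg_\tau g\le d-1$; to make the count rigorous I apply Rouché's Theorem \ref{rouche} to the one-variable polynomials $g(w,\tau)-v$, counting the $\tau$-preimages escaping to infinity as $v\to R$ and confirming that their number, hence the degree of $g'|_Z$, does not exceed $d-1$. Running over a dense set of $y$ in each component and invoking Proposition \ref{k-uniruledofdeg} then yields that $\s_f$ has degree of $\C$-uniruledness at most $d-1$, or is empty.

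The main obstacle is precisely the degree bookkeeping of the last paragraph: one must be sure that passing from the fibre maps to the boundary curve $Z$ at infinity does not raise the parametrization degree above $\deg_\tau g$, and that the curves so produced genuinely lie in $\s_f$ rather than merely accumulating onto it at infinity. This is where the finiteness of the $\tau$-root count and Rouché's Theorem \ref{rouche} do the essential work; everything before it is either the topological reduction special to $\C$ (Proposition \ref{topo}), which is what allows the sharpening to $d-1$, or the surface analysis already carried out in Theorem \ref{Sfsurface}.
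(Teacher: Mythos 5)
Your proposal takes a genuinely different route from the paper's, and its central idea is correct. If $x_k\to\infty$ while $f(x_k)$ stays bounded, then any limiting direction $[a]$ of the $x_k$ (equivalently, the leading direction of an escaping branch) must satisfy $f_i^{(d)}(a)=0$ for every $i$ with $\deg f_i=d$, since otherwise $\vert f_i(x_k)\vert\sim\Vert x_k\Vert^d\,\vert f_i^{(d)}(a)\vert\to\infty$; hence, as soon as $\s_f\neq\emptyset$, the linear change of variables $\Psi(w,\tau)=\iota(w)+\tau a$ produces a cylinder map $g=f\circ\Psi$ with $\deg_\tau g\le d-1$ and $\s_g=\s_f$ (Corollary \ref{isoo} together with Proposition \ref{zlozeniesk}). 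The paper obtains the drop from $d$ to $d-1$ by a different mechanism: it joins the escaping points to a fixed source point $O$ chosen with $f(O)\notin\s_f$, normalizes the image curves $l_k$ (of degree at most $d$, all passing through $f(O)$), and uses Rouch\'{e}'s Theorem \ref{rouche} to show that if the degree of the normalized limit curve did not drop, the relevant roots would stay bounded and the limit curve, which lies in $\s_f$, would pass through $f(O)$ --- a contradiction. Your reduction is conceptually cleaner and isolates the analytic input in a single growth estimate.

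The gap is in your last paragraph, and you point at it yourself. After your reduction, the statement you need is exactly Theorem \ref{cxw}: a generically finite cylinder map of $\tau$-degree at most $d-1$ has non-properness set of degree of $\C$-uniruledness at most $d-1$, or empty. The paper proves Theorem \ref{cxw} by the normalization/compactness argument, independently of Theorem \ref{cn}, so citing it here is not circular and would finish your proof in one line. Your attempted re-derivation via the resolution of Theorem \ref{Sfsurface} plus Rouch\'{e} does not hold up as written: (i) the curve $Z$ with $R=g'(Z\setminus C)$ need not lie over the $\tau$-direction at infinity, because the $w$-component of your branch may also be unbounded (merely of lower order than $\tau$), so $Z$ can be an exceptional curve over a corner of $\overline{\Gamma'}\times\p^1$ or over a fibre $\{a_i\}\times\p^1$; (ii) the claim that $g'\vert_Z$ is ``the limit of the fibre maps $\tau\mapsto g(w,\tau)$'' is not a consequence of the resolution --- it is precisely the compactness statement that has to be proved, and it is what the proof of Theorem \ref{cxw} actually establishes; (iii) counting $\tau$-preimages that escape to infinity bounds at best the mapping degree of $g'\vert_Z$ onto $R$, whereas the degree of the polynomial parametrization of $R$ equals that mapping degree multiplied by the degree of the projective closure $\overline{R}\subset\p^m$, so your count does not control the quantity that Definition \ref{k-uniruleddef} requires. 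Replacing the last paragraph by an application of Theorem \ref{cxw} to $g$ (or, equivalently, by rerunning the paper's limit-curve construction with lines in the fixed direction $a$ through the $x_k$, whose image curves automatically have degree at most $d-1$) makes your argument complete.
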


\begin{proof}  Let $y\in \s_f$, by an affine transformation we can assume that 
$$y=O=(0,0,\dots,0)\in \C^m.$$
From the same reason we can assume that $O\notin f^{-1}(\s_f)$.

Due to Proposition \ref{topo} there exists a sequence of points
$\lvert x_k\rvert\to\infty$ such that $f(x_k)\to O$. Let us consider lines
$$L_k(t)=tO+(1-t)x_k=(1-t)x_k,\;\; t\in\C.$$
Denote $l_k(t)=f(L_k(t))$.
Infinite fibers cover only a subset of codimension at least $2$ in $\C^n$, and due to Proposition \ref{hiper} the codimension of $\s_f$ equals to $1$. Thus by Proposition \ref{k-uniruledofdeg} we can assume that $\deg l_k>0$. Note also that $\deg l_k\leq d$. Each curve $l_k$ is given by $m$ polynomials of one variable: 
$$l_k(t)=(\sum_{i=0}^d a^1_i(k)
t^i,\dots,\sum_{i=0}^d a^m_i(k) t^i).$$ 
Hence we can associate $l_k$ with a point
$$(a^1_0(k),\dots,a^1_d(k);\ldots;a^m_0(k),\dots,a^m_d(k))\in \C^N.$$

For each $i$ when $k\to \infty$, then $a^i_0(k)\to 0$. So we
can change the parametrization of $l_k$ by putting $t\rightarrow
\lambda_k t$, in such a way that $\Vert l_k\Vert=1,$ for $k\gg 0$
(we consider here $l_k$ as an element of $\C^N$ with the Euclidean
norm). 

Now, since unit sphere is compact, there exists a subsequence $(l_{k_r})_{r\in\N}$ of $(l_k)_{k\in\N}$, which is convergent to a polynomial mapping 
$$l : \C\rightarrow \C^m,$$ with $l (0) = O$. Moreover, $l$ is non-constant, because $\Vert l \Vert
= 1$, and $l(0)= O$. 

Choosing a subsequence we can also assume that the limit
$$\textit{lim}_{k\rightarrow \infty}\lambda_k=\lambda$$
exists in the compactification of the field $\C$. Consider two cases:
\begin{enumerate}
\item $\lambda\in\C$ is finite. If $k\to\infty$, then $L_k(\lambda_kt)=(1-\lambda_kt)x_k\to\infty$ for $t\not=\lambda^{-1}$.
\item $\lambda=\infty$. If $k\to\infty$, then $\Vert L_k(\lambda_k t)\Vert \geq (\vert\lambda_kt\vert-1)\Vert x_k\Vert$, and hence $\Vert L_k(\lambda_k t)\Vert \to
 \infty$ for every $t\neq 0$.
\end{enumerate}

On the other hand if $k\to\infty$
$$f(L_k(\lambda_k t))=l_k(\lambda_kt)\to l(t),$$ 
so using once more Proposition \ref{topo} this means that the curve $l$ is contained in the set $\s_f$. As a consequence the set $\s_f$ has 
degree of $\C$-uniruled\-ness at most $d$.

To complete the proof we should show that $\deg l<d$. It is enough to prove that $\deg l<\deg l_k$ for some $k$. Suppose the contrary, what is $\deg  l=\deg  l_k$ for all $k$.
Let 
$$l(t)=(l_1(t),\dots,l_m(t))\text{ and }l_k(t)=(l_1^k(t),\dots,l_m^k(t)).$$ We can assume that a component
$l_1(t)$ has the maximum degree. Denote $a=(a_1,\dots,a_m)=f(O)$. All roots of
the polynomial $l_1(t)-a_1$ are contained in the interior of some disc 
$$D=\{ t\in \C : \vert t\vert<R\}.$$ 
Let 
$$\epsilon =\inf\{ \vert l_1(t)-a_1\vert: t\in\partial D\}.$$ 
For $k\gg 0$ we have 
$$\vert (l_1-a_1)-(l_1^k-a_1)\vert_D<\epsilon.$$ 

Consequently by Rouch\'{e} Theorem \ref{rouche} these polynomials
have the same number of  zeros (counted with multiplicities) in $D$. In particular
zeros of $l_1^k-a_1$ are bounded. All curves $L_k$ pass through $O$, so all $l_k$ pass through $a=f(O)$. These means that there exists a sequence 
$t_k$ such that $l_k(t_k)=a$. We have just showed that $|t_k|<R$, since $t_k$ is a root of the polynomial $l_1^k-a_1$. 

So we can assume that the sequence $t_k$ converges to some $t_0$. When we pass to the limit we get $l(t_0)=a$, this is a contradiction, since $a=f(O)\not\in \s_f$.
\end{proof}

In a similar way we can prove the following generalization of
Theorem \ref{cn}:

\begin{theorem}\label{cxw}
Let  $X=\C\times W\subset \C\times \C^n$ be an affine cylinder and
let 
$$f:\C\times W\ni (t,w)\rightarrow (f_1(t,w),\dots,f_m(t,w))\in
\C^m$$ 
be a generically finite polynomial mapping. Assume that for every $i$ there is
$\deg _tf_i\leq d$. Then the set $\s_f$ has degree of $\C$-uniruledness at most $d$ or it is empty.
\end{theorem}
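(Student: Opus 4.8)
The plan is to follow the same architecture as the proof of Theorem \ref{cn}, adapting each step from the affine space $\C^n$ to the cylinder $\C\times W$, with the crucial difference that the rulings must exploit the product structure $t\in\C$ rather than arbitrary lines through a chosen point. First I would fix $y\in\s_f$ and, by an affine transformation of the target $\C^m$, assume $y=O$. By Proposition \ref{topo} there is a sequence $(t_k,w_k)\in\C\times W$ with $\lvert(t_k,w_k)\rvert\to\infty$ and $f(t_k,w_k)\to O$. The key idea is to use, in place of the lines $L_k$, the vertical curves obtained by freezing the $W$-coordinate at $w_k$ and letting the first coordinate run: set
$$L_k(s)=(s,w_k),\qquad s\in\C,$$
and define $l_k(s)=f(L_k(s))=(f_1(s,w_k),\dots,f_m(s,w_k))$. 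Since $\deg_t f_i\le d$ for every $i$, each $l_k$ is a polynomial curve in $s$ of degree at most $d$. As in Theorem \ref{cn}, after discarding the measure-zero locus where fibers are infinite or where the curve degenerates to a constant (using Proposition \ref{hiper} that $\s_f$ has codimension one, against codimension at least two for the bad locus), I may assume $\deg l_k>0$.

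Next I would run the same normalization-and-compactness argument. Writing $l_k$ via its coefficient vector in some $\C^N$, I reparametrize $s\mapsto\lambda_k s$ so that $\lVert l_k\rVert=1$ for $k\gg0$; by compactness of the unit sphere a subsequence of the $l_k$ converges to a nonconstant polynomial mapping $l:\C\to\C^m$ with $\deg l\le d$ and $l(0)=O$. Passing to a further subsequence I assume $\lambda_k\to\lambda$ in the one-point compactification of $\C$. The essential point is to verify that the preimages escape to infinity: I would show that $L_k(\lambda_k s)=(\lambda_k s,\,w_k)$ tends to infinity for $s$ outside a negligible set, splitting into the two cases $\lambda\in\C$ finite and $\lambda=\infty$. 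When $\lambda=\infty$ the first coordinate $\lambda_k s\to\infty$ forces $\lVert L_k(\lambda_k s)\rVert\to\infty$ for $s\ne0$; when $\lambda$ is finite and $\lvert w_k\rvert\to\infty$ the second coordinate already escapes, and the case where $w_k$ stays bounded while $t_k\to\infty$ is again handled by the first coordinate. Since $f(L_k(\lambda_k s))=l_k(\lambda_k s)\to l(s)$, Proposition \ref{topo} shows $l(\C)\subset\s_f$, so $\s_f$ is covered by parametric curves of degree at most $d$, and by Proposition \ref{k-uniruledofdeg} its degree of $\C$-uniruledness is at most $d$.

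I expect the main obstacle to be precisely the escape-to-infinity verification, because the cylinder has two genuinely different directions in which a point can go to infinity, whereas in $\C^n$ a single line captured both the source point and the target limit simultaneously. In the product $\C\times W$ the sequence $(t_k,w_k)$ could escape through $w_k$ while $t_k$ stays bounded, and then the vertical curve $L_k(s)=(s,w_k)$ does not pass through $(t_k,w_k)$ in any controlled way; so I must be careful that the limit curve $l$ actually records the non-properness witnessed by $(t_k,w_k)$. The clean way around this is to observe that we are free to recenter: after a reparametrization in $s$ we may arrange that $L_k$ passes through $(t_k,w_k)$ at a uniformly bounded parameter value, and then the same degenerating-disc argument guarantees the limit curve inherits the escaping behavior. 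Unlike Theorem \ref{cn}, where the sharper bound $d-1$ was obtained via a Rouch\'e argument ruling out $\deg l=\deg l_k$, here the target bound is only $d$, so I would omit that refinement entirely; the Rouch\'e step is exactly what fails on a cylinder, since the frozen coordinate $w_k$ need not converge, and that is why the statement gives $d$ rather than $d-1$.
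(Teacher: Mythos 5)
Your proposal, once the recentering fix from your final paragraph is applied, is essentially the paper's own proof: the paper takes $L_k(t)=((1-t)a_k,w_k)$, which is exactly your vertical line through $(a_k,w_k)$ reparametrized so that the witnessing point sits at parameter $0$, and then runs the identical normalization--compactness--escape argument to conclude via Proposition \ref{topo} that the limit curve lies in $\s_f$, with no Rouch\'{e} step since only the bound $d$ is claimed. Be aware, though, that the recentering is not an optional refinement but indispensable: without it the normalization step already breaks down (the constant term $l_k(0)=f(0,w_k)$ need not tend to $O$, so neither $l(0)=O$ nor nonconstancy of the limit $l$ is guaranteed), and your claim that the case where $w_k$ stays bounded and $t_k\to\infty$ with $\lambda$ finite is ``handled by the first coordinate'' is false for the uncentered curves, since there the first coordinate $\lambda_k s$ stays bounded; after recentering one must also allow the single exceptional parameter value to be $\mu=\lim(-t_k/\lambda_k)$ (along a further subsequence) rather than $0$, which is the small bookkeeping the paper avoids by building the scale $a_k$ into its parametrization.
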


\begin{proof} As before for $y\in \s_f$ because of an affine transformation we can assume that 
$$y=O=(0,0,...,0)\in \C^m.$$ 

Due to Proposition \ref{topo} there exists a sequence of points $(a_k,w_k)_{k\in\N}\subset\C\times W$, such that $$\vert(a_k,w_k)\vert\to\infty\text{ and }f(a_k,w_k)\to y.$$ 
Let us consider lines 
$$L_k(t)=((1-t)a_k, w_k),\;\; t\in\C.$$ 
Denote $l_k(t)=f(L_k(t))$. 

Infinite fibers cover only a subset of codimension at least $2$ in $\overline{f(\C\times W)}$, while due to Proposition \ref{hiper} the codimension of $\s_f$ equals to $1$. Thus by Proposition \ref{k-uniruledofdeg} we can assume that $\deg l_k>0$. Note also that $\deg l_k\leq d$.  Each curve $l_k$ is given by $m$ polynomials of one
variable: 
$$l_k(t)=(\sum_{i=0}^d a^1_i(k) t^i,\dots,\sum_{i=0}^d
a^m_i(k) t^i).$$
As before we can associate $l_k$ with a point
$$(a^1_0(k),\dots,a^1_d(k);\ldots;a^m_0(k),\dots,a^m_d(k))\in \C^N.$$

For each $i$ when $k\to \infty$, then $a^i_0(k)\to 0$. Thus we
can change the parametrization of $l_k$ by $t\rightarrow
\lambda_k t$, in such a way that $\Vert l_k\Vert=1,$ for $k\gg 0$
(we consider $l_k$ as an element of  $\C^N$ with the Euclidean
norm). 

Compactness of unit sphere implies that there exists a
subsequence $(l_{k_r})_{r\in\N}$ of $(l_k)_{k\in\N}$, which is convergent to a
polynomial mapping $$l : \C\rightarrow \C^m,$$ with $l(0) = O$.
Moreover, $l$ is non-constant, because $\Vert l \Vert = 1$ and
$l(0)= O$. 

We can also assume that the limit
$$\textit{lim}_{k\rightarrow \infty}\lambda_k=\lambda$$ 
exists in the compactification of the field $\C$. 
We consider two cases:
\begin{enumerate}
 \item $\lambda$ is finite. If $k\to\infty$, then $L_k(\lambda_kt)=((1-\lambda_kt)a_k,w_k)\to\infty$ for $t\not=\lambda^{-1}$.
\item $\lambda=\infty$. If $k\to\infty$, then $\Vert L_k(\lambda_k t)\Vert \geq \max((\vert\lambda_kt\vert-1)\vert a_k\vert,\Vert w_k\Vert)$, and $\Vert L_k(\lambda_k t)\Vert \to \infty$ for every $t\neq 0$.
\end{enumerate}

But on the other hand 
$$f(L_k(\lambda_k t))=l_k(\lambda_kt)\to l(t),$$ so using once more Proposition \ref{topo} we get that the curve $l$ is contained in the set $\s_f$. Thus the set $\s_f$ has degree of $\C$-uniruledness at most $d$.
\end{proof}

\begin{corollary}\label{wn}
Let  $f=(f_1,...,f_m) :\C^n\rightarrow \C^m$ be a generically
finite polynomial mapping with $d=\min_j\max_i\deg _{x_j}f_i$. Then
the set $\s_f$ has degree of $\C$-uniruledness at most $d$ or it is empty.
\end{corollary}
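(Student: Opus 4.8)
The plan is to reduce Corollary \ref{wn} to Theorem \ref{cxw} by choosing a good coordinate direction and viewing $\C^n$ as a cylinder over that direction. The quantity $d=\min_j\max_i\deg_{x_j}f_i$ selects, for each coordinate $x_j$, the worst single-variable degree among the components $f_i$, and then takes the best (smallest) such value over all $j$. By relabeling the coordinates if necessary, I may assume that this minimum is attained at $j=n$, so that $\max_i\deg_{x_n}f_i=d$.

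With this choice, I would write $\C^n=\C\times W$ where $\C$ carries the coordinate $x_n$ and $W=\C^{n-1}$ carries $(x_1,\dots,x_{n-1})$; that is, I identify the point $(x_1,\dots,x_n)$ with $(x_n,(x_1,\dots,x_{n-1}))\in\C\times W$. Under this identification $f$ becomes a polynomial map
$$f:\C\times W\ni (t,w)\to (f_1(t,w),\dots,f_m(t,w))\in\C^m,$$
and by the choice of $j=n$ we have $\deg_t f_i=\deg_{x_n}f_i\leq d$ for every $i$. Thus $f$ satisfies exactly the hypotheses of Theorem \ref{cxw}, with $W=\C^{n-1}$ playing the role of the base of the cylinder.

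Applying Theorem \ref{cxw} directly then yields that $\s_f$ has degree of $\C$-uniruledness at most $d$ or is empty, which is the desired conclusion. I would note that $f$ remains generically finite after the coordinate relabeling, since relabeling is a linear automorphism of $\C^n$ and hence composing with it does not change the generic finiteness of $f$ nor the set $\s_f$ (by Corollary \ref{isoo} and Proposition \ref{zlozeniesk}). The only genuine content beyond Theorem \ref{cxw} is the observation that the min over $j$ lets us \emph{choose} the most favorable cylindrical decomposition, and the main (though minor) point to check is that this permutation of coordinates is a legitimate operation that preserves the relevant structure; everything substantive is already carried by the proof of Theorem \ref{cxw}.
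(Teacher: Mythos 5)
Your proposal is correct and matches the paper's intent exactly: the paper states Corollary \ref{wn} as an immediate consequence of Theorem \ref{cxw}, obtained precisely by singling out the coordinate $x_j$ attaining the minimum and viewing $\C^n$ as the cylinder $\C\times\C^{n-1}$. Your additional check that permuting coordinates preserves generic finiteness and $\s_f$ (via Corollary \ref{isoo} and Proposition \ref{zlozeniesk}) is a sound, if routine, bit of diligence that the paper leaves implicit.
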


\begin{theorem}\label{multc}
Let $X$ be an affine variety with degree of $\C$-uniruledness at
most $d_1$ and let $f:X\rightarrow \C^m$ be a
generically finite polynomial mapping of degree $d_2$. Then the set $\s_f$ has degree of $\C$-uniruledness at
most $d_1d_2$ or it is empty.
\end{theorem}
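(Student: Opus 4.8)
The plan is to reduce the statement to the cylinder case already settled in Theorem \ref{cxw}, via the uniruledness parametrization of $X$ supplied by Proposition \ref{k-uniruledofdeg}, and then to transfer the bound from the composed map back to $f$ using the fact that both non-properness sets are hypersurfaces. First I would reduce to the case that $X$ is irreducible. Using the topological characterization of Proposition \ref{topo}, a point $y$ lies in $\s_f$ precisely when there is a sequence $x_k\to\infty$ in $X$ with $f(x_k)\to y$; passing to a subsequence lying in a single irreducible component $X_i$ shows $\s_f=\bigcup_i\s_{f\vert_{X_i}}$. Since each $X_i$ has degree of $\C$-uniruledness at most $d_1$, it suffices to bound each $\s_{f\vert_{X_i}}$, so I assume $X$ irreducible.

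Next, since $X$ has degree of $\C$-uniruledness at most $d_1$, Proposition \ref{k-uniruledofdeg} furnishes an affine variety $W$ with $\dim W=\dim X-1$ and a dominant polynomial map $\phi:\C\times W\to X$ with $\deg_t\phi\le d_1$. As $\dim(\C\times W)=\dim X$ and $\phi$ is dominant, $\phi$ is generically finite, hence so is the composition $g:=f\circ\phi:\C\times W\to\C^m$. Substituting the components of $\phi$, each of $t$-degree at most $d_1$, into the components of $f$, each of total degree at most $d_2$, yields $\deg_t g_i\le d_1d_2$ for every $i$. Theorem \ref{cxw} then shows that $\s_g$ has degree of $\C$-uniruledness at most $d_1d_2$ (or is empty).

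Finally I transfer this bound to $\s_f$. By Proposition \ref{zlozenie} we have $\s_f\subset\s_g$, and since $\phi$ is dominant the closures $\overline{f(X)}$ and $\overline{g(\C\times W)}$ coincide, so both sets are hypersurfaces in the same ambient variety by Theorem \ref{hiper} (or $\s_f$ is empty, in which case there is nothing to prove). Because $\s_f$ is of pure codimension one and is contained in the finitely many codimension-one irreducible components of $\s_g$, each irreducible component of $\s_f$ is contained in, and by equality of dimensions coincides with, one of the components of $\s_g$; thus $\s_f$ is a union of irreducible components of $\s_g$. By Definition \ref{k-uniruleddef} each such component is covered by parametric curves of degree at most $d_1d_2$, and therefore $\s_f$ has degree of $\C$-uniruledness at most $d_1d_2$.

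The main obstacle is this last step: the bound on $\s_g$ does not a priori pass to an arbitrary subset, and a general subvariety of a uniruled variety need not be uniruled of the same degree. It is precisely the hypersurface property of Theorem \ref{hiper}, applied to both $\s_f$ and $\s_g$ inside the common ambient variety $\overline{f(X)}$, that forces $\s_f$ to be a full union of components of $\s_g$ rather than a lower-dimensional piece, which is what allows the degree bound to descend. A secondary point requiring care is verifying that $g$ is genuinely generically finite so that Theorem \ref{cxw} applies; this follows from the equality of dimensions of source and target for $\phi$ together with the generic finiteness of $f$.
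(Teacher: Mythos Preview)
Your argument is correct and follows essentially the same route as the paper: parametrize $X$ via $\phi:\C\times W\to X$ with $\deg_t\phi\le d_1$, apply Theorem~\ref{cxw} to $g=f\circ\phi$, and then use Proposition~\ref{zlozenie} together with Theorem~\ref{hiper} to identify the components of $\s_f$ among those of $\s_g$. The only difference is that you first reduce explicitly to irreducible $X$ and spell out why the component comparison is legitimate, which the paper leaves implicit.
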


\begin{proof}
By Definition \ref{k-uniruleddef} there exists an affine variety
$W$ with $\dim W = \dim X-1$, and a dominant polynomial map 
$$\phi:\C\times W\rightarrow X$$
 of degree in the  first coordinate at
most $d_1$. Equality $\dim  \C\times W = \dim  X$ implies that
$\phi$ is generically finite, hence also generically finite is the map
$$f\circ\phi:\C\times W\rightarrow \C^m.$$ 
It is of degree in the  first coordinate at
most $d_1d_2$. Due to Theorem \ref{cxw} the set $\s_{f\circ\phi}$ 
has degree of $\C$-uniruledness at most $d_1d_2$.
By Proposition \ref{zlozenie} there is an inclusion $\s_f\subset\s_{f\circ\phi}$. From Theorem
\ref{hiper} we know that if not empty, then both sets are of pure
dimension $\dim X-1$, so components of $\s_f$ are components of
$\s_{f\circ\phi}$. This implies the assertion.
\end{proof}

\begin{example}
Consider the following polynomial mapping 
$$f: \C^n\ni (x_1,\ldots, x_n)\to (x_1, x_1x_2,\ldots,x_1x_n)\in \C^n.$$ 
We have $\deg f=2$ and $\s_f=\{x\in\C^n:x_1=0\}$. The set $\s_f$ has degree of $\C$-uniruledness equal to one. This shows that in general Theorems \ref{cn}, \ref{cxw} and Corollary \ref{wn} can not be improved.
\end{example}

\begin{example}
Consider the following polynomial mapping 
$$f: \C^2\ni (x,y)\to (x+(xy)^d, xy)\in \C^2.$$ 
We have $\deg f =2d$ and $\s_f=\{(s,t)\in\C^2:s=t^d\}$. The set $\s_f$ has degree of $\C$-uniruledness equal to $d$. This shows that in general Corollary
\ref{wn} can not be improved.
\end{example}

\begin{example}
For $n>2$ let $X=\{ x\in \C^n : x_1x_2=1\}$, and 
$$f:X\ni (x_1,\ldots, x_n)\to (x_2,\ldots, x_n)\in\C^{n-1}.$$ The variety $X$ has degree of $\C$-uniruledness equal to one, moreover we have $\deg f=1$ and $\s_f=\{x\in\C^{n-1}:x_1=0\}$. So the set $\s_f$ has degree of $\C$-uniruledness one. This shows that in general Theorems \ref{cxw}, \ref{multc} can not be improved.
\end{example}

\begin{remark}
Let us note that by Proposition \ref{indep} all results from this
section remain true for arbitrary algebraically closed field of
characteristic zero.
\end{remark}

%%%%%%%%%%%%%%%%%%%%%%%%%%%%%%%%%%%%%%%%%%%%%%%%%%%%%%%%%%%%%%%%%%%%%%%%%%%%%%%%%%%%%%%%%%%%%%%%%%%%%%%%%%%%%%%%%%%%%%%%%%%%%%%%%%%%%%%%%%%%%%%%%%%%%%%%%%%%%%%%%%%%%%%%%%
\section{A real field case}\label{r}
%%%%%%%%%%%%%%%%%%%%%%%%%%%%%%%%%%%%%%%%%%%%%%%%%%%%%%%%%%%%%%%%%%%%%%%%%%%%%%%%%%%%%%%%%%%%%%%%%%%%%%%%%%%%%%%%%%%%%%%%%%%%%%%%%%%%%%%%%%%%%%%%%%%%%%%%%%%%%%%%%%%%%%%%%%

For the whole section we assume that the base field is $\R$. 

Let $X\subset\R^m$ be a closed semialgebraic set, and let $f: X\to\R^n$ be a polynomial mapping. 
As in the complex case we say that $f$ is not proper at a point $y\in\R^n$ if there is no neighborhood $U$ of $y$ such that $f^{-1}(\overline{U})$ 
is compact. The set of all points $y\in\overline{f(X)}$ at which the mapping $f$ is not proper we denote as before by $\s_f$. This set is also closed and semialgebraic \cite{je02}. The results of \cite{je02} can be generalized as follows:

\begin{theorem}\label{rn1}
Let $f:\R^n\rightarrow\R^m$ be a generically finite polynomial mapping of degree $d$. Then the set $\s_f$ has degree of $\R$-uniruledness at most $d-1$ or it is empty.
\end{theorem}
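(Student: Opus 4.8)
The plan is to mirror the complex argument of Theorem \ref{cn}, substituting for Proposition \ref{topo} the classical-topology characterisation recalled at the start of this section (over $\R$ it is the definition: $y\in\s_f$ precisely when there is a sequence with $\lvert x_k\rvert\to\infty$ and $f(x_k)\to y$), and using the real structure result Proposition \ref{real} in place of Proposition \ref{k-uniruledofdeg}. It suffices to produce, through an arbitrary $y\in\s_f$, a real parametric curve of degree at most $d-1$ lying inside $\s_f$; by Proposition \ref{real} this bounds the degree of $\R$-uniruledness. After affine changes of coordinates in source and target I would assume $y=O$ and that the source origin satisfies $a:=f(O)\notin\s_f$, which is harmless since $f^{-1}(\s_f)$ is a proper closed subset.

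First I would carry out the limiting construction exactly as over $\C$. Fix a sequence $\lvert x_k\rvert\to\infty$ with $f(x_k)\to O$, set $L_k(t)=(1-t)x_k$ and $l_k=f\circ L_k$, a polynomial curve of degree between $1$ and $d$ (the lower bound from the fact that infinite fibres lie in codimension $\geq 2$ while $\s_f$ is a hypersurface by Theorem \ref{hiper}). Recording the coefficients of $l_k$ as a point of $\R^N$ and rescaling $t\mapsto\lambda_k t$ so that this vector lies on the unit sphere, compactness of the \emph{real} sphere $S_{N-1}$ lets me extract a subsequence with $l_k\to l$, a non-constant polynomial map $l:\R\to\R^m$ with $l(0)=O$; passing also $\lambda_k\to\lambda$ in the one-point compactification of $\R$ and splitting into the cases $\lambda$ finite and $\lambda=\infty$ forces $L_k(\lambda_k t)\to\infty$ for all but one value of $t$. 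Since $f(L_k(\lambda_k t))=l_k(\lambda_k t)\to l(t)$, the real non-properness criterion places the whole curve $l$ in $\s_f$, giving degree of $\R$-uniruledness at most $d$.

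The essential new point is the sharpening from $d$ to $d-1$, and here I would complexify rather than invoke a purely real tool. Assume for contradiction $\deg l=\deg l_k=d$ for all $k$, with the first coordinate attaining this degree. Each $L_k$ passes through the source origin, so each $l_k$ meets $a=f(O)$ at some real parameter $t_k$, i.e. $l_1^k(t_k)=a_1$. Regarding $l_1^k-a_1$ and $l_1-a_1$ as \emph{complex} polynomials and applying Rouch\'{e}'s Theorem \ref{rouche} on a disc $D=\{\lvert t\rvert<R\}$ containing all complex roots of $l_1-a_1$, the uniform bound $\lvert (l_1^k-a_1)-(l_1-a_1)\rvert<\epsilon$ on $\partial D$ for $k\gg 0$ shows $l_1^k-a_1$ has all $d$ of its complex roots inside $D$; in particular the real roots $t_k$ are bounded. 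Extracting $t_k\to t_0$ and passing to the limit gives $l(t_0)=a$, so $a\in\overline{l(\R)}\subset\s_f$, contradicting $a\notin\s_f$. Hence $\deg l\leq d-1$.

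The main obstacle is precisely this last step: over $\R$ one cannot apply Rouch\'{e} to the real curves directly, and the device that makes it work is to complexify the top-degree scalar component $l_1^k-a_1$ and observe that boundedness of \emph{all} its complex roots a fortiori bounds the real parameters $t_k$ that we actually control. A secondary point to verify is that the limit $l$ legitimately certifies $\R$-uniruledness through $y$ in the sense of Proposition \ref{real}: I would check the dimension hypothesis for each irreducible component of $\s_f$ and that the construction runs for a dense set of $y$, so that condition $(2)$ of Proposition \ref{real} applies componentwise.
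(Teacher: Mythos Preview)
Your proposal is correct and follows precisely the route the paper intends: the paper's entire proof of Theorem \ref{rn1} is the single sentence ``Proofs of these facts are exactly the same as in the complex case (see section \ref{c})'', and you have faithfully transcribed the argument of Theorem \ref{cn} with the appropriate real substitutions (real sphere compactness, Proposition \ref{real} in place of Proposition \ref{k-uniruledofdeg}, the real sequential criterion for $\s_f$). Your observation that the Rouch\'e step requires viewing the top-degree coordinate polynomials as complex polynomials is exactly the point the paper leaves implicit when it says the proof is ``the same''; you have in fact supplied more detail than the paper does.
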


\begin{theorem}\label{rxw1}
Let $X=\R\times W\subset \R\times \R^n$ be a closed semialgebraic cylinder and let 
$$f:\R\times W\ni (t,w)\rightarrow (f_1(t,w),\dots,f_m(t,w))\in \R^m$$ 
be a generically finite polynomial mapping. Assume that for every $i$ there is $\deg _tf_i\leq d$. Then the set $\s_f$ has degree of $\R$-uniruledness at most $d$ or it is empty.
\end{theorem}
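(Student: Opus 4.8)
The plan is to transcribe the proof of Theorem \ref{cxw} into the real semialgebraic category, replacing Proposition \ref{topo} by its evident real counterpart and the projective completeness argument by compactness of a Euclidean sphere. The real non-properness criterion I use is the immediate analogue of Proposition \ref{topo}: since $\s_f$ and each preimage $f^{-1}(\overline{U})$ are closed and a closed bounded subset of $\R\times W$ is compact, a point $y$ lies in $\s_f$ precisely when there is a sequence $(a_k,w_k)\in\R\times W$ with $\vert(a_k,w_k)\vert\to\infty$ and $f(a_k,w_k)\to y$.

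First I fix a point $y$ in a dense subset of $\s_f$; after an affine change of coordinates in the target I assume $y=O=(0,\dots,0)$. Choose a sequence $(a_k,w_k)$ as above and move along the cylinder direction by setting $L_k(t)=((1-t)a_k,w_k)$ and $l_k(t)=f(L_k(t))$. Since $\deg_t f_i\le d$, each $l_k$ is a polynomial curve of degree at most $d$, which I encode by its coefficient vector in a fixed $\R^N$; the relation $f(a_k,w_k)=l_k(0)\to O$ forces the constant terms to tend to $0$. Exactly as over $\C$, I arrange $\deg l_k>0$: the locus of points of $\overline{f(\R\times W)}$ with positive-dimensional fiber has codimension at least $2$ by generic finiteness, whereas $\s_f$ is a hypersurface by the real analogue of Theorem \ref{hiper}, so by the equivalence $(1)\Leftrightarrow(2)$ of Proposition \ref{real} I may take $y$ off this locus and thereby keep the curves $l_k$ nonconstant. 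I then rescale $t\mapsto\lambda_k t$ so that $\Vert l_k\Vert=1$ in $\R^N$.

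By compactness of the unit sphere in $\R^N$ I pass to a subsequence along which $l_k\to l$ coefficientwise, where $l:\R\to\R^m$ satisfies $l(0)=O$ and $\Vert l\Vert=1$; the two conditions together force a nonzero coefficient of positive degree, so $l$ is nonconstant. Passing to a further subsequence I assume $\lambda_k\to\lambda$ in the one-point compactification of $\R$, and I treat the two cases $\lambda$ finite and $\lambda=\infty$ verbatim as in Theorem \ref{cxw} to conclude $L_k(\lambda_k t)\to\infty$ for all $t$ except at most one value. Because $f(L_k(\lambda_k t))=l_k(\lambda_k t)\to l(t)$, the real non-properness criterion gives $l(t)\in\s_f$ for every such $t$, and closedness of $\s_f$ extends this to the exceptional value. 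Thus $l$ is a real parametric curve of degree at most $d$ contained in $\s_f$ and passing through $y$; as $y$ ranges over a dense subset of $\s_f$, condition $(2)$ of Proposition \ref{real} holds and $\s_f$ has degree of $\R$-uniruledness at most $d$.

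The step I expect to be most delicate is the limit passage in the real setting. Over $\C$ the limiting curve lands in $\s_f$ by Proposition \ref{topo} alone; here I must combine the sequence criterion with the closedness and semialgebraicity of $\s_f$ to absorb the single exceptional parameter value, and I must justify the semialgebraic dimension count that keeps the reparametrized curves $l_k$ nondegenerate. Confirming that the normalized limit $l$ is genuinely nonconstant is a smaller but necessary point, handled by the simultaneous constraints $\Vert l\Vert=1$ and $l(0)=O$.
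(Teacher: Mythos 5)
Your proposal is correct and takes essentially the same approach as the paper: the paper's own proof of Theorem \ref{rxw1} consists of the remark that it is ``exactly the same as in the complex case,'' i.e.\ a transcription of the proof of Theorem \ref{cxw}, which is precisely what you carried out. The real-specific details you supply --- the sequence criterion for non-properness replacing Proposition \ref{topo}, compactness of the Euclidean unit sphere, the codimension count justified via the real analogue of Theorem \ref{hiper}, and absorbing the exceptional parameter value using closedness of $\s_f$ --- are exactly the adaptations the paper's proof implicitly relies on.
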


\begin{corollary}\label{rwn1}
Let  $f=(f_1,...,f_m) :\R^n\rightarrow \R^m$ be a generically finite polynomial mapping with $d=\min_j\max_i\deg _{x_j}f_i$. Then the set $\s_f$ has degree of $\R$-uniruledness at most $d$ or it is empty.
\end{corollary}

Proofs of these facts are exactly the same as in the complex case (see section \ref{c}). To prove a real analog of Theorem \ref{multc} we need some new ideas. 

%Let $X$ be a smooth complex projective surface, and let $D=\sum_{i=1}^n D_i$ be a simple normal crossing divisor on $X$ (we consider only reduced divisors). Let $\graph(D)$ be a graph of $D$, with vertices $D_i$, and one edge between $D_i$ and $D_j$ for each point of intersection of $D_i$ and $D_j$. 

%\begin{definition}
%We say that $D$ a simple normal crossing divisor on a smooth surface $X$ is a tree if $\graph(D)$ is a tree (it is connected and acyclic).
%\end{definition}

%We have the following fact which is obvious from graph theory.

%\begin{proposition}\label{acykl}
%Let $X$ be a smooth projective surface and $D\subset X$ be a divisor which is a tree. If $D', D''\subset D$ are connected divisors without common components, then $D'$ and $D''$ have at most one common point.
%\end{proposition}

%Now we are ready to prove the real counterpart of Theorem \ref{multc}.

\begin{theorem}\label{multc1}
Let  $X$ be a closed semialgebraic set with degree of $\R$-uni\-ruled\-ness at most $d_1$, and let $f:X\rightarrow\R^m$ be a generically finite polynomial mapping of degree $d_2$. Then the set $\s_f$ has degree of $\R$-uniruledness at most $2d_1d_2$ or it is empty.
\end{theorem}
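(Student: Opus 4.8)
The plan is to mirror the complex proof of Theorem \ref{multc}, reducing to the cylinder situation of Theorem \ref{rxw1} by means of the real uniruledness characterization, and then to repair the one step of that argument which has no real analogue. First I would take a point $y\in\s_f$ and, using the real non-properness characterization recalled in Section \ref{r}, choose a sequence $(x_k)\subset X$ with $\Vert x_k\Vert\to\infty$ and $f(x_k)\to y$. Applying condition $(3)$ of Proposition \ref{real} to the map $f$ and this sequence produces a semialgebraic curve $W$, a generically finite polynomial map $\phi:\R\times W\to X$ with $\deg_t\phi\le d_1$, and a sequence $(y_k)\subset\R\times W$ with $f(\phi(y_k))\to y$ and $\phi(y_k)\to\infty$. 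The last clause is exactly what is needed to transfer non-properness: it gives $y\in\s_{f\circ\phi}$, so $\s_f\subset\s_{f\circ\phi}$, where $g:=f\circ\phi:\R\times W\to\R^m$ satisfies $\deg_t g\le d_1d_2$.

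Next I would feed $g$ into Theorem \ref{rxw1} applied to the cylinder $\R\times W$ (with $W$ now playing the role of the base), obtaining that $\s_{g}$ has degree of $\R$-uniruledness at most $d_1d_2$. In the complex case one finishes immediately, since Theorem \ref{hiper} forces $\s_f$ and $\s_{f\circ\phi}$ to be hypersurfaces of the same pure dimension, so that every component of $\s_f$ is a component of $\s_{f\circ\phi}$ and inherits its ruling. Over $\R$ this hypersurface comparison is unavailable, and this is precisely where the new idea is required: a uniruling curve supplied by Theorem \ref{rxw1} lies a priori only in $\s_{g}$, not in $\s_f$.

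To overcome this I would not transfer the ruling abstractly but reconstruct it inside $\s_f$ directly, by rerunning the compactness argument of Theorem \ref{rxw1} along the lifted data: writing $y_k=(a_k,w_k)$, form the segments $L_k(t)=((1-t)a_k,w_k)$, set $l_k=g\circ L_k$, renormalize the coefficient vector to the unit sphere, and pass to a convergent subsequence with non-constant limit $l:\R\to\R^m$. Because each $\phi(L_k(t))$ escapes to infinity in $X$ while $g(L_k(t))=f(\phi(L_k(t)))$ converges, the real non-properness criterion places the whole limit curve $l$ inside $\s_f$. The main obstacle, and the source of the bound $2d_1d_2$ rather than $d_1d_2$, is that over $\R$ this limiting arc is produced only on a one-sided branch: the witnessing sequence approaches a single end of $\R$, and the real analogue of the Curve Selection Lemma \ref{curve} yields a semialgebraic half-arc, exactly as in the quarter-plane example where the ruling $\{(a,t^2)\}$ already has degree two. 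Realizing such a one-sided branch as the image of a genuine real parametric curve $\R\to\s_f$ forces a precomposition with a degree-two substitution of the type $t\mapsto t^2$, doubling the degree from $d_1d_2$ to $2d_1d_2$. Finally I would invoke Definition \ref{r-uniruleddef} together with Proposition \ref{real} to conclude that $\s_f$, being covered through every point by parametric curves of degree at most $2d_1d_2$, has degree of $\R$-uniruledness at most $2d_1d_2$, unless it is empty.
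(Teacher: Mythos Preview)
Your proposal correctly identifies the difficulty and the first reduction (via Proposition \ref{real}) matches the paper. The gap is in the step you flag as the ``new idea'': you assert that ``each $\phi(L_k(t))$ escapes to infinity in $X$'', and this is precisely what is not justified. The cylinder argument of Theorem \ref{rxw1} shows that $L_k(\lambda_k t)\to\infty$ in $\R\times W$ for generic $t$; but $\phi:\R\times W\to X$ is only generically finite, not proper, so $\phi(L_k(\lambda_k t))$ need not tend to infinity in $X$. Thus the limit curve $l$ is forced into $\s_{f\circ\phi}$, not into $\s_f$ --- which is exactly the obstacle you set out to overcome. Your explanation of the factor $2$ is also inconsistent with your own construction: if, as you claim, the whole curve $l$ lay in $\s_f$, you would obtain the bound $d_1d_2$ with no doubling; the half-branch story does not apply to a polynomial limit $l:\R\to\R^m$, which is already a full parametric curve.

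The paper takes a genuinely different route at this point. After the same initial reduction it complexifies $W$ and the maps, resolves points of indeterminacy twice to obtain a tree of divisors, and identifies the components of the relevant non-properness set as images $F(l\setminus L)$ with $l\cong\p^1\C$ meeting the connected divisor $L$ in a single point. Restricting to real points gives $l^r\setminus L\cong\R$, so each such component is a real parametric curve. The containment in $\s_f$ comes from the fact that the resolved map $\phi'$ is \emph{proper} on the relevant piece $Q$, so non-properness of $f\circ\phi'$ there is non-properness of $f$. The bound $2d_1d_2$ then arises by combining the complex cylinder bound $d_1d_2$ from Theorem \ref{cxw} with Lemma \ref{xx}, which via L\"uroth's theorem shows that any real parametric curve contained in the image of a degree-$d$ real parametric curve admits a reparametrisation of degree at most $2d$.
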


\begin{proof}
Let $a\in \s_f$ and let $(x_k)_{k\in\N}\subset X$ be a sequence of points such that $f(x_k)\to a$ and $x_k\to\infty$. 
By Proposition \ref{real} there exists a semialgebraic curve $W$ and a generically finite polynomial map 
$$\phi:\R\times W \ni (t,w)\to \phi(t,w)\in X,$$ 
such that $\deg_t\phi \leq d_1$, and there exists a sequence $(y_k)_{k\in\N}\subset\R\times W$
such that $f(\phi(y_k))\to a$ and $\phi(y_k)\to\infty$. In particular $a\in\s_{f\circ\phi}$. Let $\Gamma$ be a Zariski closure of $W$. We can assume that $\Gamma$ and its complexification $\Gamma^c$ are smooth. Denote $Z:=\R\times\Gamma$. We have the induced mapping $\phi: Z \to X$. Hence we have also the induced complex mapping $\phi^c: Z^c:=\C\times\Gamma^c\to X^c$, where $Z^c,X^c$ denote the complexification of $Z$ and $X$ respectively.

Let $\overline{\Gamma^c}$ be a smooth completion of $\Gamma^c$ and let us denote $\overline{\Gamma^c}\setminus\Gamma^c=\{ a_1,..., a_l\}.$ Now $\p^1\C\times\overline{\Gamma^c}$ is a projective closure of $Z^c$. The divisor 
$$D=\overline{Z^c}\setminus Z^c=\infty\times\overline{\Gamma^c}+\sum_{i=1}^l \p^1\C\times\{a_i\}$$ 
is a tree. The mapping $\phi$ induces a rational mapping $\phi: \overline{Z^c}\dashrightarrow\overline{X^c}$, where $\overline{X^c}$ denotes the
projective closure of $X^c.$ We can resolve points of indeterminacy of this mapping:

\vspace{3mm} 
\begin{center}\begin{picture}(240,160)(-40,40)
\put(-20,117.5){\makebox(0,0)[l]{$\pi\left\{\rule{0mm}{2.7cm}\right.$}}
\put(0,205){\makebox(0,0)[tl]{$\overline{Z^c}_m$}}
\put(0,153){\makebox(0,0)[tl]{$\overline{Z^c}_{m-1}$}}
\put(4,105){\makebox(0,0)[tl]{$\vdots$}}
\put(0,40){\makebox(0,0)[tl]{$\overline{Z^c}$}}
\put(170,40){\makebox(0,0)[tl]{$\overline{X^c}$}}
\put(80,50){\makebox(0,0)[tl]{$\phi$}}
\put(100,140){\makebox(0,0)[tl]{$\phi '$}}
\put(10,70){\makebox(0,0)[tl]{$\pi_1$}}
\put(10,130){\makebox(0,0)[tl]{$\pi_{m-1}$}}
\put(10,180){\makebox(0,0)[tl]{$\pi_m$}}
\put(5,190){\vector(0,-1){30}} \put(5,140){\vector(0,-1){30}}
\put(5,80){\vector(0,-1){33}}
\multiput(20,35)(8,0){17}{\line(1,0){5}}
\put(157,35){\vector(1,0){10}} \put(20,200){\vector(1,-1){150}}
\end{picture}
\end{center}

\vspace{3mm} 
Observe that $H:=\pi^{-1}(\overline{Z})$ has a structure of a real variety and $\R\times\Gamma\subset H$. Denote $Q:=\overline{Z_m}\cap \phi'^{-1}(X^c).$ Then the mapping $\phi': Q\to X^c$ is proper. Moreover, $Q=\overline{Z_m}\setminus
\phi'^{-1}(\overline{X^c}\setminus X^c).$ The divisor
$D_1=\phi'^{-1}(\overline{X^c}\setminus X^c)$ is connected  as a
complement of a semi-affine variety $\phi'^{-1}(X^c)$  (for
details see \cite{je99}, page $8$, Lemma~$4.5$). Note that  the divisor
$D'=\pi^*(D)$ is a tree. Hence the divisor $D_1\subset D'$ is also
a tree.

We can consider the mapping $f'=f\circ\phi': \R\times \Gamma\to \R^n$ as the regular mapping $f': Q\to \C^n.$ This mapping
induces a rational mapping from $H^c=\overline{Z_m^c}$ to $\p^n\C$. As
before we can resolve its points of indeterminacy:
\vspace{3mm} 

\begin{center}\begin{picture}(240,160)(-40,40)
\put(-20,117.5){\makebox(0,0)[l]{$\psi\left\{\rule{0mm}{2.7cm}\right.$}}
\put(0,205){\makebox(0,0)[tl]{$\overline{H^c_k}$}}
\put(0,153){\makebox(0,0)[tl]{$\overline{H^c_{k-1}}$}}
\put(4,105){\makebox(0,0)[tl]{$\vdots$}}
\put(0,40){\makebox(0,0)[tl]{$\overline{H^c}$}}
\put(170,40){\makebox(0,0)[tl]{${\p^n(\C)}$}}
\put(80,50){\makebox(0,0)[tl]{$f'$}}
\put(100,140){\makebox(0,0)[tl]{$F$}}
\put(10,70){\makebox(0,0)[tl]{$\rho_1$}}
\put(10,130){\makebox(0,0)[tl]{$\rho_{k-1}$}}
\put(10,180){\makebox(0,0)[tl]{$\rho_k$}}
\put(5,190){\vector(0,-1){30}} \put(5,140){\vector(0,-1){30}}
\put(5,80){\vector(0,-1){33}}
\multiput(20,35)(8,0){17}{\line(1,0){5}}
\put(157,35){\vector(1,0){10}} \put(20,200){\vector(1,-1){150}}
\end{picture}
\end{center}

\vspace{3mm} 
Note that  the divisor $D_1'=\psi^*(D_1)$ is a tree.
Denote a proper transform of
$\infty\times\overline{\Gamma}$ by $\infty'\times\overline{\Gamma}$. 
It is an easy observation that
$F(\infty'\times\overline{\Gamma})\subset \pi_\infty$, where
$\pi_\infty$ denotes the hyperplane at infinity of $\p^n\C$.
Now $\s_{f'}= F(D_1\setminus F^{-1}(\pi_\infty)).$ The curve
$L=F^{-1}(\pi_\infty)$ is connected (the same argument as above). Now
by Proposition \ref{acykl} we have that every irreducible curve
$l\subset D_1$ (we have $l\cong \p^1\C$)
which does not belong to $L$ has at most one common point with
$L$. 

Let $R$ be an irreducible component of $\s_{f'}$. Hence $R$
is a curve. There is a curve $l\subset D_1,$ which has exactly one
common point with $L$, such that $R=F(l\setminus L).$ If $l$ is
given by blowing up of a real point, then $L$ also has a real
common point  with $l$ (since the conjugate point also is a
common point of $l$ and $L$). When we restrict to the real model
$l^r$ of $l$ we have  $l^r\setminus L\cong \R.$ Hence if we
restrict our consideration only to the real points and to the set
$Q^r:=\overline{\R\times W}\subset Q$ (we consider here a closure in the
euclidian topology) we see that the set $\s$ of
non-proper points of the mapping $f'|_{Q^r}$ is a union of
parametric curves $F(l^r\setminus L), \ l\in D_1,
\psi(l)\in \overline{Q^r}$, where the last closure is the closure in a real
projective space. Of course $a\in \s\subset\s_f$.
Similarly the set $\s_{f\circ\phi}$
 is a union of parametric curves $F(l^r\setminus L), \ l\in \psi^*(D'), \pi(\psi(l))\subset
\overline{\R\times W}\subset\overline{Z}$. Hence we can say that a "irreducible
component" of the set of non-proper points of $f'|_{Q^r}$ is also a
"irreducible" component of $\s_{f\circ\phi}$. Now we can finish the
proof by Theorem \ref{cxw} and the following Lemma \ref{xx}.
\end{proof}

\begin{lemma}\label{xx}
Let $\psi:\R\to\R^n$ be a parametric curve. If there exists a parametric curve $\phi: \R\to \R^n$ of degree at most $d$ with $\psi(\R)\subset \phi(\R)$, then $\psi(R)$ has degree of $\R$-uniruledness at most $2d$.
\end{lemma}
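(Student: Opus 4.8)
The plan is to factor $\psi$ through a birational \emph{polynomial} parametrization of the curve it traces, and then to parametrize the resulting one real parameter range by the substitution $s\mapsto s^{2}$, which is exactly what produces the factor $2$. First I would dispose of the trivial cases: if $\phi$ or $\psi$ is constant the statement is immediate, so assume both are non-constant. Let $C$ be the Zariski closure of $\phi(\R)$; it is an irreducible rational affine curve, and since $\psi(\R)\subset\phi(\R)$ is one-dimensional while $C$ is irreducible of dimension one, $C$ is also the Zariski closure of $\psi(\R)$. Thus $\phi$ and $\psi$ are both dominant polynomial parametrizations of $C$. Let $\nu\colon\widetilde C\to C$ be the normalization; as $C$ is rational, $\widetilde C\cong\p^{1}$ (over the complexification, descending to $\R$). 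Because the affine line is smooth, both maps lift uniquely through the normalization, $\phi=\nu\circ\alpha$ and $\psi=\nu\circ\widetilde\psi$, with $\alpha,\widetilde\psi\colon\R\to\widetilde C$ morphisms.

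The key structural point is that, since $\phi$ is polynomial rather than merely rational, $C$ has a single place at infinity. Indeed, on the finite affine line $\alpha$ omits every point of $\widetilde C$ lying over the hyperplane at infinity $H_\infty$ (a finite value of $\phi$ never lies on $H_\infty$), whereas the non-constant morphism $\alpha\colon\p^{1}\to\widetilde C$ can omit at most the single value $\alpha(\infty)$; hence there is a unique point $s_{1}\in\widetilde C$ over $H_\infty$. Choosing the chart on $\widetilde C\cong\p^{1}$ sending $s_{1}$ to infinity, the restriction of $\nu$ to $\widetilde C\setminus\{s_{1}\}$ is a birational \emph{polynomial} parametrization $\nu\colon\R\to C\subset\R^{n}$. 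Its degree $e$ satisfies $e\le d$: writing $\phi=\nu\circ\alpha$ with $\alpha\colon\R\to\R$ a polynomial of some degree $k\ge 1$, in characteristic zero one has $\deg\phi=e\,k\le d$, so $e\le d$. For the same reason $\widetilde\psi$ avoids $s_{1}$ at every finite argument, so $\widetilde\psi\colon\R\to\R$ is a genuine polynomial, and $\psi(\R)=\nu(\widetilde\psi(\R))$.

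Now I would invoke the elementary fact that the image of $\R$ under a non-constant real polynomial is either all of $\R$ or a closed half-line. Put $J=\widetilde\psi(\R)$. If $J=\R$, then $\psi(\R)=\nu(\R)$ is already a parametric curve of degree $e\le d$. If $J=[\alpha_{0},\infty)$, then $s\mapsto\nu(\alpha_{0}+s^{2})$ is a polynomial map $\R\to\R^{n}$ of degree at most $2e\le 2d$, proper (its coordinates tend to infinity), hence with closed image equal to $\nu([\alpha_{0},\infty))=\psi(\R)$; the symmetric substitution $s\mapsto\nu(\beta_{0}-s^{2})$ handles $J=(-\infty,\beta_{0}]$. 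In every case $\psi(\R)$ is the image of a single real parametric curve of degree at most $2d$ that maps onto it surjectively. Taking this one curve as $l_{x}$ for every point $x$ verifies condition $(1)$ of Proposition \ref{real} for the one-dimensional irreducible set $\psi(\R)$, so by Definition \ref{r-uniruleddef} the degree of $\R$-uniruledness of $\psi(\R)$ is at most $2d$.

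The crux of the argument, and the only place where polynomiality of both $\phi$ and $\psi$ (as opposed to mere rationality) is essential, is the single-place-at-infinity step that turns $\nu$ and $\widetilde\psi$ into honest polynomial maps; without it one would only obtain a rational reparametrization and lose control of the degree. Once that reduction is in place, the factor $2$ is forced precisely by the fact that a half-line cannot be parametrized by all of $\R$ with a degree-one polynomial, while $s\mapsto s^{2}$ does it in degree two.
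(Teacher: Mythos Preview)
Your proof is correct and follows essentially the same route as the paper's: both produce a birational polynomial parametrization of the common Zariski closure of degree at most $d$ (you via the normalization, the paper via L\"uroth's theorem, which amounts to the same thing), use the single-place-at-infinity observation to force everything to be polynomial, and then pick up the factor $2$ by reparametrizing a half-line with $s\mapsto s^{2}$. Your treatment is in fact more explicit than the paper's at the step where one must see that $\psi(\R)=\nu(J)$ for $J$ a full line or a half-line; the paper handles this with an ``in the same way'' after doing the analogous computation for $\phi(\R)$.
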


\begin{proof}
Indeed, let $\phi(t)=(\phi_1(t),\dots,\phi_n(t))$ and consider a
field 
$$\Le=\R(\phi_1,\dots,\phi_n).$$ 
By the L\"{u}roth Theorem \ref{luroth} there
exists a rational function $g(t)\in \R(t)$ such that $\Le=\R(g(t)).$
In particular $\phi_i(t)=f_i(g(t)).$ In fact we have two induced
mappings 
$$\overline{f} : \p^1\R\to \p^n\R\text{ and }
\overline{g}: \p^1\R\to \p^1\R.$$ 
Moreover,
$\overline{f}\circ \overline{g}=\overline{\phi}.$ Let $A_\infty$
denote the unique point at infinity of a Zariski closure of
$\psi(\R)$ and let $\infty=\overline{f}^{-1}(A_\infty).$ This implies
that $\#\overline{g}^{-1}(\infty)=1$ and we can assume that
$\overline{g}^{-1}(\infty)=\infty$, i.e., $g\in \R[t].$ Similarly
$f_i\in \R[t].$ Now if deg $g=1$ then $f:\R\to \R^n$ covers the whole
$\phi(R)$, because the image of $f$ is open and closed in
$\phi(R).$ Otherwise we can compose $f$ with suitable polynomial
of degree two to obtain the whole $\phi(R)$ in the image.

In the same way we can compose $f$ with suitable polynomial of degree one or two to obtain whole
$\psi(\R)$ in the image. In any case $\psi(\R)$ has a parametrization of degree bounded by
$2\deg f\leq 2d$.
\end{proof}

Theorem \ref{multc1} gives the following:

\begin{corollary}\label{multc3}
Let $X$ be a closed semialgebraic set which is $\R$-uniruled and let $f:X\rightarrow \R^m$ be a generically
finite polynomial mapping. Then every connected component of the set $\s_f$ is unbounded.
\end{corollary}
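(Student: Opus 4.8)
The plan is to deduce the statement directly from Theorem \ref{multc1}, combined with the elementary observation that a real parametric curve is both connected and unbounded. First I would invoke Theorem \ref{multc1}: since $X$ is $\R$-uniruled it has some finite degree of $\R$-uniruledness $d_1$, and the generically finite polynomial map $f$ has some finite degree $d_2$. Hence $\s_f$ is either empty or has degree of $\R$-uniruledness at most $2d_1d_2$. If $\s_f=\emptyset$ there are no connected components and the claim holds vacuously, so from now on I would assume $\s_f\neq\emptyset$, and in particular that $\s_f$ is $\R$-uniruled.

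Next, by the definition of degree of $\R$-uniruledness (condition $(1)$ of Proposition \ref{real}, invoked through Definition \ref{r-uniruleddef}), through every point $a\in\s_f$ there passes a real parametric curve $l_a\subset\s_f$. I would then record the two decisive properties of such a curve. Being the image of the connected set $\R$ under a continuous polynomial map, $l_a$ is connected. Being one-dimensional, the polynomial map $\R\to\R^m$ defining it cannot be constant, so at least one of its coordinates is a non-constant real polynomial; such a polynomial tends to $\pm\infty$ as its argument tends to $\pm\infty$, and therefore $l_a$ is unbounded.

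Finally, since $l_a$ is connected, contained in $\s_f$, and contains $a$, it lies entirely inside the connected component $C_a$ of $\s_f$ passing through $a$. As $l_a\subset C_a$ is unbounded, the component $C_a$ is unbounded as well; because $a$ was an arbitrary point of $\s_f$, every connected component of $\s_f$ is unbounded.

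I do not expect a serious obstacle here: the argument is essentially immediate once Theorem \ref{multc1} is in hand. The only points requiring a moment's care are the vacuous empty case and the non-constancy of the covering curves (guaranteed by their being one-dimensional). It is worth emphasizing that closedness of $\s_f$ plays no role in this deduction, since connectedness of $l_a$ alone already confines it to the single component $C_a$.
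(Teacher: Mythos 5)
Your proposal is correct and takes essentially the same route as the paper: the paper states Corollary \ref{multc3} as an immediate consequence of Theorem \ref{multc1}, and the details you supply (through every point of the nonempty set $\s_f$ passes a parametric curve, which is connected as a continuous image of $\R$ and unbounded since some coordinate polynomial is non-constant, hence traps its whole connected component into being unbounded) are exactly the implicit argument. No gaps.
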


%%%%%%%%%%%%%%%%%%%%%%%%%%%%%%%%%%%%%%%%%%%%%%%%%%%%%%%%%%%%%%%%%%%%%%%%%%%%%%%%%%%%%%%%%%%%%%%%%%%%%%%%%%%%%%%%%%%%%%%%%%%%%%%%%%%%%%%%%%%%%%%%%%%%%%%%%%%%%%%%%%%%%%%%%%
\section{A field of positive characteristic case}\label{positive}
%%%%%%%%%%%%%%%%%%%%%%%%%%%%%%%%%%%%%%%%%%%%%%%%%%%%%%%%%%%%%%%%%%%%%%%%%%%%%%%%%%%%%%%%%%%%%%%%%%%%%%%%%%%%%%%%%%%%%%%%%%%%%%%%%%%%%%%%%%%%%%%%%%%%%%%%%%%%%%%%%%%%%%%%%%

In this section we assume $\K$ to be an arbitrary algebraically closed field. We begin with a useful lemma.

\begin{lemma}\label{tool}
Let $A\subset\K^n$ be an affine set, and let $f$ be a regular function on $\K^n$ not equal to
$0$ on any component of $A$. Suppose that for each $c\in \K^*$ the set 
$$A_c:=A\cap\{x\in\K^n:f(x)=c\}$$
has degree of $\K$-uniruledness at most $d$. Then $A_0$ also has degree of $\K$-uniruledness at most $d$.
\end{lemma}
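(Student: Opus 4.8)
The plan is to reduce the whole statement to the properness of a projection from a \emph{complete} parameter space of degree-$\le d$ parametric curves, in the spirit of the proof of Proposition \ref{k-uniruledofdeg}. First I would parametrize candidate curves exactly as there: for a base point $a=(a_1,\dots,a_n)\in\K^n$ and a coefficient array $b=(b_{i,j})_{1\le i\le n,\,1\le j\le d}$ put $\varphi_{a,b}(t)=\big(a_i+\sum_{j=1}^d b_{i,j}t^j\big)_{1\le i\le n}$. Reparametrizing the source by $t\mapsto\lambda t$ satisfies $\varphi_{a,b}(\lambda t)=\varphi_{a,\lambda\cdot b}(t)$, where $\lambda\cdot b=(\lambda^j b_{i,j})$, and leaves the image unchanged; hence the image of $\varphi_{a,b}$ depends on $b$ only up to this weighted scaling. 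I would therefore let $b$ range over the weighted projective space $\p_w^M$ (weight $j$ on $b_{i,j}$, $M=nd-1$), which is complete.

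Next I would form the incidence variety
$$V=\{(a,[b])\in\K^n\times\p_w^M:\ \varphi_{a,b}(\K)\subset A\ \text{and}\ f\circ\varphi_{a,b}\ \text{is constant}\},$$
with the projection $p:V\ni(a,[b])\mapsto a\in A$. Writing $f_k(\varphi_{a,b}(t))=\sum_l c_{k,l}(a,b)\,t^l$ and $f(\varphi_{a,b}(t))=\sum_l e_l(a,b)\,t^l$, the identity $\varphi_{a,\lambda\cdot b}(t)=\varphi_{a,b}(\lambda t)$ forces $c_{k,l}(a,\lambda\cdot b)=\lambda^l c_{k,l}(a,b)$ and likewise for $e_l$, so every $c_{k,l}$ and $e_l$ is weighted-homogeneous of degree $l$ in $b$. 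Thus the defining conditions $c_{k,l}\equiv 0$ and $e_l\equiv 0$ for $l\ge 1$ are weighted-homogeneous, and $V$ is a genuine closed subvariety of $\K^n\times\p_w^M$. Since $\p_w^M$ is complete, $p$ is a closed (proper) morphism. Crucially, every point of $\p_w^M$ has some $b_{i,j}\neq 0$, so every curve parametrized by $V$ is \emph{nonconstant} of degree at most $d$, passes through $p(a,[b])=\varphi_{a,b}(0)=a$, and carries the constant value $f(a)$ of $f$.

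Then I would run the surjectivity chain. Fix $c\in\K^*$: since $A_c$ has degree of $\K$-uniruledness at most $d$, through each $x\in A_c$ there is a degree-$\le d$ parametric curve $l_x\subset A_c$; after an affine reparametrization of the source we may take its parametrization to be $\varphi_{x,b}$ with base point $x$, and as $l_x\subset A_c\subset A$ with $f\equiv c$ on $l_x$ we get $(x,[b])\in V$, i.e. $x\in p(V)$. Hence $p(V)\supset\bigcup_{c\neq 0}A_c=A\cap\{f\neq 0\}$, which is dense in $A$ because $f$ vanishes on no component of $A$; as $p(V)$ is closed and contained in $A$, it equals $A$. Surjectivity gives $p(p^{-1}(A_0))=A_0$, and $p^{-1}(A_0)=\{(a,[b])\in V:f(a)=0\}$ consists exactly of curves whose constant $f$-value is $0$, hence curves contained in $A_0$. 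So through every point of $A_0$ passes a nonconstant degree-$\le d$ parametric curve lying in $A_0$. Since such a curve is irreducible, for a general point of a fixed irreducible component $Z$ of $A_0$ the curve through it lies in $Z$; condition $(2)$ of Proposition \ref{k-uniruledofdeg} then shows $Z$ has degree of $\K$-uniruledness at most $d$, and Definition \ref{k-uniruleddef} yields the claim for all of $A_0$.

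The main obstacle is the construction in the second step: one must verify carefully that the conditions cutting out $V$ are weighted-homogeneous in $b$, so that $V$ really is a closed subscheme of the complete space $\K^n\times\p_w^M$, and — what makes the argument succeed where a naive space of maps would fail — that moving to the boundary of this parameter space never yields a constant degenerate curve. The weighted projective compactification is precisely what guarantees both the properness of $p$ (so that the density $p(V)\supset A\cap\{f\neq 0\}$ upgrades to $p(V)=A$) and the non-degeneracy of every parametrized curve; establishing these two points is the crux of the proof.
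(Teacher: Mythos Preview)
Your argument is correct and follows the same strategy as the paper: compactify the parameter space of degree-$\le d$ parametric curves so that the projection to the base point is proper, observe that its image is closed and contains the dense subset $\bigcup_{c\neq 0}A_c$, hence equals $A$, and conclude that $A_0$ is covered by curves on which $f$ is constant (namely $0$).

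The only technical difference is in how the compactification is achieved. You take the coefficients $b_{i,j}$ linearly and pass to the weighted projective space $\p_w^M$ (weight $j$ on $b_{i,j}$), checking that the defining equations are weighted-homogeneous. The paper instead stays in ordinary projective space $\p^{dn-1}$ by writing the curve as $\varphi_{a,b}(t)=\big(a_i+\sum_j b_{i,j}^{\,j}\,t^j\big)_i$; the substitution $b_{i,j}\mapsto b_{i,j}^{\,j}$ makes the ordinary scaling $b\mapsto\lambda b$ correspond exactly to $t\mapsto\lambda t$, so the equations become homogeneous in the usual sense (this uses that $\K$ is algebraically closed, so every coefficient is a $j$-th power). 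The two devices are interchangeable and yield the same incidence variety and the same closed-image argument; your weighted-projective formulation is arguably cleaner conceptually, while the paper's power trick avoids invoking weighted projective spaces at all.
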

\begin{proof}
Suppose that 
$$A=\{x\in\K^n:g_1(x)=0,\dots,g_r(x)=0\}.$$ 
For $a=(a_1,\dots,a_n)\in\K^n$ and $b=(b_{1,1}:\dots:b_{d,n})\in\p^{dn-1}$, let
$$\varphi_{a,b}:\K\ni t\rightarrow (a_1+b_{1,1}t+\dots+b_{1,d}^dt^d,\dots,a_n+b_{n,1}t+\dots+b_{n,d}^dt^d)\in\K^n$$ 
be a parametric curve of degree at most $d$. Let us consider a variety and a projection
$$\K^n\times\p^{dn-1}\supset V=\{(a,b)\in\K^n\times\p^{dn-1}:\forall_{t,i}\;g_i(\varphi_{a,b}(t))=0\;\text{and}$$
$$\forall_{t_1,t_2}\;f(\varphi_{a,b}(t_1))=f(\varphi_{a,b}(t_2))\}\ni(a,b)\rightarrow a\in\K^n.$$
The definition of the set $V$ says that parametric curves $\varphi_{a,b}$ are contained in $A$ and $f$ is constant on them. Hence $V$ is closed and the image of the projection is contained in $A$. Moreover the image contains all $A_c$ for $c\in \K^*$, since they are filled with
parametric curves of degree at most $d$. But since $\p^{dn-1}$ is complete and $V$ is closed we have that the image is closed, and hence it is the whole set $A$. In particular $A_0$ is contained in the image, so it is filled with parametric curves of degree at most $d$.
\end{proof}

We are ready to prove an analog of Theorem \ref{cn} for general fields.

\begin{theorem}\label{kn}
Let $\K$ be an arbitrary algebraically closed field, and let
$f:\K^n\rightarrow\K^m$ be a generically finite polynomial mapping of degree
$d$. Then the set $\s_f$ has degree of $\K$-uniruledness at most $d$ or it is empty.
\end{theorem}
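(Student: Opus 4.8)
The plan is to mimic the proof of Theorem \ref{cn}, replacing the topological limiting argument (which rests on Proposition \ref{topo} and on the compactness of the unit sphere, both unavailable over an arbitrary field) by a purely algebraic degeneration, organised through Lemma \ref{tool}. First I would dispose of the case $\operatorname{char}\K=0$. The map $f$, and hence every component of the hypersurface $\s_f$, is defined over a subfield $k_0\subset\K$ finitely generated over $\Q$; such a $k_0$ embeds into $\C$, so by Proposition \ref{indep} the degree of $\K$-uniruledness of $\s_f$ equals the degree of $\C$-uniruledness of the corresponding complex set. The latter is at most $d-1$ by Theorem \ref{cn} (extended to arbitrary algebraically closed fields of characteristic zero, as noted in the Remark of Section \ref{c}). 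Thus $\s_f$ has degree of $\K$-uniruledness at most $d-1\le d$, and the whole content of the theorem lies in the positive characteristic case.

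So assume $\operatorname{char}\K=p>0$ and fix a general point $y$ of a component of $\s_f$; by Theorem \ref{hiper} these components are hypersurfaces in $\overline{f(\K^n)}$. Following Theorem \ref{cn}, I consider the degree $\le d$ parametric curves $l_x(t)=f\bigl((1-t)x\bigr)$ obtained by restricting $f$ to lines through the origin. Each $l_x$ lies in the finite part $f(\K^n)$, and by the graph-at-infinity description of Proposition \ref{graph} its boundary behaviour, as $x$ runs off to infinity along a suitable direction, produces a point of $\s_f$. The algebraic substitute for the normalisation ``$\Vert l_x\Vert=1$'' and for extracting a convergent subsequence is to record a curve by its coefficient point in the projective space $\p^N$ of degree $\le d$ parametrisations, and to take the Zariski limit of the $l_x$ as $x$ approaches the boundary of a smooth compactification of $\K^n$. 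This limiting curve is again a degree $\le d$ parametric curve, it passes through $y$, and Proposition \ref{graph} guarantees that it is contained in $\s_f$ rather than merely approaching it.

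To turn this into a covering statement I would package the degeneration through Lemma \ref{tool}. Concretely, I would build an affine incidence set $A$ in the product of $\K^m$ with the parameter $c$ of the degeneration, together with the regular function $h=c$, arranged so that for $c\neq 0$ the fibre $A_c$ is a copy of a piece of $f(\K^n)$ swept out by the line-image curves $l_x$, hence of degree of $\K$-uniruledness at most $d$, while the zero fibre $A_0$ recovers $\s_f$. Lemma \ref{tool} then transports the bound from the generic fibres to $A_0$, so that $\s_f$ is covered by parametric curves of degree at most $d$; this is exactly condition $(1)$ of Proposition \ref{k-uniruledofdeg}.

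The main obstacle is the middle step: carrying out the limit algebraically and proving that the Zariski-limit curve genuinely lands inside $\s_f$ and keeps degree at most $d$. Over $\C$ this is precisely where Rouch\'{e}'s Theorem \ref{rouche} controls the location of the roots and forces the drop from $d$ to $d-1$; in positive characteristic no such analytic control is available, and the reparametrisation needed to put the limit curve into polynomial form (via L\"{u}roth's Theorem \ref{luroth}) may fail to be separable, so one cannot in general recover the sharper estimate. This is why the bound obtained here is $d$ rather than the $d-1$ of Theorem \ref{cn}, and verifying that nothing worse than the loss of a single unit occurs is the delicate point of the argument.
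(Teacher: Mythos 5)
Your high-level strategy is the right one, and you correctly identified the two tools the paper actually uses: Proposition \ref{graph} to replace the sequential characterization of $\s_f$, and Lemma \ref{tool} to replace the compactness/Rouch\'{e} argument by an algebraic degeneration. Your characteristic-zero reduction via Proposition \ref{indep} and Theorem \ref{cn} is also valid (though, as it turns out, unnecessary, since the uniform argument below works in every characteristic). But there is a genuine gap at the center: you never construct the affine set $A$ and the regular function $h$ to which Lemma \ref{tool} is to be applied, and the version you gesture at does not satisfy the lemma's hypotheses. You propose that the fibres $A_c$, $c\neq 0$, be ``pieces of $f(\K^n)$ swept out by the line-image curves $l_x$'' and that $A_0$ ``recover $\s_f$''; but $f(\K^n)$ is only constructible, not closed, so this $A$ is not an affine set, and $\s_f$ is not the zero fibre of any evident regular function on it. Moreover your plan still requires producing an explicit ``Zariski-limit curve'' and checking that it lies in $\s_f$ with degree at most $d$ --- precisely the step you yourself flag as the main obstacle and leave unresolved. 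So the proof is incomplete exactly where the work lies.

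The paper's resolution is to run the degeneration inside the closure of the graph rather than inside the image, which makes every hypothesis of Lemma \ref{tool} trivially verifiable and eliminates any explicit limit. Write $\overline{\graph(f)}\subset\p^n\times\K^m$ with coordinates $(x_0:\dots:x_n;x_{n+1},\dots,x_{n+m})$, so that $\overline{\graph(f)}\setminus\graph(f)=\overline{\graph(f)}\cap\{x_0=0\}$ and, by Proposition \ref{graph}, $\s_f=p_2(\overline{\graph(f)}\setminus\graph(f))$. Given a point $z$ at infinity, pick $1\le i\le n$ with $z_i\neq 0$ and apply Lemma \ref{tool} to the affine set $A:=\overline{\graph(f)}\cap\{x_i\neq 0\}$ with the regular function $h=x_0/x_i$. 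For $c\neq 0$ the fibre $A_c$ is just the graph of $f$ over the affine hyperplane $\{x_i=1/c\}\subset\K^n$, and it is covered by the curves $t\mapsto\bigl(x(t),f(x(t))\bigr)$ where $x(t)$ runs along lines in that hyperplane (this needs $n\ge 2$; $n=1$ is the trivial case where $\s_f=\emptyset$); these are parametric curves of degree at most $d$ lying inside $A_c$, with no limiting process whatsoever --- the existence of limit curves is entirely absorbed into the proof of Lemma \ref{tool}, via completeness of the projective parameter space. The lemma then says $A_0=\overline{\graph(f)}\cap\{x_i\neq 0\}\cap\{x_0=0\}$ is covered by parametric curves of degree at most $d$, so through $z$ passes such a curve at infinity. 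Finally $p_2$ has degree one, and since $\s_f$ and the set at infinity have the same dimension, the image curves can degenerate to points only over a proper subvariety; hence a dense subset of $\s_f$ is covered by parametric curves of degree at most $d$, and Proposition \ref{k-uniruledofdeg} upgrades this to all of $\s_f$. Note that with this setup your worries about inseparability and L\"{u}roth reparametrization never arise: the covering curves are explicit compositions of linear parametrizations with the graph embedding, which is why the clean bound $d$ (rather than $d-1$) comes out with no delicate analysis at all.
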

\begin{proof}
If $n=1$ then the map is proper and $\s_f$ is empty. 

We consider the case $n\geq 2$. Due to Proposition \ref{graph} $$\s_f=p_2(\overline{\graph(f)}\setminus\graph(f)),$$
so it is enough to prove that the set
$$\overline{\graph(f)}\setminus\graph(f)$$ 
is filled with parametric curves of degree at most $d$. It is because we take images of curves under projection $p_2$ which is of degree $1$, 
and both sets are of the same dimension, so only on a codimension $1$ subvariety images of curves can become points. 

Denote the coordinates in $\p^n\times\K^m$ by $(x_{0} : \dots : x_n;x_{n+1},\dots,x_{n+m})$. Let us take an arbitrary point $$z\in\overline{\graph(f)}\setminus\graph(f)=\overline{\graph(f)}\cap\{x_0=0\}.$$ 
There exists $1\leq i\leq n$ such that $z_i\neq 0$. Consider sets 
$$A:=\overline{\graph(f)}\cap\{x_i\neq 0\}\text{ and }A_c:=\overline{\graph(f)}\cap\{x_i\neq 0\}\cap\{x_0=cx_i\}\text{ for }c\in\K.$$ 
The set $A$ is affine and sets $A_c$ satisfy
assumptions of Lemma \ref{tool} for $f=\frac{x_0}{x_i}$. Sets $A_c$ for $c\neq 0$ are filled with parametric
curves of degree at most $d$, because we can take $j\neq 0,i$ and curves
$$\K\ni t\rightarrow (c:a_1:\dots:a_{i-1}:1:\dots:a_{j-1}:t:a_{j+1}:\dots)\xrightarrow{f} A_c\subset\graph(f).$$
Hence the set 
$$A_0=\overline{\graph(f)}\cap\{x_i\neq 0\}\cap\{x_0=0\}$$ 
is also filled with such curves and we get that though $z$ passes a parametric curve of degree at most $d$ in $$\overline{\graph(f)}\cap\{x_0=0\},$$ 
this finishes the proof.
\end{proof}

By looking carefully at the proof of the above theorem we get the following slightly more general result:

\begin{theorem}\label{kw}
Let $\K$ be an arbitrary algebraically closed field, and let $f:\K\times W\rightarrow\K^m$ be a generically finite polynomial mapping of degree $d$. Then the set $\s_f$ is empty or all its components have degree of $\K$-uniruledness at most $d$, except possibly components of $p_2(\overline{\graph(f)}\cap \{(0:1:0:\dots)\}\times \K^m)$.
\end{theorem}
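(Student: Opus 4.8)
The plan is to mimic the proof of Theorem \ref{kn}, tracking exactly where the hypothesis $X=\K^n$ was used and isolating the single place where it fails for $X=\K\times W$. Write $W\subset\K^{n-1}$, so that $\K\times W\subset\K^n$ with affine coordinates $(x_1,\dots,x_n)$, where $x_1=t$ parametrises the $\K$-factor and $(x_2,\dots,x_n)$ are coordinates on $W$; take the projective closure of the domain in $\p^n$ with homogeneous coordinates $(x_0:x_1:\dots:x_n)$. By Proposition \ref{graph} we have $\s_f=p_2(\overline{\graph(f)}\setminus\graph(f))=p_2(\overline{\graph(f)}\cap\{x_0=0\})$. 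As in Theorem \ref{kn}, it suffices to fill a dense part of the boundary $B:=\overline{\graph(f)}\cap\{x_0=0\}$ with parametric curves of degree at most $d$: since $f$ is generically finite, $\dim\overline{\graph(f)}=\dim(\K\times W)$ and $\dim B=\dim\s_f$ by Theorem \ref{hiper}, so $p_2|_B$ is generically finite onto $\s_f$ and maps a parametric curve through a general point of $B$ to a parametric curve of the same degree through its image; Proposition \ref{k-uniruledofdeg} then upgrades a dense covering to the desired bound on each component.

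Next I would fix a point $z\in B$ and distinguish it from the excluded fibre. The domain part of $z$ is $(0:z_1:\dots:z_n)\in\p^n$, and $z$ fails to lie in $\{(0:1:0:\dots)\}\times\K^m$ precisely when $z_i\neq 0$ for some $i\in\{2,\dots,n\}$, i.e.\ some coordinate transverse to the $\K$-factor survives at infinity. The essential difference from Theorem \ref{kn} is that there one is free to ``move'' in any coordinate direction $x_j$ because the domain is the whole space; here only motion in the $\K$-direction $x_1=t$ is guaranteed to stay inside $\K\times W$, whereas moving a single $W$-coordinate would in general leave $W$. Hence the moving direction is always $j=1$, and to set up the slicing I need a transverse coordinate $x_i$ (with $i\geq 2$) that is nonzero at $z$. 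For the point $(0:1:0:\dots)$ no such $i$ exists, which is exactly why its $p_2$-image is excluded from the statement.

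Given such an $i$, I would apply Lemma \ref{tool} to $A:=\overline{\graph(f)}\cap\{x_i\neq 0\}$ (affine, and with $x_0/x_i\not\equiv 0$ on each component once we reduce to $W$ irreducible, since $A$ contains finite graph points) using the function $x_0/x_i$ and the slices $A_c=A\cap\{x_0=cx_i\}$. For $c\neq 0$ the slice consists of finite points with $x_i$-coordinate equal to $1/c$, and it is filled by the parametric curves $t\mapsto\big((1:t:\xi_2:\dots:\xi_n),f(t,\xi_2,\dots,\xi_n)\big)$, where $(\xi_2,\dots,\xi_n)$ runs over $W\cap\{x_i=1/c\}$; these have degree at most $d$ because their domain part is linear in $t$ and $\deg_t f_j\le\deg f_j\le d$. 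Lemma \ref{tool} then gives that $A_0=\overline{\graph(f)}\cap\{x_i\neq 0\}\cap\{x_0=0\}$ has degree of $\K$-uniruledness at most $d$, so a parametric curve of degree at most $d$ through $z$ exists inside $B$. Running this over all admissible $z$ covers $B$ away from the fibre over $(0:1:0:\dots)$, and projecting by $p_2$ finishes the proof.

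The main obstacle is the bookkeeping around the excluded fibre and the projection step rather than any deep new idea: I must verify that $p_2|_B$ is genuinely generically finite onto $\s_f$, so that the images of the constructed curves are again curves and not points, and that every component of $\s_f$ other than those in $p_2(\overline{\graph(f)}\cap\{(0:1:0:\dots)\}\times\K^m)$ meets the image of $B$ minus the special fibre in a dense set, so that the covering by degree-at-most-$d$ curves passes to these components via the dense-subset criterion of Proposition \ref{k-uniruledofdeg}. Reducibility of $W$ is handled by working one irreducible component at a time, since the degree of $\K$-uniruledness is defined componentwise.
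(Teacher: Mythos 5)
Your proposal is correct and follows essentially the same route as the paper's own proof: the same reduction via Proposition \ref{graph} to filling $\overline{\graph(f)}\cap\{x_0=0\}$ with curves, the same dichotomy between points at infinity having some transverse coordinate $x_i\neq 0$ (with $i$ not the $\K$-direction) and the excluded point $(0:1:0:\dots)$, the same application of Lemma \ref{tool} to $A=\overline{\graph(f)}\cap\{x_i\neq 0\}$ with the function $x_0/x_i$, and the same graphs over $\K$-direction lines giving the degree-at-most-$d$ curves in the slices $A_c$, concluded via Proposition \ref{k-uniruledofdeg}. The only differences are cosmetic: indexing conventions, and your more explicit bookkeeping of the generic finiteness of $p_2$ on the boundary and of reducible $W$, points the paper leaves implicit.
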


\begin{proof}
Suppose $\K\times W\subset\K\times\K^n$. As before it is enough to show that 
$$\overline{\graph(f)}\setminus\graph(f)$$ 
is filled with parametric curves of degree at most $d$. Let us take an arbitrary point $$z\in\overline{\graph(f)}\setminus\graph(f)=\overline{\graph(f)}\cap\{x_0=0\}.$$ 

If there exists $i\neq 1$ such that $z_i\neq 0$ then the idea from the proof of Theorem \ref{kn} works.
We apply Lemma \ref{tool} to the set 
$$A:=\overline{\graph(f)}\cap\{x_i\neq 0\}\text{ and }f=\frac{x_0}{x_i}.$$ 
Sets $A_c$ for $c\neq 0$ are filled with parametric
curves of degree at most $d$
$$\K\ni t\rightarrow (c:t:a_2:\dots:a_{i-1}:1:a_{i+1}\dots:a_n)\xrightarrow{f} A_c\subset\graph(f).$$

Otherwise $z=(0,1,0\dots,0)$. So for every component $C$ of $\s_f$ which is different from components of $p_2(\overline{\graph(f)}\cap \{(0:1:0:\dots)\}\times \K^m)$ its open dense subset is covered by parametric curves of degree at most $d$, thus by Proposition \ref{k-uniruledofdeg} component $C$ has degree of $\K$-uniruledness at most $d$.
\end{proof}

When applying Theorem \ref{kw} to two different directions we get:

\begin{corollary}\label{kkw}
Let $\K$ be an arbitrary algebraically closed field, and let $f:\K^2\times W\rightarrow\K^m$ be a generically finite polynomial mapping of degree $d$. Then the set $\s_f$ has degree of $\K$-uniruledness at most $d$ or it is empty.
\end{corollary}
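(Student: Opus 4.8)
The plan is to deduce the corollary from Theorem \ref{kw} by exploiting the two independent cylinder structures on $\K^2\times W$. Write $\K^2\times W\subset\K^n$ and form the projective closure $\overline{\graph(f)}\subset\p^n\times\K^m$ with homogeneous coordinates $(x_0:x_1:x_2:x_3:\dots:x_n)$, where $x_1,x_2$ are the two $\K^2$-directions and $x_3,\dots,x_n$ are the $W$-directions. By Proposition \ref{graph} we have $\s_f=p_2(\overline{\graph(f)}\setminus\graph(f))$, so, exactly as in the proof of Theorem \ref{kn}, it suffices to cover $\overline{\graph(f)}\setminus\graph(f)=\overline{\graph(f)}\cap\{x_0=0\}$ by parametric curves of degree at most $d$: the projection $p_2$ has degree one and preserves dimension, so it can collapse curves to points only on a set of codimension one, which by Proposition \ref{k-uniruledofdeg} does not affect the degree of $\K$-uniruledness of $\s_f$.

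First I would regard $\K^2\times W$ as the cylinder $\K\times(\K\times W)$ with distinguished coordinate $x_1$ and apply the argument inside Theorem \ref{kw}. This produces, through every boundary point $z\in\overline{\graph(f)}\cap\{x_0=0\}$ whose projective part has some nonzero coordinate other than $x_1$, a parametric curve of degree at most $d$ lying in $\overline{\graph(f)}\cap\{x_0=0\}$; the only points left untreated are those lying over $P_1:=(0:1:0:\dots:0)$. Then I would repeat the argument with $x_2$ as the distinguished coordinate (a mere relabelling of the $\K^n$-coordinates, which preserves both the degree $d$ and the cylinder hypothesis of Theorem \ref{kw}); this treats every boundary point except those lying over $P_2:=(0:0:1:0:\dots:0)$.

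The key observation is that the two exceptional loci are disjoint: a point $z\in\overline{\graph(f)}\cap\{x_0=0\}$ is left untreated by the first application only if its $\p^n$-component equals $P_1$, and by the second only if its $\p^n$-component equals $P_2$; since $P_1\neq P_2$, no boundary point can be exceptional for both. Hence the two applications together cover all of $\overline{\graph(f)}\cap\{x_0=0\}$ by parametric curves of degree at most $d$, and projecting down gives the bound on $\s_f$.

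The step I expect to require the most care is the bookkeeping that lets the pointwise covering of the boundary descend to a covering of $\s_f$: one must check, as in Theorem \ref{kn}, that $\overline{\graph(f)}$ and $\s_f$ have the same dimension so that $p_2$ collapses curves only in codimension one, and then invoke the equivalence of conditions $(1)$ and $(2)$ in Proposition \ref{k-uniruledofdeg} to pass from a dense covering of each component to the full degree bound. The conceptual heart, however, is the elementary but decisive remark that the two bad directions $P_1$ and $P_2$ are distinct points of $\p^n$, so the single-direction defect of Theorem \ref{kw} is removed precisely by using both directions at once.
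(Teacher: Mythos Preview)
Your proposal is correct and follows essentially the same approach as the paper: apply Theorem \ref{kw} once with $x_1$ as the distinguished cylinder direction and once with $x_2$, and observe that the two exceptional loci, lying over the distinct projective points $(0:1:0:\dots:0)$ and $(0:0:1:0:\dots:0)$, cannot share a component (indeed a point). The only cosmetic difference is that the paper phrases the argument component-by-component on $\overline{\graph(f)}\setminus\graph(f)$ rather than point-by-point; note also the small slip in your last paragraph---it is $\overline{\graph(f)}\setminus\graph(f)$, not $\overline{\graph(f)}$ itself, that must match $\s_f$ in dimension.
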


\begin{proof}
Apply Theorem \ref{kw} to two directions. As previously it is enough to show that any component $C$ of  $$\overline{\graph(f)}\setminus\graph(f)$$  
has degree of $\K$-uniruledness at most $d$. From the first direction we get that for any component of $$\overline{\graph(f)}\setminus\graph(f)\setminus\overline{\graph(f)}\cap \{(0:1:0:0:\dots)\}\times \K^m,$$ 
has degree of $\K$-uniruledness at most $d$,
while from the second direction we get it for any component of 
$$\overline{\graph(f)}\setminus\graph(f)\setminus\overline{\graph(f)}\cap \{(0:0:1:0:\dots)\}\times \K^m.$$
Since $C$ is a component of at least one of them the assertion follows.
\end{proof}

%%%%%%%%%%%%%%%%%%%%%%%%%%%%%%%%%%%%%%%%%%%%%%%%%%%%%%%%%%%%%%%%%%%%%%%%%%%%%%%%%%%%%%%%%%%%%%%%%%%%%%%%%%%%%%%%%%%%%%%%%%%%%%%%%%%%%%%%%%%%%%%%%%%%%%%%%%%%%%%%%%%%%%%%%%
\section{Remarks}
%%%%%%%%%%%%%%%%%%%%%%%%%%%%%%%%%%%%%%%%%%%%%%%%%%%%%%%%%%%%%%%%%%%%%%%%%%%%%%%%%%%%%%%%%%%%%%%%%%%%%%%%%%%%%%%%%%%%%%%%%%%%%%%%%%%%%%%%%%%%%%%%%%%%%%%%%%%%%%%%%%%%%%%%%%

When we consider all generically finite mappings $\C^n\rightarrow\C^n$ of degree at most $D$, then by Theorem \ref{Sfkuni} hypersurfaces $\s_f$ are all $\C$-uniruled and by Theorem \ref{hiperc} their degree is bounded. It happens that they all are also of bounded degree of $\C$-uniruledness (namely by $D-1$, this follows from Theorem \ref{cn}). It is reasonable to ask the following question:

\begin{question}
Does for every $n$ and $d$ exist a universal constant $D=D(n,d)$, such that all $\K$-uniruled hypersurfaces in $\K^n$ of degree at 
most $d$ have degree of $\K$-uniruledness at most $D(n,d)$?
\end{question}

We can ask an even stronger question:

\begin{question}\label{22}
Does for every $n$ and $d$ there exist a universal constant $D=D(n,d)$, such that if a hypersurface in $\K^n$ of degree at most $d$ 
contains a parametric curve passing through $O=(0,\dots,0)$, then it also contains such a parametric curve of degree at most $D(n,d)$?
\end{question}

We can show the following equivalent condition to the affirmative answer to Question \ref{22}. Let us define
$$K_{n,d,D}=\{a=(a_1,\dots,a_s)\in\K^{\binom{n+d}{d}}:\text{ such that hypersurface in }\K^n\text{ defined}$$ $$\text{by }f_a=a_1+a_2x_1+\dots+a_sx_1^d\cdots x_n^d=0\text{ contains a parametric curve}$$ 
$$\text{passing through }O\text{ of degree at most }D\},$$
and let
$$K_{n,d}=\{(a_1,\dots,a_s)\in\K^{\binom{n+d}{d}}:\text{ such that hypersurface in }\K^n\text{ defined by}$$ $$f_a=a_1+a_2x_1+\dots+a_sx_1^d\cdots x_n^d=0\text{ contains a parametric curve}$$
$$\text{passing through }O\}.$$

\begin{proposition}
The set $K_{n,d,D}$ is closed.  
\end{proposition}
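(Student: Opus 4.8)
The plan is to reproduce the incidence-variety-and-projection device already used in the proof of Proposition \ref{k-uniruledofdeg} (and in Lemma \ref{tool}): I will parametrize the admissible curves by a point of a \emph{projective}, hence complete, space; collect into one closed set $V$ all pairs (coefficient vector $a$, curve) for which the curve lies in $\{f_a=0\}$; and then project $V$ onto the $a$-space. Since a projection off a complete factor is a closed map, the image is closed, and the remaining task is to identify that image with $K_{n,d,D}$.

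First I would normalize the phrase ``passing through $O$''. A parametric curve of degree at most $D$ through $O$ is the image of a nonconstant polynomial map $\psi:\K\to\K^n$ of degree at most $D$ with $O\in\psi(\K)$; precomposing with an affine reparametrization $t\mapsto t+t_0$ (which preserves both the degree and the image) I may assume $\psi(0)=O$, i.e. $\psi$ has no constant term. Following the homogenization trick of Proposition \ref{k-uniruledofdeg}, I would then parametrize such curves by
$$\varphi_c:\K\ni t\mapsto\Big(\sum_{j=1}^{D}c_{1,j}^{\,j}t^j,\dots,\sum_{j=1}^{D}c_{n,j}^{\,j}t^j\Big)\in\K^n,\qquad c=(c_{i,j})\in\p^{nD-1}.$$
The exponents $c_{i,j}^{\,j}$ are chosen so that rescaling $c\mapsto\lambda c$ is exactly the reparametrization $t\mapsto\lambda t$; thus $\varphi_c$ is well defined on $\p^{nD-1}$ up to reparametrization, while a genuine point of $\p^{nD-1}$ (not all $c_{i,j}$ zero) yields a nonconstant map, since over the algebraically closed $\K$ one has $c_{i,j}^{\,j}\neq 0$ whenever $c_{i,j}\neq 0$.

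Writing $f_a=\sum_k a_k m_k$ with $m_k$ the monomials of degree at most $d$, I would set
$$V=\{(a,c)\in\K^{\binom{n+d}{d}}\times\p^{nD-1}:f_a(\varphi_c(t))\equiv 0\text{ in }t\}.$$
The key computation is that inside each factor $\sum_j c_{i,j}^{\,j}t^j$ every monomial carries equal degree in $c$ and power in $t$; hence the coefficient of $t^\ell$ in $f_a(\varphi_c(t))$ is homogeneous of degree $\ell$ in $c$ and linear in $a$. Consequently the simultaneous vanishing of these coefficients is given by equations homogeneous in the projective coordinates $c$, so $V$ is closed in $\K^{\binom{n+d}{d}}\times\p^{nD-1}$.

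Finally I would take the projection $p:\K^{\binom{n+d}{d}}\times\p^{nD-1}\to\K^{\binom{n+d}{d}}$. Completeness of $\p^{nD-1}$ makes $p$ closed, so $p(V)$ is closed, and it only remains to show $p(V)=K_{n,d,D}$. For the inclusion $\subseteq$, any $(a,c)\in V$ gives a nonconstant $\varphi_c$ of degree at most $D$ with $\varphi_c(0)=O$ and $\varphi_c(\K)\subset\{f_a=0\}$, so $a\in K_{n,d,D}$. For $\supseteq$, given $a\in K_{n,d,D}$ I normalize a witnessing curve as above to $\psi_i(t)=\sum_{j\geq 1}d_{i,j}t^j$ and solve $d_{i,j}=c_{i,j}^{\,j}$ using algebraic closedness; the resulting $c$ is nonzero, hence a point of $\p^{nD-1}$, with $(a,c)\in V$. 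The only delicate points are the homogenizing choice of exponents $c_{i,j}^{\,j}$ (identical to Proposition \ref{k-uniruledofdeg}) and the extraction of $j$-th roots, both routine once set up; the one genuinely load-bearing ingredient — the reason the statement holds — is the completeness of the parameter space $\p^{nD-1}$, which converts ``there exists a curve'' into the image of a proper projection.
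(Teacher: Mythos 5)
Your proof is correct and takes essentially the same route as the paper's: the identical projective parametrization with coefficients $b_{i,j}^{\,j}$ (so that rescaling of the projective coordinates is a reparametrization of the curve), the same incidence set $V$, and the same conclusion via closedness of $V$ and completeness of $\p^{nD-1}$. You merely make explicit details the paper leaves implicit — the reparametrization placing $O$ at $t=0$, the homogeneity in $c$ of the defining equations, and the extraction of $j$-th roots in the surjectivity step — all of which are sound.
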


\begin{proof}
For $b=(b_{1,1}:\dots:b_{D,n})\in\p^M$, where $M=Dn-1$, let 
$$\varphi_{b}:\K\ni t\rightarrow (b_{1,1}t+\dots+b_{1,D}^Dt^D,\dots,b_{n,1}t+\dots+b_{n,D}^Dt^D)\in\K^n$$
be a parametric curve of degree at most $D$ passing through $O$. Consider a variety and a projection
$$\K^{\binom{n+d}{d}}\times\p^M\supset V=\{(a,b):\forall_{t}\;f_a(\varphi_{b}(t))=0\}\ni(a,b)\rightarrow a\in \K^{\binom{n+d}{d}}.$$
By the definition the set $V$ is closed and it consists of pairs $(a,b)$ corresponding to pairs $(f_a,\varphi_b)$ of a hypersurface and a parametric curve passing through $O$ contained in it. Hence the image equals to $K_{n,d,D}$. But since $\p^M$ is complete and $V$ is closed the image is also closed. 
\end{proof}

\begin{proposition}
Let $\K$ be an uncountable field. Then the following conditions are equivalent:
\begin{enumerate}
\item the set $K_{n,d}$ is closed,
\item the answer to Question \ref{22} is positive.  
\end{enumerate}
\end{proposition}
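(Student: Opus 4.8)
The plan is to exploit the elementary fact that $K_{n,d}=\bigcup_{D\in\N}K_{n,d,D}$ is an \emph{increasing} union of the closed sets $K_{n,d,D}$: a parametric curve through $O$ has some finite degree, so it witnesses membership in $K_{n,d,D}$ for that $D$, while a curve of degree at most $D$ is a fortiori of degree at most $D+1$, giving $K_{n,d,D}\subset K_{n,d,D+1}$. With this observation both implications become short, and a positive answer to Question \ref{22} is simply the assertion that this chain stabilises, that is $K_{n,d}=K_{n,d,D}$ for some $D$.

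First I would prove $(2)\Rightarrow(1)$. If the answer to Question \ref{22} is positive, there is a constant $D=D(n,d)$ so that every hypersurface of degree at most $d$ containing a parametric curve through $O$ already contains one of degree at most $D$. In terms of the sets this says precisely $K_{n,d}=K_{n,d,D}$, and the right-hand side is closed by the previous Proposition; hence $K_{n,d}$ is closed.

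The content is in $(1)\Rightarrow(2)$, and here I would invoke Baire's Theorem \ref{baire}. Assume $K_{n,d}$ is closed, so it is an affine variety with finitely many irreducible components. Fix one such component $X$. Each $X\cap K_{n,d,D}$ is closed in $X$, and their union over $D\in\N$ equals $X$, because $X\subset K_{n,d}=\bigcup_D K_{n,d,D}$. In particular this union contains the non-empty open subset $X$ of $X$, so Baire's Theorem \ref{baire} yields a $D$ with $X\cap K_{n,d,D}=X$, i.e. $X\subset K_{n,d,D}$. Taking the maximum of the finitely many values of $D$ obtained from the components produces a single $D$ with $K_{n,d}\subset K_{n,d,D}$; the reverse inclusion is trivial, so $K_{n,d}=K_{n,d,D}$, which is exactly a positive answer to Question \ref{22}.

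The only delicate point, and the reason the uncountability hypothesis on $\K$ cannot be dropped, is the application of Baire's Theorem: over a countable field an increasing union of proper closed subvarieties can exhaust an irreducible variety, so the chain $K_{n,d,D}$ might fail to stabilise even when $K_{n,d}$ happens to be closed. Uncountability is precisely what rules this out, and it is the hypothesis that makes the argument go through.
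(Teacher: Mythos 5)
Your proof is correct and follows exactly the route the paper takes: writing $K_{n,d}=\bigcup_{D\in\N}K_{n,d,D}$ as a union of closed sets, getting $(2)\Rightarrow(1)$ trivially from the closedness of $K_{n,d,D}$, and deducing $(1)\Rightarrow(2)$ from Baire's Theorem \ref{baire} applied to the irreducible components. You merely spell out the details (componentwise application of Baire, monotonicity of the sets $K_{n,d,D}$, taking the maximum $D$) that the paper's two-line proof leaves implicit.
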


\begin{proof}
Obviously $K_{n,d}=\bigcup_{D\in\N}K_{n,d,D}$. 
Implication $(1)\Rightarrow(2)$ follows from Baire's Theorem \ref{baire}, the opposite one is trivial.
\end{proof}

Affirmative answers in the case of $\K=\C$ from section \ref{c}
and $\K=\R$ from section \ref{r} lead to the following natural conjecture:

\begin{conjecture}
Let $\K$ be an arbitrary algebraically closed field, and let
$f:\K^n\rightarrow \K^m$ be a generically finite polynomial mapping of degree
$d$. Then the set $\s_f$ has degree of $\K$-uniruledness at most $d-1$ or it is empty.
\end{conjecture}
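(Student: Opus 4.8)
The bound $d$ is already supplied by Theorem \ref{kn}, so the whole force of the conjecture lies in sharpening that bound to $d-1$ over an arbitrary algebraically closed field. The plan is to transplant the mechanism of Theorem \ref{cn}, where the decisive ingredient was not the construction of a covering curve $l\subset\s_f$ (that already yields the bound $d$) but the \emph{strict} inequality $\deg l<d$, into the purely algebraic setting. I would begin with the same reductions as in Theorem \ref{cn}: fix $y\in\s_f$ and, after an affine change of coordinates, assume $y=O=(0,\dots,0)$ and $O\notin f^{-1}(\s_f)$. By Proposition \ref{graph} together with the curve‑selection reduction of Theorem \ref{Sfkuni} (Lemma \ref{curve}), it suffices to cover a dense subset of each component of $\s_f$ by parametric curves of degree at most $d-1$, so by Proposition \ref{k-uniruledofdeg} one may work with explicit families of image curves of source lines, exactly as in the proofs of Theorems \ref{cn} and \ref{kn}.

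The field‑independent reason a degree is lost is the following. Write $f_l=\sum_{j=0}^d f_l^{(j)}$ with $f_l^{(d)}$ the top homogeneous part. Homogenising $y_l=f_l(x)$ gives the relation $y_l\,x_0^{d}=\tilde f_l(x_0,\dots,x_n)$, which therefore holds on $\overline{\graph(f)}$; restricting to the divisor at infinity $Z=\overline{\graph(f)}\cap\{x_0=0\}$ forces $f_l^{(d)}(x_1,\dots,x_n)=0$ for every $l$. Thus every point of $Z$, hence every non‑proper direction, lies over the common projective zero locus $B=\{f_1^{(d)}=\dots=f_m^{(d)}=0\}$ of the leading forms. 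Since the $t^{d}$‑coefficient of the curve $f\bigl(x+tv\bigr)$ is precisely $f^{(d)}(v)$, and the admissible directions accumulate on the cone over $B$ where $f^{(d)}$ vanishes, the top coefficient of the appropriately renormalised limiting curve is driven toward $0$. Turning this heuristic into an honest degree drop is the heart of the matter.

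To make the limit algebraic I would replace the analytic extraction of $l$ over $\C$ (compactness of the unit sphere, together with the topological criterion of Proposition \ref{topo}) by a one‑parameter specialisation through the point of $\overline{\graph(f)}$ lying over $y$, realised inside the complete parameter variety $V\subset\K^n\times\p^{M}$ of Lemma \ref{tool}; by completeness this produces a non‑constant limit curve $l$ which, as in Theorem \ref{kn}, still lies in $\s_f$. Crucially I would use a pencil of source lines \emph{through} $O$ (as in Theorem \ref{cn}, not the coordinate lines of Theorem \ref{kn}), so that every member of the family passes through $a:=f(O)$ at some parameter $\tau_k$. The analogue of the Rouch\'e step, Theorem \ref{rouche}, is then obtained from properness of $\p^1$ and the fact that roots of a family of monic polynomials specialise: assuming for contradiction $\deg l=d$, choose a coordinate $l^{(1)}$ of maximal degree $d$; the marked parameters $\tau_k$ are roots of $l_k^{(1)}-a_1$, and since the leading coefficient does not degenerate the homogenised limit has no root at infinity, so $\tau_k$ specialises to a finite $\tau_0$ with $l(\tau_0)=a$. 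This gives $a\in l\subset\s_f$, contradicting $O\notin f^{-1}(\s_f)$; hence $\deg l<d$, and Proposition \ref{k-uniruledofdeg} finishes the argument.

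The main obstacle is reconciling the two frameworks. The degree‑drop argument needs the covering curves to pass through the basepoint $O$ in the source, which is what pins the marked parameter $\tau_k$ to the excluded value $a$; but the only field‑independent handle on $\s_f$ is the \emph{projective} graph description of Proposition \ref{graph}, which naturally yields curves transverse to $O$ and loses track of that basepoint. Forcing lines through $O$ while controlling the limit inside $\overline{\graph(f)}$ — and verifying that the specialised curve is genuinely contained in $\s_f$ via valuations or the generic point of the family, since there are no convergent sequences to invoke — is exactly the step that resists the elementary treatment available over $\C$ and $\R$. A secondary difficulty is positive characteristic: inseparability of $f$ can make the leading form $f^{(d)}$ degenerate in ways absent over $\C$, so one must check that Lüroth reparametrisation (Theorem \ref{luroth}) does not silently reintroduce a Frobenius factor that keeps the parametrising map at degree $d$ even when its image is a degree‑$(d-1)$ curve. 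Controlling these two phenomena is, in my view, the crux on which the conjecture turns.
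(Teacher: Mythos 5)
You are attempting to prove what is, in the paper, an \emph{open conjecture}, not a theorem: the paper itself establishes only the weaker bound $d$ over an arbitrary algebraically closed field (Theorem \ref{kn}), and the sharp bound $d-1$ only over $\C$ and $\R$ (Theorems \ref{cn} and \ref{rn1}), where Proposition \ref{topo}, compactness of the unit sphere and Rouch\'{e}'s Theorem \ref{rouche} are available. So there is no proof in the paper to compare against, and -- to your credit -- your text does not really claim to be one: you yourself write that ``turning this heuristic into an honest degree drop is the heart of the matter,'' that the basepoint-tracking step ``resists the elementary treatment available over $\C$ and $\R$,'' and that controlling the two phenomena you list ``is the crux on which the conjecture turns.'' As it stands, the proposal is a strategy with two self-acknowledged unresolved gaps, not an argument, and the statement remains exactly as open after your text as before it.

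Beyond the overall incompleteness, you misplace the difficulty. The paper's Remark at the end of Section \ref{c} already observes that, by Proposition \ref{indep}, all results of that section -- in particular Theorem \ref{cn} and its bound $d-1$ -- remain true over \emph{every} algebraically closed field of characteristic zero (spread $f$ out over a finitely generated subfield and transfer the degree of uniruledness through field extensions in both directions). Hence your declared ``main obstacle'' -- replacing sequences and the unit-sphere renormalisation by an algebraic one-parameter specialisation while keeping the source lines pinned through $O$ -- does not need to be solved at all in characteristic zero; the entire content of the conjecture is positive characteristic, which you demote to a ``secondary difficulty.'' And there, the steps you sketch are precisely the ones that break, and you do not repair them: the renormalising exponent in general exists only after a ramified base change $u\mapsto u^{1/N}$ whose interaction with Frobenius must be controlled; the algebraic closure of $\K((u))$ in characteristic $p$ is strictly larger than the Puiseux field, so ``roots of a family of monic polynomials specialise'' requires an actual valuation-theoretic (Newton polygon) argument rather than an appeal to properness of $\p^1$; and inseparability of $f$, or of the L\"{u}roth reparametrisation of the limit curve via Proposition \ref{birrat}, can \emph{a priori} force every polynomial parametrisation of that curve to have degree divisible by $p$ -- the degeneration you name but leave unexamined. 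Until those points are settled, the bound $d-1$ over arbitrary algebraically closed fields remains, as the paper states it, a conjecture.
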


Theorems \ref{multc} and \ref{multc1} suggests the following two conjectures:

\begin{conjecture}
Let $\K$ be an arbitrary algebraically closed field, $X$ be an affine variety with degree of $\K$-uniruledness at most $d_1$ and let $f:X\rightarrow\K^m$ be a generically finite polynomial mapping of degree $d_2$. Then the set $\s_f$ has degree of $\K$-uniruledness at most $d_1d_2$ or it is empty.
\end{conjecture}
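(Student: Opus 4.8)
The plan is to follow the architecture of the proof of Theorem \ref{multc}, replacing the complex-analytic input (Theorem \ref{cxw}) by its purely algebraic counterpart for arbitrary fields, Theorem \ref{kw}. By Definition \ref{k-uniruleddef} there is an affine variety $W$ with $\dim W=\dim X-1$ and a dominant polynomial map $\phi:\K\times W\to X$ of degree in the first coordinate at most $d_1$. Since $\dim(\K\times W)=\dim X$, the map $\phi$ is generically finite, hence so is $g:=f\circ\phi:\K\times W\to\K^m$, and its degree in the first coordinate is at most $d_1d_2$. By Proposition \ref{zlozenie} we have $\s_f\subset\s_g$, and by Theorem \ref{hiper} both sets, if nonempty, are of pure codimension one in $\overline{f(X)}$; therefore every component of $\s_f$ is a component of $\s_g$, and it suffices to bound the degree of $\K$-uniruledness of the components of $\s_g$.

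I would then apply Theorem \ref{kw} to $g$. It yields that all components of $\s_g$ have degree of $\K$-uniruledness at most $d_1d_2$, with the single possible exception of those contained in
$$E:=p_2\bigl(\overline{\graph(g)}\cap(\{(0:1:0:\dots:0)\}\times\K^m)\bigr).$$
If no component of $\s_f$ meets $E$, the proof is complete. This already disposes of characteristic zero with no extra work: by Proposition \ref{indep} the degree of $\K$-uniruledness is insensitive to the base field, so after extending the ground field to $\C$ the bound follows directly from Theorem \ref{multc}. The content of the conjecture therefore lies entirely in positive characteristic, where the compactness and Rouch\'e argument behind Theorem \ref{cxw} is unavailable and one is forced to rely on the graph-closure method of Theorem \ref{kw}.

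The hard part will be the exceptional locus $E$. Unlike Theorem \ref{cxw}, whose limiting-curve argument treats every direction at infinity uniformly, the algebraic argument of Theorem \ref{kw} singles out the one point $(0:1:0:\dots:0)$ over which the fibre of the graph-closure can fail to be swept by low-degree parametric curves; this is exactly the boundary direction of the cylinder factor $\K$ carrying the uniruling. The strategy I would pursue is to kill this single bad direction by introducing a second ruling, in the spirit of Corollary \ref{kkw}: if one can arrange a dominant map from $\K^2\times W'$ with $\dim W'=\dim X-2$, whose two cylinder directions have degree at most $d_1d_2$ and whose two exceptional points $(0:1:0:\dots:0)$ and $(0:0:1:0:\dots:0)$ are distinct, then every boundary component is non-exceptional for at least one of the two directions, and Theorem \ref{kw} applied twice, exactly as in Corollary \ref{kkw}, bounds its degree of $\K$-uniruledness by $d_1d_2$. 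The obstruction is that Definition \ref{k-uniruleddef} furnishes only a one-parameter uniruling $\K\times W\to X$, so producing a genuinely two-directional sweep of $X$ by curves of controlled degree, over a field too small for the genericity arguments behind Proposition \ref{k-uniruled}, is the crux. Failing that, one would instead have to show directly that the fibre of $\overline{\graph(g)}$ over the bad point is itself uniruled of degree at most $d_1d_2$, for instance by exploiting the product structure $\K\times W$ of the source to parametrize that fibre.
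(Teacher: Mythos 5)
The first thing to say is that this statement is not a theorem of the paper: it appears in the Remarks section of Chapter 2 as an open \emph{conjecture} (motivated by Theorems \ref{multc} and \ref{multc1}), so there is no proof in the paper to compare yours against. What the paper does prove is the bound $d_1d_2$ over $\C$ (Theorem \ref{multc}), together with the remark closing section \ref{c} that Proposition \ref{indep} extends this to every algebraically closed field of characteristic zero --- exactly the reduction you make --- and, in arbitrary characteristic, only the special cases $X=\K^n$, $X=\K\times W$ and $X=\K^2\times W$ (Theorem \ref{kn}, Theorem \ref{kw}, Corollary \ref{kkw}). Your proposal reconstructs precisely this state of knowledge; it does not go beyond it.

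The gap is the one you flag yourself, and it is genuine, not a technicality. After composing the uniruling $\phi:\K\times W\to X$ with $f$ and passing to $g=f\circ\phi$, Theorem \ref{kw} controls every component of $\s_g$ except possibly the components of $E=p_2\bigl(\overline{\graph(g)}\cap\{(0:1:0:\dots:0)\}\times\K^m\bigr)$, and nothing in your argument excludes that a component of $\s_f$ is such an exceptional component; the bad direction at infinity is exactly the direction of the ruling lines you used to build $g$, so it cannot be wished away. Your first repair --- a two-directional ruling $\K^2\times W'\to X$ with $\dim W'=\dim X-2$ --- is simply unavailable under the hypotheses: take $X=\K\times C$ with $C$ a smooth affine non-rational curve; then $X$ has degree of $\K$-uniruledness one, yet a dominant map $\K^2\times W'\to X$ would force (restricting to a general line in a fibre $\K^2$) a dominant map $\K\to C$, making $C$ rational by Proposition \ref{birrat}, a contradiction. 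Your second repair, a direct analysis of the fibre of $\overline{\graph(g)}$ over the bad point, is exactly the unsolved difficulty and is not carried out. One further wrinkle: Theorem \ref{kw} as stated bounds the degree of uniruledness by the degree $d$ of the map, and $\deg g$ is in general far larger than $d_1d_2$ (the degree of $\phi$ in the $W$-variables is unbounded); to extract the bound $d_1d_2$ you must reread the proof of \ref{kw} and observe that only $\deg_t g$ enters, which is true but needs to be stated. In summary: your characteristic-zero reduction agrees with the paper's own remark, and your positive-characteristic plan is the natural one, but it remains a plan with an acknowledged hole --- consistent with the fact that the paper leaves the statement as a conjecture.
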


\begin{conjecture}
Let $X$ be a closed semialgebraic set with degree of $\R$-uni\-ruled\-ness at most $d_1$, and let $f:X\rightarrow\R^m$ be a generically finite polynomial mapping of degree $d_2$. Then the set $\s_f$ has degree of $\R$-uniruledness at most $d_1d_2$ or it is empty.
\end{conjecture}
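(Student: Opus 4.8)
The strategy is to follow the proof of the companion Theorem \ref{multc1} and to isolate the single step at which a factor $2$ is lost, with the aim of saving it. Fix $a\in\s_f$ and a sequence $x_k\in X$ with $f(x_k)\to a$ and $x_k\to\infty$; by Proposition \ref{real} there are a semialgebraic curve $W$ and a generically finite polynomial map $\phi:\R\times W\to X$ with $\deg_t\phi\le d_1$ and a witnessing sequence, so that $a\in\s_{f\circ\phi}$ and $\deg_t(f\circ\phi)\le d_1d_2$. Complexifying $\R\times\Gamma$ (with $\Gamma$ the Zariski closure of $W$, assumed smooth), resolving the indeterminacies of $f\circ\phi$, and passing to real models exactly as in Theorem \ref{multc1}, one exhibits the component of $\s_f$ through $a$ as a real arc $F(l^r\setminus L)$, where $l\subset D_1$ is a rational curve of the exceptional tree meeting the connected curve $L=F^{-1}(\pi_\infty)$ in a single real point.

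Over $\C$ this already finishes the argument: by Theorem \ref{cxw} the ambient complex component has degree of $\C$-uniruledness at most $d_1d_2$, and since $\s_f\subset\s_{f\circ\phi}$ are pure of codimension one (Theorem \ref{hiper}) the irreducible components of $\s_f$ are among those of $\s_{f\circ\phi}$; this is exactly Theorem \ref{multc}. The whole loss in the real category is concentrated in the appeal to Lemma \ref{xx}: the branch of $\s_f$ is in general only a proper sub-arc of the full real image of a parametric curve of degree at most $d_1d_2$ covering the ambient component, and a proper arc can be swept out from $\R$ only by a polynomial map of degree at least $2$, which is where the factor $2$ in Lemma \ref{xx} originates.

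To remove it I would first exploit the multiplicativity hidden in Lüroth's Theorem \ref{luroth}. Writing the covering curve as $\eta\circ g$ with $\eta$ a birational parametrization, one has $\deg\eta=\deg(\eta\circ g)/\deg g\le d_1d_2/\deg g$, so as soon as $\deg g\ge2$ the unavoidable degree $2$ reparametrization is absorbed and $2\deg\eta\le d_1d_2$. The only dangerous case is $\deg g=1$, where $\eta$ is itself birational of degree at most $d_1d_2$ and the branch is a proper sub-arc of its full real image. Here Corollary \ref{multc3} helps: every connected component of $\s_f$ is unbounded, so the branch cannot be a bounded arc — which no non-constant polynomial map $\R\to\R^n$ could sweep out in any case — and the problem reduces to the case of a half-line.

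The main obstacle, and the reason the statement is still only a conjecture, is precisely this half-line case: a proper unbounded sub-arc still requires a degree $2$ polynomial to be covered from $\R$, so the numerical trick above does not by itself close the gap. Settling the conjecture thus seems to demand a finer analysis of the real behaviour of $F$ near the single point $l\cap L$, showing that a half-line branch of $\s_f$ always extends across that point to the full real branch of the curve, so that a degree $1$ reparametrization suffices — or, failing that, a genuinely new construction of the covering curve that does not factor through reparametrizing a proper real arc. This distinction is invisible over $\C$, which is exactly why the complex Theorem \ref{multc} is sharp whereas its real analogue is not yet known to be.
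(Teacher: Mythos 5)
The statement you were asked about is not a theorem of the paper at all: it appears there as a conjecture in the closing remarks of the chapter on bounding the degree of $\K$-uniruledness, and the paper offers no proof of it. The strongest result the paper actually proves in this direction is Theorem \ref{multc1}, which gives the weaker bound $2d_1d_2$, precisely because of the factor $2$ in Lemma \ref{xx}. So there is no paper proof to compare yours against, and your decision not to claim a complete argument is the correct one.

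Your analysis is faithful to the paper's own state of knowledge: you correctly reproduce the reduction via Proposition \ref{real}, the complexification and resolution of indeterminacy, the identification of the branch of $\s_f$ through $a$ as $F(l^r\setminus L)$, and you correctly locate the loss of the factor $2$ in Lemma \ref{xx}. Your L\"{u}roth observation is sound as far as it goes: if the real curve of degree at most $d_1d_2$ covering the ambient component factors as $\eta\circ g$ with $\eta$ birational and $\deg g\ge 2$, then $\deg\eta\le d_1d_2/2$, so the degree-two reparametrization needed to sweep out a half-line costs nothing and the bound $d_1d_2$ survives. But, as you yourself concede, this leaves untouched exactly the case $\deg g=1$, where the branch is a proper unbounded sub-arc of a birationally parametrized curve of degree up to $d_1d_2$; Corollary \ref{multc3} rules out bounded arcs but does not help with half-lines, which still force a degree-two reparametrization and hence the bound $2d_1d_2$. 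That is a genuine gap --- the conjecture remains open after your argument, exactly as it does in the paper --- and your proposed remedies (a local analysis of $F$ near the point $l\cap L$ showing that a half-line branch always extends to the full real branch, or a construction of the covering curve avoiding reparametrization altogether) are so far only programmatic. One further caveat to keep in mind in any refinement: your reduction tacitly assumes that the parametrization of degree at most $d_1d_2$ furnished by Theorem \ref{cxw} for the complexified component can be taken real; in the paper this is precisely the role of the passage to the real model $l^r$ in the proof of Theorem \ref{multc1}, and that step must be preserved, not bypassed, in whatever sharpening you attempt.
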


\pagebreak

%%%%%%%%%%%%%%%%%%%%%%%%%%%%%%%%%%%%%%%%%%%%%%%%%%%%%%%%%%%%%%%%%%%%%%%%%%%%%%%%%%%%%%%%%%%%%%%%%%%%%%%%%%%%%%%%%%%%%%%%%%%%%%%%%%%%%%%%%%%%%%%%%%%%%%%%%%%%%%%%%%%%%%%%%%
%%%%%%%%%%%%%%%%%%%%%%%%%%%%%%%%%%%%%%%%%%%%%%%%%%%%%%%%%%%%%%%%%%%%%%%%%%%%%%%%%%%%%%%%%%%%%%%%%%%%%%%%%%%%%%%%%%%%%%%%%%%%%%%%%%%%%%%%%%%%%%%%%%%%%%%%%%%%%%%%%%%%%%%%%%
%%%%%%%%%%%%%%%%%%%%%%%%%%%%%%%%%%%%%%%%%%%%%%%%%%%%%%%%%%%%%%%%%%%%%%%%%%%%%%%%%%%%%%%%%%%%%%%%%%%%%%%%%%%%%%%%%%%%%%%%%%%%%%%%%%%%%%%%%%%%%%%%%%%%%%%%%%%%%%%%%%%%%%%%%%
\chapter{The set of fixed points of group actions}
%%%%%%%%%%%%%%%%%%%%%%%%%%%%%%%%%%%%%%%%%%%%%%%%%%%%%%%%%%%%%%%%%%%%%%%%%%%%%%%%%%%%%%%%%%%%%%%%%%%%%%%%%%%%%%%%%%%%%%%%%%%%%%%%%%%%%%%%%%%%%%%%%%%%%%%%%%%%%%%%%%%%%%%%%%
%%%%%%%%%%%%%%%%%%%%%%%%%%%%%%%%%%%%%%%%%%%%%%%%%%%%%%%%%%%%%%%%%%%%%%%%%%%%%%%%%%%%%%%%%%%%%%%%%%%%%%%%%%%%%%%%%%%%%%%%%%%%%%%%%%%%%%%%%%%%%%%%%%%%%%%%%%%%%%%%%%%%%%%%%%
%%%%%%%%%%%%%%%%%%%%%%%%%%%%%%%%%%%%%%%%%%%%%%%%%%%%%%%%%%%%%%%%%%%%%%%%%%%%%%%%%%%%%%%%%%%%%%%%%%%%%%%%%%%%%%%%%%%%%%%%%%%%%%%%%%%%%%%%%%%%%%%%%%%%%%%%%%%%%%%%%%%%%%%%%%

In the whole Chapter, unless stated otherwise, $\K$ is assumed to be an arbitrary algebraically closed field.
We are going to remind the definition and a characterization of unipotent groups. The main goal of this Chapter is to show that under some conditions the set of fixed points $\Fix(\G)$ of action of an algebraic group $\G$ is $\K$-uniruled. 

In particular we prove:

\begin{theoremm}
Let $\G$ be a nontrivial connected unipotent algebraic group which acts effectively on an affine variety $X$. Then the set of fixed points of $\G$ is a $\K$-uniruled variety.
\end{theoremm}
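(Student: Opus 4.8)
The plan is to reduce the statement to the one-dimensional case of a connected unipotent group, which means the additive group $\mathbb{G}_a=(\K,+)$, and then exhibit explicit parametric curves through each fixed point. First I would recall the structural fact that a nontrivial connected unipotent group $\G$ contains a closed subgroup isomorphic to $\mathbb{G}_a$; since the fixed-point set only grows as we pass to a subgroup, and since $\Fix(\G)\subset\Fix(\mathbb{G}_a)$, it suffices to handle the case $\G=\mathbb{G}_a$ acting effectively (the effectiveness of the full action forces the $\mathbb{G}_a$-action to be nontrivial, which is what we need). So I would set up the action as a regular map $\sigma:\K\times X\to X$, $\sigma(s,x)=s\cdot x$, satisfying $\sigma(0,x)=x$ and the group law $\sigma(s,\sigma(s',x))=\sigma(s+s',x)$.

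The key geometric idea is that for a point $x$ that is \emph{not} fixed, the orbit map $s\mapsto s\cdot x$ is a nonconstant regular map $\K\to X$, hence its image is a parametric curve, and by the group law this whole orbit is contained in $X$ and avoids being collapsed. The heart of the matter is to produce a parametric curve through each \emph{fixed} point $x_0\in\Fix(\G)$. Here I would argue that a fixed point cannot be isolated: if $x_0$ were an isolated fixed point, one can analyze the action near $x_0$ (for instance, linearizing the $\mathbb{G}_a$-action on the Zariski tangent space or cotangent space at $x_0$, using that a unipotent action has a fixed nonzero vector and a nontrivial nilpotent generator) to show that nearby non-fixed orbits degenerate onto $x_0$, producing limit curves through $x_0$. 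Concretely, I would take a sequence (or family) of orbits $s\mapsto s\cdot x_\varepsilon$ for points $x_\varepsilon\to x_0$ that are not fixed, reparametrize them to unit norm as in the proof of Theorem \ref{cn}, and pass to a limit to obtain a nonconstant parametric curve through $x_0$ lying in $\Fix(\G)$. To make this work over an arbitrary algebraically closed field rather than relying on limits, I would instead prefer the algebraic version: use that $\sigma-\mathrm{pr}_X$ is divisible by $s$ (since $\sigma(0,x)=x$), write the orbit displacement as $s\cdot\tau(s,x)$ for a regular $\tau$, and show that the locus where the leading behavior degenerates forces a curve of fixed points through $x_0$.

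The cleanest route, and the one I expect to be decisive, is to relate $\Fix(\G)$ to a non-properness set and invoke the machinery already built. The plan is: the action $\sigma:\K\times X\to X$ is dominant onto a union of orbits; if $\Fix(\G)$ had an isolated point $x_0$, then restricting $\sigma$ to a suitable slice $\K\times W$ (with $W$ transverse to orbits, $\dim W=\dim X-1$) gives a generically finite map whose non-properness set $\s_\sigma$ detects where orbits escape to infinity. By Theorem \ref{Sfkuni}, $\s_\sigma$ is $\K$-uniruled, and the isolated fixed point would have to lie on a parametric curve inside $\Fix(\G)$, a contradiction with isolation. Thus every component of $\Fix(\G)$ is covered by parametric curves, which by Proposition \ref{k-uniruledofdeg} and Definition \ref{k-uniruleddef} is exactly $\K$-uniruledness.

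The main obstacle will be the base point itself: producing an \emph{actual} parametric curve (an image of $\K$ under a regular map) through a given fixed point $x_0$, rather than merely showing $x_0$ is non-isolated. Over $\C$ one can take analytic/metric limits of reparametrized orbits, but over an arbitrary algebraically closed field the limiting argument must be replaced by a purely algebraic degeneration, and controlling that the limit curve is nonconstant and genuinely lies in $\Fix(\G)$ (not just in its Zariski closure) is the delicate point. I would handle this by first establishing the result for $\G=\mathbb{G}_a$ with an explicit orbit-degeneration argument, checking nonconstancy via the nontriviality of the nilpotent infinitesimal generator, and then deducing the general statement by the $\mathbb{G}_a$-subgroup reduction described above.
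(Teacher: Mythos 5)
Your proposal has two genuine gaps.

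First, the reduction to $\G_a$ is incorrect as stated. Restricting the action to a one-parameter subgroup $H\cong\G_a\subset\G$ gives $\Fix(\G)\subset\Fix(H)$, and proving that the \emph{larger} set $\Fix(H)$ is $\K$-uniruled says nothing about the subvariety $\Fix(\G)$: uniruledness does not pass to subvarieties. The paper's reduction goes the other way, by induction on $\dim\G$ along the normal series $0=\G_0\vartriangleleft\dots\vartriangleleft\G_r=\G$ of Definition \ref{series}: the quotient $\G/\G_{r-1}\cong\G_a$ acts on the affine variety $\Fix(\G_{r-1})$, and $\Fix(\G)$ is precisely the fixed-point set of \emph{that} $\G_a$-action, so one applies the $\G_a$-case to this smaller variety (or the inductive hypothesis, if the quotient action is trivial).

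Second, and more seriously, your main argument proves the wrong statement, and the step that would prove the right one is missing. Ruling out isolated fixed points is Bia\l ynicki-Birula's result (the Corollary in the paper) and is strictly weaker than $\K$-uniruledness: a positive-dimensional fixed-point set need not be covered by parametric curves, so your concluding sentence ``thus every component of $\Fix(\G)$ is covered by parametric curves'' is a non sequitur. What the theorem requires, and what your sketch asserts without proof, is that the parametric curve through a fixed point $a$ produced inside the non-properness set actually lies in $\Fix(\G)$. This is the crux of the paper's proof and needs three ingredients you never supply: (i) the map $\Phi:L\times\G\ni(x,g)\mapsto gx\in X$, with $L$ a curve through $a$ (so that Theorem \ref{Sfsurface} exhibits $\s_\Phi$ as a finite union of parametric curves), is $\G$-equivariant, hence by Lemma \ref{ginv} the set $\s_\Phi$ is $\G$-invariant; (ii) since $\G$ is connected, each irreducible component $S_i$ of $\s_\Phi$ is itself $\G$-invariant; (iii) $a\in\s_\Phi$ because its fiber is infinite, and if the component $S_1\ni a$ contained a non-fixed point $x$, then $S_1=\G x$ would be a single orbit containing the fixed point $a$, forcing $x=a$, a contradiction; hence $S_1\subset\Fix(\G)$. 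Without this invariance/orbit argument you only know that $a$ lies on a parametric curve inside the non-properness set, not inside $\Fix(\G)$. A smaller omission: the curves so produced carry no a priori degree bound, so the paper first works over an uncountable field, where Propositions \ref{k-uniruled} and \ref{k-uniruledofdeg} apply, and then descends to countable fields via Proposition \ref{indep}; your sketch, which instead gestures at analytic limits or an unspecified algebraic degeneration, does not resolve this.
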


\begin{theoremm}
Let $\G$ be an infinite connected algebraic group which acts effectively on $\K^n$, for $n\geq 2$. Assume that an irreducible hypersurface $H$ is contained in the set of fixed points of $\G$. Then $H$ is $\K$- uniruled.
\end{theoremm}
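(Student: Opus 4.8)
The plan is to reduce to a one-parameter subgroup and then exhibit $H$ as an irreducible component of the non-properness set of a suitable generically finite map from a $\K$-uniruled variety, so that Theorem \ref{Sfkuni} applies.

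First I would cut down the group. Since $\G$ is connected of positive dimension, it contains a closed one-dimensional connected subgroup $\G_0$, isomorphic either to the additive group $\G_a$ or to the multiplicative group $\G_m$. Because the action of $\G$ is effective, so is that of $\G_0$ (its kernel is $\G_0\cap\{e\}=\{e\}$), and clearly $H\subset\Fix(\G)\subset\Fix(\G_0)$. As $\G_0$ acts nontrivially we have $\Fix(\G_0)\subsetneq\K^n$, hence $\dim\Fix(\G_0)\le n-1$; since $H$ is irreducible of dimension $n-1$, it is automatically an irreducible component of $\Fix(\G_0)$. Thus it suffices to prove that this component is $\K$-uniruled. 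If $\G_0\cong\G_a$ is unipotent, this is immediate from Theorem \ref{glowne}, which gives that the whole set $\Fix(\G_0)$ is $\K$-uniruled. The real content is therefore the torus case $\G_0\cong\G_m$, which \ref{glowne} does not cover.

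For $\G_0\cong\G_m$ I would argue as follows. Choose a generic affine hyperplane $\Sigma\cong\K^{n-1}$; for generic $\Sigma$ the orbit map restricts to a generically finite dominant morphism
$$g:\G_0\times\Sigma\ni(t,\sigma)\longmapsto t\cdot\sigma\in\K^n,$$
because a generic hyperplane meets each one-dimensional orbit in finitely many points and the saturation $\G_0\cdot\Sigma$ is dense. The source $\G_0\times\Sigma\cong\K^*\times\K^{n-1}$ is affine and $\K$-uniruled of degree one, being covered by the lines in the $\K^{n-1}$-direction. Hence by Theorem \ref{Sfkuni} the set $\s_g$ is $\K$-uniruled or empty, while by Theorem \ref{hiper} it is a hypersurface in $\overline{g(\G_0\times\Sigma)}=\K^n$, i.e. of pure dimension $n-1$ whenever nonempty.

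The heart of the matter, and the step I expect to be the \emph{main obstacle}, is to show $H\subset\s_g$: intuitively, an orbit issuing from the slice degenerates to a fixed point precisely as the group parameter runs to the boundary of $\G_m$, so the fixed hypersurface should be exactly where $g$ fails to be proper. Concretely, at a generic $x_0\in H$ the torus fixes $x_0$ and so acts linearly on $T_{x_0}\K^n$, trivially along $T_{x_0}H$ and, by effectiveness (a zero normal weight would force the action to be trivial near $x_0$, hence on all of $\K^n$), with a nonzero weight in the normal direction; since $\G_m$ is linearly reductive in every characteristic, this linear model governs the action near $x_0$. After replacing $t$ by $t^{-1}$ we may take the weight positive, so for $y$ near $x_0$ off $H$ one has $t\cdot y\to x_0$ as $t\to0$; writing $y=t'\cdot\sigma$ with $\sigma\in\Sigma$ gives $(tt')\cdot\sigma\to x_0$ with parameter $tt'\to0$, i.e. with $(tt',\sigma)$ escaping to the boundary of $\G_m\times\Sigma$. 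Thus $x_0$ is a point of non-properness of $g$ — over $\C$ or $\R$ this is literally the criterion of Proposition \ref{topo}, while over an arbitrary $\K$ the same conclusion follows by identifying the corresponding boundary point of $\overline{\graph(g)}\setminus\graph(g)$ lying over $x_0$ via Proposition \ref{graph}. Hence a dense subset of $H$, and therefore by closedness of $\s_g$ all of $H$, is contained in $\s_g$. Since $H$ is an irreducible hypersurface inside the pure $(n-1)$-dimensional set $\s_g$, it is a component of $\s_g$, and the $\K$-uniruledness of $\s_g$ forces $H$ to be $\K$-uniruled, which finishes the proof.
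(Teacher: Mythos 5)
Your overall plan (reduce to a one--parameter subgroup $\G_a$ or $\G_m$, then exhibit $H$ inside the non-properness set of an orbit map from a $\K$-uniruled cylinder) is viable, and the $\G_a$ branch via Theorem \ref{glowne} is fine. But the step you yourself flag as the heart of the matter, $H\subset\s_g$, is where the argument genuinely breaks. The linearization reasoning does not work as written: there is no Zariski-local linearization of a $\G_m$-action at a fixed point, so ``this linear model governs the action near $x_0$'' can only be given meaning formally or \'etale-locally, and converting a formal attraction statement into an actual point of non-properness is precisely the content that is missing. Over $\C$ one could try to patch this analytically, but the theorem is about an arbitrary algebraically closed field (including positive characteristic), where there are no sequences and no limits; your sentence that the conclusion ``follows by identifying the corresponding boundary point of $\overline{\graph(g)}\setminus\graph(g)$ lying over $x_0$ via Proposition \ref{graph}'' asserts exactly what has to be proved --- to use Proposition \ref{graph} you must \emph{construct} a point of $\overline{\graph(g)}$ over $x_0$ outside $\graph(g)$, and nothing in your argument produces one. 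A second, smaller gap: ``a connected group of positive dimension contains $\G_a$ or $\G_m$'' is false in general (an abelian variety contains neither); you must first reduce to affine $\G$, which the paper does via the Chevalley theorem, using that an effective action on the affine space $\K^n$ forces $\G$ to be linear.

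The irony is that the step you struggled with has a one-line proof requiring no linearization, and it is exactly the mechanism of the paper's own proof. A point $x_0\in H$ is fixed, so its orbit is $\{x_0\}$; hence $x_0$ lies in the image $g(\G_m\times\Sigma)$ only if $x_0\in\Sigma$. Thus every point of $H\setminus\Sigma$ lies in $\overline{g(\G_m\times\Sigma)}\setminus g(\G_m\times\Sigma)$ (the closure is $\K^n$ by your dominance claim), and such points are automatically in $\s_g$: if $g$ were finite over a neighborhood of such a point, lying-over for integral extensions would force the point into the image; equivalently, the fiber of $\overline{\graph(g)}$ over it is nonempty and disjoint from $\graph(g)$, so Proposition \ref{graph} applies. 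Since $\s_g$ is closed, $H\subset\s_g$, and your component argument finishes. The paper runs this same idea locally and uniformly in both group cases: for $a\in H$ it picks a line $L$ through $a$ meeting $\Fix(\G)$ in finitely many points, observes that $\phi(L\times\G)$ meets $H$ in only those finitely many points (orbits of fixed points are singletons), concludes that a curve $S\subset\overline{\phi(L\times\G)}\cap H$ through $a$ lies in $\s_\phi$, and invokes the surface case (Theorem \ref{Sfsurface}) instead of Theorem \ref{Sfkuni}; this also avoids your unproven (though correct) genericity claims about the hyperplane $\Sigma$.
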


%%%%%%%%%%%%%%%%%%%%%%%%%%%%%%%%%%%%%%%%%%%%%%%%%%%%%%%%%%%%%%%%%%%%%%%%%%%%%%%%%%%%%%%%%%%%%%%%%%%%%%%%%%%%%%%%%%%%%%%%%%%%%%%%%%%%%%%%%%%%%%%%%%%%%%%%%%%%%%%%%%%%%%%%%%
\section{Introduction}
%%%%%%%%%%%%%%%%%%%%%%%%%%%%%%%%%%%%%%%%%%%%%%%%%%%%%%%%%%%%%%%%%%%%%%%%%%%%%%%%%%%%%%%%%%%%%%%%%%%%%%%%%%%%%%%%%%%%%%%%%%%%%%%%%%%%%%%%%%%%%%%%%%%%%%%%%%%%%%%%%%%%%%%%%%

We will use the following definition of unipotent groups.

\begin{definition}\label{series}
An algebraic group $\G$ is \textup{unipotent} if there exists a series of normal algebraic subgroups
$$0=\G_0\vartriangleleft \G_1\vartriangleleft\dots\vartriangleleft \G_r=\G,$$
such that $\G_i/\G_{i-1}\cong \G_a=(\K,+,0)$.
\end{definition}

Let $\G$ be a connected unipotent algebraic group, which acts effectively on a variety $X$. The set
of fixed points of this action was intensively studied (see \cite{bi, caso, gr, ho}). 
In particular Bia\l ynicki-Birula proved that if $X$ is an affine variety,
then $\G$ has no isolated fixed points.

Here we also consider the case when $X$ is an affine variety. We
generalize the result of Bia\l ynicki-Birula by proving, that
the set $\Fix(\G)$ of fixed points of $\G$ is in fact a $\K$-uniruled
variety. So through every point $x\in\Fix(\G)$ passes a
parametric curve.

We also show that if an arbitrary infinite connected algebraic group $\G$
acts effectively on $\K^n$ and the set of fixed points contains a
hypersurface $H$ then this hypersurface is $\K$-uniruled. It was known before for the case $\K=\C$ \cite{je03}.

%%%%%%%%%%%%%%%%%%%%%%%%%%%%%%%%%%%%%%%%%%%%%%%%%%%%%%%%%%%%%%%%%%%%%%%%%%%%%%%%%%%%%%%%%%%%%%%%%%%%%%%%%%%%%%%%%%%%%%%%%%%%%%%%%%%%%%%%%%%%%%%%%%%%%%%%%%%%%%%%%%%%%%%%%%
\section{The set of fixed points of a unipotent group}
%%%%%%%%%%%%%%%%%%%%%%%%%%%%%%%%%%%%%%%%%%%%%%%%%%%%%%%%%%%%%%%%%%%%%%%%%%%%%%%%%%%%%%%%%%%%%%%%%%%%%%%%%%%%%%%%%%%%%%%%%%%%%%%%%%%%%%%%%%%%%%%%%%%%%%%%%%%%%%%%%%%%%%%%%%

We begin with a quite natural observation:

\begin{lemma}\label{ginv}
Let algebraic group $\G$ act on affine varieties $X,Y$ and let $f:X\rightarrow Y$ be a generically finite $\G$-invariant mapping (for any $x\in X,g\in\G$ there is $gf(x)=f(gx)$). Then the set $\s_f$ of points at which map $f$ is not proper is also $\G$-invariant (for any $y\in \s_f,g\in\G$ there is $gy\in \s_f$). 
\end{lemma}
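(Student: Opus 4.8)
The plan is to exploit that the $\G$-action is by isomorphisms on both $X$ and $Y$, together with the behaviour of $\s_f$ under composition with isomorphisms. Fix $g\in\G$ and write $\sigma_g\colon X\to X$, $x\mapsto gx$, and $\tau_g\colon Y\to Y$, $y\mapsto gy$, for the corresponding maps. Since the inverse of $\sigma_g$ is $\sigma_{g^{-1}}$ (and likewise for $\tau_g$), both are isomorphisms of affine varieties, hence finite by Corollary \ref{isoo}. The equivariance hypothesis $gf(x)=f(gx)$ is precisely the identity $f\circ\sigma_g=\tau_g\circ f$. I would also note that $\overline{f(X)}$ is $\G$-invariant, because $\tau_g(\overline{f(X)})=\overline{f(\sigma_g(X))}=\overline{f(X)}$, so that $f$ may be regarded as a dominant map onto $\overline{f(X)}$ on which $\G$ acts. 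The first ingredient then concerns precomposition: applying Proposition \ref{zlozeniesk} to the dominant map $f$ and the finite map $\sigma_g$ gives $\s_f=\s_{f\circ\sigma_g}$.

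The second ingredient, which is the main point to check, is a complementary statement for postcomposition with an isomorphism $\alpha$ of the target, namely $\s_{\alpha\circ f}=\alpha(\s_f)$. I would prove this directly from the local definition of properness. For $y\in\overline{f(X)}$ and $U$ a Zariski-open neighbourhood of $y$, the set $U'=\alpha(U)$ is an open neighbourhood of $\alpha(y)$ with $(\alpha\circ f)^{-1}(U')=f^{-1}(U)$; moreover $\alpha$ restricts to an isomorphism $U\to U'$, so $f^{-1}(U)$ is affine if and only if $(\alpha\circ f)^{-1}(U')$ is, and $(\alpha\circ f)|_{f^{-1}(U)}=\alpha|_U\circ f|_{f^{-1}(U)}$ is finite if and only if $f|_{f^{-1}(U)}$ is, since composing with the isomorphism $\alpha|_U$, finite in both directions, preserves integrality of the induced ring extension. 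Hence $f$ is proper at $y$ exactly when $\alpha\circ f$ is proper at $\alpha(y)$, which gives $\s_{\alpha\circ f}=\alpha(\s_f)$.

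Combining the two ingredients finishes the argument: using $f\circ\sigma_g=\tau_g\circ f$,
$$\s_f=\s_{f\circ\sigma_g}=\s_{\tau_g\circ f}=\tau_g(\s_f)=g\cdot\s_f.$$
As this holds for every $g\in\G$, the set $\s_f$ is $\G$-invariant. The only real obstacle is the postcomposition sub-lemma, since the compositional statements supplied earlier (Propositions \ref{zlozenie} and \ref{zlozeniesk}) only govern the inner map; but because $\alpha$ is an isomorphism this reduces to the elementary observations above about affineness and integral extensions being preserved under isomorphisms.
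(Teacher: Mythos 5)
Your proof is correct and is essentially the paper's own argument in a slightly different packaging: the paper transports the finiteness neighborhood $U$ directly via the commutative square $f\vert_{f^{-1}(gU)}=\tau_g\circ f\vert_{f^{-1}(U)}\circ\sigma_g^{-1}$, using Corollary \ref{isoo} and closure of finite maps under composition, which is exactly the content of your two ingredients combined. Your postcomposition sub-lemma $\s_{\alpha\circ f}=\alpha(\s_f)$ is the same local neighborhood-transport computation the paper performs inside its diagram chase, so the mathematics coincides.
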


\begin{proof}
It is enough to show that the complement of the set $\s_f$ is
$\G$-invariant.

Suppose that $f$ is proper at $y\in Y$. This means that
there exists an open neighborhood $U$ of $y$ such that the mapping
$$f\vert_{f^{-1}(U)}:f^{-1}(U)\to U$$
is finite. We have the following diagram:

%\vspace{3mm}
\begin{center}
\begin{picture}(240,160)(-40,40)
\put(140,160){\makebox(0,0)[tl]{$f^{-1}(gU)=gf^{-1}(U)$}}
\put(20,160){\makebox(0,0)[tl]{$f^{-1}(U)$}}
\put(30,40){\makebox(0,0)[tl]{$U$}}
\put(180,40){\makebox(0,0)[tl]{$gU$}}
\put(190,100){\makebox(0,0)[tl]{$f\vert_{f^{-1}(gU)}$}}
\put(40,100){\makebox(0,0)[tl]{$f\vert_{f^{-1}(U)}$}}
\put(95,50){\makebox(0,0)[tl]{$g$}}
\put(95,170){\makebox(0,0)[tl]{$g^{-1}$}}
\put(33,145){\vector(0,-1){100}} \put(45,35){\vector(1,0){125}}
\put(135,155){\vector(-1,0){75}} \put(183,145){\vector(0,-1){100}}
\end{picture}
\end{center}
\vspace{3mm}

Horizontal mappings are isomorphisms, hence by Corollary \ref{isoo} they are finite. The map $f\vert_{f^{-1}(gU)}$ is a composition of finite maps, so it is also finite. Hence $gy\notin \s_f$.
\end{proof}

The aim of this section is to prove the following:

\begin{theorem}\label{glowne}
Let $\G$ be a nontrivial connected unipotent algebraic group which acts effectively on an affine variety $X$. Then the set of fixed points of $\G$ is a $\K$-uniruled variety.
\end{theorem}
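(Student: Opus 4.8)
The plan is to reduce to the case $\G=\G_a$ and then to produce parametric curves inside $\Fix(\G_a)$ as limits ``at infinity'' of degenerating families of orbits, feeding the non-properness machinery of the previous chapter.

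First I would reduce to a one-parameter group. A nontrivial connected unipotent group has nontrivial centre, and by Definition \ref{series} that centre contains a closed subgroup $H\cong\G_a$. Since $H$ is central, $\Fix(H)$ is a $\G$-invariant closed (hence affine) subvariety of $X$, the quotient $\G/H$ acts on it, and $\Fix(\G)=\Fix_{\G/H}(\Fix(H))$. Replacing $\G/H$ by its image in the automorphism group of $\Fix(H)$ makes the action effective without changing the fixed locus, while the dimension of the acting group drops; so induction on $\dim\G$ reduces everything to the base case $\G=\G_a$, where it suffices to show that each irreducible component $Z$ of $\Fix(\G_a)$ is covered by parametric curves (Proposition \ref{k-uniruledofdeg}).

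For $\G=\G_a$ the non-fixed locus is swept out by one-dimensional orbits, so passing to a generic geometric quotient I would produce an affine $W$ with $\dim W=\dim X-1$, a rational section $s$, and a dominant, generically finite polynomial map $\phi:\K\times W\to X$, $\phi(t,w)=t\cdot s(w)$, of bounded degree in $t$. The cylinder $\K\times W$ is $\K$-uniruled of degree one, so Theorem \ref{Sfkuni} gives that $\s_\phi$ is $\K$-uniruled; moreover $\phi$ is $\G_a$-equivariant for the translation action on the first factor, so by Lemma \ref{ginv} the set $\s_\phi$ is $\G_a$-invariant. The guiding observation is that a fixed point is exactly a place where orbits ``slow down at infinity'': if $w_n\to w_*$ and $t_n\to\infty$ then $\lim t_n\cdot s(w_n)$ is a fixed point, so the limit points produced by non-properness of $\phi$ lie in $\Fix$, and the parametric curves covering $\s_\phi$ that pass through such a point should be the envelopes $s\mapsto\lim_{t\to\infty}t\cdot x_s$ of a degenerating family of orbits, which are contained in $\Fix$.

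The main obstacle is twofold. First, $\s_\phi$ is always pure of codimension one (Theorem \ref{hiper}), whereas a component $Z$ of $\Fix$ may have higher codimension, so uniruledness of the hypersurface $\s_\phi$ does not by itself descend to the smaller set $Z$. I would remove this mismatch by a second induction, on $\dim X$: choosing a non-constant $\G_a$-invariant function $h$ (invariants separate distinct fixed points, so $h$ can be taken non-constant on $Z$ when $\dim Z\ge 1$), the level sets $X_c=\{h=c\}$ are $\G_a$-invariant affine varieties of dimension $\dim X-1$ carrying the induced action, with fixed locus $\Fix\cap X_c$; the inductive hypothesis covers each slice $Z\cap X_c$ by parametric curves, and since every point of $Z$ lies in some slice, $Z$ itself is covered by parametric curves. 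The recursion bottoms out at the surface situation, where the relevant component is a curve that is genuinely a hypersurface, $\s_\phi$ has the same dimension as $Z$, and the argument of Theorem \ref{Sfsurface} (reached via curve selection, Lemma \ref{curve}) applies; there one checks that $\s_\phi$ is precisely the union of the components of $\Fix$, so $Z$ is a single parametric curve. The second and genuinely hard part is exactly this last verification — that the covering curves of $\s_\phi$ through fixed points can be chosen \emph{inside} $\Fix$ rather than transverse to it — which is the quantitative form of Bia\l ynicki-Birula's theorem that an affine $\G_a$-action has no isolated fixed points, and is where the $\G_a$-invariance from Lemma \ref{ginv} and the limit-at-infinity description of $\s_\phi$ must be combined most carefully. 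Finally, since $\K$-uniruledness is independent of the base field by Proposition \ref{indep}, the statement over an arbitrary algebraically closed $\K$ follows, and one may pass to a convenient uncountable extension when invoking the sequential characterisation of non-properness.
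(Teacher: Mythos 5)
Your reduction to $\G=\G_a$ is sound and is essentially the paper's reduction (the paper peels off the top quotient $\G/\G_{r-1}\cong\G_a$ of the normal series rather than a central copy of $\G_a$, but both work). The base case, however, contains two genuine gaps, located exactly at the two places where you hedge. First, your guiding observation is false: a limit $\lim_n t_n\cdot x_n$ with $t_n\to\infty$ need not be a fixed point. Take the Weitzenb\"ock action of $\G_a$ on $\K^3$ in characteristic zero, $t\cdot(x,y,z)=(x,\,y+tx,\,z+ty+\tfrac{t^2}{2}x)$, whose fixed locus is the $z$-axis $\{x=y=0\}$; with $p_n=(2\beta/n,\,-\beta,\,\gamma)$ and $t_n=n$ one computes $t_n\cdot p_n=(2\beta/n,\,\beta,\,\gamma)\to(0,\beta,\gamma)$, which is not fixed when $\beta\neq 0$. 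So the non-properness set $\s_\phi$ of your global quotient map genuinely contains non-fixed points, and the parametric curves covering $\s_\phi$ through a fixed point have no reason to lie in $\Fix(\G)$: the ``hard verification'' you defer is not a technical refinement but the missing heart of the proof, and it cannot be completed in your setup, because a $\G$-invariant irreducible \emph{hypersurface} through a fixed point need not be pointwise fixed (in the example, $\s_\phi$ contains the plane $x=0$, which is invariant, contains fixed points, and is mostly non-fixed). Second, your slicing repair rests on the claim that invariants separate fixed points, which is false for unipotent actions: already for $t\cdot(x,y)=(x,y+tx)$ on $\K^2$ the invariant ring is $\K[x]$, constant on the fixed line; and in the Weitzenb\"ock example every invariant is constant on the fixed axis (the invariant ring is $\K[x,\,y^2-2xz]$), which is precisely a positive-dimensional, codimension-two component of $\Fix(\G)$ -- the situation where you need a non-constant invariant $h$. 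So the induction on $\dim X$ cannot start.

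The idea your proposal is missing is that the paper works \emph{pointwise}, which makes the dimensions match. Given a fixed point $a$, choose an irreducible curve $L\ni a$ contained neither in $\Fix(\G)$ nor in any orbit, and consider $\Phi:L\times\G\ni(x,g)\to gx\in X$. Its image is a surface, so by Theorems \ref{hiper} and \ref{Sfsurface} the set $\s_\Phi$ is a finite union of parametric \emph{curves}; it contains $a$ (the fiber over $a$ is infinite) and is $\G$-invariant by Lemma \ref{ginv}, hence each of its finitely many irreducible components is $\G$-invariant since $\G$ is connected. Now the invariance argument closes: if the component $S_1\ni a$ contained a non-fixed point $x$, the one-dimensional orbit $\G x\subset S_1$ would be closed (orbits of unipotent groups on affine varieties are closed) and dense in the irreducible curve $S_1$, forcing $S_1=\G x\ni a$, contradicting that $a$ is fixed; hence $S_1\subset\Fix(\G)$ is a parametric curve through $a$. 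This matching -- the non-properness set of a map from a surface is a curve, and a $\G$-invariant irreducible curve through a fixed point must be fixed -- is exactly what your global, codimension-one $\s_\phi$ cannot provide, and it also eliminates any need for quotients, sections, or slicing.
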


\begin{proof} By the induction on $\dim\G$ we can easily reduce the general case to the case of $\G=\G_a$. Indeed let 
$$0=\G_0\vartriangleleft \G_1\vartriangleleft\dots\vartriangleleft \G_r=\G$$
be a normal series like in Definition \ref{series}. Now $\Fix(\G)\subset\Fix(\G_{r-1})$. The set $\Fix(\G_{r-1})$ is invariant under $\G$ action, moreover $\G_{r-1}$ acts trivially on it. So the group $\G/\G_{r-1}\cong \G_a$ acts on $\Fix(\G_{r-1})$. If this action is trivial, then $\Fix(\G)=\Fix(\G_{r-1})$ and the assertion follows from the inductive assumption. Otherwise $\G_a$ acts effectively on $\Fix(\G_{r-1})$ so the set of fixed points $\Fix(\G)$ is $\K$-uniruled.

First assume that the field $\K$ is uncountable. Take a point $a\in\Fix(\G)$. By Propositions \ref{k-uniruled} and \ref{k-uniruledofdeg} it is enough to prove, that
there exists a parametric curve $S\subset\Fix(\G)$ passing through $a$.
Let $L$ be an irreducible curve in $X$ passing through $a$, which is
not contained in any orbit of $\G$ and which is not contained in
$\Fix(\G)$. 

Consider a surface $Y=L\times\G$. There is natural $\G$
action on $Y$. For $h\in\G$ and $y=(l,g)\in Y$ we put
$h(y)=(l,hg)\in Y$. Take a mapping
$$\Phi : L\times G\ni (x, g)\to gx\in X.$$ 
It is a generically finite polynomial mapping. 

Observe that it is $\G-$invariant,
$\Phi(gy)=g\Phi(y)$. Due to Lemma \ref{ginv} this implies that the set $\s_\Phi$ of points
at which the mapping $\Phi$ is not finite is also $\G$-invariant.

Theorem \ref{Sfsurface} gives that the set $\s_\Phi$ is $\K$-uniruled. Let $\s_\Phi=S_1\cup\dots\cup S_k$ be the decomposition of $\s_\Phi$ 
into parametric curves. Since the set $\s_\Phi$ is $\G$-invariant, we have that each curve $S_i$ is also
$\G$-invariant. 

Note that the point $a$ belongs to $\s_\Phi$,
because the fiber over $a$ has an infinite number of points. We can
assume that $a\in S_1$. We want to show that $S_1\subset\Fix(\G)$. Let $x\in S_1$, assume $x\notin\Fix(\G)$, then $\G x=S_1$ and $a$ would be in the orbit
of $x$, this is a contradiction with $a\in\Fix(\G)$. Hence $a\in S_1\subset\Fix(\G)$, which ends the proof of an uncountable field case.

Now assume that the field $\K$ is countable.
Let $\Le$ be an uncountable algebraically closed extension of $\K$. 
Then the group $\Le\G$ acts on $\Le X$ and $\Fix(\Le\G)=\Le\Fix(\G)$.
By the first part of our proof the variety
$\Fix(\Le\G)$ is $\Le$-uniruled, so due to Proposition \ref{indep} the set $\Fix(\G)$ is $\K$-uniruled.
\end{proof}

We get a following corollary:

\begin{corollary}[Bia\l ynicki-Birula \cite{bi}]
Let $\G$ be a nontrivial connected unipotent  group which acts
effectively on an affine variety $X$. Then $\G$ has no isolated fixed points.
\end{corollary}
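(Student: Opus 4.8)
The plan is to deduce this corollary immediately from Theorem \ref{glowne}, which under precisely the same hypotheses asserts that the fixed-point set $\Fix(\G)$ is a $\K$-uniruled variety. The entire content of the corollary therefore collapses to the elementary observation that a $\K$-uniruled variety cannot have an isolated point.

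First I would invoke Theorem \ref{glowne} to conclude that $\Fix(\G)$ is $\K$-uniruled. Then I would argue by contradiction: suppose $x \in \Fix(\G)$ is an isolated fixed point, so that $\{x\}$ is a $0$-dimensional irreducible component of $\Fix(\G)$. By Definition \ref{k-uniruleddef}, every irreducible component of a $\K$-uniruled variety satisfies the covering condition of Proposition \ref{k-uniruledofdeg}; in particular through $x$ there must pass a parametric curve $l_x$ contained in the component $\{x\}$. But a parametric curve is, by definition, the dominant image of a regular map $\K \to l_x$, hence an irreducible variety of dimension one, which cannot be contained in the single point $\{x\}$. This contradiction shows that $\Fix(\G)$ has no $0$-dimensional components, i.e. no isolated fixed points.

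There is essentially no genuine obstacle at this stage: all the difficulty has already been absorbed into Theorem \ref{glowne}, whose proof passes through the surface case of Theorem \ref{Sfsurface} and the $\G$-invariance of the non-properness set established in Lemma \ref{ginv}. The only point that merits careful statement is the incompatibility between a $1$-dimensional parametric curve and a $0$-dimensional component, since it is exactly this dimension count that converts the conclusion \emph{``$\Fix(\G)$ is $\K$-uniruled''} into the conclusion \emph{``$\G$ has no isolated fixed points''}.
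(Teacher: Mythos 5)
Your proof is correct and follows exactly the paper's route: the corollary is stated there as an immediate consequence of Theorem \ref{glowne}, with the (unwritten) justification being precisely your dimension count that a $0$-dimensional irreducible component cannot contain a parametric curve, which Definition \ref{k-uniruleddef} requires of every component of a $\K$-uniruled variety.
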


%%%%%%%%%%%%%%%%%%%%%%%%%%%%%%%%%%%%%%%%%%%%%%%%%%%%%%%%%%%%%%%%%%%%%%%%%%%%%%%%%%%%%%%%%%%%%%%%%%%%%%%%%%%%%%%%%%%%%%%%%%%%%%%%%%%%%%%%%%%%%%%%%%%%%%%%%%%%%%%%%%%%%%%%%%
\section{Hypersurface contained in the set\newline of fixed points}
%%%%%%%%%%%%%%%%%%%%%%%%%%%%%%%%%%%%%%%%%%%%%%%%%%%%%%%%%%%%%%%%%%%%%%%%%%%%%%%%%%%%%%%%%%%%%%%%%%%%%%%%%%%%%%%%%%%%%%%%%%%%%%%%%%%%%%%%%%%%%%%%%%%%%%%%%%%%%%%%%%%%%%%%%%

We generalize the main result of \cite{je03} from $\C$ to an arbitrary algebraically closed field.

\begin{theorem}\label{glowne1}
Let $\G$ be an infinite connected algebraic group which acts
effectively on $\K^n$, for $n\geq 2$. Assume that an irreducible
hypersurface $H$ is contained in the set $\Fix(\G)$ of fixed points of $G$.
Then $H$ is $\K$-uniruled.
\end{theorem}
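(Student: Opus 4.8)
The plan is to reduce the general infinite connected group $\G$ to a one-parameter subgroup and then treat the additive and multiplicative cases separately. First I would note that a positive-dimensional connected algebraic group acting effectively on $\K^n$ must contain a connected one-dimensional subgroup $\G'$ isomorphic either to $\G_a=(\K,+)$ or to $\G_m=(\K^*,\cdot)$ (a complete, e.g. abelian-variety, factor would have to act trivially, since the image of a complete variety in affine space is a point, contradicting effectiveness). Because $H\subset\Fix(\G)\subset\Fix(\G')$ and the restriction of the action to $\G'$ is still effective, the fixed locus $\Fix(\G')$ is a proper closed subset of $\K^n$; as $\dim H=n-1$ this forces $\dim\Fix(\G')=n-1$, so $H$ is an irreducible component of $\Fix(\G')$. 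It therefore suffices to prove that $\Fix(\G')$ is $\K$-uniruled, since then all its components, in particular $H$, are $\K$-uniruled.

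The additive case is immediate: $\G_a$ is a nontrivial connected unipotent group, so Theorem \ref{glowne} yields that $\Fix(\G_a)$ is $\K$-uniruled, finishing this case.

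For the multiplicative case I would mimic the proof of Theorem \ref{glowne}. Fix a general point $a\in H$, choose an irreducible curve $L\subset\K^n$ through $a$ contained neither in an orbit nor in $\Fix(\G_m)$, and form the affine surface $Y=L\times\G_m$ with the action $h\cdot(x,g)=(x,hg)$. The orbit map $\Phi:L\times\G_m\ni(x,g)\mapsto g\cdot x\in\K^n$ is generically finite and $\G_m$-invariant, so by Lemma \ref{ginv} the set $\s_\Phi$ is $\G_m$-invariant; moreover $a\in\s_\Phi$, since the fibre $\Phi^{-1}(a)\supset\{a\}\times\G_m$ is infinite. Once $\s_\Phi$ is known to be $\K$-uniruled, it decomposes into finitely many $\G_m$-invariant parametric curves, and the component through $a$ — an invariant irreducible curve containing a fixed point — should be shown to be pointwise fixed, hence contained in $H$ by the choice of $a$ as a general point lying on no other component of $\Fix(\G_m)$. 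Proposition \ref{k-uniruledofdeg} then upgrades this dense family of curves to a covering of all of $H$, and the passage from countable to uncountable base fields is handled exactly as in Theorem \ref{glowne}, via Proposition \ref{indep}.

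The main obstacle is the step that $\s_\Phi$ is $\K$-uniruled, because Theorem \ref{Sfsurface} applies only to surfaces of the form $\Gamma\times\K$, whereas $Y=L\times\G_m$ is not of this type: completing it inside $\overline{L}\times\p^1$ produces two horizontal sections $\overline{L}\times\{0\}$ and $\overline{L}\times\{\infty\}$ which, together with the vertical fibres over $\overline{L}\setminus L$, form cycles, so the boundary divisor is \emph{not} a tree and Proposition \ref{acykl} is unavailable. I would resolve this by analyzing the two ends of $\G_m$ one at a time: localizing at a single section recreates the tree situation of Theorem \ref{Sfsurface}, whereupon the fixed-point curves through $a$ appear as images of boundary curves meeting the connected component $C=f'^{-1}(H_\infty)$ in exactly one point. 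A secondary subtlety, absent for $\G_a$, is that $\G_m$-orbits need not be closed, so an invariant curve in $\s_\Phi$ could a priori be the closure of a single nontrivial orbit with $a$ merely an endpoint; excluding this, so that the curve through $a$ is genuinely pointwise fixed, by exploiting the genericity of $a$ in $H$, is the delicate point the argument must settle.
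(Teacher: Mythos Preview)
Your reduction to a one-dimensional subgroup $\G'\cong\G_a$ or $\G_m$ matches the paper, and the $\G_a$ case is fine. But your treatment of the $\G_m$ case diverges from the paper and runs into real trouble on both obstacles you flag.

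The first obstacle is a non-issue, and the paper dissolves it by a choice you overlooked: since we are in $\K^n$, take $L$ to be a \emph{line} through $a$ with $L\cap\Fix(\G)$ finite. Then $L\cong\K$, and $L\times\G_m\cong\K\times\K^*$ is literally of the form $\K\times\Gamma$ with $\Gamma=\K^*$ an affine curve, so Theorem \ref{Sfsurface} applies verbatim. No tree surgery on the two horizontal sections is needed.

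The second obstacle, however, is fatal to your line of argument. A $\G_m$-invariant irreducible curve containing a fixed point need not be pointwise fixed: think of a coordinate axis under a diagonal $\G_m$-action, which is the closure of one orbit with the origin as its unique fixed point. Your ``genericity of $a$'' does not rule this out; such an axis can perfectly well land in $\s_\Phi$ and pass through a general point of $H$. The paper therefore does \emph{not} try to show the curve is pointwise fixed via invariance at all. Instead it argues by dimension: the image $\phi(L\times\G)$ is a union of $\G$-orbits, so it meets $H\subset\Fix(\G)$ only in the finite set $L\cap H$; on the other hand the closure $X=\overline{\phi(L\times\G)}$ is a surface in $\K^n$, hence $X\cap H$ is a curve. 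Any component $S$ of $X\cap H$ through $a$ therefore lies in $\overline{X\setminus\phi(L\times\G)}\subset\s_\phi$, and Theorem \ref{Sfsurface} gives directly that $S\subset H$ is parametric. This bypasses Lemma \ref{ginv} and the whole invariance discussion for $\G_m$, and it is the key idea your proposal is missing.
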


\begin{proof} Since $\G$ acts effectively on the affine space $\K^n$, then by the Chevalley Theorem (\cite{sh}, page $190$, Theorem C) we can assume that the group $\G$ is affine. 
In particular it contains either the subgroup $\G_m=(\K^*,\cdot,1)$ or the subgroup $\G_a=(\K,+,0)$ (see \cite{ii}). Thus we can assume that the group $\G$ is either $\G_m$ or $\G_a$.

As before we can assume that the field $\K$ is uncountable. Take a
point $a \in H$. By Propositions \ref{k-uniruled} and \ref{k-uniruledofdeg} it is enough to
prove, that there exists a parametric curve $S\subset H$ passing
through $a$. Let $L$ be a line in $\K^n$ going through $a$ such
that the set $L\cap\Fix(\G)$ is finite. Denote $L\cap H=\{a, a_1,\dots,a_m\}$. 

Consider a mapping
$$\phi:L\times\G\ni (x,g)\to gx\in\K^n.$$ 
Observe that $\phi(L\times\G)$ is a union of disjoint orbits of $\G$. This
implies that $\phi(L\times\G)\cap H=\{ a, a_1,\dots,a_m\}$. Take
$X=\overline{\phi(L\times\G)}$. Note that $X\cap H$ is a union of
curves. This means that there exists a curve $S\subset X\cap H$, which
contains the point $a$. However $S\subset \overline{X\setminus
\phi(L\times\G)}.$ This implies that $S\subset\s_\phi$ and we
conclude by Theorem \ref{Sfsurface}.
\end{proof}

%%%%%%%%%%%%%%%%%%%%%%%%%%%%%%%%%%%%%%%%%%%%%%%%%%%%%%%%%%%%%%%%%%%%%%%%%%%%%%%%%%%%%%%%%%%%%%%%%%%%%%%%%%%%%%%%%%%%%%%%%%%%%%%%%%%%%%%%%%%%%%%%%%%%%%%%%%%%%%%%%%%%%%%%%%
\section{A real field case}
%%%%%%%%%%%%%%%%%%%%%%%%%%%%%%%%%%%%%%%%%%%%%%%%%%%%%%%%%%%%%%%%%%%%%%%%%%%%%%%%%%%%%%%%%%%%%%%%%%%%%%%%%%%%%%%%%%%%%%%%%%%%%%%%%%%%%%%%%%%%%%%%%%%%%%%%%%%%%%%%%%%%%%%%%%

We give a real counterpart of Theorem \ref{glowne}.

\begin{theorem}\label{glowne2}
Let $\G$ be a real nontrivial connected unipotent algebraic group, which acts effectively and polynomially on a closed semialgebraic $\R$-uniruled set $X$. Then the set $\Fix(\G)$ of fixed points of $\G$ is also $\R$-uniruled. In particular it does not contain isolated points.
\end{theorem}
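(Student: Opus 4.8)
The plan is to follow the proof of the algebraically closed case, Theorem \ref{glowne}, replacing each ingredient by its semialgebraic counterpart; since $\R$ is uncountable, the passage to an uncountable extension at the end of \ref{glowne} is not needed. First I would reduce to $\G=\G_a=(\R,+,0)$ by induction on $\dim\G$. Over $\R$, being in characteristic zero, a connected unipotent group still admits a normal series as in Definition \ref{series} with quotients isomorphic to $\G_a$ (no nontrivial forms appear). Writing $0=\G_0\vartriangleleft\dots\vartriangleleft\G_r=\G$, the subgroup $\G_{r-1}$ acts effectively on the $\R$-uniruled set $X$, so by the inductive hypothesis $\Fix(\G_{r-1})$ is $\R$-uniruled; then $\G/\G_{r-1}\cong\G_a$ acts on this $\R$-uniruled set, and either the action is trivial (so $\Fix(\G)=\Fix(\G_{r-1})$) or it is effective and we are reduced to the base case. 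It is precisely this propagation of the hypothesis through the induction that forces $\R$-uniruledness to be imposed on $X$.

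For the base case fix $a\in\Fix(\G_a)$; by Proposition \ref{real} it suffices to produce a real parametric curve $S\subset\Fix(\G_a)$ through $a$. Here the hypothesis enters a second time: unlike the complex case, I cannot take an arbitrary algebraic curve through $a$ and be certain that its real points form a curve inside $X$, so I would use $\R$-uniruledness of $X$ to select a real parametric curve $L\subset X$ with $a\in L$ and $L\not\subset\Fix(\G_a)$ (such an $L$ exists because the action is nontrivial and the covering curves fill $X$). Since $a\in L\cap\Fix(\G_a)$ while $L\not\subset\Fix(\G_a)$, the intersection $L\cap\Fix(\G_a)$ is finite and $L$ is not a single orbit. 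Then I form the real surface $L\times\G_a$ with the action $h\cdot(x,g)=(x,g+h)$ and the $\G_a$-invariant, generically finite polynomial map $\Phi\colon L\times\G_a\ni(x,g)\mapsto g\cdot x\in X$.

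Reordering coordinates, $\Phi$ is a generically finite polynomial map on the closed semialgebraic cylinder $\R\times L$ with bounded degree in the $\R$-direction (the action is polynomial), so Theorem \ref{rxw1} gives that $\s_\Phi$ is $\R$-uniruled, and the real analog of Lemma \ref{ginv} (its proof is purely formal, using only that isomorphisms are finite) shows $\s_\Phi$ is $\G_a$-invariant. Moreover $\Phi(a,g)=g\cdot a=a$ for all $g$, so the fiber over $a$ is infinite and $a\in\s_\Phi$. Decomposing $\s_\Phi$ into its finitely many parametric curves, each is $\G_a$-invariant; let $S$ be one passing through $a$. If some $x\in S$ were not fixed, then, since orbits of a unipotent group are Zariski closed and one-dimensional, $\G_a\cdot x$ would be a closed one-dimensional subset of the irreducible curve $S$, hence equal to $S$; then $a\in S=\G_a\cdot x$ would place the fixed point $a$ on a nontrivial orbit, a contradiction. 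Thus $S\subset\Fix(\G_a)$ is the desired curve, and as $a$ was arbitrary, $\Fix(\G)$ is $\R$-uniruled; in particular it has no isolated points.

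The main obstacle is the real substitute for Theorem \ref{Sfsurface}: establishing that the non-properness set of a map from a real cylinder over a curve is genuinely $\R$-uniruled. This is the content of Theorems \ref{rxw1} and \ref{multc1}, proved by complexifying $\R\times L$, resolving the indeterminacy of the induced rational map, and exploiting the tree structure of the divisor at infinity together with the real curve selection lemma; the degree-doubling of Lemma \ref{xx} is harmless here, since I only need finiteness of the degree of $\R$-uniruledness and not a sharp bound. The secondary delicate point is the semialgebraic selection of the curve $L$, for which the $\R$-uniruledness of $X$ is indispensable.
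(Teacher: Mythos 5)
Your proposal follows the paper's proof of this theorem quite closely: the same induction reducing to $\G=\G_a$, the same surface $L\times\G_a$ with the invariant generically finite map $\Phi(x,g)=g\cdot x$, the same use of Lemma \ref{ginv}, of the infinite fiber over $a$ to conclude $a\in\s_\Phi$, and of a $\G_a$-invariant parametric-curve component of $\s_\Phi$ through $a$. Two smaller deviations are harmless or easily repaired. Invoking Theorem \ref{rxw1} on the cylinder $\R\times L$ in place of Theorem \ref{multc1} is legitimate, and in fact the paper's stated degree bound $\max(d,D)$ matches the cylinder bound better than the bound $2d_1d_2$ of Theorem \ref{multc1}. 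On the other hand, your claim that one can always pick $L$ through $a$ with $L\not\subset\Fix(\G_a)$ is unjustified (a priori every covering curve through $a$ could lie in $\Fix(\G_a)$); this should simply be the paper's dichotomy: take any parametric curve $L\subset X$ through $a$, and if $L\subset\Fix(\G_a)$ the assertion already holds, otherwise proceed.

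The genuine gap is your last step, which is exactly the point where the real case differs from Theorem \ref{glowne}. You argue that if $x\in S$ is not fixed, then $\G_a\cdot x$ is Zariski closed and one-dimensional, hence equals the irreducible curve $S$. Both ingredients fail over $\R$. Closedness of unipotent orbits (Kostant--Rosenlicht) is a theorem over algebraically closed fields; the real orbit $\G_a(\R)\cdot x$ is the image of an injective polynomial map $\R\to\R^m$, which is closed in the Euclidean topology but in general not Zariski closed: for instance $t\mapsto(t^2+1,t^3+t)$ is injective and its image is the unbounded branch of $\{y^2=x^2(x-1)\}$, whose Zariski closure also contains the isolated real point $(0,0)$. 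Moreover, even Euclidean closedness would not suffice: a Euclidean-closed one-dimensional subset of an irreducible semialgebraic curve need not be the whole curve (a closed half-line inside a line), since the inference ``a closed one-dimensional subset of an irreducible curve is everything'' is a statement about the Zariski topology on algebraic varieties, not about semialgebraic parametric curves. The paper resolves precisely this difficulty by complexifying: it records that $a$ is also a fixed point of $\G^c=(\C,+,0)$, that the complexification $S_1^c$ of the invariant component is $\G^c$-invariant, and then runs the orbit argument over $\C$, where $S_1^c$ is an irreducible algebraic curve and the unipotent orbit $\G^c x$ really is Zariski closed; this yields $\G^c x=S_1^c\ni a$, so $a=g\cdot x$ for some $g\in\C$, whence $x=(-g)\cdot a=a$, a contradiction. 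Without this complexification step (or a genuine real substitute, which again has to pass through the complex orbit), your conclusion $S\subset\Fix(\G_a)$ is not established.
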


\begin{proof} By the induction on $\dim\G$ we
can easily reduce the problem to the case of $\G=\G_a$. 

Assume that $\G=\G_a$. Let $D$ be the degree of $\R$-uniruledness
of $X$. Take a point $a\in\Fix(\G)$. Let 
$$\phi:\G\times X\ni
(g,x)\to \phi(g,x)\in X$$ 
be a polynomial action of $\G$ on $X$.
This action also induces a polynomial action of the
complexification $\G^c=(\C,+,0)$ of $\G$ on $X^c$. We will denote
this action by $\overline{\phi}$. Assume that $\deg_g\phi\le d$.
By Definition \ref{r-uniruleddef} we have to prove, that
there exists a parametric curve $S\subset\Fix(\G)$ passing through
$a$ of degree bounded by max$(d,D)$. Let $L$ be a parametric
curve in $X$ passing through $a$. If it is contained in
$\Fix(\G)$, then the assertion is true. Otherwise consider a closed semialgebraic
surface $Y=L\times\G$. There is a natural $\G$ action on $Y$: 
$$\text{for }h\in \G\text{ and }y=(l,g)\in Y\text{ we put }h(y)=(l,hg)\in Y.$$ 
Take a generically finite polynomial mapping
$$\Phi:L\times\G\ni (x, g)\to \phi(g,x)\in X.$$ 

Observe that it is $\G$-invariant, which means
$\Phi(gy)=g\Phi(y)$. Lemma \ref{ginv} implies that the set $\s_\Phi$ of points
at which the mapping $\Phi$ is not finite is also $\G$-invariant.

Let $\s_\Phi=S_1\cup\dots\cup S_k$ be a decomposition of $\s_\Phi$ into irreducible components. Due to Theorem \ref{multc1} each $\s_i$ is a parametric curve. Since
the set $\s_\Phi$ is $\G-$invariant each $S_i$ is also $\G$-invariant. 

Note that the point $a$ belongs to $\s_\Phi$, because the fiber over $a$ has infinite number of points. We can assume that $a\in S_1$. Let us note that the point $a$ is also a fixed point for $\G^c$. We want to show that $S_1\subset\Fix(\G)$. Let $x\in S_1$. The set $S_1^c$ is also $\G^c$-invariant and if $x\not\in\Fix(\G)$ then $\G^cx=S_1^c$ and $a$ would be in the orbit of $x$, which is a contradiction. Hence $S_1\subset\Fix(\G)$ completes the proof.
\end{proof}

\begin{corollary}
Let $\G$ be a real nontrivial connected unipotent group which acts effectively and polynomially on a closed semialgebraic set $X$. If the set $\Fix(\G)$ of fixed points of $\G$ is nowhere dense in $X$, then it is $\R$-uniruled.
\end{corollary}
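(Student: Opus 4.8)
The statement is identical to Theorem \ref{glowne2} except that the hypothesis ``$X$ is $\R$-uniruled'' has been weakened to ``$\Fix(\G)$ is nowhere dense in $X$''. The plan is therefore to show that this weaker condition already forces $X$ to be $\R$-uniruled, after which the Corollary is an immediate application of Theorem \ref{glowne2} to the given effective action.

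The core observation is that the non-fixed points carry parametric curves of uniformly bounded degree. Let $\phi:\G\times X\to X$ denote the polynomial action. We work in characteristic zero, where a connected unipotent group is generated by its one-parameter subgroups: writing $\exp:\mathfrak g\to\G$ for the exponential isomorphism (a morphism of affine varieties), every one-parameter subgroup has the form $t\mapsto\exp(tv)$ with $v\in\mathfrak g$. If $x\in X$ is not fixed by all of $\G$, then some $\exp(tv)$ must move $x$, so the orbit $t\mapsto\phi(\exp(tv),x)$ is a non-constant polynomial map $\R\to X$, that is, a parametric curve through $x$ contained in $X$. Since $\exp(tv)$ is, in each coordinate, a polynomial in $t$ whose degree is bounded by the nilpotency length of $\G$ (independently of $v$), the degree of this curve is at most $(\dim\G)\,\deg_g\phi$, a constant depending only on the action.

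Now assume $\Fix(\G)$ is nowhere dense in $X$, so that $X\setminus\Fix(\G)$ is dense and, by the previous paragraph, is covered by parametric curves of degree at most $(\dim\G)\,\deg_g\phi$. Fix an irreducible component $X_i$ of $X$. It is not contained in $\Fix(\G)$, for otherwise $\Fix(\G)$ would contain a non-empty open subset of $X$; hence $X_i\setminus\Fix(\G)$ is dense in $X_i$ and covered by these curves. When $\dim X_i\ge 2$ the implication $(2)\Rightarrow(1)$ of Proposition \ref{real} shows that $X_i$ is covered by parametric curves of bounded degree, so $X_i$ is $\R$-uniruled, and therefore $X$ is $\R$-uniruled in the sense of Definition \ref{r-uniruleddef}. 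Applying Theorem \ref{glowne2} to the effective action of $\G$ on this $\R$-uniruled set $X$ then yields that $\Fix(\G)$ is $\R$-uniruled.

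The step I expect to require the most care is precisely the passage ``nowhere dense $\Rightarrow$ $X$ is $\R$-uniruled'', namely the uniform degree bound for the orbit curves together with the low-dimensional components. For components $X_i$ with $\dim X_i\le 1$ Proposition \ref{real} does not apply directly, and one must check separately that a one-dimensional component meeting $\Fix(\G)$ is the closure of a single orbit, hence a parametric curve, so that the conclusion of Theorem \ref{glowne2} is unaffected. One should also confirm that the exponential description of one-parameter subgroups, and thus the bounded-degree claim, is available in the real setting used here; if one prefers to avoid it, the same conclusion follows by the induction on $\dim\G$ along the central series of Definition \ref{series} employed in the proofs of Theorems \ref{glowne} and \ref{glowne2}, which reduces the covering argument to the case $\G=\G_a$, where the orbit curves have degree at most $\deg_g\phi$ directly.
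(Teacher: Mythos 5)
Your proposal is correct and matches what the paper intends: the corollary is stated without proof immediately after Theorem \ref{glowne2} precisely because, when $\Fix(\G)$ is nowhere dense, the dense open set $X\setminus\Fix(\G)$ is covered by one-parameter orbit curves $t\mapsto\phi(\exp(tv),x)$ of degree bounded by a constant depending only on the action, so Proposition \ref{real} (condition $(2)\Rightarrow(1)$) makes $X$ itself $\R$-uniruled and Theorem \ref{glowne2} applies. Your worry about components of dimension one is harmless, since the compactness/limit argument proving $(2)\Rightarrow(1)$ in Proposition \ref{real} nowhere uses the hypothesis $\dim X\geq 2$, and components of dimension zero are excluded outright: an isolated point is fixed by the connected group $\G$ and would be an open subset of $X$ inside $\Fix(\G)$, contradicting nowhere-density.
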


\begin{corollary}
Let $\G$ be a real nontrivial connected unipotent group which acts effectively and polynomially on a connected Nash submanifold $X\subset\R^n$. Then the set $\Fix(\G)$ is $\R$-uniruled.
\end{corollary}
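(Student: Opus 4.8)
The plan is to deduce the statement from the preceding Corollary rather than from Theorem \ref{glowne2} directly: that Corollary deliberately drops the hypothesis that $X$ be $\R$-uniruled and replaces it by the requirement that $\Fix(\G)$ be nowhere dense in $X$, so the only thing left to check is nowhere density. A connected Nash submanifold is in particular a connected closed semialgebraic set, and the action is polynomial (hence Nash, hence real-analytic), so the preceding Corollary will apply verbatim once nowhere density has been established.

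First I would record that $\Fix(\G)$ is a proper closed semialgebraic subset of $X$. It is closed and semialgebraic because for each fixed $g\in\G$ the locus $\{x\in X:\phi(g,x)=x\}$ is closed semialgebraic, and $\Fix(\G)$ is the intersection of these over $g\in\G$, which remains semialgebraic since $\G$ is a semialgebraic group (equivalently, by connectedness of $\G$ it is the common zero set of the finitely many Nash fundamental vector fields of the action). It is proper because $\G$ is nontrivial and the action is effective: were $\Fix(\G)=X$, every $g\in\G$ would act as the identity, contradicting effectiveness.

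The main step is to upgrade ``proper'' to ``nowhere dense,'' and this is where I expect the real work to lie. Since the action is polynomial, its fundamental (infinitesimal) vector fields are Nash, hence real-analytic, and $\Fix(\G)$ is exactly their common zero locus. If $\Fix(\G)$ had nonempty interior $U$, these analytic vector fields would vanish on the open set $U$; by the identity principle for real-analytic functions together with the connectedness of $X$ they would then vanish identically, forcing $\Fix(\G)=X$ and contradicting effectiveness. Hence $\Fix(\G)$ is nowhere dense. This is the genuine obstacle, since it is precisely here that the connectedness of $X$ (and of $\G$) and the analyticity of the action are indispensable; the semialgebraic bookkeeping above is routine by comparison.

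With nowhere density in hand I would simply invoke the preceding Corollary to conclude that $\Fix(\G)$ is $\R$-uniruled. If one prefers to bypass that Corollary, the same conclusion follows by rerunning the proof of Theorem \ref{glowne2}: for a fixed point $a$, nowhere density lets me choose a closed semialgebraic arc $L\subset X$ through $a$ not contained in $\Fix(\G)$, the cylinder $L\times\G$ is then a closed semialgebraic surface that is $\R$-uniruled of degree one via its vertical rulings, so Theorem \ref{multc1} applies to the generically finite, $\G$-equivariant map $\Phi\colon L\times\G\ni(x,g)\mapsto\phi(g,x)\in X$; Lemma \ref{ginv} makes $\s_\Phi$ a $\G$-invariant union of parametric curves, the component $S_1$ through $a$ lies in $\s_\Phi$ because the fibre over the fixed point $a\in L$ is infinite, and the complexified orbit argument forces $S_1\subset\Fix(\G)$. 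By Definition \ref{r-uniruleddef} this exhibits a parametric curve through every point of $\Fix(\G)$, completing the proof.
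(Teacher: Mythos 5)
Your proposal is correct and coincides with the paper's intended derivation: the thesis states this corollary without proof immediately after the nowhere-dense corollary, and the only missing ingredient is exactly the step you supply, namely that on a connected Nash submanifold an effective polynomial action of a nontrivial connected group has nowhere dense fixed-point set, which follows from real-analyticity of the action and the identity principle (your fallback rerun of Theorem \ref{glowne2} is not needed). The one caveat, inherited from the paper's own formulation rather than introduced by you, is that invoking the preceding corollary requires $X$ to be a \emph{closed} semialgebraic subset of $\R^n$, and not every convention for ``Nash submanifold'' guarantees closedness, so this should be read as an implicit hypothesis.
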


%%%%%%%%%%%%%%%%%%%%%%%%%%%%%%%%%%%%%%%%%%%%%%%%%%%%%%%%%%%%%%%%%%%%%%%%%%%%%%%%%%%%%%%%%%%%%%%%%%%%%%%%%%%%%%%%%%%%%%%%%%%%%%%%%%%%%%%%%%%%%%%%%%%%%%%%%%%%%%%%%%%%%%%%%%
\section{Remarks}
%%%%%%%%%%%%%%%%%%%%%%%%%%%%%%%%%%%%%%%%%%%%%%%%%%%%%%%%%%%%%%%%%%%%%%%%%%%%%%%%%%%%%%%%%%%%%%%%%%%%%%%%%%%%%%%%%%%%%%%%%%%%%%%%%%%%%%%%%%%%%%%%%%%%%%%%%%%%%%%%%%%%%%%%%%

To finish we state:

\begin{conjecture} Let $\G$ be an algebraic group, which
acts effectively on $\K^n$. If $R$ is an irreducible component of
the set $\Fix(\G)$ of fixed points of $\G$, then $R$ is a $\K$-uniruled variety or it is a point.
\end{conjecture}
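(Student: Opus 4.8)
The plan is to imitate the proofs of Theorems \ref{glowne} and \ref{glowne1}, whose common engine is the pair ``$\s_\phi$ is $\G$-invariant'' (Lemma \ref{ginv}) together with ``$\s_\phi$ is $\K$-uniruled'' (Theorem \ref{Sfsurface} in the surface case, Theorem \ref{Sfkuni} in general), and then to isolate exactly where the extra generality forces a genuinely new argument. First I would dispose of the trivial reductions. If $R$ is a point there is nothing to prove, so assume $\dim R\geq 1$. Since an algebraic group has finitely many components and $\Fix(\G)=\Fix(\G^0)\cap\bigcap_i\Fix(g_i)$ for coset representatives $g_1,\dots,g_k$ of $\G/\G^0$, the case of interest is that $\G$ is infinite; then $\G^0$ is positive dimensional, it still acts effectively (any subgroup of an effective action is effective), and, exactly as in Theorem \ref{glowne1}, the Chevalley Theorem lets us take $\G$ affine, so $\G^0$ contains a one-parameter subgroup $\Gamma$ isomorphic to $\G_a$ or to $\G_m$.

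Next I would run the transverse-slice construction. Fix a general point $a\in R$, lying on no other component of $\Fix(\G)$, and choose an affine subspace $L\subset\K^n$ through $a$, of dimension equal to the codimension of $R$, meeting $\Fix(\G)$ in a finite set near $a$. Using the full group I would form the $\G$-invariant map $\phi:L\times\G\ni(x,g)\mapsto gx\in\K^n$; since $a$ is fixed, the fibre over $a$ contains $\{a\}\times\G$ and is infinite, so $a\in\s_\phi$, and by Lemma \ref{ginv} the set $\s_\phi$ is $\G$-invariant. Writing $X=\overline{\phi(L\times\G)}$, the same orbit bookkeeping as in Theorem \ref{glowne1} shows $\phi(L\times\G)\cap R$ is finite, so any positive-dimensional part of $X\cap R$ lies in $\overline{X\setminus\phi(L\times\G)}\subseteq\s_\phi$ and is therefore swept out by parametric curves. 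Thus it suffices to produce a curve of $X\cap R$ through $a$: by the affine dimension theorem every component of $X\cap R$ has dimension at least $\dim X+\dim R-n$, and since $X\not\subseteq R$ this intersection is of positive dimension as soon as $\dim X\geq\operatorname{codim}R+1$, giving a parametric curve in $R$ through $a$ and hence, by Proposition \ref{k-uniruledofdeg}, the $\K$-uniruledness of $R$.

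The main obstacle is making the dimension count $\dim X\geq\operatorname{codim}R+1$ compatible with the tools that actually yield the curves. Theorem \ref{glowne1} is precisely the case where $R$ is a hypersurface and $\dim\G=1$: then $L\times\G$ is a surface, $X$ is a surface, $\dim X+\dim R-n=1$, and Theorem \ref{Sfsurface} guarantees that the curve produced really is parametric and passes through the prescribed point $a$. For a component $R$ of higher codimension one needs $\dim X>2$, so the source $L\times\G$ can no longer be a surface; then Theorem \ref{Sfsurface} is unavailable, and Theorem \ref{Sfkuni} only asserts that $\s_\phi$ is uniruled as a whole, through a dense subset and without the finiteness of components and the component-wise $\G$-invariance that, in Theorem \ref{glowne}, forced the curve through $a$ to lie inside $\Fix(\G)$. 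In other words, the missing ingredient is a higher-dimensional refinement of Theorem \ref{Sfsurface} that still controls a parametric curve through a prescribed non-proper point and respects the group symmetry.

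A second, independent difficulty is hidden in the passage from $\Gamma$ to $\G$ and in the non-unipotent case. For unipotent $\G$ orbits on an affine variety are closed, which is what allowed the clean argument in Theorem \ref{glowne}: a $\G$-invariant parametric curve through a fixed point cannot contain a non-fixed point. For $\G\supseteq\G_m$ this fails, since a one-dimensional $\G_m$-orbit acquires fixed endpoints in its closure, so a $\G$-invariant curve through $a$ may be an orbit closure that leaves $\Fix(\G)$; this is exactly the phenomenon behind the unavoidable ``or it is a point'' in the statement (e.g. the scaling action with $\Fix=\{0\}$). Finally the disconnected and purely finite cases, where $\Fix(\G)=\Fix(\G^0)\cap\bigcap_i\Fix(g_i)$ and no positive-dimensional subgroup is available, would have to be treated separately and are entangled with linearizability questions for finite-order automorphisms of $\K^n$. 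Overcoming the first obstacle, a symmetric, point-controlled uniruledness statement for $\s_\phi$ with higher-dimensional source, is where I would expect the real work to lie, and its absence is presumably why the statement remains a conjecture.
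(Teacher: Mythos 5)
This statement is not proved in the paper: it is the closing conjecture of the last chapter, stated without proof, so there is no argument of the author's to compare yours against. Your submission is accordingly not a proof, and you say as much; what you actually deliver --- a reconstruction of the common mechanism of Theorems \ref{glowne} and \ref{glowne1} (the slice map $\phi:L\times\G\to\K^n$, the invariance Lemma \ref{ginv}, the surface Theorem \ref{Sfsurface}), together with an identification of where it breaks --- is accurate, and it matches the reason the statement is left open.

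Two confirmations and one sharpening. The step in your second paragraph claiming that a positive-dimensional component of $X\cap R$ lies in $\s_\phi$ ``and is therefore swept out by parametric curves'' is indeed the precise gap, and you correctly withdraw it in your third paragraph: a subvariety of a $\K$-uniruled set need not be $\K$-uniruled, because the curves supplied by Theorem \ref{Sfkuni} have no reason to stay inside $X\cap R$, let alone inside $\Fix(\G)$. The surface arguments of Theorems \ref{glowne} and \ref{glowne1} succeed only because there $\s_\phi$ has pure dimension one, hence is a finite union of parametric curves, so an irreducible curve contained in it must coincide with one of them; this rigidity disappears as soon as $\dim(L\times\G)\geq 3$, which is forced once $\operatorname{codim} R\geq 2$. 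Your second obstacle is equally real: the conclusion $S_1\subset\Fix(\G)$ in Theorem \ref{glowne} rests on orbits of unipotent groups on affine varieties being closed, and this fails for $\G_m$ --- the scaling action on $\K^n$, with $\Fix(\G_m)=\{0\}$, shows both that a $\G$-invariant curve through a fixed point may leave the fixed locus and that the clause ``or it is a point'' cannot be dropped. So your text should be read as a correct account of the obstructions rather than as a solution; the conjecture remains open in the paper as well.
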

\pagebreak
%\chapter{Bibliography}
\clearpage
\addcontentsline{toc}{chapter}{Bibliography}

\end{document}